\title{MOY calculus in type \texorpdfstring{$\mathsf{D}$}{D}}
\author{Elijah Bodish}
\address{Department of Mathematics, Massachusetts Institute of Technology
77 Massachusetts Ave. Cambridge, MA USA 02139}
\email{\href{mailto:ebodish@mit.edu}{ebodish@mit.edu}}
\author{Louis-Hadrien Robert}
 \address{Université Clermont Auvergne, LMBP, Campus des Cézeaux, 3 place Vasarely, TSA 60026, CS 60026, 63178 Aubière Cedex, France}
 \email{\href{mailto:louis-hadrien.robert@uca.fr}{louis-hadrien.robert@uca.fr}}
 \author{Emmanuel Wagner}
 \address{Univ Paris Cité, IMJ-PRG, Univ Paris Sorbonne, UMR 7586 CNRS,
   F-75013, Paris, France} 
 \email{\href{mailto:emmanuel.wagner@imj-prg.fr}{emmanuel.wagner@imj-prg.fr}}
\let\oldtocsubsection\tocsubsection
\renewcommand\tocsubsection[3]{\hspace{0.5cm}\oldtocsubsection{#1}{#2}{#3}}
\let\oldtocsubsubsection\tocsubsubsection
\renewcommand\tocsubsubsection[3]{\hspace{1cm}\oldtocsubsubsection{#1}{#2}{#3}}
\newcounter{res}[section]
\numberwithin{res}{section}
\newtheorem{thm}[res]{Theorem}
\newtheorem{maintheo}{Theorem}
\newtheorem{lem}[res]{Lemma}
\newtheorem{lem-dfn}[res]{Lemma-Definition}
\newtheorem{prop}[res]{Proposition}
\newtheorem{cor}[res]{Corollary}
\theoremstyle{definition}
\newtheorem{notation}[res]{Notation}
\newtheorem{dfn}[res]{Definition}
\newtheorem{rmk}[res]{Remark}
\newtheorem{exa}[res]{Example}
\newtheorem{conv}[res]{Convention}
\newcommand{\web}{\ensuremath{\Gamma}}
\newcommand{\webspin}[1][\web]{\ensuremath{#1}_{\mathrm{spin}}}
\newcommand{\gweb}{generalized web}
\newcommand{\NB}[1]{\ensuremath{\vcenter{\hbox{#1}}}}
\tikzset{set style/.style={#1},
  expand style/.style={set style/.expanded={#1}},
  expand style once/.style={set style/.expand once={#1}},
  expand style twice/.style={set style/.expand twice={#1}}
}
 \tikzset{zxplane/.style={canvas is zx plane at y=#1}}
 \tikzset{yxplane/.style={canvas is yx plane at z=#1}}
 \tikzset{zyplane/.style={canvas is zy plane at x=#1}}
\newcommand{\coloreddiagrams}
{
\definecolor{colorvect}{RGB}{10,10,10}
\definecolor{colorodd}{RGB}{0,0,255}
\definecolor{coloreven}{RGB}{0,127,0}
\definecolor{colorcycle}{RGB}{200,120,0}
}
\newcommand{\graydiagrams}
{
\definecolor{colorvect}{RGB}{100,100,100}
\definecolor{colorodd}{RGB}{100,100,100}
\definecolor{coloreven}{RGB}{100,100,100}
\definecolor{colorspin}{RGB}{100,100,100}
}
\newcommand{\stylespin}{colorspin, line width=1mm}
\newcommand{\stylecycle}{colorcycle, line width=2mm, opacity=0.3, rounded corners}
\newcommand{\stylevect}{colorvect, thin}
\newcommand{\styleodd}{colorodd, thin, decorate, decoration={snake, segment
    length=1mm,amplitude=0.5mm }}
\newcommand{\styleeven}{coloreven, thin, double}
\newcommand{\midarrow}[2]{node[opacity= 0.7, pos=0.5, #1, sloped, font=\normalsize,
  scale=0.7, rotate={#2}] {$\blacktriangleright$}}
\newcommand{\midarrowno}[1]{node[opacity= 0.7, pos=0.5,  sloped, font=\normalsize,
  scale=0.7, rotate={#1}] {$\blacktriangleright$}}
\newcommand{\arr}{\midarrowno{0}}
\newcommand{\rarr}{\midarrowno{180}}
\newcommand{\arreven}{\midarrow{coloreven}{0}}
\newcommand{\arrodd}{\midarrow{colorodd}{0}}
\newcommand{\rarreven}{\midarrow{coloreven}{180}}
\newcommand{\rarrodd}{\midarrow{colorodd}{180}}
\newcommand{\singvertex}{{\tiny $\bullet$}}
\tikzset{->-/.style={decoration={markings, mark=at position .5 with {\arrow{>}}},postaction={decorate}}}
\tikzset{-<-/.style={decoration={markings, mark=at position .5 with {\arrow{<}}},postaction={decorate}}}
\newcommand{\singvertextikz}[1][]{
  \begin{scope}[#1]
    \coordinate (A) at ( 135:1) ;
    \coordinate (B) at (  45:1) ;
    \coordinate (C) at ( -45:1) ;
    \coordinate (D) at (-135:1) ;
    \node (O) at (0,0) {\singvertex};
    \draw (A) --(C);
    \draw (B) --(D);
  \end{scope}
}
\newcommand{\myor}{\ensuremath{\bigcirc}}
\newcommand{\slN}[1][\myN]{\ensuremath{\mathfrak{sl}_{#1}}}
\newcommand{\glN}[1][\myN]{\ensuremath{\mathfrak{gl}_{#1}}}
\newcommand{\so}{\ensuremath{\mathfrak{so}}}
\newcommand{\myN}{\ensuremath{N}}
\newcommand{\sotN}[1][2\myN]{\ensuremath{\so_{#1}}}
\newcommand{\RT}[2][\sotN]{\ensuremath{\mathrm{RT}_{#1}\left(#2\right)}}
\newcommand{\Ashift}[1]{\ensuremath{\mathrm{sh}^{\A}\left(#1\right)}}
\newcommand{\shiftN}[2][\myN]{\ensuremath{\mathrm{sh}_{#1}\left(#2\right)}}
\newcommand{\gshiftN}[2][\myN]{\ensuremath{\mathrm{gsh}_{#1}\left(#2\right)}}
\newcommand{\rotational}[2][]{\ensuremath{\rho_{#1}\left(#2\right)}}
\newcommand{\setN}[1][\myN]{\ensuremath{\llbracket{#1}\rrbracket}}
\newcommand{\powerset}[1]{\ensuremath{\mathcal{P}}}
\newcommand{\ev}[1]{\ensuremath{\left\langle #1 \right\rangle}} \newcommand{\evN}[2][\myN]{\ensuremath{\ev{#2}_{#1}}}
\newcommand{\evA}[2][\glN]{\ensuremath{\left\langle #2 \right\rangle_{#1}}}
\newcommand{\cycle}{\ensuremath{Z}}
\newcommand{\col}[2][]{\ensuremath{\mathrm{col}\ifstrempty{#1}{}{_{#1}}\left(#2\right)}}
\renewcommand{\deg}[1][]{\mathrm{deg}\ifstrempty{#1}{}{_{#1}}}
\newcommand{\NN}{\ensuremath{\mathbb{N}}}
\newcommand{\ZZ}{\ensuremath{\mathbb{Z}}}
\newcommand{\RR}{\ensuremath{\mathbb{R}}}
\newcommand{\A}{\ensuremath{\mathsf{A}}}
\newcommand{\webA}{\ensuremath{G}}
\newcommand{\ii}{{\rotatebox{90}{=}}}
\newcommand{\wt}{\ensuremath{\mathrm{wt}}}
\newcommand{\diag}{\ensuremath{D}}
\newcommand{\ttkzAshift}[5][]{\NB{\tikz[#1]{
\begin{scope}[scale=0.6, font =\tiny]
  \draw[expand style =\stylevect, #2] (0,0) -- (1,0);
  \draw[expand style =\styleeven] (0,0) -- (0,1) \arreven node[pos=0.85, #3] {$#4$};
  \draw[expand style =\styleodd]  (0,0) -- (0,-1) \rarrodd node[pos= 0.85, #3] {$#5$};
\end{scope}
    }}}
\newcommand{\eval}{\mathrm{eval}}
\newcommand{\coeval}{\mathrm{coev}}
\newcommand{\capp}{\mathrm{cap}}
\newcommand{\cupp}{\mathrm{cup}}
\newcommand{\id}{\mathrm{id}}
\newcommand{\Ya}{\mathrm{Y}}
\newcommand{\Pa}{\reflectbox{\rotatebox[origin=c]{180}{$\mathrm{Y}$}}}
\newcommand{\HH}{\mathrm{H}}
\newcommand{\II}{\mathrm{I}}
\newcommand{\XX}{\mathrm{X}}
\newcommand{\Hom}{\mathrm{Hom}}
\newcommand{\End}{\mathrm{End}}
\newcommand{\mymacrotwo}[2]{\ensuremath{C_{{#1}^\star{}{#2}}}}
\newcommand{\mymacroone}[2]{\ensuremath{C_{{#1}'{#2}}}}
 \newcommand{\imagesfolder}{.}
\begin{document}
\begin{abstract}We define a positive state sum for webs ``of type $\mathsf{D}$''. These webs are
graphs which mimic morphisms in the category of finite-dimensional
$U_q(\sotN)$-modules. From the state sum, we derive an invariant of
framed unoriented links. After giving explicit details about
some intertwiners in the category of $U_q(\sotN)$-modules, we relate
our state-sum link invariant with Reshetikhin--Turaev's invariant associated
with $U_q(\sotN)$.
\end{abstract}
\maketitle
\setcounter{tocdepth}{1}

\tableofcontents

\coloreddiagrams
\section{Introduction}
\label{sec:intro}

\subsection{History}
\label{sec:background}

Quantum topology started with Jones' discovery of the eponymous
polynomial \cite{MR766964}. Inspired by this definition, Reshetikhin
and Turaev \cite{MR1036112} gave a rather general framework to
construct polynomial knot invariants given a choice of ``color'', i.e.{} a finite dimensional representations of a ribbon Hopf
algebra. 

A rich source of such colors are finite dimensional representations of Drinfeld-Jimbo quantum groups. These quantum groups are $q$-analogues of universal
enveloping algebras of semisimple Lie algebras. Moreover, the finite dimensional representations of these quantum groups are $q$-analogues of finite dimensional representations of semisimple Lie algebras. 

When the semisimple Lie algebra
is $\mathfrak{g}= \slN[2]$ and the choice of color is the $q$-analogue
of $\mathbb{C}^2$, the Reshetikhin--Turaev construction recovers the
Jones polynomial. Although the Reshetikhin--Turaev construction is quite general, the
majority of constructions and in-depth studies following their work focus on colored $\slN[n]$-polynomials, which are
the closest generalizations of the Jones polynomial. 

One reason these invariants are the most studied is
that explicit computations are
easier. For example, Murakami--Othsuki--Yamada \cite{MOY}
give a very elementary and self-contained combinatorial description of Reshetikhin--Turaev invariants colored by the $q$-analogues of $\Lambda^k(\mathbb{C}^\myN)$. In fact they do even more, and also define an invariant of knotted $\mathfrak{gl}_\myN$ web diagrams, which are certain trivalent graphs colored by the quantum exterior powers. 

A key feature of Murakami--Othsuki--Yamada's approach is integrality and
positivity of the quantity associated with planar diagrams, i.e. given a planar $\mathfrak{gl}_\myN$ web diagram $W$ we have $\left\langle W\right\rangle_{\mathfrak{gl}_\myN} \in \mathbb{Z}_{\ge 0}[q,q^{-1}]$. This feature was essential for defining Khovanov--Rozansky $\glN[\myN]$
link homology theories \cite{MR2391017,MR2421131}.

This wonderful paper by Murakami--Othsuki--Yamada could have originally been thought of as just a diagrammatic and combinatorial reformulation of the algebraic framework of $R$-matrices. But their work has had a large impact in the development of categorified link invariants \cite{MR4164001}, especially the key functorial properties \cite{FunctorialitySLN}.

By leveraging the functorial properties, the algebraic and combinatorial approach to categorification has resulted in breakthroughs for low-dimensional topology and geometry, for example Picirillo's work on the Conway knot \cite{MR4076631} and defining the skein-Lasagna invariants of oriented smooth four manifolds \cite{MR4562565}. It is also amazing to see the combinatorics and algebra approach to categorification replacing analytic gauge theory techniques, e.g. Rasmussen's reproof of the Milnor conjecture \cite{MR2729272}.

\subsection{Our work}
\label{sec:ourwork}

Inspired by the seminal work of Murakami--Othsuki--Yamada and the
representation theory of $U_q(\sotN)$, we define a family of
invariants $\left(\evN{\cdot}\right)_{\myN\geq 3}$ of
unoriented framed links. The details of the construction are analogous to the
rephrasing of \cite{MOY} by the second author \cite{LHRmoy}. The invariants possess integrality and positivity properties that one seeks for well-behaved categorifications.

A consequence of our work is a new proof of the following existence theorem. \begin{maintheo}
  [Theorem \ref{thm:restated-thm-A}]\label{theo:A}
    There is of a family of link invariants satisfying the following skein relations:
\begin{gather}
     \evN{\,\,\NB{\tikz[rotate=90]{\begin{scope}[scale=0.6]
  \draw[expand style =\stylevect] (45:1) -- (-135:1);
  \fill[white] (0,0) circle (0.2cm);
  \draw[expand style =\stylevect] (-45:1) -- (135:1);
\end{scope}}}\,\,} -
   \evN{\,\,\NB{\tikz[rotate=00]{}}\,\,}
   = (q-q^{-1})\left(\evN{\,\,\NB{\tikz[rotate=90]{\begin{scope}[scale=0.6]
  \draw[expand style =\stylevect] (45:1) .. controls (45:0.5) and (-45:0.5) .. (-45:1);
  \draw[expand style =\stylevect] (135:1) .. controls (135:0.5) and (-135:0.5) .. (-135:1);
\end{scope}}}\,\,} -
     \evN{\,\,\NB{\tikz[rotate=00]{}}\,\,} \right), \\
       \evN{\NB{\tikz[scale=0.5]{\begin{scope}[expand style = \stylevect]
  \coordinate (a) at (-0.3,0.3);
  \coordinate (b) at (0.3,0.3);
  \coordinate (c) at (0.3,-0.3);
  \coordinate (d) at (-0.3,-0.3);
  \coordinate (O) at (0,0);
  \coordinate (T) at (-0.3, 0.9);
  \coordinate (B) at (-0.3,-0.9);
  \draw (T) .. controls +(0, -0.2) and +(-0.2, 0.2) .. (a);
  \draw (B) .. controls +(0, +0.2) and +(-0.2, -0.2) .. (d);  
  \draw (a) -- (c);
  \fill[white] (O) circle (1mm); 
  \draw (b) -- (d);
  \draw (b) .. controls +(0.4, 0.4) and +(0.4, -0.4) .. (c);
\end{scope}}}} = -q^{2\myN -1} \evN{\,\NB{\tikz[scale=0.5]{\begin{scope}[expand style = \stylevect]
  \coordinate (T) at (-0.3, 0.9);
  \coordinate (B) at (-0.3,-0.9);
  \draw (T) -- (B);
\end{scope}}}\,}, \qquad \qquad
    \evN{\NB{\tikz[scale=0.5, yscale=-1]{}}} = -q^{1-2\myN}
    \evN{\,\NB{\tikz[scale=0.5]{}}\,}, \\ \text{and}\qquad  \evN{\NB{\tikz[]{  \begin{scope}[scale = 0.6]
    \draw[expand style =\stylevect] (0,0) circle (0.5cm); 
  \end{scope}

}}} = \left([2\myN-1] +1\right).  
\end{gather}
\end{maintheo} 
The first existence proof is due to Kauffman \cite{MR958895}. In fact, the invariants we consider are 
specializations of the two-variable Kauffman polynomial, the existence and uniqueness of which is proven in the place cited. Another proof of the existence of such a family of invariants, which we recall in detail in Section \ref{sec:intertwiners}, uses quantum groups, while yet another existence proof uses the BMW algebras \cite{MR992598, MR927059}.

To define the family of link invariants, we first define a family of invariants of $4$-valent non-oriented planar graphs. The relationship between the invariants of links are then obtained from the invariants of graphs by using the following relationship:
\begin{equation}\label{eqn:typeDskein}
    \ev{\,\,\NB{\tikz[rotate=90]{}}\,\,} = -q\ev{\,\,\NB{\tikz[]{}}\,\,} + \ev{\,\,\NB{\tikz[]{\begin{scope}[scale=0.6]
  \draw[expand style =\stylevect] (45:1) -- (-135:1);
  \draw[expand style =\stylevect] (-45:1) -- (135:1);
  \node at (0,0) {\singvertex};
\end{scope}}}\,\,} -q^{-1}\ev{\,\,\NB{\tikz[rotate=90]{}}\,\,} .
\end{equation}
In fact, ideas of Kauffman and Vogel \cite{KV} imply that defining the invariant for non-oriented planar $4$-valent graphs is equivalent to defining the invariant for links.

To define the invariant for planar $4$-valent graphs, we actually first define the invariant for planar trivalent graphs, equipped with a labeling of edges and a ``semi-orientation", for example:
  \begin{gather*}
    \NB{\tikz[scale=0.6]{\begin{scope}
  \draw[expand style =\stylevect] (0,0) -- (90:1);
  \draw[expand style =\styleeven] (0,0) -- (210:1) \arreven;
  \draw[expand style =\styleodd] (0,0) -- (-30:1) \arrodd;
\end{scope}
}} \qquad\qquad
    \NB{\tikz[scale=0.6]{\begin{scope}[xshift=0cm]
  \draw[expand style =\stylevect] (0,0) -- (90:1);
  \draw[expand style =\styleeven] (0,0) -- (-30:1) \arreven;
  \draw[expand style =\styleodd] (0,0) -- (210:1) \arrodd;
\end{scope}
}} \qquad\qquad
    \NB{\tikz[scale=0.6]{\begin{scope}[xshift=0cm]
  \draw[expand style =\stylevect] (0,0) -- (90:1);
  \draw[expand style =\styleeven] (0,0) -- (210:1) \rarreven;
  \draw[expand style =\styleodd] (0,0) -- (-30:1) \rarrodd;
\end{scope}
}} \qquad\qquad
    \NB{\tikz[scale=0.6]{\begin{scope}[xshift=0cm]
  \draw[expand style =\stylevect] (0,0) -- (90:1);
  \draw[expand style =\styleeven] (0,0) -- (-30:1) \rarreven;
  \draw[expand style =\styleodd] (0,0) -- (210:1) \rarrodd;
\end{scope}
}}
  \end{gather*}
Then, we use the following relationship between trivalent and $4$-valent vertices to define the invariant for $4$-valent graphs.
\begin{gather}
  \label{eq:4v-def-int}
     [2]^{[N-3]} \ev{\NB{\tikz[]{}} } :=  \ev{\NB{\tikz[]{  \begin{scope}[scale = 0.6]
    \coordinate (A) at ( 135:1) ;
    \coordinate (B) at (  45:1) ;
    \coordinate (C) at ( -45:1) ;
    \coordinate (D) at (-135:1) ;
    \coordinate (a) at ( 135:0.6) ;
    \coordinate (b) at (  45:0.6) ;
    \coordinate (c) at ( -45:0.6) ;
    \coordinate (d) at (-135:0.6) ;
    \draw (A) --(a);
    \draw (B) --(b);
    \draw (C) --(c);
    \draw (D) --(d);
    \draw[expand style =\styleeven] (a) -- (b) \arreven;
    \draw[expand style =\styleodd] (b) -- (c) \arrodd;
    \draw[expand style =\styleeven] (c) -- (d) \rarreven;
    \draw[expand style =\styleodd] (d) -- (a) \arrodd;
  \end{scope}

}} }
     =  \ev{\NB{\tikz[]{  \begin{scope}[scale = 0.6]
    \coordinate (A) at ( 135:1) ;
    \coordinate (B) at (  45:1) ;
    \coordinate (C) at ( -45:1) ;
    \coordinate (D) at (-135:1) ;
    \coordinate (a) at ( 135:0.6) ;
    \coordinate (b) at (  45:0.6) ;
    \coordinate (c) at ( -45:0.6) ;
    \coordinate (d) at (-135:0.6) ;
    \draw (A) --(a);
    \draw (B) --(b);
    \draw (C) --(c);
    \draw (D) --(d);
    \draw[expand style =\styleeven] (a) -- (b) \rarreven;
    \draw[expand style =\styleodd] (b) -- (c) \rarrodd;
    \draw[expand style =\styleeven] (c) -- (d) \arreven;
    \draw[expand style =\styleodd] (d) -- (a) \rarrodd;
  \end{scope}

}} }
     =  \ev{\NB{\tikz[]{  \begin{scope}[scale = 0.6, rotate=90]
    \coordinate (A) at ( 135:1) ;
    \coordinate (B) at (  45:1) ;
    \coordinate (C) at ( -45:1) ;
    \coordinate (D) at (-135:1) ;
    \coordinate (a) at ( 135:0.6) ;
    \coordinate (b) at (  45:0.6) ;
    \coordinate (c) at ( -45:0.6) ;
    \coordinate (d) at (-135:0.6) ;
    \draw (A) --(a);
    \draw (B) --(b);
    \draw (C) --(c);
    \draw (D) --(d);
    \draw[expand style =\styleeven] (a) -- (b) \midarrow{coloreven}{90};
    \draw[expand style =\styleodd] (b) -- (c)  \midarrow{colorodd}{90};
    \draw[expand style =\styleeven] (c) -- (d) \midarrow{coloreven}{-90};
    \draw[expand style =\styleodd] (d) -- (a)  \midarrow{colorodd}{90};
  \end{scope}

}} }
     =  \ev{\NB{\tikz[]{  \begin{scope}[scale = 0.6, rotate=90]
    \coordinate (A) at ( 135:1) ;
    \coordinate (B) at (  45:1) ;
    \coordinate (C) at ( -45:1) ;
    \coordinate (D) at (-135:1) ;
    \coordinate (a) at ( 135:0.6) ;
    \coordinate (b) at (  45:0.6) ;
    \coordinate (c) at ( -45:0.6) ;
    \coordinate (d) at (-135:0.6) ;
    \draw (A) --(a);
    \draw (B) --(b);
    \draw (C) --(c);
    \draw (D) --(d);
    \draw[expand style =\styleeven] (a) -- (b) \midarrow{coloreven}{-90};
    \draw[expand style =\styleodd] (b) -- (c)  \midarrow{colorodd}{-90};
    \draw[expand style =\styleeven] (c) -- (d) \midarrow{coloreven}{90};
    \draw[expand style =\styleodd] (d) -- (a)  \midarrow{colorodd}{-90};
  \end{scope}

}} }.
   \end{gather}
The trivalent graphs we consider are inspired by their interpretation as intertwiners between representations of $U_q(\sotN)$, the Drinfeld-Jimbo quantum group associated to $\sotN$, the simple Lie algebra of Dynkin type $D$. 

We define $\mathsf{D}$-webs to be trivalent planar graphs, equipped with a labeling of edges and semi-orientation, see
Definition~\ref{dfn:webs}, and then define $\sotN$-colorings of $\mathsf{D}$-webs. For each $\mathsf{D}$-web $\web$, the set $\col[\sotN]{\web}$ of
$\sotN$-colorings is finite. Using the embedding in the oriented plane
and counting the algebraic number of oriented cycles arising from a
coloring $c$, we can define the degree $\deg(c)$ of $c$. Consequently, we define a state-sum (indexed by colorings) associated with the web $\web$:
\begin{equation}
  \label{eq:37}
  \evN{\web} := \sum_{c \in \col[\sotN]{\web}} q^{\deg(c)}.
\end{equation}
Thus, the invariants of $\mathsf{D}$-webs are $\ZZ_{\ge 0}[q, q^{-1}]$-valued and
defined in purely combinatorial terms. 

We can also extend our evaluation to the 4-valent setting, which implies the following.\begin{maintheo}[Theorem \ref{thm:B}]
  \label{theo:B}
    If $\Gamma$ is a $4$-valent non-oriented planar graph, then $\evN{\Gamma}\in \mathbb{Z}_{\ge 0}[q, q^{-1}]$.
\end{maintheo}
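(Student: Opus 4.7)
The plan is to lift the positivity already established for trivalent D-webs in equation~\eqref{eq:37} through the defining identity~\eqref{eq:4v-def-int}. Let $k$ be the number of $4$-valent vertices of $\Gamma$, and let $\Gamma'$ be any trivalent D-web obtained by resolving each such vertex as one of the squares appearing in~\eqref{eq:4v-def-int}. Iterating that identity gives
\[
\evN{\Gamma} \;=\; \frac{\evN{\Gamma'}}{[2]^{[N-3]\cdot k}},
\]
and since $\evN{\Gamma'}\in\ZZ_{\geq 0}[q,q^{-1}]$, the content of Theorem~\ref{theo:B} is that this a priori rational expression is an element of $\ZZ_{\geq 0}[q,q^{-1}]$.

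The natural route is to produce an intrinsic positive state sum for $\evN{\Gamma}$. I would first extend the notion of $\sotN$-coloring from D-webs to $4$-valent planar graphs, specifying how colors may pass through a $4$-valent vertex, and define a corresponding degree. The target identity is
\[
\evN{\Gamma} \;=\; \sum_{c\,\in\,\col[\sotN]{\Gamma}} q^{\deg(c)},
\]
which would immediately yield both integrality and positivity.

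To establish this identity, I would construct a surjection
\[
\pi\colon\col[\sotN]{\Gamma'}\twoheadrightarrow\col[\sotN]{\Gamma}
\]
by forgetting the coloring of the ``internal'' edge introduced at each $4$-valent vertex. The problem then reduces to verifying, for every $c\in\col[\sotN]{\Gamma}$, the fiberwise identity
\[
\sum_{c'\,\in\,\pi^{-1}(c)} q^{\deg(c')} \;=\; [2]^{[N-3]\cdot k}\cdot q^{\deg(c)}.
\]
Because both the fiber and the degree split as products over the $4$-valent vertices, this further reduces to a purely local statement at a single vertex.

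The main obstacle is precisely this local calculation: with the labels and semi-orientations of the four external edges (together with the chosen resolution) fixed, one must enumerate the admissible colorings of the internal edge, track how each choice alters the collection of oriented cycles entering the degree, and show that the resulting weighted sum equals $[2]^{[N-3]}$. This will require a case analysis over external configurations, with the cycle bookkeeping being the most delicate step. The multiple equalities between resolutions in~\eqref{eq:4v-def-int} offer both a consistency check and the freedom to pick, for each configuration, the resolution that makes the local enumeration most transparent.
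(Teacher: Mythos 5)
Your approach is the paper's: extend colorings and the degree function to planar graphs with $4$-valent (singular) vertices, so that positivity is immediate from the state-sum definition, and then check compatibility with \eqref{eq:4v-def-int} by a local analysis---which is exactly the content of Lemma~\ref{lem:singular-square}. However, the surjection and fiberwise identity as you state them cannot hold. Forgetting the internal edge is not a surjection onto colorings of $\Gamma$ defined by flow data alone: when all four external legs at a singular vertex are colored by the same pigment $a$ (necessarily with alternating orientations), the internal edge of $\web_\square$ can be colored by any even subset $X\subseteq\setN\setminus\{a\}$, and the computation in the proof of Lemma~\ref{lem:singular-square} shows that the weighted sum over these $2^{N-3}$ resolutions is $[2]^{[N-3]}\bigl(q^{\deg(o,c_+)}+q^{\deg(o,c_-)}\bigr)$---a two-term quantity that a single coloring of the $4$-valent vertex cannot account for, so the target identity $\sum_{c'\in\pi^{-1}(c)}q^{\deg(c')}=[2]^{[N-3]}q^{\deg(c)}$ is false in this case. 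This forces an extra combinatorial datum at monochromatic singular vertices. The paper's Definition~\ref{dfn:gweb} attaches an element of $\{+,-\}$ to each such vertex and adds $\#\{-\}-\#\{+\}$ to the degree in \eqref{eq:degree-gcol}, so that two colorings of the singular vertex sit over each monochromatic boundary and absorb the excess. This $\pm$ decoration is the genuinely non-obvious idea missing from your sketch; with it in place, your plan reproduces the paper's argument, including the local case analysis you rightly anticipate as the delicate step.
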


\begin{rmk}
  Theorem \ref{theo:B} does not follow from the definition of evaluation provided by equation \eqref{eq:4v-def-int}: it seems hard to ensure divisibility of $\ev{\web}$ by $\left([2]^{[\myN-3]}\right)^{v}$ for $v$ the number of $4$-valent vertices of $\web$. We are not aware of a proof that uses equation \eqref{eqn:typeDskein} either: it seems hard to ensure positivity of the coefficients. Instead the proof of Thereom \ref{theo:B} relies on an extension of the state sum to webs with $4$-valent vertices.
\end{rmk}

\begin{rmk}
    The centralizer algebra of a tensor power of the $2N$ dimensional representation of $U_q(\sotN)$ has a canonical basis \cite[27.3.10]{MR1227098}. There is no known algebraic characterization of this basis, although an attempt for the BMW algebra\footnote{This algebra depends on two parameters, $r$ and $q$. After specializing $r$ to a power of $q$, there is a surjective but not in general faithful map from the BMW algebra to the centralizer algebra.} appears in \cite{MR1312974}.

    Often canonical bases arise from categorification. One might dream that the positivity in Theorem \ref{theo:B} is a shadow of categorification, which in turn is related to some canonical basis for these centralizer algebras. 
\end{rmk}

To precisely relate our combinatorially defined link invariants to the Reshetikhin--Turaev link invariants associated with $U_q(\sotN)$ (colored by the $2\myN$ dimensional vector representation) we provide explicit formulas for specific intertwiners between tensor products of minuscule $U_q(\sotN)$ representations. In particular for intertwiners corresponding to the trivalent vertices above. After unraveling the general construction of Reshetikhin--Turaev in the case of $U_q(\sotN)$, we conclude with the following theorem.

\begin{maintheo}[Theorem \ref{thm:ev-vs-alg}]
    \label{theo:C}
        For any framed link $L$, the vector representation colored $\sotN$ Reshetikhin-Turaev invariant of $L$ is equal to our combinatorially defined invariant of $L$, after changing $q$ to $-q$.
      \end{maintheo}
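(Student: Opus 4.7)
The plan is to reduce the comparison to a local, vertex-by-vertex identification of the two invariants, using the explicit $U_q(\sotN)$-intertwiner formulas computed earlier in the paper. Since both $\evN{\cdot}$ and the RT functor are monoidal and compatible with gluing, it suffices to match them on a generating set of elementary pieces: the cup, the cap, the crossing, and (after extension to trivalent graphs via \eqref{eq:4v-def-int}) the trivalent $\mathsf{D}$-web vertices.

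First, I would assign to each elementary $\mathsf{D}$-web piece the specific $U_q(\sotN)$-intertwiner produced in Section~\ref{sec:intertwiners} and write it in the weight basis of the vector representation $V$ and its relevant tensor summands. The non-zero matrix entries are parametrized by local data that patches together, for a closed planar web $\web$, exactly into the $\sotN$-colorings in $\col[\sotN]{\web}$. The $q$-powers coming from the ribbon element and from the diagonal entries of the $R$-matrix along each coloring, after the substitution $q \mapsto -q$, should assemble cycle-by-cycle to give the degree $\deg(c)$ appearing in \eqref{eq:37}. This matches RT and state-sum evaluations for closed $\mathsf{D}$-webs, and the extension to $4$-valent graphs follows by the normalization \eqref{eq:4v-def-int}, which on the algebraic side reflects that the singular vertex is the projector onto a canonical summand of $V \otimes V$.

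To pass from graphs to links, I would invoke the decomposition \eqref{eqn:typeDskein}. This relation mirrors the BMW-type cubic resolution of the $R$-matrix on $V \otimes V$: the three spectral projectors are the identity, the singular vertex projector, and the cup-cap projector, and their eigenvalues for the braiding of $V$ with itself are, under $q \mapsto -q$, precisely the coefficients $-q$, $1$, and $-q^{-1}$ appearing in \eqref{eqn:typeDskein}. Substituting this decomposition into the RT definition reproduces \eqref{eqn:typeDskein}; combined with the web-level matching, this proves the theorem, with the curl relations of Theorem~\ref{theo:A} falling out of the ribbon twist on $V$ as a final consistency check.

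The main obstacle will be the sign bookkeeping that forces the global substitution $q \mapsto -q$. The vector representation of $U_q(\sotN)$ is self-dual, but the canonical evaluation pairing, the pivotal structure, the ribbon twist, and the off-diagonal and diagonal entries of the $R$-matrix each contribute signs that must conspire, under a single uniform substitution, to match the unsigned positive state sum. Controlling these signs coherently across cups, caps, trivalent vertices, and crossings requires carefully chosen bases and normalizations in the intertwiner formulas of Section~\ref{sec:intertwiners}; once those normalizations are fixed, the verification reduces to a finite list of matrix computations on $V \otimes V$ and $V \otimes V \otimes V$.
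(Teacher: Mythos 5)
Your approach is genuinely different from the paper's, and the crucial step is left as an assertion where the paper deliberately avoids it. The paper's proof of Theorem~\ref{thm:ev-vs-alg} is a short uniqueness argument: both $\RT{-}$ and $\left(\evN{\cdot}\right)_{q\mapsto -q}$ satisfy the Kauffman-polynomial-type skein relations (quadratic skein, unknot value, two curl relations), and these relations \emph{uniquely} determine a framed link invariant by Kauffman's theorem \cite{MR958895}. Thus the paper never needs to match the two evaluations on individual pieces; it only has to verify a finite list of skein relations on each side (equations \eqref{eq:skein-diag}, \eqref{eq:unknot-v-diag}, \eqref{eq:R1s-diag} for RT, and \eqref{eq:skein-ev-link}, \eqref{eq:17}, \eqref{eq:R1s} for the state sum).

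Your plan instead tries to match the two theories locally: interpret each $\sotN$-coloring as a choice of weight-basis summands propagating through the intertwiner matrices, and show that the $q$-powers ``assemble cycle-by-cycle'' to give $\deg(c)$. That is the harder claim, and the sentence where you say it ``should assemble'' is precisely where the real work lies --- you would have to prove that the rotational numbers of bicolored cycles, which define $\deg(c)$, coincide with the $q$-exponents produced by the explicit cup, cap, trivalent-vertex, ribbon, and pivotal data of Section~\ref{sec:intertwiners}, uniformly under $q\mapsto -q$. The authors explicitly flag this as open: the final remark of Section~\ref{sec:intertwiners} states that they ``did not prove that there is a re-normalization of the algebraic evaluation of webs which matches the combinatorial one'' and only ``believe'' such a renormalization exists. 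A related structural obstacle is that $\evN{\cdot}$ is defined only on \emph{closed} webs; it is not a priori a functor on a category of tangled webs with boundary, so the monoidal/gluing reduction you invoke at the outset is not available without first constructing such a functor. If carried out, your approach would prove a strictly stronger statement than Theorem~\ref{theo:C}, which is appealing, but as written it assumes the outcome of the hardest computation and circumvents neither of the two issues the paper's route was designed to avoid.
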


Strikingly, one must change
$q$ to $-q$ in order to precisely go from one invariant to the other. In particular, the positivity, which is essential for categorification, is not
reached directly when using the more familiar $R$-matrix framework.

\begin{rmk}
    In this paper, we only define a combinatorial link invariant when the color of the link corresponds to the vector representation of $U_q(\sotN)$. There is a Reshetikhin--Turaev invariant of links with components colored by arbitrary irreducible representations of $U_q(\sotN)$, in particular the half-spin
    representations. For the present work half-spin
    representations are necessary for our combinatorial evaluation of webs, but we do not provide a combinatorial definition of a link invariant colored by half-spin representations.
\end{rmk}

\subsection{Results and Organization}
\label{sec:results}

The main contributions of this paper are the definition of colorings of
webs and of the degree of colorings. These definitions and some
examples of computation are given in Section~\ref{sec:webs}.

In Section~\ref{sec:global-relations}, we state and prove
\emph{global} relations satisfied by $\evN{\cdot}$. We consider
three flavors of global relations: the first globale relations say that
$\evN{\cdot}$ is invariant under certain global modification of
webs, such as changing all the orientations at once
(Corollary~\ref{cor:ev-all-equal}). The second global relations explain how
$\evN{\cdot}$ relates to
$\evN[N-1]{\cdot}$ (Proposition~\ref{prop:BR-typeD}). The third global relations relate $\evN{\cdot}$ and
$\ev{\cdot}_{\glN}$. Philosophically at least, the latter two global relations should reflect the branching rules from $\sotN$ to
$\sotN[2\myN-2]$ and from $\sotN$ to $\slN$ (Proposition~\ref{prop:BR-typeA}).

In Section~\ref{sec:skein}, we start by extending the
notion of webs to include a new kind of vertex (called a
\emph{singular} vertex). As shown in Lemma~\ref{lem:singular-square},
this new kind of vertex can be thought of as a shortcut for an actual
web (divided by a Laurent polynomial). Then we establish various local
``skein" relations satisfied by $\evN{\cdot}$.

Section~\ref{sec:linkinvariants} starts with a generalization of
$\evN{\cdot}$ to (some) knotted web diagrams, by expressing crossings in the knot diagram as a linear combinations of planar diagrams. In particular
this gives a definition of $\evN{\cdot}$ on link
diagrams. Theorem~\ref{thm:restated-thm-A} states that this is an
invariant of unoriented framed links. The rest of the section is
devoted to the proof of that statement.

The paper ends with Section~\ref{sec:intertwiners} which gives details
about $U_q(\sotN)$. We provide explicit
formulas for some morphisms in the category of finite dimensional
$U_q(\sotN)$-modules. Finally we relate the link invariant provided by
the pivotal and braided structure of this category to the invariant
$\evN{\cdot}$ defined in Section~\ref{sec:linkinvariants}.

\subsection*{Acknowledgments}
\label{sec:acknoledgements}
We wish to thank William Ballinger for enlignening
discussion. EB was partially supported by the NSF MSPRF-2202897. LHR
and EW are not supported by any project related funding.

\section{Webs}
\label{sec:webs}
In this paper we will consider some trivalent graphs with additional
combinatorial data which are inspired by the representation theory of
the quantum group $U_q(\so_{2\myN})$.  These graph are called
\emph{$\mathsf{D}$-webs} (or simply \emph{webs}).  In
Section~\ref{sec:webs} and \ref{sec:global-relations}, $\myN$ is an
integer greater than or equal to $1$, in the remaining sections, we
assume $\myN\geq 3$. The definitions provided in
Section~\ref{sec:webs} and \ref{sec:global-relations} make sense for
all positive $\myN$. Some technical points in the remaining sections
(especially Definition~\ref{dfn:gweb} and all constructions that
follows) are easier to phrase in the case $\myN\geq 3$. However, we
believe that the rest of the paper can be adapted for $\myN=1,2$.

\begin{conv}\label{conv:conv}
The symbol $\setN$ denotes the ordered set
  $\{1, \dots, \myN\}$ and $q$ is a formal variable. The set of
  non-negative integers is denoted by $\NN$ and for $k \in \NN$, we set
  $[k]=\frac{q^{k} - q^{-k}}{q-q^{-1}}$, and
  $[2]^{[k]} = \prod_{i=1}^k(q^i+q^{-i})$.
\end{conv}

\begin{dfn}\label{dfn:webs}
  A \emph{type $\mathsf{D}$ web} (or simply \emph{web})  is a trivalent semi-oriented planar
  graph $\web$ with three types of edges:
  \begin{itemize}
  \item \emph{vectorial} unoriented edges, depicted
    $\vcenter{\tikz{\draw[expand style =\stylevect] (0,0) -- (1,0);}}$,
  \item \emph{spin-even} oriented edges, depicted
    $\vcenter{\tikz{\draw[expand style =\styleeven] (0,0) --
        (1,0) \arreven;}}$,
  \item and \emph{spin-odd} oriented edges, depicted
    $\vcenter{\tikz{\draw[expand style =\styleodd] (0,0) -- (1,0)
        \arrodd;}}$.
  \end{itemize}
  The vertices of $\web$ are required to follow one of the four local
  models:
  \begin{equation}\label{eq:local-models-webs}
    \NB{\tikz[scale=0.6]{}} \qquad\qquad
    \NB{\tikz[scale=0.6]{}} \qquad\qquad
    \NB{\tikz[scale=0.6]{}} \qquad\qquad
    \NB{\tikz[scale=0.6]{}}
  \end{equation}
  Vertex-less loops are allowed and we do not
  require $\web$ to be connected. A \emph{spin} edge is an edge which is either spin-odd or spin-even. The \emph{spin part} of a web $\web$ is the subgraph of $\web$ obtained by forgetting the vectorial edges. It is a disjoint union of edge-bicolored oriented cycles. It is denoted $\webspin$.
\end{dfn}
Note that when one forgets about embedding and orientations of the web
in the plane, the four local models of vertices are identical.

\begin{dfn}\label{dfn:dual-web}
  If $\web$ is a web, the \emph{dual of $\web$} is the web
  $\web^\star$ obtained by reversing the orientations of all spin
  edges and, if $\myN$ is odd, by changing all spin-even edges for
  spin-odd edges and vice-versa. The \emph{opposite of $\web$} is the
  web $\web'$ obtained by reversing the orientation of all spin edges
  (regardless of the parity of $\myN$).
\end{dfn}

\begin{dfn}
  Given a web $\web$, an \emph{$\sotN$-coloring} (or simply coloring)
  of $\web$ is a pair $(o, c)$ with $o$ an orientation of the
  vectorial edges of $\web$ and
  $c:E(\Gamma) \to \mathcal{P}\left({\setN}\right)$ such that:
  \begin{itemize}
  \item if $e$ is a vectorial edge of $\web$, $\#c(e) = 1$,
  \item if $e$ is a spin-even edge of $\web$, $\#c(e)$ is even,
  \item if $e$ is a spin-odd edge of $\web$, $\#c(e)$ is odd,
  \item at each vertex a flow condition (taking into account $o$) is satisfied, namely
    (disregarding the embedding in the plane) the coloring follow one
    of the four local models:
    \[
      \NB{\tikz[]{\input{\imagesfolder/td-webs-local-col}}}
    \]
    with $a$ an element of $\setN$ and $X$ and $Y$ subsets of $\setN$
    of respectively even and odd cardinality. 
    
    The elements of $\setN$ are \emph{pigments} and for an edge $e$, the set
    $c(e)$ is called the \emph{color of $e$}, so that a color is a set of pigments. The set of colorings of a
    web $\web$ is denoted $\col[\myN]{\web}$. 
  \end{itemize}
\end{dfn}

\begin{figure}[ht]
  \centering
  \NB{\tikz[scale=2.2]{\input{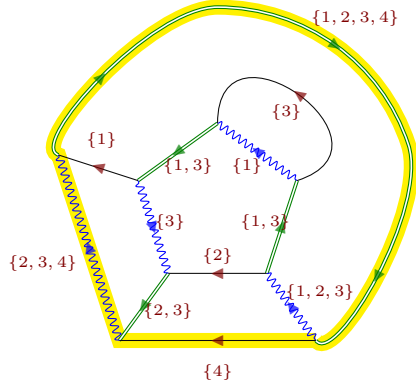}}}
  \caption[Example of a colored web.]{Example of a web endowed with a
    $\sotN$-coloring (for $N\geq 4$). For more readibility, in this
    picture, and only in this one, the coloring (including the
    orientations of vectorial edges) is indicated in wine. The
    $4$-colored cycle is depiced in yellow (see Definition~\ref{dfn:col-cyc}).}
  \label{fig:example-col-web}
\end{figure}

Now that we have colorings, which might be thought of as
\emph{states}, we need to define the degree of such a coloring, so
that we can define the state sum. To this end, we need to consider
various cycles in the webs.

\begin{dfn}\label{dfn:col-cyc}
  Let $\web$ be a web, $(o, c)$ a coloring of $\web$, and $a \in \setN$ a pigment. The \emph{$a$-colored cycle of $(\web, (o,c))$} is
  the union of oriented edges containing $a$ in their color. 
\end{dfn}

After a quick inspection of the local models of colorings, one obtains:
\begin{lem}
  For any colored web $(\web, (o,c))$ and any pigment $a \in \setN$, the
  $a$-colored cycle of $(\web, (o,c))$ is a collection of oriented
  simple closed cycles of the graph underlying $\web$. 
\end{lem}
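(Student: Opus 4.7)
The plan is to reduce the statement to a local check at every vertex of $\web$. More precisely, to show that the $a$-colored cycle is a disjoint union of oriented simple closed cycles of the underlying graph, it suffices to verify that at each vertex $v$ of $\web$: (i)~the number of edges incident to $v$ whose color contains $a$ is even (and hence $0$ or $2$, since the vertex has degree~$3$); and (ii)~in the case where exactly two $a$-carrying edges meet at $v$, the induced orientations (spin orientations on spin edges, together with $o$ on vectorial edges) present one such edge as incoming to $v$ and the other as outgoing. Once these two points are established, the $a$-colored subgraph has every vertex of total degree $0$ or $2$ with matching in/out flow, so each of its connected components is an oriented simple closed cycle.

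To prove (i) and (ii), I would go through each of the four local models listed in~\eqref{eq:local-models-webs}. In every such model, the three incident colors have the form $\{a\}$, $X$ with $|X|$ even, and $Y$ with $|Y|$ odd, satisfying the prescribed ``flow'' relation. The essential observation is that this relation is a symmetric-difference-type identity among subsets of $\setN$, so the indicator function ``contains the pigment $a$'' is itself conserved: the number of edges at the vertex whose color contains $a$ is automatically even. Thus either no edge at $v$ carries $a$ (and $v$ is invisible to the $a$-colored cycle) or exactly two edges carry $a$ (and $v$ lies on that cycle with local valence~$2$).

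For point (ii), I would inspect each of the four models case by case, splitting further according to which two of the three incident edges contain $a$ (for example, a vectorial edge and a spin-even edge, or two spin edges of different parities). In each subcase, the local flow condition that defines the model — together with the way $o$ orients the vectorial edge at the vertex — forces the two $a$-carrying edges to be oriented consistently through $v$: one incoming, one outgoing. This is the same bookkeeping that justifies calling the datum a ``flow condition'' in the first place.

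The main obstacle is really just a short and systematic case enumeration across the four vertex models and the possible subsets of $\{a\text{-carrying edges}\}$ at a vertex; there is no conceptual difficulty, which is why the authors state that the conclusion follows from ``a quick inspection of the local models.'' The only mild subtlety is to check that the orientation $o$ of the vectorial edges, which is not fixed a~priori but is part of the coloring data, interacts correctly with the fixed spin orientations — but this is precisely what the flow condition in the definition of an $\sotN$-coloring was designed to guarantee.
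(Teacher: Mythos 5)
Your proposal is correct and fleshes out exactly the ``quick inspection of the local models'' that the paper itself invokes without writing down: the paper gives no explicit proof here. Your reduction to the two local facts --- the $a$-carrying degree at each vertex is even (hence $0$ or $2$), and when it is $2$ the orientations match one-in/one-out because of the flow condition --- is precisely the content of that inspection.
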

\begin{dfn} \label{dfn:dual-col}
  Let $(\web, (o,c))$ be a colored web. The \emph{dual of $(o,c)$} is
  the coloring $(o^\star, c^\star)$ of the web $\web^\star$ obtained by  replacing the color of spin edges by their complement in
  $\setN$ while keeping the color and orientation of vectorial edges the same.

  The \emph{opposite of $(o,c)$} is the coloring $(o', c')$ of the web $\web'$
  obtained by keeping the color of spin edges the same while keeping the color and reversing the orientation of the vectorial edges.
\end{dfn}

Once more, a quick inspection of local models gives:
\begin{lem}
  The operation
  $\col[\myN]{\web}\ni(o,c)) \mapsto  (o^\star, c^\star) \in
  \col[\myN]{\web^\star}$  and $\col[\myN]{\web}\ni (o,c) \mapsto (o',
  c') \in \col[\myN]{\Gamma'}$  are 
  well-defined, i.e.{} $(o^\star, c^\star)$ and $(o', c')$ are indeed
  a coloring of   $\web^\star$ and  $\web'$ respectively. 
\end{lem}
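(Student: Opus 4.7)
The plan is to verify, in each case, the two defining properties of an $\sotN$-coloring at each edge and each vertex of the new web: the parity constraint on the cardinality of the color of a spin edge, and the local flow condition at each vertex. Both checks are strictly local, so it suffices to work edge-by-edge and vertex-by-vertex, going through the four local models.

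For the dual $(o^\star, c^\star)$, on a spin edge $e$ the color is replaced by its complement in $\setN$, giving $\#c^\star(e) = \myN - \#c(e)$. When $\myN$ is even, this preserves parity, matching the fact that $\web^\star$ leaves the spin-even/spin-odd labeling of edges unchanged. When $\myN$ is odd, complementation flips parity, matching the swap of spin types in the definition of $\web^\star$. For the flow condition at a vertex, I would observe that in each local model the coloring constraint can be expressed as a set-theoretic equation of the form $X \triangle Y = \{a\}$, where $X$, $Y$ and $a$ denote the spin-even, spin-odd and vectorial colors respectively. Since complementation in $\setN$ satisfies $X^c \triangle Y^c = X \triangle Y$, and since the vectorial color $a$ is not complemented, this equation is preserved. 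The accompanying reversal of spin edge orientations in $\web^\star$ permutes the four local models in a way that exactly accommodates the exchange of containments $X \supset Y$ versus $X^c \subset Y^c$ induced by complementation, so each colored vertex again fits a valid local model.

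For the opposite $(o', c')$, the colors on all edges are unchanged, so the cardinality conditions are automatic. The set equation $X \triangle Y = \{a\}$ at each vertex is the same as in $(o,c)$ and does not refer to the orientations, so it still holds. The simultaneous reversal of the spin edge orientations (from the definition of $\web'$) and of the vectorial edge orientations (from the definition of $o'$) permutes the four local models among themselves, and each vertex in the resulting colored web again satisfies one of the allowed models.

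The only care needed is the bookkeeping through the case analysis across the four local models, and, for the dual, the parity-of-$\myN$ swap between spin-even and spin-odd edges. There is no substantial obstacle: the mathematical content reduces to the elementary identity that complementation in $\setN$ preserves symmetric differences.
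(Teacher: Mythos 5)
Your proposal is correct and takes the same approach as the paper, which gives no written argument beyond the phrase ``a quick inspection of local models'' preceding the lemma. You flesh out that inspection, correctly isolating the two things to check (parity of spin colors and the vertex flow condition), observing that the flow condition at a vertex reduces to the set equation $X \triangle Y = \{a\}$ for the spin-even, spin-odd, and vectorial colors, and noting that this is preserved by complementation since $X^c \triangle Y^c = X \triangle Y$. The remaining bookkeeping — that reversing spin-edge orientations (together with keeping the vectorial orientation for the dual, or reversing it for the opposite) sends each valid oriented local model to another valid one, and that complementation flips the parity of spin colors precisely when $\myN$ is odd, matching the swap of spin-even and spin-odd in $\web^\star$ — is exactly the ``quick inspection'' the paper invokes, and you handle it correctly.
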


Note that the underlying unoriented graphs of $\web^\star$ (resp. $\web'$) is the same as that of
$\web$, so that a cycle in $\web^\star$ (resp. $\web'$) can be seen as a cycle in $\web$.

\begin{lem}
  If $N\geq 2$, then any webs admit a $\sotN$-coloring.
\end{lem}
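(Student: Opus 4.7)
My plan is to construct a coloring of $\web$ explicitly, using the structure of $\webspin$ as a disjoint union of oriented edge-bicolored cycles.

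First, assign pigment $1$ to every vectorial edge. On every spin cycle $C$, we have two natural colorings compatible with this pigment choice, both satisfying $X \triangle Y = \{1\}$ at every vertex of $C$: the \emph{odd} coloring gives every spin-even edge of $C$ the color $\emptyset$ and every spin-odd edge the color $\{1\}$; the \emph{even} coloring gives every spin-even edge the color $\{1,2\}$ and every spin-odd edge the color $\{2\}$, using the assumption $\myN \geq 2$. At each vertex $v \in C$, the flow of pigment $1$ forces the vectorial edge at $v$ to be oriented opposite to whichever spin edge carries pigment $1$. Since the two spin edges meeting at $v$ necessarily have opposite orientations (the cycle being oriented), the \emph{odd} option forces the vectorial edge to be oriented opposite to the spin-odd edge at $v$, while the \emph{even} option forces it to the same direction as the spin-odd edge. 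Thus toggling the option at $C$ simultaneously reverses the forced orientation of every vectorial edge attached to $C$.

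Requiring that the forced orientations at the two endpoints of each vectorial edge agree yields a $\mathbb{Z}/2$-valued system of constraints on the choice of option at each spin cycle; equivalently, a $2$-coloring problem on a signed graph whose vertices are the spin cycles of $\web$ and whose edges record the vectorial edges, with signs determined by the relative values of $\sigma_{\mathrm{odd}}$ at the two endpoints within their respective spin cycles. Existence of a coloring reduces to the solvability of this system.

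The main technical step is to verify that the system is always solvable. When the two options above are insufficient, additional flexibility comes from using $\myN \geq 2$ in a further way: change the pigment of an individual vectorial edge from $1$ to $2$ and adjust the colors along the spin cycles meeting that edge correspondingly (this changes the case of the flow condition at the two endpoints and thereby locally toggles the orientation constraint on that one vectorial edge). The hard part will be arranging these choices globally in a consistent way, which I expect to handle by a parity argument: one tracks the product of signs along any cycle in the signed graph in terms of $\sigma_{\mathrm{odd}}$ at the relevant vertices and uses the alternating pattern of $\sigma_{\mathrm{odd}}$ around each spin cycle, together with the planar embedding of $\web$, to show the product is always $+1$.
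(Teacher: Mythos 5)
There is a genuine gap. Your proposal reduces the problem to a $\mathbb{Z}/2$-constraint system on the choice of ``odd'' vs.\ ``even'' option at each spin cycle, and you correctly observe that toggling an option at a spin cycle $C$ simultaneously reverses the forced orientation of every vectorial edge attached to $C$. But the step you call ``the hard part'' --- showing the resulting system is always solvable, possibly after recoloring some vectorial edges with pigment $2$ --- is precisely the content of the lemma and you do not carry it out; you only say you ``expect to handle it by a parity argument.'' This expectation is not obviously justified: take one spin cycle of length $4$ with vertices $v_1,v_2,v_3,v_4$ and two vectorial chords $e_1=(v_1,v_3)$, $e_2=(v_2,v_4)$ (one drawn inside, one outside). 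Here both pairs of endpoints are at even distance along the cycle, so the forced orientations at the two ends of each $e_i$ disagree under \emph{both} the odd and the even option. Your pigment-$2$ escape hatch is therefore genuinely required, but once you recolor one chord by $\{2\}$ the colors on the spin edges of $C$ are no longer those of either of your two fixed options, so the simple ``signed graph on spin cycles'' picture no longer applies directly; you would have to set up and analyse a larger, coupled system, and you give no argument for its solvability. In short, the reduction to a $\mathbb{Z}/2$-system is fine, but the solvability proof is absent.

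For comparison, the paper's proof avoids any linear-algebra solvability question. It first reduces to the $\sotN[4]$ case, then uses the freedom to reverse orientations of spin cycles (via the duality of Definition~\ref{dfn:dual-web} applied cycle-by-cycle) to arrange that spin cycles bound the $1$-colored domains of a checkerboard coloring of $\RR^2\setminus\webspin$. It then $2$-colors the \emph{sectors} cut out by vectorial edges inside each domain, subject to a same/different constraint across even/odd spin edges, and shows by an inductive sweep from the outer domain that exactly two such sector-colorings exist. The $\sotN[4]$-coloring is then read off directly from domain colors and sector colors. That argument is constructive and uses the planar embedding (nesting of domains, sectors meeting along spin edges) in an essential way, rather than asserting a balance condition for an abstract signed graph. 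If you want to salvage your approach, the crucial missing lemma is a proof that your constraint system --- \emph{including} the pigment-$2$ moves --- is always consistent, and the example above shows you cannot skip the pigment-$2$ moves; the paper's domain/sector structure is in effect a way of organizing exactly those choices.
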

\begin{rmk}
  One can easily check that the  web \NB{\tikz[scale=0.8]{\input{\imagesfolder/td-ce-coloring}}}  does not admit an $\sotN[2]$-coloring.
  
\end{rmk}

\begin{proof}
  Let $\web$ be a web. It is enough to prove that it admits an $\sotN[4]$ coloring, since any $\sotN$-coloring can be viewed as an $\sotN[2M]$ coloring for $M\geq N$.

  First note that changing the orientations of a spin cycle does not
  change the $\sotN[4]$-colorability, since the first operation
described in Definition~\ref{dfn:dual-col} (and in Definition~\ref{dfn:dual-web}) can be actually performed on each spin cycle independently.

  Color the connected components of $\RR^2\setminus \webspin$ by
  $\{1,2\}$ such that adjacent regions are colored differently. We
  momentarily call these regions \emph{domains}. We may suppose that
  spin cycles are oriented as the boundary of the $1$-colored domains
  (whose orientations are induced by that of $\RR^2$).

  Each domain is partitioned by vectorial edges. We momentarily call these smaller regions \emph{sectors}. We will color sectors $\{\emptyset, \bullet\}$ so that within a given domain $\{\emptyset, \bullet\}$ such that adjacent sectors (in the same domain) are colored differently. There are $2^{\#\textrm{domains}}$ such colorings.
  Starting with the outer domain one can inductively choose a coloring of sectors such that if two sectors share a spin-edge in their boundaries (and hence are in different domain), then:
  \begin{itemize}
  \item If this spin-edge is even, then the two sectors have the same color.
  \item If this spin-edge is odd, then the two sectors have different colors.
  \end{itemize}
There are exactly two such colorings, which corresponds to the two possibilities to color the outer domain.
  
  Now orient vectorial edges as boundary of $\bullet$-colored sectors for those which are in $1$-colored domain and color them by $1$. Orient them as boundary of $\emptyset$-colored sectors for those which are in $2$-colored sectors and color them by $2$. This can be extended to a $\sotN[4]$ coloring as follows: each spin edge is at the interface of two sectors, one in a domain colored by $1$ the other the other one in a domain colored by $2$. Color spin edges by the subset of $\{1,2\}$ which described which adjacent sector is colored by $\{\bullet\}$.

  A local inspection of the situation at vertices shows  that this indeed give a $\sotN[4]$-coloring of $\web$.
  \end{proof}

\begin{dfn}
  Let $\web$ be a web, $(o, c)$ a coloring of $\web$ and $a \in \setN$
  a pigment. The \emph{$a^\star$-colored cycle of $(\web, (o,c))$}
  (resp.{} \emph{$a'$-colored cycle of $(\web', (o',c')$}). 
  is the $a$-colored cycle of $(\web^\star, (o^\star,c^\star))$
  (resp.{} $(\web', (o',c'))$).
\end{dfn}

\begin{rmk}
  Note in particular, on spin edges the orientations of the
  $a^\star$-colored  and $a'$-colored cycles of $(\web, (o,c))$ are opposite to what is  prescribed by $\web$. 
\end{rmk}

Recall that the symmetric difference of two sets $X$ and $Y$, denoted by
$X\Delta Y$, is defined to be $(A\setminus B) \cup (B\setminus A)$, which is the same as $(A\cup B) \setminus (A\cap B)$, i.e. the union without the intersection.

\begin{dfn}
  Let $(\web,(o,c))$ be a colored web and let $a < b$ be two distinct pigments in $\setN$.
  \begin{enumerate}
  \item The \emph{$a'b$-bicolored cycle} is the symmetric difference
    of the $b$-colored cycle of $(\web, (o,c))$ and the $a'$-colored
    cycle of $(\web, (o,c))$. It is denoted by
    $\mymacroone{a}{b}(\web, (o,c))$. (or simply by
    $\mymacroone{a}{b}$ when $\web$ and $(o,c)$ are clear from the
    context).\item The \emph{$a^\star b$-bicolored cycle} is the symmetric difference of
  the $a^\star$-colored cycle (with opposite orientation) of $(\web, (o,c))$ and the $b$-colored cycle
  of $(\web, (o,c))$. It is denoted by $\mymacrotwo{a}{ b}(\web,
  (o,c))$ (or simply by $\mymacrotwo{a}{b}$ when $\web$ and $(o,c)$
  are clear from the context).
\end{enumerate}
If $a>b$, the
\emph{$a'b$-bicolored} cycle is defined to be the $b'a$-bicolored
cycle. Similarly the \emph{$a^\star b$-bicolored} cycle is defined to
be the $b^\star{}a$-bicolored cycle.

We say $a'b$-bicolored cycles are \emph{of the first type}, and
$a^\star b$-bicolored cycles are \emph{of the second type}.  These
notions are illustrated on Figure~\ref{fig:example-bicolored}
\end{dfn}

\begin{rmk}
  Note that $\mymacrotwo{a}{b}(\web, (o,c))$ is more symmetric in
  $a$ and $b$ than $\mymacroone{a}{ b}(\web, (o,c))$: For defining $\mymacrotwo{a}{ b}(\web, (o,c))$
knowing the $a<b$ is not necessary.   \end{rmk}

\begin{figure}[ht]
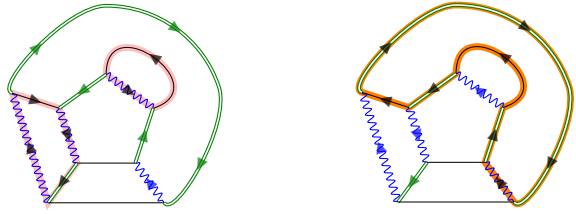

  \centering
  \NB{\tikz[scale=1.3]{\input{\imagesfolder/td-example-1}}}\qquad  \qquad   \NB{\tikz[scale=1.3]{\input{\imagesfolder/td-example-2}}}
  \caption[Example of a bicolored cycles.]{The $1'3$-bicolored (in
    pink, on the
    left) and $1^\star3$-bicolored (in orange, on the right) cycles for the web and the
    coloring given in Figure~\ref{fig:example-col-web}.}
  \label{fig:example-bicolored}
\end{figure}

\begin{dfn}
  The \emph{rotational} of a collection of oriented cycles $\mathcal{C} \subset \mathbb{R}^2$, is
  the number of positively oriented (i.e.{} counter-clockwise) simple cycles in $\mathcal{C}$ minus the number of
  negatively oriented (i.e.{} clockwise) simple cycles. It is denoted $\rotational{\mathcal{C}}$.

  Given a colored web $(\web, (o,c))$, the \emph{degree} of $(o,c)$ is
  given by the following formula:
  \begin{equation}\label{eq:def-degree}
    \deg(o,c) = \sum_{1\leq a < b \leq \myN} \bigg(\rotational{\mymacroone{a}{b}(\web,(o,c))} + \rotational{\mymacrotwo{a}{ b}(\web, (o,c))} \bigg).
  \end{equation}

  The \emph{evaluation} of a web $\web$ is the element of $\NN[q, q^{-1}]$ given by the following formula:
  \begin{equation}
    \label{eq:1}
    \evN{\web} = \sum_{(o,c) \in \col{\web}} q^{\deg(o,c)}. 
  \end{equation}
  In most places, we will make the dependency on $\myN$ implicit and
  simply write $\ev{\web}$ for $\evN{\web}$ (we will keep the
  dependency in Section~\ref{sec:changing-myn}, since we'll relate
  $\evN{\cdot}$ and $\evN[\myN-1]{\cdot}$).
\end{dfn}
\begin{exa}\label{ex:vect}  Let $\web$ be the web consisting of one vectorial circle. A coloring
  of $\web$ consists of an orientation $o$ of the circle and a
  one-element subset $\{a\}$ of $\setN$. Let $b\neq d$ be two pigments in $\setN\setminus\{a\}$. In Table~\ref{table:deg-bic-vectorial-circle}, we gather the degree of the bicolored cycles of $\web$ colored with $(o,\{a\})$. 
  \begin{table}[h!]
    \centering
    \begin{tabular}{|c|c|c|c|c|}
      \hline
                                       & $o=\circlearrowleft, a<b$ & $o=\circlearrowleft, a>b$ & $o=\circlearrowright, a<b$ & $o=\circlearrowright, a>b$ \\ \hline
      $\rotational{\mymacroone{a}{b}}$            & $-1$                        & $+1$                        & $+1$                         & $-1$                         \\ \hline
      $\rotational{\mymacrotwo{a}{b}}$ & $+1$                        & $+1$                        & $-1$                         & $-1$                         \\ \hline
      $\rotational{\mymacroone{b}{d}}$            & 0                         & 0                         & 0                          & 0                        \\ \hline
      $\rotational{\mymacrotwo{b}{d}}$ & 0                         & 0                         & 0                          & 0                   \\ \hline
    \end{tabular}
    \caption{Degrees of bicolored cycles on the vectorial circle web colored with $(o, \{a\})$.}\label{table:deg-bic-vectorial-circle}
  \end{table}
  From Table~\ref{table:deg-bic-vectorial-circle}, it follows that:\[
    \deg((\circlearrowleft, \{a\}))= 2a-2 \qquad \textrm{and} \qquad
    \deg((\circlearrowright, \{a\}))= -2a+2. 
  \]
  So that $\ev{\web}= [N] (q^{N-1} + q^{1-N}) =[2N-1] + 1$.
\end{exa}
\begin{exa}
\label{ex:even} Let us now inspect the case where $\web$ is a
  positively oriented even-spin circle. Colorings of $\Gamma$ are in
  canonical one-to-one correspondence with even\footnote{An
    \emph{even} (resp.{} \emph{odd}) set, is a set with even
    (resp. odd) cardinality.} subsets of $\setN$.
  Let us denote by
  $\myor$ the orientation given to $\Gamma$: the orientation part
  of each coloring is always equal to $\myor$.
Let $a<b$ be two
  distinct pigments in $\setN$. We gather the degree of bicolored
  cycles of $\web$ colored by $X\subseteq \setN$ in Table
  \ref{table:deg-bic-circle}.

  \begin{table}[h!]
    \centering
    \begin{tabular}{|c|c|c|c|c|}
      \hline
                                       & $a\notin X, b\notin X $ & $a\in X, b \notin X$ & $a\notin X, b \in X$ & $a\in X, b\in X$ \\ \hline
      $\rotational{\mymacroone{a}{ b}}$            & $0$                     & $-1$                 & $+1$                 & $0$              \\ \hline
      $\rotational{\mymacrotwo{a}{ b}}$ & $-1$                    & $0$                  & $0$                  & $+1$             \\ \hline
    \end{tabular}
    \caption{Degrees of bicolored cycles on the positively oriented even-spin circle web colored with $(o, X)$.}\label{table:deg-bic-circle}
  \end{table}
  Now, since $a<b$, we have:
  \begin{equation}
\rotational{\mymacroone{a}{ b}} + \rotational{\mymacrotwo{a}{b}} =\begin{cases} +1 & \textrm{if $b \in X$,} \\ -1 &\textrm{if $b\notin X$.}\end{cases}
    \end{equation}
    so that
    \begin{equation}
      \label{eq:2}
      \sum_{a=1}^{b-1}(\rotational{\mymacroone{a}{ b}} + \rotational{\mymacrotwo{a}{b}}) = \begin{cases} b-1 & \textrm{if $b \in X$,} \\ 1-b &\textrm{if $b\notin X$.}\end{cases}
    \end{equation}
    From equation \eqref{eq:def-degree}, we have
    \begin{equation}
        \deg(\myor, X) = \sum_{b=1}^N\sum_{a=1}^{b-1}(\rotational{\mymacroone{a}{ b}} + \rotational{\mymacrotwo{a}{b}}) =\sum_{\substack{1\le b\le N \\ b\in X}}(b-1) + \sum_{\substack{1\le b\le N \\ b\notin X}} (1-b).
      \end{equation}
    Since we know the parity of $X$, we can completely describe $X$ by its intersection with $\{2,\dots, \myN \}$, hence we have:
  \begin{equation}
    \ev{\Gamma} = \prod_{b=2}^\myN (q^{b-1}+ q^{1-b}) = [2]^{[N-1]}.
  \end{equation}
  Here, as decided in Convention~\ref{conv:conv}, $[2]^{[N-1]} := \prod_{i=1}^{N-1}(q^i+q^{-i})$.
\end{exa}
\begin{exa}
\label{ex:odd} For a positively oriented odd-spin circle, the previous argument is almost the same ($X$ being odd and not even). Negatively oriented spin circles are treated analogously.
\end{exa}
\begin{exa}
\label{ex:theta} Consider the theta web
  \[
    \web:= \NB{\tikz[]{\begin{scope}[scale=0.6]
    \draw[expand style =\stylevect] (90:1) arc (90:270:1);
  \draw[expand style =\styleeven] (90:1) -- (270:1) \arreven;
  \draw[expand style =\styleodd] (90:1) arc (90:-90:1) \rarrodd;
\end{scope}}}. 
  \]
  Colorings of $\web$ can be partitioned into two subsets: the ones for which the vectorial edge is upward-oriented and the ones for which it is downward-oriented. We'll first focus on the ones which are downward-oriented. Such colorings are in a canonical one-to-one correspondence with pairs $(X, a)$, where $a\in \setN$ is a pigment and $X$ is an even subset of $\setN$ not containing $a$. This correspondence is made explicit in Figure~\ref{fig:col-theta}.
  \begin{figure}[h!]
    \centering
    \NB{\tikz[]{\input{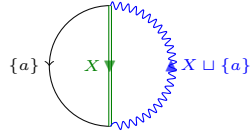}}}
    \caption{Coloring of $\Gamma$}\label{fig:col-theta}
  \end{figure}
  Bicolored cycles involving $a$ will never be empty. With a similar analysis to that for the even-spin circle, one obtains that:
  \begin{align}
    \deg(X,a) &= N-1 + ((a-1) -(N-a)) + \sum_{b \in X, b<a} (b-1) \\ & \quad + \sum_{b \in X, b>a} (b-2)  + \sum_{b \notin X, b<a} (1-b) + \sum_{b \in X, b>a} (2-b).
  \end{align}
  So if one denotes by $\col{\Gamma}_{\uparrow a}$, the subset of colorings with the vectorial edge upward-oriented and colored by $a$, one has:
  \begin{equation}
    \label{eq:4}
    \sum_{(X,a) \in \col{\Gamma}_{\uparrow a}} q^{\deg(X,a)}= q^{2a-2}\prod_{b=1}^{N-2}(q^b + q^{-b}). 
  \end{equation}
  Similarly, one has (extending the notation in the obvious manner):
\begin{equation}
  \sum_{(X,a) \in \col{\Gamma}_{\downarrow a}} q^{\deg(X,a)}= q^{2-2a}\prod_{b=1}^{N-2}(q^b + q^{-b}). 
  \end{equation}
  Summing all colorings together, one obtains: \begin{align}
    \label{eq:5}
    \ev{\web}&=([2N-1] +1)\prod_{b=1}^{N-2}(q^b + q^{-b})\\ &=([2N-1] +1)[2]^{[N-2]} = [N][2]^{[N-1]}.
  \end{align}
\end{exa}

\section{Global relations}
\label{sec:global-relations}
This section investigates certain global relations satisfied by the web
evaluation. Section~\ref{sec:chang-parity-orient} leads to a proof that $\ev{\web}$ is
symmetric in $q$ and $q^{-1}$ for every web
$\web$. Section~\ref{sec:changing-myn} relates $\evN{\cdot}$ and
$\evN[N-1]{\cdot}$. We expect that the relation between web evaluation for different values of $\myN$ may be further developed to deduce an inductive formula to evaluate $\evN{\cdot}$.

This section is somewhat independent of the rest of the paper. In particular, the global relations are not used to
prove the skein relations in Section~\ref{sec:skein} or to deduce invariance under Reidemeister moves in Section~\ref{sec:linkinvariants}.

\subsection{Changing parities and orientations}
\label{sec:chang-parity-orient}

\begin{lem}
Let $\web$ be a web and $\web^\star$ its dual (see Defintion~\ref{dfn:dual-web}), then
\[\ev{\web} = \ev{\web^\star}.\]
\end{lem}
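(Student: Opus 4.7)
The natural strategy is to use the bijection $\col{\web}\to\col{\web^\star}$, $(o,c)\mapsto(o^\star,c^\star)$, established after Definition~\ref{dfn:dual-col}, and to show that it preserves the degree. In view of \eqref{eq:1} and \eqref{eq:def-degree}, it suffices to exhibit, for every ordered pair of pigments $a<b$, the orientation-preserving equalities
\[
  \mymacroone{a}{b}(\web^\star,(o^\star,c^\star)) = \mymacroone{a}{b}(\web,(o,c)), \qquad
  \mymacrotwo{a}{b}(\web^\star,(o^\star,c^\star)) = \mymacrotwo{a}{b}(\web,(o,c)),
\]
viewed as oriented subgraphs of the common planar graph underlying $\web$ and $\web^\star$.

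The key auxiliary identity is that for every pigment $a$, the $a$-colored cycle of $(\web^\star,(o^\star,c^\star))$ coincides, as an oriented subgraph, with the $a^\star$-colored cycle of $(\web,(o,c))$. Indeed, on a spin edge $e$ the condition $a\in c^\star(e)$ is equivalent to $a\notin c(e)$ and the orientation of $e$ in $\web^\star$ is opposite to that in $\web$; on a vectorial edge the membership condition and the orientation both match. Because $\star$ is involutive on coloring data (modulo the harmless global parity flip of spin edges when $\myN$ is odd), the $a^\star$-colored cycle of $(\web^\star,(o^\star,c^\star))$ is in turn the $a$-colored cycle of $(\web,(o,c))$.

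Combining these identities, $\mymacrotwo{a}{b}(\web^\star,(o^\star,c^\star))$ equals the symmetric difference of the $a$-cycle and the $b^\star$-cycle of $(\web,(o,c))$, which in turn equals $\mymacrotwo{a}{b}(\web,(o,c))$ by the symmetry of $\mymacrotwo{\cdot}{\cdot}$ in its two pigments observed after its definition. For $\mymacroone{a}{b}$ no such shortcut is available, so I would compute the $a'$-colored cycle of $(\web^\star,(o^\star,c^\star))$ directly from the definitions and verify the desired equality by a four-case analysis according to whether $a$ and $b$ each belong to $c(e)$, checking edge by edge that the same spin and vectorial edges appear with the same orientations in the two symmetric differences. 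Summing the resulting equalities of rotationals over all $a<b$ yields $\deg(o^\star,c^\star)=\deg(o,c)$, and the state-sum \eqref{eq:1} gives the lemma.

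The main technical obstacle is this last step: the $a'$-colored cycle of the dual web involves reversed orientations on vectorial edges together with complemented memberships on spin edges, and it does not simplify to any single cycle of $(\web,(o,c))$. Consequently the equality of $\mymacroone{a}{b}$ cycles must be verified by the explicit case analysis described above, rather than by invoking a formal symmetry as in the $\mymacrotwo{a}{b}$ case.
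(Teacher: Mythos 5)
Your proposal follows the same strategy as the paper's proof: exploit the bijection $(o,c)\mapsto(o^\star,c^\star)$ between colorings and show that both types of bicolored cycles agree as oriented plane curves, whence the degree is preserved and the state sums coincide. Two small caveats: your ``key auxiliary identity'' (that the $a$-colored cycle of $(\web^\star,(o^\star,c^\star))$ is the $a^\star$-colored cycle of $(\web,(o,c))$) is literally the paper's definition of the latter rather than a fact to prove, and you only outline the edge-by-edge check for $\mymacroone{a}{b}$ without carrying it out --- but since the paper's own proof likewise merely asserts the equality of bicolored cycles without verification, your proposal is at the same level of completeness, and your observation that $\mymacrotwo{a}{b}$ can be dispatched via its $a\leftrightarrow b$ symmetry is a reasonable refinement.
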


\begin{proof}This follows from the fact for any coloring $(o,c)$ of $\web$ and
  any pigments $a<b$ in $\setN$, the $a'b$-bicolored cycle
  (resp.{} $a^\star b$-bicolored cycle) of $(\web, (o,c))$ is equal to
  the $a'b$-bicolored cycle (resp.{} $a^\star b$-bicolored cycle) of $(\web^\star, (o^\star,c^\star))$, hence the colorings $(o,c)$ and $(o^\star,c^\star)$ have the same degree.
\end{proof}

\begin{lem}
Let $\web$ be a web and  $\web'$ its opposite, then  
\[\ev{\web} = \ev{\web'}_{q\mapsto q^{-1}}.\]
\end{lem}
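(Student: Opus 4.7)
The plan is to exhibit a bijection $\col[\myN]{\web}\to\col[\myN]{\web'}$, $(o,c)\mapsto (o',c')$, as provided by Definition~\ref{dfn:dual-col}, and to show that this bijection sends every coloring to one of opposite degree. Once this is established, the identity $\ev{\web'}=\ev{\web}_{q\mapsto q^{-1}}$ follows at once by summing over colorings in \eqref{eq:1}.

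The key observation I would rely on is that the ``opposite'' operation has the effect of reversing the orientation of every oriented edge in the decorated web: spin edges get reversed in going from $\web$ to $\web'$ by definition of the opposite, while vectorial edges get reversed in going from $o$ to $o'$. Consequently, for any pigment $a\in\setN$, the $a$-colored cycle of $(\web',(o',c'))$ consists of the same collection of edges as the $a$-colored cycle of $(\web,(o,c))$, but with every edge orientation reversed. Using that taking the opposite twice returns $(\web,(o,c))$, I would conclude that in $(\web',(o',c'))$ the roles of the $a$-colored and $a'$-colored cycles are swapped compared with $(\web,(o,c))$.

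Next, I would verify the analogous statement for cycles of the second type. The operations ``dual'' and ``opposite'' commute on webs and on colorings, and a similar unraveling shows that the $a^\star$-colored cycle of $(\web',(o',c'))$ is the orientation-reverse of the $a^\star$-colored cycle of $(\web,(o,c))$. Taking symmetric differences then yields that each of $\mymacroone{a}{b}(\web',(o',c'))$ and $\mymacrotwo{a}{b}(\web',(o',c'))$ is the orientation-reverse of its counterpart in $(\web,(o,c))$. Since reversing all orientations negates the rotational, formula \eqref{eq:def-degree} gives $\deg(o',c')=-\deg(o,c)$, and the claim follows.

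The main obstacle I expect is the bookkeeping for cycles of the second type: the ``with opposite orientation'' clause in their definition and, when $\myN$ is odd, the parity-swap in the dual operation need to be tracked carefully. Once one records that ``dual'' and ``opposite'' commute in the sense that $(\web')^\star$ (with the appropriate coloring) is naturally identified with $(\web^\star)'$, however, the verification reduces to the same orientation-reversal principle used for cycles of the first type.
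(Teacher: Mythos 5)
Your proposal is correct and takes essentially the same approach as the paper: both use the bijection $(o,c)\mapsto(o',c')$ from Definition~\ref{dfn:dual-col} and the observation that the $a'b$- and $a^\star b$-bicolored cycles of $(\web',(o',c'))$ are the orientation-reversals of those of $(\web,(o,c))$, so the degree is negated. The paper states this in a single sentence, while you spell out the verification (including the commuting of the dual and opposite operations, which is indeed the only slightly delicate point for cycles of the second type).
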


\begin{proof}
  To any coloring $(o,c)$ of $\web$, one associates the coloring
  $(o',c')$ of $\web'$.  For any pigments $a<b \in \setN$, the
  $a'b$-bicolored cycle (resp.{} $a^\star b$-bicolored cycle) of
  $(\web, (o,c))$ is equal to $a'b$-bicolored cycle (resp.{}
  $a^\star b$-bicolored cycle) of $(\web', (o',c'))$ with reversed orientations.
\end{proof}

\begin{cor}
  If $N$ is even, $\ev{\web}$ is symmetric under $q \leftrightarrow q^{-1}$. 
\end{cor}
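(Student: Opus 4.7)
The plan is to combine the two preceding lemmas by exploiting the fact that, when $N$ is even, the ``dual'' and the ``opposite'' operations coincide.

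First I would compare Definition~\ref{dfn:dual-web} of $\web^\star$ and $\web'$. The opposite $\web'$ is obtained by reversing all spin orientations, while the dual $\web^\star$ does the same and \emph{additionally} swaps spin-even and spin-odd edges \emph{only when $N$ is odd}. Hence, when $N$ is even, $\web^\star = \web'$ as type~$\mathsf{D}$ webs (and not merely as abstract planar graphs).

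Assuming this identification, the corollary is immediate from the two lemmas already proved. Indeed, the first lemma gives $\ev{\web} = \ev{\web^\star}$, and the second gives $\ev{\web} = \ev{\web'}_{q \mapsto q^{-1}}$. When $N$ is even, $\web^\star = \web'$, so
\[
\ev{\web} \;=\; \ev{\web'} \;=\; \ev{\web'}_{q\mapsto q^{-1}}\big|_{q\mapsto q^{-1}} \;=\; \ev{\web}_{q\mapsto q^{-1}},
\]
where in the last step I reapply the second lemma (or equivalently, I substitute $q \mapsto q^{-1}$ into $\ev{\web} = \ev{\web'}_{q\mapsto q^{-1}}$ and use $\ev{\web'} = \ev{\web^\star} = \ev{\web}$ by combining the two lemmas once more).

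There is no real obstacle here; the only point that requires care is checking the matching of decorations (the orientation-reversal on spin edges together with the parity labels) in Definition~\ref{dfn:dual-web} so that $\web^\star$ and $\web'$ are genuinely the same decorated web when $N$ is even, rather than merely having the same underlying unoriented graph. Once this is verified, the corollary follows from the two lemmas by a single substitution.
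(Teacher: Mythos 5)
Your proof is correct and takes essentially the same route as the paper: when $N$ is even, $\web^\star = \web'$ as decorated webs, so combining $\ev{\web} = \ev{\web^\star}$ with $\ev{\web} = \ev{\web'}_{q\mapsto q^{-1}}$ immediately gives $\ev{\web} = \ev{\web}_{q\mapsto q^{-1}}$.
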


\begin{proof}
  If $N$ is even, $\web^\star = \web'$, so that $\ev{\web}= \ev{\web}_{q\mapsto q^{-1}}.$
\end{proof}

\begin{cor}
  If $N$ is odd, and $\overline{\web}$ is the web obtained from $\web$ by swapping odd and even spin-edges (but keep their orientations), then $\ev{\web} = \ev{\overline{\web}}_{q\mapsto q^{-1}}$.
\end{cor}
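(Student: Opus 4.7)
The plan is to deduce this corollary directly from the two lemmas just established, by identifying the right composite operation. Specifically, I would first observe that when $\myN$ is odd, the dual $\web^\star$ and the parity-swap-then-opposite web $\overline{\web}'$ coincide: both are obtained from $\web$ by simultaneously reversing the orientation of every spin edge and swapping even-spin edges with odd-spin edges. (The two operations ``reverse spin orientations'' and ``swap spin parities'' are defined on disjoint pieces of data attached to each spin edge, so they commute, and the definition of $\web^\star$ for odd $\myN$ performs both.)

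With this identification in hand, the argument is a short chain. First, by the first lemma of Section~\ref{sec:chang-parity-orient}, $\ev{\web} = \ev{\web^\star}$. Next, substituting the equality $\web^\star = \overline{\web}'$ gives $\ev{\web} = \ev{\overline{\web}'}$. Finally, I apply the second lemma to the web $\overline{\web}$ (rather than to $\web$) to obtain $\ev{\overline{\web}} = \ev{\overline{\web}'}_{q\mapsto q^{-1}}$, which after the involutive substitution $q \leftrightarrow q^{-1}$ yields $\ev{\overline{\web}'} = \ev{\overline{\web}}_{q \mapsto q^{-1}}$. Combining these equalities gives the desired statement.

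There is no real obstacle: the only point requiring minor care is verifying that ``parity-swap'' and ``orientation-reversal'' commute and together recover the dual when $\myN$ is odd, which follows immediately from Definition~\ref{dfn:dual-web}. No analysis of colorings, bicolored cycles, or rotationals is needed beyond what was already used in the two preceding lemmas.
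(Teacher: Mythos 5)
Your proof is correct and follows essentially the same route as the paper: the paper simply notes that $\overline{\web} = (\web^\star)'$ when $\myN$ is odd and applies the two preceding lemmas, which is the same observation you make (written as $\web^\star = \overline{\web}'$) up to involutivity of the opposite operation.
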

\begin{proof}
  If $N$ is odd, $\overline{\web} = (\web^\star)'$.
\end{proof}

\begin{lem}
  For any web, $\ev{\web} = \ev{\overline{\web}}$. 
\end{lem}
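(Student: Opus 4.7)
I would build an explicit bijection $\Phi\colon \col{\web} \to \col{\overline{\web}}$ and show it preserves the degree. Concretely, set $\Phi(o,c) = (o, \tilde c)$ with $\tilde c(e) := c(e) \triangle \{1\}$ on spin edges and $\tilde c(e) := c(e)$ on vectorial edges, leaving all orientations unchanged. The choice of the pigment $1$ (rather than any other) is key: in Example~\ref{ex:even} each pigment $b$ contributes the term $(b-1)\epsilon_b$ to the degree of a spin circle, so pigment $1$ is precisely the degree-neutral one, giving a first indication that $\Phi$ should preserve degrees.

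Well-definedness of $\Phi$: symmetric difference with $\{1\}$ flips the parity of the color of each spin edge, which matches the swap of spin-even and spin-odd edges in $\overline{\web}$; and the local flow relation $X \triangle Y = \{a\}$ is $\mathbb{F}_2$-linear, so it survives unchanged. Since $(\cdot) \triangle \{1\}$ is an involution, $\Phi$ is an involutive bijection.

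For degree preservation, write
\[
\deg(o,c) = \sum_{1 \le p < q \le N} \bigl(\rotational{\mymacroone{p}{q}} + \rotational{\mymacrotwo{p}{q}}\bigr).
\]
For any pigment $a \ne 1$, both the set $\{e : a \in c(e)\}$ and the relevant orientations are unaffected by $\Phi$, so the $a$-, $a'$- and $a^\star$-colored cycles of $(\web,(o,c))$ and $(\overline{\web},(o,\tilde c))$ are literally identical. Hence the contribution of every pair $2 \le p < q \le N$ is unchanged, and only the pairs $(1,b)$ with $b \ge 2$ remain to be analyzed.

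The main obstacle is precisely these pairs. An edge-by-edge comparison shows that, on spin edges, the $1$- and $1^\star$-colored cycles of $(\overline{\web},(o,\tilde c))$ exchange with the $1^\star$- and $1$-colored cycles of $(\web,(o,c))$ respectively, while on vectorial edges the bicolored cycles $\mymacroone{1}{b}$ and $\mymacrotwo{1}{b}$ in $\overline{\web}$ agree with their counterparts in $\web$ up to reversing the orientation on the $1$-colored vectorial edges. The expected mechanism is that this reorientation shifts $\rotational{\mymacroone{1}{b}}$ and $\rotational{\mymacrotwo{1}{b}}$ by equal and opposite amounts, traceable vertex by vertex via the four local models (which come in pairs related precisely by this reorientation). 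Granting the identity
\[
\rotational{\mymacroone{1}{b}(\overline{\web},(o,\tilde c))} + \rotational{\mymacrotwo{1}{b}(\overline{\web},(o,\tilde c))} = \rotational{\mymacroone{1}{b}(\web,(o,c))} + \rotational{\mymacrotwo{1}{b}(\web,(o,c))}
\]
and summing over $b$, one obtains $\deg(o,c) = \deg(o,\tilde c)$ and hence $\ev{\web} = \ev{\overline{\web}}$.
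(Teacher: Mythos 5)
Your map $\Phi(o,c)=(o,\tilde c)$ with $\tilde c(e)=c(e)\triangle\{1\}$ on spin edges, $\tilde c(e)=c(e)$ on vectorial edges, and \emph{all orientations unchanged}, is not well-defined: $(o,\tilde c)$ is generally not a coloring of $\overline{\web}$. The flow condition (equivalently, the requirement that each $p$-colored cycle be a union of oriented simple closed curves) fails for the pigment $1$ at any vertex whose vectorial edge is colored $\{1\}$. Concretely, at such a vertex the spin edges have one incoming and one outgoing; validity of $(o,c)$ forces $1$ to lie on the vectorial edge and on exactly that spin edge whose orientation is opposite to the vectorial one. Taking symmetric difference with $\{1\}$ on the two spin colors moves the pigment $1$ to the \emph{other} spin edge, which has the \emph{same} in/out status as the vectorial edge; so in $(o,\tilde c)$ the pigment $1$ now sits on two edges pointing the same way at that vertex, and the $1$-colored ``cycle'' has unbalanced in/out degree. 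You actually flag the needed repair yourself when you mention that $\mymacroone{1}{b}$ and $\mymacrotwo{1}{b}$ ``agree with their counterparts up to reversing the orientation on the $1$-colored vectorial edges'' — but $\Phi$ as you define it never performs that reversal, so $\rotational{\mymacroone{1}{b}(\overline{\web},(o,\tilde c))}$ and $\rotational{\mymacrotwo{1}{b}(\overline{\web},(o,\tilde c))}$ are not even defined.

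The paper's bijection is exactly your $\Phi$ \emph{together with} reversing the orientation of every vectorial edge colored $\{1\}$; with that extra reversal the flow condition is restored. The degree computation then becomes clean and does not need the ``granted'' identity: for pigments $a\neq 1$ neither $\mymacroone{a}{b}$ nor $\mymacrotwo{a}{b}$ touches any edge carrying $1$, so these bicolored cycles are literally identical before and after; and for $a=1$ the extra orientation reversal makes the $1'b$-bicolored cycle of $(\web,(o,c))$ coincide with the $1^\star b$-bicolored cycle of $(\overline{\web},(\overline{o},\overline{c}))$ and vice versa, so the two rotationals simply swap inside the sum defining the degree. The heuristic about the pigment $1$ being ``degree-neutral'' in Example~\ref{ex:even} is a nice sanity check, but the real reason $1$ works is that it is the minimum of $\setN$, which is what makes the $1'b$/$1^\star b$ pair swap without shifting.
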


\begin{proof}
  Let $(o,c)$ be a coloring of $\web$. Let us consider the coloring $(\overline{o}, \overline{c})$ of $\overline{\web}$ obtained as follow: change the orientations of all the vectorial edges colored by $\{1\}$ and for each spin edge, replace its color by its symmetric difference with $\{1\}$. Note that this changes the parities of the colors and that $(\overline{o}, \overline{c})$ is indeed a coloring of $\overline{\web}$.
  This operation is of course involutive and therefore induces a bijection between $\col{\web}$ and $\col{\overline{\web}}$. 
  Let $a<b$ be two pigments in $\setN$. If $a\neq 1$, the
  $a'b$-bicolored cycle (resp.{} $a^\star b$-bicolored cycle) of
  $(\web, (o,c))$ is equal to the $a'b$-bicolored cycle (resp.{}
  $a^\star b$-bicolored cycle) of $(\overline{w}, (\overline{o},
  \overline{c}))$. If $a=1$, $1'b$-bicolored cycle (resp.{} $1^\star
  b$-bicolored cycle) is equal to the $1^\star b$-bicolored cycle (resp.{}
  $1'b$-bicolored cycle) of $(\overline{w}, (\overline{o}, \overline{c}))$. So that $\deg(\overline{o}, \overline{c}) = \deg(o,c)$.
\end{proof}

\begin{cor}\label{cor:ev-all-equal}
  For all webs $\web$,  $\ev{\web}$ is invariant under $q\mapsto q^{-1}$, and
  \begin{equation} \ev{\web} = \ev{\web^\star} = \ev{\web'} = \ev{\overline{\web}}.\end{equation}
\end{cor}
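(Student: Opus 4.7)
The plan is to assemble the four preceding lemmas and corollaries. Three of the four equalities claimed in the corollary have essentially been proved already ($\ev{\web}=\ev{\web^\star}$ is the first lemma of the subsection, $\ev{\web}=\ev{\overline{\web}}$ is the most recent lemma, and $\ev{\web}=\ev{\web'}_{q\mapsto q^{-1}}$ is the second lemma). So the only genuinely new content required is the statement that $\ev{\web}$ is invariant under $q\mapsto q^{-1}$; once this invariance is in hand, applying it to $\web'$ in the relation $\ev{\web}=\ev{\web'}_{q\mapsto q^{-1}}$ immediately upgrades that equality to $\ev{\web}=\ev{\web'}$.

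To establish $q\mapsto q^{-1}$ invariance I would split on the parity of $\myN$. For $\myN$ even, the invariance is exactly the first corollary of the subsection, based on the identification $\web^\star=\web'$. For $\myN$ odd, I would combine the last lemma, $\ev{\web}=\ev{\overline{\web}}$, with the second corollary, $\ev{\web}=\ev{\overline{\web}}_{q\mapsto q^{-1}}$, which together give
\[
\ev{\web}\;=\;\ev{\overline{\web}}_{q\mapsto q^{-1}}\;=\;\ev{\web}_{q\mapsto q^{-1}}.
\]
These two cases together cover all $\myN$.

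With symmetry in $q$ established, I would finish by writing out the chain
\[
\ev{\web}=\ev{\web^\star}=\ev{\overline{\web}}=\ev{\web'}_{q\mapsto q^{-1}}=\ev{\web'},
\]
invoking the three preceding lemmas for the first three equalities and the just-established $q\mapsto q^{-1}$ symmetry (applied to the web $\web'$) for the last. There is no real obstacle here since all of the combinatorial bijections between colorings that do the actual work have been constructed in the earlier lemmas; the only thing worth being careful about is keeping straight which identities require a substitution $q\mapsto q^{-1}$ and which do not, and making sure the parity case split for the invariance step is clean.
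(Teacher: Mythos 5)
Your proof is correct and takes the same approach the paper intends (the paper omits an explicit proof, since the corollary is a direct assembly of the preceding four results); your parity case split for the $q\mapsto q^{-1}$ invariance, followed by applying that invariance to $\web'$ to upgrade $\ev{\web}=\ev{\web'}_{q\mapsto q^{-1}}$ to $\ev{\web}=\ev{\web'}$, is exactly the right chain of reasoning.
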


\begin{rmk}
  Corollary~\ref{cor:ev-all-equal}, says as far as $\ev{\cdot}$ is concerned, we can globaly reverse orientation and swap parity of spin edges. In particular this will reduce the amount of cases to consider when proving skein relations. 
  
  Corollary~\ref{cor:ev-all-equal} does not imply that we can perform this operation locally. However, we do believe that locally reversing orientation and swapping parity of spin edges leaves $\ev{\cdot}$ invariant. See Lemma~\ref{lem:singular-square} for an example. But we do not have a proof of such a statement in general.
\end{rmk}

\subsection{Changing $\myN$}
\label{sec:changing-myn}

In this section, we aim to express $\ev{\web}=\evN{\web}$ as a linear combination of $\evN[\myN-1]{\web^\cycle}$ where $\web_\cycle$ are webs constructed from $\web$. Heuristically, this should reflect the branching rules of $\sotN$ to $\sotN[2\myN-2]$.  Before stating a precise result, we need a couple of definitions and notations.

\begin{dfn}
  Let $\web$ be a web. An \emph{oriented cycle} in $\web$ is a union (maybe empty) of disjoint simple cycles in $\web$ endowed with an orientation which is compatible with the orientation (given by $\web$) of the spin edges it contains.

  If $\cycle$ is an oriented cycle in $\web$, then $\web^\cycle$ is obtained from $\web$ be removing all vectorial edges contained in $\cycle$ and changing the parity (but keeping the orientations) of all spin edges contained in $\cycle$.

  The \emph{global $\myN$-shift} of a web $\web$ is the integer denoted $\gshiftN{\web}$ given by the following formula which counts local configurations in $\web$:
  \begin{align}
    \begin{split}
    \label{eq:dfn-global-shift}
    \gshiftN{\web} =& -(N-1)\rotational{\webspin}\\ &+\frac{1}{2}\left(
                        + \#\left\{ \NB{\tikz[]{\begin{scope}[scale=0.6]
  \draw[expand style =\stylevect] (0,0) -- (0.7,0);
  \draw[expand style =\styleeven] (0,0) -- (0,1) \arreven;
  \draw[expand style =\styleodd]  (0,0) -- (0,-1) \rarrodd;
\end{scope}}} \right\} + \#\left\{\NB{\tikz[yscale=-1]{}} \right\}
                        - \#\left\{ \NB{\tikz[xscale=-1]{}} \right\} - \#\left\{\NB{\tikz[scale=-1]{}} \right\} \right),
                      \end{split}
  \end{align}
    where $\rotational{\webspin}$ is the rotational of the collection of oriented cycles given by spin edges of $\web$. The other terms are a weighted count of the vertices of $\web$, counting $+\frac12$ (resp.{} $-\frac12$) when the vectorial edge is on the right-and side (resp.{} left-hand side) of the spins.

  The \emph{$\myN$-shift} of a cycle $\cycle$ in $\web$ is the integer denoted $\shiftN{\cycle}$ given by the following formula which counts local configurations of $\cycle$: 
  \begin{align}
  \begin{split}
    \label{eq:dfn-shift}
    \shiftN{\cycle} =  &2(N-1)\rotational{\cycle} 
    - \#\left\{ \NB{\tikz[]{\begin{scope}[scale=0.6]
  \draw[expand style =\stylecycle] (0,-1) -- (0,1);
  \draw[expand style =\stylevect] (0,0) -- (0.7,0);
  \draw[expand style =\styleeven] (0,0) -- (0,1) \arreven;
  \draw[expand style =\styleodd]  (0,0) -- (0,-1) \rarrodd;
\end{scope}}} \right\} + \#\left\{\NB{\tikz[yscale=-1]{}} \right\}
    - \#\left\{ \NB{\tikz[xscale=-1]{}} \right\} - \#\left\{\NB{\tikz[scale=-1]{}} \right\} \\
    & \quad+ \frac{N-2}{2}\left( \#\left\{ \NB{\tikz[]{\begin{scope}[scale=0.6]
  \draw[expand style =\stylecycle] (0,-1) -- (0,0) -- (0.7,0);
  \draw[expand style =\stylevect] (0,0) -- (0.7,0);
  \draw[expand style =\styleeven] (0,0) -- (0,1) \arreven;
  \draw[expand style =\styleodd]  (0,0) -- (0,-1) \rarrodd;
\end{scope}}} \right\} + \#\left\{\NB{\tikz[yscale=-1]{\begin{scope}[scale=0.6]
  \draw[expand style =\stylecycle] (0,-1) -- (0,0) -- (0.7,0);
  \draw[expand style =\stylevect] (0,0) -- (0.7,0);
  \draw[expand style =\styleeven] (0,0) -- (0,-1) \rarreven;
  \draw[expand style =\styleodd]  (0,0) -- (0,1) \arrodd;
\end{scope}}} \right\} + \#\left\{\NB{\tikz[]{}} \right\}
      + \#\left\{ \NB{\tikz[yscale=-1]{}} \right\}  \right.  \\
    &  \qquad \qquad \left. - \#\left\{ \NB{\tikz[xscale=-1]{}} \right\}  - \#\left\{\NB{\tikz[scale=-1]{}} \right\} - \#\left\{\NB{\tikz[xscale=-1]{}} \right\}
      - \#\left\{ \NB{\tikz[scale=-1]{}} \right\} \right),      
  \end{split}
  \end{align}
  where the cycle $\cycle$ is represented by $\NB{\tikz[scale=0.8]{\draw[expand style =\stylecycle] (0,0) -- (1,0);}}$. The second and third lines count algebraically (with a factor $\frac{N-2}{2}$) the number of right-angle turns in the cycle when the web is drawn so that vectorial edges are orthogonal to spin ones at vertices. 
\end{dfn}

\begin{prop}  \label{prop:BR-typeD}
  Suppose that $N\geq 2$. Then for any web $\web$, one has:
  \begin{equation}
  \evN[\myN]{\web}= q^{\gshiftN{\web}}\sum_{\substack{\text{$\cycle$ oriented} \\ \text{cycle in $\web$}}}q^{\shiftN{\cycle}}\evN[N-1]{\web^\cycle}. \label{eq:branching-rule-TD}
\end{equation}
\end{prop}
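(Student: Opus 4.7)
The plan is to establish a bijection
\[
\col[\sotN]{\web} \;\longleftrightarrow\; \bigsqcup_{\cycle} \col[\sotN[2(N-1)]]{\web^\cycle},
\]
where $\cycle$ ranges over oriented cycles in $\web$. Given $(o,c) \in \col[\sotN]{\web}$, the pigment $N$ plays a distinguished role: the $N$-colored cycle of $(o,c)$ is a disjoint union of oriented simple closed cycles in $\web$ (by the earlier lemma), and I would take this to be $\cycle$. Erasing $N$ from every color then produces an $\sotN[2(N-1)]$-coloring $(o', c')$ of $\web^\cycle$: the vectorial edges colored $\{N\}$ are exactly those in $\cycle$ (which disappear in $\web^\cycle$), while spin edges containing $N$ have their parity flipped upon removing $N$ (matching the parity flip in $\web^\cycle$). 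The inverse map reinserts $N$ along $\cycle$, and the vertex flow condition $X \Delta Y = \{a\}$ implies both maps are well defined and mutually inverse.

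Granting the bijection, I would split
\[
\deg(o,c) = \underbrace{\sum_{1 \leq a < b < N}\!\!(\rotational{\mymacroone{a}{b}} + \rotational{\mymacrotwo{a}{b}})}_{\text{internal}} + \underbrace{\sum_{a=1}^{N-1}(\rotational{\mymacroone{a}{N}} + \rotational{\mymacrotwo{a}{N}})}_{\text{boundary}}.
\]
For the internal sum, I would argue that for every $a, b < N$ the bicolored cycles $\mymacroone{a}{b}(\web, (o,c))$ and $\mymacrotwo{a}{b}(\web, (o,c))$ coincide, as oriented curves in the plane, with the analogous bicolored cycles of $(\web^\cycle, (o',c'))$: for $p \in \{a,b\}$, so $p \neq N$, whether a spin edge $e$ lies in the $p$- or $p^\star$-cycle depends only on whether $p$ lies in $c(e)$ or in its complement in $\setN$, and this is unchanged by deleting $N$ from $c(e)$; moreover the only vectorial edges removed in passing from $\web$ to $\web^\cycle$ are those colored $\{N\}$, which never lie in a $p$- or $p^\star$-cycle for $p<N$. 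Hence the internal sum equals the $\sotN[2(N-1)]$-degree of $(o',c')$.

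The hard part is to identify the boundary sum with $\gshiftN{\web} + \shiftN{\cycle}$. My strategy is to write each rotational as a signed count of local angular turns of the cycle at the vertices of $\web$ it visits, then reorganize $\sum_{a=1}^{N-1}(\rotational{\mymacroone{a}{N}} + \rotational{\mymacrotwo{a}{N}})$ as a sum over the vertices and edges of $\web$. A case analysis at each vertex---based on whether the incident vectorial edge has color $\{N\}$, which incident spin edges contain $N$ (i.e.\ lie in $\cycle$), and whether $a$ appears in each incident spin color---identifies precisely which of $\mymacroone{a}{N}$ and $\mymacrotwo{a}{N}$ passes through the vertex, via which pair of incident edges, and with which induced orientation. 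Summing these generic vertex contributions over all $N-1$ values of $a$ produces the global terms $-(N-1)\rotational{\webspin}$ and $2(N-1)\rotational{\cycle}$, while the corrections at vertices incident to $\cycle$ and at the right-angle turns of $\cycle$ assemble into the $\pm\tfrac12$ and $\pm\tfrac{N-2}{2}$ local weights appearing in \eqref{eq:dfn-global-shift}--\eqref{eq:dfn-shift}. The main obstacle is precisely this bookkeeping: one must carefully track the orientation reversals imposed by the $(\cdot)'$ and $(\cdot)^\star$ operations---especially at vertices where exactly one incident spin edge lies in $\cycle$---and verify that the half-integer local weights assemble correctly into the formulas for $\gshiftN{\web}$ and $\shiftN{\cycle}$.
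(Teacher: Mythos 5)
Your outline matches the paper's proof almost exactly: same bijection $\col[\sotN]{\web} \leftrightarrow \bigsqcup_\cycle \col[\sotN[2N-2]]{\web^\cycle}$ via the $N$-colored cycle, same reduction to showing the ``boundary sum'' $\sum_{a<N}\bigl(\rotational{\mymacroone{a}{N}}+\rotational{\mymacrotwo{a}{N}}\bigr)$ equals $\gshiftN{\web}+\shiftN{\cycle}$, and same strategy of decomposing each rotational into local contributions (the paper normalizes the embedding so spin edges are smooth with vectorial edges orthogonal, then separates the rotational into smooth curvature along edges plus combinatorial corner angles at vertices, which is what your ``sum over vertices and edges'' amounts to). The one caveat is that the case analysis you defer to — how each vertex/edge contributes to $\mymacroone{a}{N}$ and $\mymacrotwo{a}{N}$ depending on the local coloring and on membership in $\cycle$, and how the $\pm\tfrac12$ and $\tfrac{N-2}{2}$ weights in \eqref{eq:dfn-global-shift}–\eqref{eq:dfn-shift} emerge from these contributions — \emph{is} the proof in the paper (it occupies essentially the entire argument), so your proposal should be read as a correct plan rather than a completed proof.
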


\begin{proof}
  Let $(o,c)$ be an $\sotN$-coloring of $\web$. The union of edges
  containing $N$ is their colors is an oriented cycle in $\web$,
  denote it $\cycle(o,c)$. The $\sotN$-coloring $(o,c)$ induces a
  $\sotN[2(N-1)]$-coloring of $\web^{\cycle(o,c)}$. This construction
  induces a one-to-one correspondence between the set of $\sotN$-colorings of $\web$ and the union of the sets of $\sotN[2N-2]$-colorings of $\web^\cycle$ for $\cycle$ running through oriented cycles in $\web$. In order to prove \eqref{eq:branching-rule-TD}, one needs to understand how this correspondence behaves with respect to the degrees of colorings.

  Let $(o,c)$ be a $\sotN$-coloring of $\web$ and let $(o',c')$ be the corresponding $\sotN[2(N-1)]$ coloring. Following the definition of degrees, one has:
  \begin{equation}
    \label{eq:3}
    \deg{(o,c)}= \deg{(o',c')} + \sum_{1\leq a < \myN} \left(\deg(\mymacroone{a}{\myN}(\web,(o,c)) + \deg(\mymacrotwo{a}{ \myN}(\web, (o,c)) \right).
  \end{equation}
  The aim of what follows is to show that
  \begin{equation}
    \label{eq:6}
 \gshiftN{\web} + \shiftN{\web} =\sum_{1\leq a < \myN} \left(\deg(\mymacroone{a}{\myN}(\web,(o,c))) + \deg(\mymacrotwo{a}{ \myN}(\web, (o,c))) \right)
  \end{equation}
  from which \eqref{eq:branching-rule-TD} follows.
Let $a \in \setN[\myN-1]$ be a pigment, and let us compute
$D:=\deg(\mymacroone{a}{ \myN}(\web,(o,c))) + \deg(\mymacrotwo{a}{ \myN}(\web, (o,c)))$. For computing rotational of planar curves we may use (normalized) curvature. However since webs are not smooth at vertices we should add a combinatorial contribution to curvature at corners: at a vertex $v$ the curve has two unit tangent vectors, the combinatorial (normalized) curvature is the directed angle between these two vectors (the direction being given by the orientation) divided by $2\pi$.  We may suppose that the web $\web$ is embedded so that at vertices the union of spin edges forms a smooth closed 1-manifold and that at vertices vectorial edges are orthogonal to this 1-manifold, i.e.{} vertices look locally as follows:
\begin{equation}
  \label{eq:orthogonal-models}
  \NB{\tikz[]{}} \qquad \qquad \NB{\tikz[yscale=-1]{}} \qquad \qquad \NB{\tikz[xscale=-1]{}} \qquad \qquad \NB{\tikz[scale=-1]{}}
\end{equation}

With these conditions, one has:
\begin{align}
\begin{split}
\gshiftN{\web} + \shiftN{\cycle} = & -(N-1)\rotational{\webspin} + 2(N-1)\rotational[\mathrm{smooth}]{\cycle}  \\
&+\frac{1}{2}\left(\#\left\{ \NB{\tikz[]{}} \right\} + \#\left\{\NB{\tikz[yscale=-1]{}} \right\} - \#\left\{ \NB{\tikz[xscale=-1]{}} \right\} - \#\left\{\NB{\tikz[scale=-1]{}} \right\} \right) \\
&-\frac{1}{2}\left(\#\left\{ \NB{\tikz[]{}} \right\} + \#\left\{\NB{\tikz[yscale=-1]{}} \right\} - \#\left\{ \NB{\tikz[xscale=-1]{}} \right\} - \#\left\{\NB{\tikz[scale=-1]{}} \right\} \right),
\end{split}\label{eq:gshsh}
\end{align}
where:
\begin{itemize}
\item The diagrams on the first line have slightly different meaning than in \eqref{eq:dfn-global-shift}. Here they represent vertices which are away from the cycle $\cycle$, while in \eqref{eq:dfn-global-shift} they represented all vertices in $\web$. 
\item The quantity $\rotational[\mathrm{smooth}]{\cycle}$ is the sum of the contribution of the (normalized) curvature of the edges belonging to $\cycle$. 
\end{itemize}
This follows from the fact that the quantity
\begin{equation}
  \begin{split}
\rotational[\mathrm{comb.}]{\cycle} = \frac{1}{4} =  &\#\left\{ \NB{\tikz[]{}} \right\} + \#\left\{\NB{\tikz[yscale=-1]{}} \right\} + \#\left\{\NB{\tikz[]{}} \right\}
      + \#\left\{ \NB{\tikz[yscale=-1]{}} \right\}   \\
    &  \quad - \#\left\{ \NB{\tikz[xscale=-1]{}} \right\}  - \#\left\{\NB{\tikz[scale=-1]{}} \right\} - \#\left\{\NB{\tikz[xscale=-1]{}} \right\}
      - \#\left\{ \NB{\tikz[scale=-1]{}} \right\}
  \end{split}
  \label{eq:rhocomb}
\end{equation}
with $\rotational{\cycle}=\rotational[\mathrm{comb.}]{\cycle} + \rotational[\mathrm{smooth}]{\cycle}$.

Let us write $D= D_{\mathrm{smooth}} + D_{\mathrm{comb.}}$ with:
\begin{align}
  D_{\mathrm{smooth}} &=
 \sum_{1\leq a < \myN} \rotational[\mathrm{smooth}]{\mymacroone{a }{ \myN}(\web, (o,c))}  +
                        \rotational[\mathrm{smooth}]{\mymacrotwo{a}{
     \myN}(\web, (o,c))}
\end{align}
and \begin{align}
D_{\mathrm{comb.}} &=
  \sum_{1\leq a < \myN} \rotational[\mathrm{comb.}]{\mymacroone{a }{ \myN}(\web, (o,c))}  +
                        \rotational[\mathrm{comb.}]{\mymacrotwo{a}{ \myN}(\web, (o,c))}
\end{align}

Let $e$ be a vectorial edge. It is colored and oriented by $(o,c)$. We will distinguish three cases: $c(e) = \{\myN\}$, $c(e)=\{a\}$ or $c(e)=\{b\}$ for $b\notin \{a , \myN\}$.
\begin{itemize}
\item If $c(e) =\{\myN\}$, the edge $e$ appears in both $\mymacroone{a }{ \myN}(\web, (o,c))$ and $\mymacrotwo{a}{ \myN}(\web, (o,c))$ with orientation given by $o$. 
\item If $c(e) =\{a\}$, the edge $e$ appears in
  $\mymacrotwo{a}{ \myN}(\web, (o,c))$ with orientation given by
  $o$ and in $\mymacroone{a }{ \myN}(\web, (o,c))$  with the opposite orientation.
\item If $c(e) =\{b\}$, the edge $e$ appears neither in  $\mymacroone{a}{ \myN}(\web, (o,c))$ nor in $\mymacrotwo{a}{ \myN}(\web, (o,c))$.
\end{itemize}
Hence the curvature of $e$ contributes to $D_{\mathrm{smooth}}$ if and only if $c(e)= \{\myN\}$ and in this case, it contributes with twice its (normalized) curvature computed with the orientation given by $o$.

Now let $e$ be a spin edge. It is colored by $c$. Independently, both $a$ and $\myN$ can belong to $c(e)$, hence we have four cases to inspect:
\begin{itemize}
\item If both $a$ and $\myN$ belong to $c(e)$, then $e$ appears in $\mymacrotwo{a}{ \myN}(\web, (o,c))$ with its natural orientation and does not appear in $\mymacroone{a}{ \myN}(\web, (o,c))$.
\item If $\myN$ belongs to $c(e)$ but not $a$, then $e$ appears in $\mymacroone{a}{ \myN}(\web, (o,c))$ with its natural orientation and does not appear in $\mymacrotwo{a}{ \myN}(\web, (o,c))$.
\item If $a$ belongs to $c(e)$ but not $a$, then $e$ appears in $\mymacroone{a}{ \myN}(\web, (o,c))$ with its opposite orientation and does not appear in $\mymacrotwo{a}{ \myN}(\web, (o,c))$.
\item If neither $a$ nor $\myN$ belong to $c(e)$, then $e$ appears in $\mymacrotwo{a}{ \myN}(\web, (o,c))$ with its opposite orientation and does not appear in $\mymacroone{a}{ \myN}(\web, (o,c))$.
\end{itemize}
Finally, the curvature or $e$ contributes to $D_{\mathrm{smooth}}$ with its natural orientation if $\myN \in c(e)$ and with its opposite orientation otherwise.

Since $a$ runs between $1$ and $\myN-1$, one has:
\begin{equation}
  \label{eq:6}
  D_{\mathrm{smooth}}= -(N-1)\rotational{\webspin} + 2(N-1)\rotational[\mathrm{smooth}]{\cycle}.
\end{equation}

We now inspect the contribution of the combinatorial curvature at a
vertex $v$ to $D_{\mathrm{comb.}}$. Forgetting about the embedding in
the plane and the orientation of spin edges, there are four different
local configuration for $\cycle(c)$ at a vertex:
\[
  \NB{\tikz[]{\begin{scope}[scale=0.6]
  \draw[expand style =\stylevect] (0,0) -- (0.7,0);
  \draw[expand style =\styleeven] (0,0) -- (0,1); 
  \draw[expand style =\styleodd]  (0,0) -- (0,-1); 
\end{scope}}} \qquad \qquad \NB{\tikz[]{\begin{scope}[scale=0.6]
  \draw[expand style =\stylecycle] (0,-1) -- (0,1);
  \draw[expand style =\stylevect] (0,0) -- (0.7,0);
  \draw[expand style =\styleeven] (0,0) -- (0,1); 
  \draw[expand style =\styleodd]  (0,0) -- (0,-1);
\end{scope}}} \qquad \qquad
  \NB{\tikz[]{\begin{scope}[scale=0.6]
  \draw[expand style =\stylecycle] (0,-1) -- (0,0) -- (0.7,0);
  \draw[expand style =\stylevect] (0,0) -- (0.7,0);
  \draw[expand style =\styleeven] (0,0) -- (0,1);
  \draw[expand style =\styleodd]  (0,0) -- (0,-1);
\end{scope}}} \qquad \qquad\NB{\tikz[]{\begin{scope}[scale=0.6]
  \draw[expand style =\stylecycle] (0,-1) -- (0,0) -- (0.7,0);
  \draw[expand style =\stylevect] (0,0) -- (0.7,0);
  \draw[expand style =\styleeven] (0,0) -- (0,-1); 
  \draw[expand style =\styleodd]  (0,0) -- (0,1); 
\end{scope}}}.
\]
The two last models are actually similar since in our argument the parity of spin edges will not play any role.

Suppose first that the vertex $v$ is  of type \NB{\tikz[]{}}. If the vectorial edge is colored by $\{b\}$ with $b\neq a$, then there is no contribution of $v$ to $D_{\mathrm{comb.}}$ since both $\mymacroone{a}{ \myN}(\web, (o,c))$ and $\mymacrotwo{a}{ \myN}(\web, (o,c))$ are locally smooth. If it is colored by $\{a\}$, then both  $\mymacroone{a}{ \myN}(\web, (o,c))$ and $\mymacrotwo{a}{ \myN}(\web, (o,c))$ have a right-angle at $v$ (both with normalized combinatorial curvature equal to $\pm \frac14$) and the contribution of $v$ to $D_{\mathrm{comb.}}$ depends on the orientation and is summarized in Table~\ref{tab:BR1}.

  \renewcommand{\arraystretch}{2.5}
\begin{table}[ht]
  \centering
  \begin{tabular}{|p{2.7cm}|c|c|c|c|}\hline
    orientation& \NB{\tikz[]{\input{\imagesfolder/td-sh0}}}  & \NB{\tikz[yscale=-1]{\input{\imagesfolder/td-sh0}}} & \NB{\tikz[xscale=-1]{\input{\imagesfolder/td-sh0}}} &\NB{\tikz[scale=-1]{\input{\imagesfolder/td-sh0}}} \\ \hline
    contribution to $D_{\mathrm{comb.}}$ &$\displaystyle{\frac12}$& $\displaystyle{\frac12}$ &$\displaystyle{-\frac12}$&$\displaystyle{-\frac12}$\\ \hline
  \end{tabular}
  \label{tab:BR1}
\end{table}

If the vertex  $v$ is of type \NB{\tikz[]{}}, the situation is quite similar:  If the vectorial edge is colored by $\{b\}$ with $b\neq a$, then there is no contribution of $v$ to $D_{\mathrm{comb.}}$ since both $\mymacroone{a}{ \myN}(\web, (o,c))$ and $\mymacrotwo{a}{ \myN}(\web, (o,c))$ are locally smooth. If it is colored by $\{a\}$, then both  $\mymacroone{a}{ \myN}(\web, (o,c))$ and $\mymacrotwo{a}{ \myN}(\web, (o,c))$ have a right-angle at $v$ (both with normalized combinatorial curvature equal to $\pm \frac14$) and the contribution of $v$ to $D_{\mathrm{comb.}}$ depends on the orientation and is summarized in Table~\ref{tab:BR2}.

\begin{table}[ht]
  \centering
  \begin{tabular}{|p{2.7cm}|c|c|c|c|}\hline
    orientation and embedding & \NB{\tikz[]{\input{\imagesfolder/td-sh1}}}  & \NB{\tikz[yscale=-1]{\input{\imagesfolder/td-sh1}}} & \NB{\tikz[xscale=-1]{\input{\imagesfolder/td-sh1}}} &\NB{\tikz[scale=-1]{\input{\imagesfolder/td-sh1}}} \\ \hline
    contribution to $D_{\mathrm{comb.}}$ &$\displaystyle{-\frac12}$& $\displaystyle{-\frac12}$ &$\displaystyle{\frac12}$&$\displaystyle{\frac12}$\\ \hline
  \end{tabular}
  \label{tab:BR2}
\end{table}

If the vertex $v$ is of type \NB{\tikz[]{}} or \NB{\tikz[]{}}, then both $\mymacroone{a}{ \myN}(\web, (o,c))$ and $\mymacrotwo{a}{ \myN}(\web, (o,c))$ have a right-angle at $v$, however they are of opposite signs so that the contribution of $v$ to $D_{\mathrm{comb.}}$ is $0$.

Finally we have:
\begin{equation}\label{eq:Dcomb}
  \begin{split}
    D_{\mathrm{comb.}} = &\frac{1}{2}\left( + \#\left\{ \NB{\tikz[]{}} \right\} + \#\left\{\NB{\tikz[yscale=-1]{}} \right\} - \#\left\{ \NB{\tikz[xscale=-1]{}} \right\} - \#\left\{\NB{\tikz[scale=-1]{}} \right\} \right) \\
&-\frac{1}{2}\left( + \#\left\{ \NB{\tikz[]{}} \right\} + \#\left\{\NB{\tikz[yscale=-1]{}} \right\} - \#\left\{ \NB{\tikz[xscale=-1]{}} \right\} - \#\left\{\NB{\tikz[scale=-1]{}} \right\} \right).
  \end{split}
\end{equation}
and it follows from \eqref{eq:rhocomb}, \eqref{eq:gshsh} and \eqref{eq:Dcomb}
\begin{equation}
  \label{eq:7}
  D= D_{\mathrm{comb.}} + D_{\mathrm{smooth}} = \gshiftN{\web} + \shiftN{\cycle}. \qedhere
\end{equation}
\end{proof}

\subsection{Relation with type $\texorpdfstring{\mathsf{A}}{A}$}
\label{sec:relation-with-type}

In this section, we aim to express $\ev{\web}=\evN{\web}$ as a linear
combination of $\evA{\web^\ell}$, where $\web^\ell$ are webs of type
$\mathsf{A}$ and $\evA{\cdot}$ is the evaluation of such
webs. Heuristically, this should correspond to the
branching rules of $\mathfrak{sl}_N\subset \sotN$. The restriction of the vector representation of $\sotN$ decomposes as the direct sum of the vector representation of $\mathfrak{sl}_N$ and its dual, while the even/odd spin representations decompose as the direct sum of even/odd exterior powers of the vector representation of $\mathfrak{sl}_N$.
We will not make this precise, but these branching rules are exactly  the motivation for the combinatorial colorings introduced earlier. For instance, the choice of orientations for the vectorial unoriented edges.

  Before stating a
precise result, we need a pair of definitions and notations.

\begin{dfn}
  An \emph{$\A$-labeling} of a web $\web$ is a pair $\ell = (\ell_o, \ell_t)$, where $\ell_o$ gives an orientation to vectorial edges of $\web$ and $\ell_t$ associated a non-negative integers to each spin edge of $\web$, respecting their parities, and such that the flow is preserved at each vertex.

  Let $\ell$ be an $\A$-labeling of a $\web$ and $v$ be a vertex of $\web$. Define the $\A$-shift of $(\ell, v)$ to be the integer $\Ashift{\ell,v}$ given the following rules depending on the local configuration\footnote{In \eqref{eq:Ashift-begin}--\eqref{eq:Ashift-end}, in the column on the left-hand side, $k$ is an even non-negative integer, and $k$ is an odd non-negative integer in the column on the right-hand side.}:
  \begin{align}\label{eq:Ashift-begin}
    \Ashift{\ttkzAshift[xscale=-1]{->}{left}{k}{k+1}} &=\frac{ k}{2}, &
    \Ashift{\ttkzAshift[xscale=-1]{-<-}{left}{k+1}{k}} &=\frac{ k}{2}, \\
    \Ashift{\ttkzAshift[scale=-1]{-<-}{left}{k}{k+1}} &=\frac{ k}{2}, &
    \Ashift{\ttkzAshift[scale=-1]{->}{left}{k+1}{k}} &=\frac{ k}{2}  \\
    \Ashift{\ttkzAshift[]{->}{right}{k}{k+1}} &=\frac{ -k}{2}, & 
    \Ashift{\ttkzAshift[]{-<-}{right}{k+1}{k}} &=\frac{ -k}{2}, \\
    \Ashift{\ttkzAshift[yscale=-1]{-<-}{right}{k}{k+1}} &=\frac{ -k}{2}, &
    \Ashift{\ttkzAshift[yscale=-1]{->}{right}{k+1}{k}} &=\frac{ -k}{2}. \label{eq:Ashift-end}
  \end{align}
  Finally define the $\A$-shift of $\ell$ to be the integer $\Ashift{\ell}$ defined by:
  \begin{equation}
    \label{eq:10}
    \Ashift{\ell} =  \sum_{v\in V(\web)}\Ashift{\ell,v}.
  \end{equation}
\end{dfn}
If $\web$ is a web and $\ell$ is a labeling of $\web$, then $\web^\ell$
denotes the web $\web$ endowed with the extra combinatorial features
provided by $\ell$. Such a labeled web is\footnote{Not all $\A$-webs
  can be obtained as labeled web.} then a plane trivalent oriented
graph with a \emph{thickness} function (denoted $t$) from the edges to
$\NN$ which preserves the flow. Such a graph is often called an
\emph{$\A$-web} or a MOY graph. These graphs are well-studied since
they encodes the finite dimensional representation theory of $\glN$
and $U_q(\glN)$ \cite{MOY, MR3263166, 1701.02932,RoseTub}.

  \begin{dfn}
    Let $\webA$ be an $\A$-web, a \emph{$\glN$-coloring} is a map $c$ from the set of edges of $\webA$ to $\powerset(\setN)$, such that:
    \begin{itemize}
    \item For each edge, $\#c(e) = t(e)$.   
    \item If $e_1, e_2, e_3$ meet at a vertex with
      $t(e_1)= t(e_2)+t(e_3)$, then $c(e_1)=c(e_2)\sqcup c(e_3)$. In
      other words, the coloring satisfies itself some kind of flow
      condition.
    \end{itemize}
    The set of $\glN$-coloring of an $\A$-web $\webA$ is denoted by
    $\col[\glN]{\webA}$.
  \end{dfn}

  Note that a $\glN$-colored $\A$-web and a $\sotN$-colored web carry
  the same type of information. Hence we can import a lot of
  definitions of Section~\ref{sec:webs} in this new context.

  \begin{dfn}[{\cite{LHRmoy}}]
    The \emph{degree} of a $\glN$-coloring $c$ of an $\A$-web $\webA$ is the integer given by the following formula:
    \begin{equation}
      \label{eq:8}
      \deg[\glN](c)=\sum_{1\leq a < b \leq \myN} \rotational{\mymacroone{a}{ b}(\webA,c)}. 
    \end{equation}
    The \emph{$\glN$-evaluation} of an $\A$-web is the element of $\NN[q, q^{-1}]$ given by the following formula:
    \begin{equation}
      \label{eq:9}
      \evA{\webA} = \sum_{c \in \col[\glN]{\webA}} q^{\deg(c)}.
    \end{equation}
    If $\webA$ is an $\A$-web, its \emph{rotational}
    $\rotational{\webA}$ is the rotational of the collection of plane
    cycles obtained when replacing each edge of $\webA$ by a number of
    parallel arcs given by its thickness and joining these arcs at
    vertices in the only crossing-less manner.
  \end{dfn}
  
  \begin{prop} \label{prop:BR-typeA}
    Let $\web$ be a web, then, one has:
    \begin{equation}
      \label{eq:11}
      \ev{\web} = q^{\frac{-\myN(\myN-1)}{2}\rotational{\webspin}}\sum_{\substack{\text{$\ell$ labeling} \\ \text{of $\web$}}} q^{\Ashift{\ell}}q^{(\myN-1)\rotational{\web^\ell}}\evA{\Gamma^\ell}.
    \end{equation}
  \end{prop}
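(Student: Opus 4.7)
The proof parallels that of Proposition~\ref{prop:BR-typeD}: we set up a bijection between $\col[\sotN]{\web}$ and the set of pairs $(\ell, c_{\glN})$ with $\ell$ an $\A$-labeling of $\web$ and $c_{\glN}\in \col[\glN]{\web^\ell}$, and then compare the two notions of degree.

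The bijection sends an $\sotN$-coloring $(o,c)$ to the pair $(\ell, c_{\glN})$ with $\ell=(\ell_o, \ell_t):=(o, e\mapsto \#c(e))$ and $c_{\glN}:=c$ viewed as a map on edges of $\web^\ell$. Under this identification, the $\sotN$-flow condition at a vertex---that the symmetric difference of the two spin colors equals the singleton vectorial color---is precisely the $\A$-web flow condition that one spin color be the disjoint union of the other with the vectorial color. The inverse sends $(\ell, c_{\glN})$ back to $(o, c):=(\ell_o, c_{\glN})$, so the correspondence is a bijection, and the $\A$-labeling data $\ell$ is exactly what is needed to reconstruct $\web^\ell$ from $\web$.

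The degree identity to check is
\[
\deg[\sotN](o, c) - \deg[\glN](c_{\glN}) = -\tfrac{\myN(\myN-1)}{2}\rotational{\webspin} + \Ashift{\ell} + (\myN-1)\rotational{\web^\ell}.
\]
Rewriting the left-hand side as
\[
\sum_{1\le a<b\le \myN}\Bigl[\rotational{\mymacroone{a}{b}(\web,(o,c))} - \rotational{\mymacroone{a}{b}(\web^\ell, c_{\glN})} + \rotational{\mymacrotwo{a}{b}(\web,(o,c))}\Bigr]
\]
and embedding $\web$ so that at each vertex the two spin edges form a smooth arc and the vectorial edge is orthogonal to it (as in the proof of Proposition~\ref{prop:BR-typeD}), we decompose each rotational into a smooth-curvature contribution along edges and a combinatorial-curvature contribution at vertices. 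The smooth contribution along an edge $e$ picks up a weight that depends only on $c(e)$ and how many pigments $a<b$ make $e$ appear in the relevant bicolored cycles; after summation, this reproduces $(\myN-1)\rotational{\web^\ell}$ from the $\mymacroone{}$-part and $-\myN(\myN-1)/2\cdot\rotational{\webspin}$ from the $\mymacrotwo{}$-part, both of which are topological quantities determined entirely by $\ell$ and the spin part of $\web$.

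The combinatorial part is the main technical obstacle: at each vertex $v$, the sum over pairs $a<b$ of the combinatorial right-angle contributions from all $\mymacroone{a}{b}$ and $\mymacrotwo{a}{b}$ cycles passing through $v$ must collapse to $\Ashift{\ell, v}$. This is verified by a case analysis over the four local models of \eqref{eq:local-models-webs}, the two orientations of the incident vectorial edge, and the incidence of $a$ and $b$ with the incident spin colors. The key simplification is that the only pairs $(a,b)$ producing a non-zero right angle at $v$ are those in which the vectorial edge's color is $\{a\}$ or $\{b\}$; counting such pigments on the smaller of the two adjacent spin edges gives exactly the $\pm k/2$ pattern of \eqref{eq:Ashift-begin}--\eqref{eq:Ashift-end}. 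Summing the vertex contributions yields the $\Ashift{\ell}$ term, completing the identity and hence the proof.
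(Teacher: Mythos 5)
Your overall strategy is the right one and matches the paper: set up the bijection between $\sotN$-colorings of $\web$ and pairs (labeling $\ell$, $\glN$-coloring of $\web^\ell$), then compare degrees via a decomposition of the rotational into smooth curvature along edges plus combinatorial curvature at vertices. However, the bookkeeping of which terms in the final identity come from where is incorrect in several places, and as written the argument does not close.

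First, the terms $\rotational{\mymacroone{a}{b}(\web,(o,c))} - \rotational{\mymacroone{a}{b}(\web^\ell, c_{\glN})}$ are identically zero, not merely equal after summing and decomposing. A $\glN$-colored $\A$-web and an $\sotN$-colored web carry exactly the same information, and the first-type bicolored cycle is defined by the same formula in both settings, so the two planar curves coincide and their rotationals cancel cycle-by-cycle. Consequently the degree difference reduces immediately to $D=\sum_{1\le a<b\le \myN}\rotational{\mymacrotwo{a}{b}(\web,(o,c))}$, and \emph{all} of $-\tfrac{\myN(\myN-1)}{2}\rotational{\webspin} + \Ashift{\ell} + (\myN-1)\rotational{\web^\ell}$ must be produced from the second-type cycles alone. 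Your claim that the $\mymacroone$-part ``reproduces $(\myN-1)\rotational{\web^\ell}$'' is therefore wrong; that part contributes nothing.

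Second, and relatedly, the vertex contribution is not simply $\Ashift{\ell,v}$. The quantity $\rotational{\web^\ell}$ itself has a combinatorial-curvature contribution of $\pm\tfrac14$ from each vertex $v$ (call its sign $\epsilon_v$), so the edge curvature alone does not assemble into $(\myN-1)\rotational{\web^\ell}$; the difference $(\myN-1)\epsilon_v/4$ must be supplied by the vertex contribution to $D$. A careful inspection shows that the combinatorial right-angle contribution of $v$ to $D$ equals $\Ashift{\ell,v} - \epsilon_v\tfrac{\myN-1}{4}$, not $\Ashift{\ell,v}$. Without this correction term your decomposition does not sum to the stated right-hand side. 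So while the mechanism (bijection plus curvature split) is correct, you need to (i) observe that the $\mymacroone$-parts cancel outright, (ii) show that the $\mymacrotwo$-smooth edge contributions produce both the vectorial and spin pieces (a vectorial edge contributes its curvature $\myN-1$ times, a spin edge colored by a set of size $k$ contributes $-\tfrac{\myN(\myN-1)}{2}+k(\myN-1)$ times), and (iii) verify the vertex formula $\Ashift{\ell,v}-\epsilon_v\tfrac{\myN-1}{4}$, which then combines with the vertex contributions to $\rotational{\web^\ell}$ to close the identity.
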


  \begin{rmk}
In  \eqref{eq:11}, the sum is a priori infinite, however, only finitely many labelings $\ell$ are such 
$\evA{\Gamma^\ell}\neq 0$, these are precisely labelings for which all edges have thickness less than or equal to $\myN$.
\end{rmk}

  \begin{proof}
    The proof goes along the same strategy as that of
    Proposition~\ref{prop:BR-typeD}. First note that there is a
    canonical one-to-one correspondence between $\sotN$-colorings of
    $\web$ and the union of $\glN$-colorings of $\web^\ell$ for $\ell$
    running through labelings of $\web$. Let $(o,c_{\sotN})$ be a
    $\sotN$-coloring of $\web$. To fix notation, say it corresponds to
    the coloring $c_{\glN}$ of $\web^\ell$.  Following the definition
    of degrees, one has:
    \begin{equation}
      \label{eq:12}
      \deg((o,c_{\sotN})) - \deg[\glN](c_{\glN}) = \sum_{1\leq a < b\leq \myN} \rotational{\mymacrotwo{a}{b}(\web,(o, c_{\sotN}))}. 
    \end{equation}
    Hence it is enough to show that:
    \begin{equation}
      \label{eq:13}
      \sum_{1\leq a < b\leq \myN} \rotational{\mymacrotwo{a}{b}(\web,(o, c_{\sotN}))} =
      \frac{-\myN(\myN-1)}{2}\rotational{\webspin} +{\Ashift{\ell}} +{(\myN-1)\rotational{\web^\ell}}.
    \end{equation}
    As in the proof of Proposition~\ref{prop:BR-typeA}, we suppose
    that neighborhood of vertices are locally embedded as depicted in
    \eqref{eq:orthogonal-models}. Define
    $D = \sum_{1\leq a < b\leq \myN}
    \rotational{\mymacrotwo{a}{b}(\web,(o, c_{\sotN}))}$.  We will
    inspect how edges and vertices contribute to $D$.

    Let $e$ be a vectorial edge, the pair $(o, c_{\sotN})$ orients and
    colors it , by say $\{a \}$ with $a$ in $\setN$. This edge $e$ belongs to
    $\mymacrotwo{i}{j}(\web,(o, c_{\sotN}))$ if and only if
    $a \in\{i,j\}$, and when this is the case it is endowed with its
    orientation given by $o$. Hence the (normalized) curvature of $e$
    contributes $\myN-1$ times to $D$.

    Let $e$ be a spin edge (odd or even). It is given a color
    $X\subset \setN$ by $c_{\sotN}$, let $k$ be the cardinal of
    $X$. The edge $e$ belongs to
    $\mymacrotwo{i}{j}(\web,(o, c_{\sotN}))$ in two cases:
    \begin{itemize}
    \item If $i$ and $j$ belong to $X$, in this case the edge $e$
      appears with its natural orientation. This occurs $k(k-1)/2$
      times.
    \item If neither $i$ nor $j$ belong to $X$, in this case the edge
      $e$ appears with the opposite orientation. This occurs
      $(\myN-k)(\myN-k-1)/2$ times.
    \end{itemize}
    Since
    \[
      \frac{k(k-1)}{2} - \frac{(\myN -k)(\myN-k-1)}{2} =
      -\frac{\myN(\myN -1)}2 + k(\myN-1)
    \]
    Hence the curvature of $e$ contributes
    $\left(-\frac{\myN(\myN -1)}2 + k(\myN-1)\right)$ times to $D$. On
    the other hand it contributes $k$ times to
    $\rotational{\web^\ell}$ and once to $\rotational{\webspin}$.

    Let now $v$ be a vertex. First $v$ contributes to
    $\rotational{\web^\ell}$ with $\pm\frac{1}{4}$, let us denote
    $\epsilon_v$ this sign. One easily checks that the contribution of
    $v$ (via combinatorial curvature) to $D$ is precisely given by
    $\Ashift{\ell, v} - \epsilon_v\frac{N-1}{4}$.

    Summing all contribution, we obtain that:
    \begin{equation}
      \label{eq:14}
      D = \Ashift{\ell} + (N-1)\rotational{\web^\ell} - \frac{N(N-1)}{2}\rotational{\webspin}.
      \qedhere
    \end{equation} 
  \end{proof}

\section{Skein relations}
\label{sec:skein}
This section is devoted to stating and proving skein relations
satisfied by the web evaluation. Note that we only consider closed
webs. To state skein relations, we draw webs with boundaries (as
usually done in the literature), these relations are meant to be
understood in a skein theoretical way. In this section, we assume that
$\myN$ is an integer greater than or equal to $3$.

\subsection{A new kind of vertex}
\label{sec:new-kind-vertex}

\begin{dfn}\label{dfn:gweb}
  A \emph{\gweb{}} is a planar graph, with the same kind of
  combinatorial data as a web, but with the extra flexibility of
  having 4-valent vertices (all adjacent edges being vectorial)
  called \emph{singular vertices}. In other words, we extend our local
  models to vertices of the form:
  \begin{equation}
    \label{eq:15}
    \NB{\tikz[]{}}
  \end{equation}
  
  A \emph{coloring} of a \gweb{} is (essentially) defined in the same way as a
  coloring of a web, but with two modifications. One is a flow-preserving condition at singular
  vertices. The other is that if all edges adjacent to a singular vertex are
  colored with the same one-element subset of $\setN$, then their
  orientations should alternate, and the coloring carries the extra
  information of an element of $\{-, +\}$ attached to this
  vertex. Hence, up to symmetry, a coloring of a singular vertex
  follows one of the six local models:
  \begin{equation}
    \label{eq:15}
    \NB{\tikz[]{\input{\imagesfolder/td-singvertex-colorings-1}}} \quad
    \NB{\tikz[]{\input{\imagesfolder/td-singvertex-colorings-2}}} \quad
    \NB{\tikz[]{\begin{scope}[scale=0.6, font = \tiny]
  \draw[expand style =\stylevect, ->] (0,0)--(45:1) node[above] {$\{b\}$};
  \draw[expand style =\stylevect, -<-] (0,0)--(135:1) node[above] {$\{a\}$};;
  \draw[expand style =\stylevect, ->] (0,0)--(-135:1) node[below] {$\{a\}$};;
  \draw[expand style =\stylevect, -<-] (0,0)--(-45:1)node[below] {$\{b\}$};;
  \node at (0,0) {\singvertex};
\end{scope}
}} \quad
    \NB{\tikz[]{\begin{scope}[scale=0.6, font = \tiny]
  \draw[expand style =\stylevect, ->] (0,0)--(45:1) node[above] {$\{a\}$};
  \draw[expand style =\stylevect, -<-] (0,0)--(135:1) node[above] {$\{a\}$};;
  \draw[expand style =\stylevect, ->] (0,0)--(-135:1) node[below] {$\{b\}$};;
  \draw[expand style =\stylevect, -<-] (0,0)--(-45:1)node[below] {$\{b\}$};;
  \node at (0,0) {\singvertex};
\end{scope}
}} \quad
    \NB{\tikz[]{\begin{scope}[scale=0.6, font = \tiny]
  \draw[expand style =\stylevect, -<-] (0,0)--(45:1) node[above] {$\{b\}$};
  \draw[expand style =\stylevect, -<-] (0,0)--(135:1) node[above] {$\{a\}$};;
  \draw[expand style =\stylevect, ->] (0,0)--(-135:1) node[below] {$\{a\}$};;
  \draw[expand style =\stylevect, ->] (0,0)--(-45:1)node[below] {$\{b\}$};;
  \node at (0,0) {\singvertex};
\end{scope}
}} \quad
    \NB{\tikz[]{\begin{scope}[scale=0.6, font = \tiny]
  \draw[expand style =\stylevect, -<-] (0,0)--(45:1) node[above] {$\{b\}$};
  \draw[expand style =\stylevect, -<-] (0,0)--(135:1) node[above] {$\{a\}$};;
  \draw[expand style =\stylevect, ->] (0,0)--(-135:1) node[below] {$\{b\}$};;
  \draw[expand style =\stylevect, ->] (0,0)--(-45:1)node[below] {$\{a\}$};;
  \node at (0,0) {\singvertex};
\end{scope}
}} 
  \end{equation}  
  for $a\neq b$ pigments in
  $\setN$.
\end{dfn}

In order to define the evaluation of a \gweb{}, one needs to define the
degree of a coloring of such a web. We will first modify
formula~\eqref{eq:def-degree} by setting:
\begin{equation}\label{eq:degree-gcol}
\deg((o,c)) = {\#\{-\} - \#\{+\}} + \sum_{1\leq a < b\leq \myN}
  \rotational{\mymacroone{a }{ b}(\web, (o,c))}+ \rotational{\mymacrotwo{a}{ b}(\web, (o,c))} 
\end{equation}
where $\#\{+\}$ (resp.{} $\#\{-\}$) represent the number of $+$
(resp. $-$) on mono-colored singular vertices.  For formula
\eqref{eq:degree-gcol} to make sense, we need to specify what are
$\mymacroone{a}{ b}(\web, (o,c))$ and
$\mymacrotwo{a}{ b}(\web, (o,c))$ in this new context. More precisely,
we need to describe how to define cycles locally around singular
vertices. For this we refer to Tables \ref{tab:singvertex1} and
\ref{tab:singvertex2}.

\begin{table}[ht]
  \begin{tabular}{|c|c|c|c|c|c|c|} \hline
    &$\begin{array}{c}\mymacroone{a}{ b}\\(a<b)\end{array}$&$\begin{array}{c}\mymacroone{a}{ b} \\  (b<a)\end{array}$&$\begin{array}{c}\mymacroone{a}{ c} \\  (a<c) \end{array}$&$\begin{array}{c}\mymacroone{a}{ c} \\  (c<a)\end{array}$&$\begin{array}{c}\mymacroone{b}{ c} \\  (b<c)\end{array}$&$\begin{array}{c}\mymacroone{b}{ c} \\  (c<b)\end{array}$ \\ \hline
    \NB{\tikz[scale=1]{\input{\imagesfolder/td-singvertex-colorings-1}}}&\NB{\tikz[xscale=-1, scale=0.6]{\input{\imagesfolder/td-singvertex-cycle-smoothing}}}&\NB{\tikz[scale=0.6]{\input{\imagesfolder/td-singvertex-cycle-smoothing}}}&\NB{\tikz[xscale=-1, scale=0.6]{\input{\imagesfolder/td-singvertex-cycle-smoothing}}}&\NB{\tikz[scale=0.6]{\input{\imagesfolder/td-singvertex-cycle-smoothing}}}&\NB{\tikz[scale=0.6]{\singvertextikz[opacity=0.2]
}}& \NB{\tikz[scale=0.6]{}} \\ \hline
    \NB{\tikz[]{\input{\imagesfolder/td-singvertex-colorings-2}}}&\NB{\tikz[ rotate=90, scale=0.6]{\input{\imagesfolder/td-singvertex-cycle-smoothing}}}&\NB{\tikz[xscale=-1, rotate=90, scale=0.6]{\input{\imagesfolder/td-singvertex-cycle-smoothing}}}&\NB{\tikz[rotate=90, scale=0.6]{\input{\imagesfolder/td-singvertex-cycle-smoothing}}}&\NB{\tikz[xscale=-1, rotate=90, scale=0.6]{\input{\imagesfolder/td-singvertex-cycle-smoothing}}}&\NB{\tikz[scale=0.6]{}}& \NB{\tikz[scale=0.6]{}} \\ \hline
    \NB{\tikz[]{\input{\imagesfolder/td-singvertex-colorings-3}}}&\NB{\tikz[rotate=90, scale=0.6]{\input{\imagesfolder/td-singvertex-cycle-smoothing-2}}}&\NB{\tikz[rotate=90, xscale=-1, scale=0.6]{\input{\imagesfolder/td-singvertex-cycle-smoothing-2}}}&\NB{\tikz[rotate=90, scale=0.6]{\singvertextikz[opacity=0.2]
\begin{scope}[red, thick]
  \draw[->] (A) .. controls (O) .. (B);
\end{scope}}}&\NB{\tikz[yscale=-1, rotate=90,scale=0.6]{}}&\NB{\tikz[rotate=90, scale=-0.6]{}}&\NB{\tikz[yscale= -1, rotate=90, scale=-0.6]{}} \\ \hline
    \NB{\tikz[]{\input{\imagesfolder/td-singvertex-colorings-4}}}&\NB{\tikz[xscale=-1, scale=0.6]{\input{\imagesfolder/td-singvertex-cycle-smoothing}}}&\NB{\tikz[scale=0.6]{\input{\imagesfolder/td-singvertex-cycle-smoothing}}}&\NB{\tikz[xscale=-1, scale=0.6]{}}&\NB{\tikz[scale=0.6]{}}&\NB{\tikz[yscale=-1, scale=0.6]{}}&\NB{\tikz[scale=-0.6]{}} \\ \hline
    \NB{\tikz[]{\input{\imagesfolder/td-singvertex-colorings-5}}}&\NB{\tikz[xscale=-1, scale=0.6]{\input{\imagesfolder/td-singvertex-cycle-smoothing}}}&\NB{\tikz[scale=0.6]{\input{\imagesfolder/td-singvertex-cycle-smoothing}}}&\NB{\tikz[rotate=90, scale=0.6]{}}&\NB{\tikz[yscale=-1, rotate=90,scale=0.6]{}}&\NB{\tikz[yscale= -1, rotate=90, scale=-0.6]{}}&\NB{\tikz[rotate=90, scale=-0.6]{}} \\ \hline
    \NB{\tikz[]{\input{\imagesfolder/td-singvertex-colorings-6}}}&\NB{\tikz[xscale=-1, scale=0.6]{\input{\imagesfolder/td-singvertex-cycle-smoothing-2}}}&\NB{\tikz[scale=0.6]{\input{\imagesfolder/td-singvertex-cycle-smoothing-2}}}&\NB{\tikz[rotate=180, scale=0.6]{\singvertextikz[opacity=0.2]
\begin{scope}[red, thick]
  \draw[->] (A) .. controls (O) .. (C);
\end{scope}}}&\NB{\tikz[scale=0.6]{}}&\NB{\tikz[rotate=90, scale=0.6]{}}&\NB{\tikz[rotate= 270, scale=0.6]{}} \\ \hline
  \end{tabular}
  \caption{Local definition bicolored cycles (of first type) at singular vertices}\label{tab:singvertex1}
 \end{table}
 
 \begin{table}[ht]
  \begin{tabular}{|c|c|c|c|} \hline
    &\!\!$\mymacrotwo{a}{ b}$\!\!&\!\!$\mymacrotwo{a}{ c}$\!\!&\!\!$\mymacrotwo{b}{ c}$ \\ \hline
    \NB{\tikz[scale=1]{\input{\imagesfolder/td-singvertex-colorings-1}}}&

                                                                                 \NB{\tikz[xscale=1, scale=0.6]{\input{\imagesfolder/td-singvertex-cycle-smoothing}}}&\NB{\tikz[xscale=1, scale=0.6]{\input{\imagesfolder/td-singvertex-cycle-smoothing}}}&\NB{\tikz[scale=0.6]{}}\\ \hline \NB{\tikz[]{\input{\imagesfolder/td-singvertex-colorings-2}}}&\NB{\tikz[ xscale= -1, rotate=90, scale=0.6]{\input{\imagesfolder/td-singvertex-cycle-smoothing}}}&\NB{\tikz[xscale= -1, rotate=90, scale=0.6]{\input{\imagesfolder/td-singvertex-cycle-smoothing}}}&\NB{\tikz[scale=0.6]{}}\\ \hline
    \NB{\tikz[]{\input{\imagesfolder/td-singvertex-colorings-3}}}&\NB{\tikz[scale=0.6]{\input{\imagesfolder/td-singvertex-cycle-smoothing}}}&\NB{\tikz[yscale= -1, rotate=90, scale=0.6]{}}&\NB{\tikz[yscale= -1, rotate=90, scale=-0.6]{}}\\ \hline
    \NB{\tikz[]{\input{\imagesfolder/td-singvertex-colorings-4}}}&\NB{\tikz[rotate=90, xscale=-1, scale=0.6]{\input{\imagesfolder/td-singvertex-cycle-smoothing}}}&\NB{\tikz[scale=0.6]{}}&\NB{\tikz[xscale=-1,yscale=-1, scale=0.6]{}}\\ \hline
    \NB{\tikz[]{\input{\imagesfolder/td-singvertex-colorings-5}}}&\NB{\tikz[rotate=90, xscale=-1, scale=0.6]{\input{\imagesfolder/td-singvertex-cycle-smoothing-2}}}&\NB{\tikz[yscale=-1, rotate=90, scale=0.6]{}}&\NB{\tikz[yscale= 1, rotate=90, scale=-0.6]{}}\\ \hline
    \NB{\tikz[]{\input{\imagesfolder/td-singvertex-colorings-6}}}&\NB{\tikz[rotate=90, xscale=-1, scale=0.6]{\input{\imagesfolder/td-singvertex-cycle-smoothing-2}}}&\NB{\tikz[rotate=0, scale=0.6]{}}&\NB{\tikz[rotate=-90, scale=0.6]{}}\\ \hline
  \end{tabular}
        \caption{Local definition bicolored cycles (of second type) at singular vertices}\label{tab:singvertex2}
 \end{table}

We now can prove Theorem \ref{theo:B} from the introduction.

\begin{thm}\label{thm:B}
    If $\Gamma$ is a $4$-valent non-oriented planar graph, then $\evN{\Gamma}\in \mathbb{Z}_{\ge 0}[q^{\pm 1}]$.
\end{thm}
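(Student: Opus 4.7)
The plan is to view the $4$-valent graph $\Gamma$ as a generalized web in the sense of Definition~\ref{dfn:gweb}, by treating each $4$-valent vertex as a singular vertex and each edge as a vectorial edge. Under this identification, the extended state sum
\[
  \evN{\Gamma} = \sum_{(o,c) \in \col[\myN]{\Gamma}} q^{\deg(o,c)},
\]
with the degree given by equation~\eqref{eq:degree-gcol}, is the natural candidate evaluation. Since $\col[\myN]{\Gamma}$ is finite and each degree is an integer, it is tautological from this formula that the right-hand side lies in $\mathbb{Z}_{\ge 0}[q^{\pm 1}]$; positivity is therefore immediate once the state sum is accepted as the correct definition.

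To complete the argument, one must verify that this state sum coincides with the evaluation specified in the introduction by equation~\eqref{eq:4v-def-int}. This is precisely what the forthcoming Lemma~\ref{lem:singular-square} asserts: the quantity attached to a singular vertex by the state sum, once multiplied by $[2]^{[\myN-3]}$, agrees with the quantity attached to each of the four resolution squares. Applying this local identity at every singular vertex of $\Gamma$ then shows that the original definition and the generalized-web state sum produce the same element of $\mathbb{Z}[q,q^{-1}]$, yielding both integrality and positivity simultaneously---and in particular resolving the two difficulties flagged in the remark following Theorem~\ref{theo:B}.

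The main obstacle is thus concentrated in Lemma~\ref{lem:singular-square}. Its proof should proceed vertex-locally: for each of the six local coloring models at a singular vertex listed in Definition~\ref{dfn:gweb} (with the extra $\pm$ label in the monochromatic case), one exhibits a bijection with the local colorings of a chosen square, arranged so that summing over the additional degrees of freedom present in the square---namely, the colors of the two new spin edges and of the internal vectorial edge---yields exactly the factor $[2]^{[\myN-3]} = \prod_{i=1}^{\myN-3}(q^i + q^{-i})$. The combinatorics producing this product should mirror the even/odd-spin circle computation of Examples~\ref{ex:even} and~\ref{ex:odd}, where such a product appears by summing $q^{\pm(b-1)}$ over parities of subsets of $\{2,\dots,\myN\}$. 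The delicate part will be tracking the bicolored cycle contributions recorded in Tables~\ref{tab:singvertex1} and~\ref{tab:singvertex2} and matching them to the rotational contributions coming from inside the square; once these cancellations are identified case by case, the factor $[2]^{[\myN-3]}$ should drop out uniformly across all six coloring models, and Theorem~\ref{thm:B} follows.
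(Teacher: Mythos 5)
Your core argument --- view $\Gamma$ as a generalized web with singular vertices and vectorial edges, then observe that the state sum of equation~\eqref{eq:degree-gcol} is manifestly a sum of integer powers of $q$ with non-negative coefficients --- is exactly the paper's proof. The consistency check you flag against equation~\eqref{eq:4v-def-int} is indeed the content of Lemma~\ref{lem:singular-square}, but the paper takes the state sum itself as the definition of $\evN{\Gamma}$ for generalized webs, so that lemma is a separate verification and not strictly part of the proof of Theorem~\ref{thm:B}.
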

\begin{proof}
The evaluation of \gweb{} is defined combinatorially, so naturally is an element of $\mathbb{Z}_{\ge 0}[q^{\pm 1}]$.
\end{proof}

The evaluation of a \gweb{} is defined without using a the following Lemma which relates evaluation of generalized webs to evaluation of $\mathsf{D}$-webs. 
 
 \begin{lem}\label{lem:singular-square}
   For $\myN \geq 3$, the web evaluation $\ev{\cdot}$ satisfies the
   following skein relations:
   \begin{equation}
        \ev{\NB{\tikz[]{}} }
     =  \ev{\NB{\tikz[]{}} }
     =  \ev{\NB{\tikz[]{}} }
     =  \ev{\NB{\tikz[]{}} }
     =    [2]^{[N-3]} \ev{\NB{\tikz[]{}} }.
   \end{equation}
\end{lem}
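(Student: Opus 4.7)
The plan is to prove each equality by a state-sum bijection. Fix one of the four squares $\web_i$ and partition $\col[\myN]{\web_i}$ according to the restriction to the four boundary vectorial half-edges. Since these half-edges are vectorial, each restriction matches one of the six local singular-vertex models in \eqref{eq:15}. So it suffices to show that, for each fixed boundary restriction, the state sum
\[
\sum_{\substack{(o,c)\in \col[\myN]{\web_i} \\ (o,c) \text{ restricts to given boundary}}} q^{\deg(o,c)}
\]
equals $[2]^{[N-3]}$ times $q^{\text{deg contribution}}$ of that boundary configuration as a singular-vertex coloring.

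For each fixed boundary data, the flow conditions at the four trivalent vertices of $\web_i$ constrain the colors of its two internal spin edges. In the bichromatic case (boundary coloring using two distinct pigments $a,b$, i.e.\ the first four singular-vertex models), flow forces each internal spin color to consist of a prescribed ``minimal'' subset of $\{a,b\}$ together with an arbitrary ``extra'' subset $S \subseteq \setN\setminus\{a,b\}$; the parity of $S$ is determined by the parities (odd or even) of the spin edges constituting the square $\web_i$. In the monochromatic case (boundary using a single pigment $a$ with a $\pm$ label), a similar description applies with $S \subseteq \setN\setminus\{a\}$.

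The key step is to match degrees. For each pair of pigments $1\le i<j\le \myN$, the bicolored cycles $\mymacroone{i}{j}$ and $\mymacrotwo{i}{j}$ of $(\web_i,(o,c))$ split into an ``outside'' part (depending only on the boundary data) and a ``local'' part (inside the square, depending on $S$). When the pair $\{i,j\}$ meets $\{a,b\}$, the outside part matches exactly the bicolored cycle prescribed at the singular vertex by Tables~\ref{tab:singvertex1} and~\ref{tab:singvertex2}, and the local part contributes a bounded rotational correction that does not depend on $S$. When $\{i,j\}\subseteq \setN\setminus\{a,b\}$, the outside part vanishes and the local part is a small planar cycle whose rotational depends on whether $i,j\in S$ in the same way as on an even-spin circle. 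Summing over all allowed $S$ then mirrors the computation in Example~\ref{ex:even} applied to the $(N-2)$-element pigment set $\setN\setminus\{a,b\}$, yielding exactly the factor
\[
\prod_{k=1}^{N-3}(q^{k}+q^{-k}) = [2]^{[N-3]}.
\]
The remaining contribution assembles into $q^{\deg(\text{singular vertex coloring})}$ in the sense of \eqref{eq:degree-gcol}, establishing the equality with the singular-vertex side.

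The main obstacle is the bookkeeping: six boundary types for each of four squares, with the bicolored-cycle tables at the singular vertex needing to be verified entry by entry against the actual local behaviour of $\mymacroone{i}{j}$ and $\mymacrotwo{i}{j}$ inside $\web_i$. A convenient organizational tool is to first check the bichromatic models, where the ``extra'' subset $S$ is genuinely arbitrary and the $[2]^{[N-3]}$ factor appears cleanly; the monochromatic cases then require tracking the extra $\pm 1$ from the sign label, which matches a single additional rotational contribution from a small closed cycle through the square. The equality of all four squares follows in parallel: the four configurations differ by local swaps of spin parity and orientation that shift the parity condition on $S$ and the local rotational contributions in a compensating way, leaving the total state sum unchanged.
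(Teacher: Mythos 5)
Your overall strategy -- partition colorings by boundary data, match each boundary type to a singular-vertex coloring, and show that summing over the residual internal freedom produces the $[2]^{[N-3]}$ factor -- is exactly the strategy the paper uses. The paper likewise defines a $2^{N-3}$-to-one map from $\col{\web_\square}$ to $\col{\web_\times}$, canonical in the bichromatic case, and treats the monochromatic case (all four boundary pigments equal) by an explicit degree computation.

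There is, however, a specific incorrect claim in your degree bookkeeping. You assert that for a pair $\{i,j\}$ \emph{meeting} $\{a,b\}$, ``the local part contributes a bounded rotational correction that does not depend on $S$,'' and that the $[2]^{[N-3]}$ factor comes solely from pairs $\{i,j\}\subseteq \setN\setminus\{a,b\}$. This is false in the mixed case where exactly one of $i,j$ lies in $\{a,b\}$, say $j=a$ and $i\notin\{a,b\}$: then whether $i\in S$ determines whether the $i$-colored cycle passes through the square, so the local rotational of $\mymacroone{i}{a}$ and $\mymacrotwo{i}{a}$ does depend on $S$. The paper's explicit computation makes this visible: in the monochromatic case they write the degree increment as three sums, and the third term $-\sum_{i\in X,\,i>a}2$ is precisely the $S$-dependent (here $X$-dependent) contribution from pairs with one pigment equal to $a$. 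This term is absorbed into the first two and is needed to make the sum over $X$ factor as a clean product $\prod(q^k+q^{-k})$. If you took your decomposition literally -- ignoring the $S$-dependence from mixed pairs -- the arithmetic would not close, and you would not recover the factor $[2]^{[N-3]}$ with the correct overall $q$-power. The fix is straightforward: in the mixed case, track the $S$-dependent local correction alongside the pure $\setN\setminus\{a,b\}$ pairs, and check that they jointly reproduce a translate of the even-spin-circle computation.
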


 \begin{proof}
We only prove
   $\ev{\NB{\tikz[]{}}} = [2]^{[N-3]} \ev{\NB{\tikz[]{}}}$,
   the other ones are similar. Let us denote $\web_\square$ (resp.{}
   $\web_\times$) the web on the LHS (resp.{} RHS) of the desired equality.

   Let us define a surjective $2^{N-3}$-to-one map from the set of
   $\sotN$-colorings of $\web_\square$ to that of $\web_\times$. This
   map is canonical if the four ends of the square are not all
   colored by the same pigments (note that in this case, there are
   exactly two pigments appearing at the ends of the square and the
   flow condition is satisfied, so that this indeed gives a coloring of
   $\web_\times$). A careful but easy inspection of degrees for the
   various $\sotN$-colorings of $\web_\square$ corresponding to a
   given $\sotN$-coloring of $\web_\times$ shows that indeed
   \begin{equation}
     \label{eq:16}
     \sum_{(o,c)\in \col[\neq]{\web_\square}} \deg((o,c)) = [2]^{[\myN-3]}
     \sum_{(o,c)\in \col[\neq]{\web_\times}} \deg((o,c)).
   \end{equation}
   Here $\col[\neq]{\web_\square}$ (resp.{}
   $\col[\neq]{\web_\times}$) denotes the set of colorings of
   $\web_\square$ (resp. ${\web_\times}$)  which do not associate the same pigments to the four
   vectorial edges touching the square.

   Hence we need to focus on $\sotN$-colorings of $\Gamma_\square$ and
   $\Gamma_\times$ with the same pigments on all four ends of the square. To fix notations, let $(o,c)$ be such a coloring and
   $a \in \setN$ be the pigment. Let $b$ the maximum of
   $\setN \setminus \{a\}$ and suppose that the orientations given by
   the coloring are locally given by:
   \[
\NB{\tikz[]{  \begin{scope}[scale = 0.6]
    \coordinate (A) at ( 135:1) ;
    \coordinate (B) at (  45:1) ;
    \coordinate (C) at ( -45:1) ;
    \coordinate (D) at (-135:1) ;
    \coordinate (a) at ( 135:0.6) ;
    \coordinate (b) at (  45:0.6) ;
    \coordinate (c) at ( -45:0.6) ;
    \coordinate (d) at (-135:0.6) ;
    \draw[<-] (A) --(a);
    \draw[>-] (B) --(b);
    \draw[<-] (C) --(c);
    \draw[>-] (D) --(d);
    \draw[expand style =\styleeven] (a) -- (b) \arreven;
    \draw[expand style =\styleodd] (b) -- (c) \arrodd;
    \draw[expand style =\styleeven] (c) -- (d) \rarreven;
    \draw[expand style =\styleodd] (d) -- (a) \arrodd;
  \end{scope}

}}.
   \]
   Denote by $X$ the color given to the vertical edges in the square
   (in particular $a \notin X$).  Let us rename $(o,c)=(o,c_X)$ and
   denote $(o, c_\emptyset)$ the coloring identical to $(o,c_X)$ but
   with $\emptyset$ in place of $X$ coloring the vertical edges. An
    inspection of bicolored cycles shows that:
   \begin{equation}
     \deg(o, c_X) - \deg(o,c_\emptyset) = -\sum_{\substack{i \in X \\ i < a}}  2(i-1)  - \sum_{\substack{i \in X \\ i > a}}  2(i-2) - \sum_{i \in X \\ i > a} 2.
   \end{equation}
   The two first terms on the RHS accounts for bicolored cycles of
   either type with both pigments different from $a$. The last one for
   bicolored cycles with one pigment equal to $a$. So that if one sums
   over all possible $X$'s, one gets:
   \begin{equation}
     \sum_{X} q^{\deg(o, c_X) - \deg(o,c_\emptyset)}= q^{-(\myN-1)(\myN-2)/2 +(\myN-a)} (q^{a-2}+q^{2-a}) \prod_{i=1}^{\myN-3}(q^i + q^{-i}).
     \end{equation}
     On the other hand, there are two colorings of $\web_\times$ which
     corresponds to this situation. We may denote them $(o,c_+)$ and
     $(o, c_-)$. One has:
     \begin{equation}
       \deg(o, c_+) - \deg(o,c_\emptyset) = q^{-1 -(\myN-1)(\myN-2)/2
         +(a-1)-(\myN-a) }
     \end{equation}
     {and}
     \begin{equation}                                   
       \deg(o, c_-) - \deg(o,c_\emptyset) = q^{1-(\myN-1)(\myN-2)/2 - (\myN-1)}.
     \end{equation}
     The $\pm 1$ term corresponds to the extra weight coming from the
     $\pm$-decoration included in $c_\pm$. The $-(\myN-1)(\myN-2)/2$
     accounts for the bicolored cycles (pigments different from $a$)
     in the square of $(\web_\square,(o, c_\emptyset))$ which do not
     appear in $(\web_\times ,(o,c_\pm))$. The remaining terms account
     for the difference between bicolored cycles (one of the pigment
     being $a$) in $(\web_\square,(o, c_\emptyset))$ and
     $(\web_\times ,(o,c_\pm))$ of the first type for $c_+$ and of the
     second type for $c_-$.  Hence, we do have:
     \begin{equation}
       \sum_{X} q^{\deg((o, c_X))} = [2]^{[\myN-3]}(q^{\deg((o,c_+))} +  q^{\deg((o,c_-))})\qedhere.
     \end{equation}
 \end{proof}
 
\subsection{Local relations}
\label{sec:local-relations}
From the definition of $\ev{\cdot}$ and
Examples~\ref{ex:vect}, \ref{ex:even}, \ref{ex:odd}, we get:

 \begin{prop}\label{prop:disjoint-union}
   The web evaluation $\ev{\cdot}$ is multiplicative under disjoint
   unions. Moreover, we have:
   \begin{align}
     \label{eq:17}
     \ev{\NB{\tikz[]{}}} &= [2\myN-1] +1 \\ \ev{\NB{\tikz[]{  \begin{scope}[scale = 0.6]
    \draw[expand style =\styleodd] (0,0) arc (0:360:0.5cm) \arrodd; 
  \end{scope}

}}}&=\ev{\NB{\tikz[]{  \begin{scope}[scale = 0.6]
    \draw[expand style =\styleodd] (0,0) arc (0:360:0.5cm) \rarrodd; 
  \end{scope}

}}}=\ev{\NB{\tikz[]{  \begin{scope}[scale = 0.6]
    \draw[expand style =\styleeven] (0,0) arc (0:360:.5cm) \arreven; 
  \end{scope}

}}}=\ev{\NB{\tikz[]{  \begin{scope}[scale = 0.6]
    \draw[expand style =\styleeven] (0,0) arc (0:360:.5cm) \rarreven; 
  \end{scope}

}}}= [2]^{[\myN-1]}.
   \end{align}
 \end{prop}

 An adaptation of Example~\ref{ex:theta} proves:
 \begin{prop}\label{p:combinatorial-skein-bigon}
   The following local relations hold for $\myN\geq 2$:
   \begin{align}
     \ev{\NB{\tikz[]{\begin{scope}
  \draw[expand style= \stylevect] (-0.5,0) -- (-0.3,0);
  \draw[expand style= \stylevect] ( 0.5,0) -- ( 0.3,0);
  \draw[expand style= \styleodd] (-0.3,0) ..  controls +(0.2, 0.3) and +(-0.2, 0.3) .. (0.3,0) \arrodd;
  \draw[expand style= \styleeven] (-0.3,0) ..  controls +(0.2, -0.3) and +(-0.2, -0.3) .. (0.3,0) \rarreven;
\end{scope}
}}} =
     \ev{\NB{\tikz[yscale=-1]{}}}=&
     [2]^{[\myN-2]}
     \ev{\NB{\tikz[]{\begin{scope}
  \draw[expand style= \stylevect] (-0.5,0) -- ( 0.5,0);
\end{scope}}}}, \\
     \ev{\NB{\tikz[]{\begin{scope}
  \draw[expand style= \styleeven] (-0.5,0) -- (-0.3,0) \arreven;
  \draw[expand style= \styleeven] ( 0.5,0) -- ( 0.3,0) \arreven;
  \draw[expand style= \styleodd] (-0.3,0) ..  controls +(0.2, 0.3) and +(-0.2, 0.3) .. (0.3,0) \arrodd;
  \draw[expand style= \stylevect] (-0.3,0) ..  controls +(0.2, -0.3) and +(-0.2, -0.3) .. (0.3,0);
\end{scope}
}}} =
     \ev{\NB{\tikz[yscale=-1]{}}}=&
     [\myN]
     \ev{\NB{\tikz[]{\begin{scope}
  \draw[expand style= \styleeven] (-0.5,0) -- ( 0.5,0) \arreven;
\end{scope}
}}}, \\
     \ev{\NB{\tikz[]{\begin{scope}
  \draw[expand style= \styleodd] (-0.5,0) -- (-0.3,0) \arrodd;
  \draw[expand style= \styleodd] ( 0.5,0) -- ( 0.3,0) \arrodd;
  \draw[expand style= \styleeven] (-0.3,0) ..  controls +(0.2, 0.3) and +(-0.2, 0.3) .. (0.3,0) \arreven;
  \draw[expand style= \stylevect] (-0.3,0) ..  controls +(0.2, -0.3) and +(-0.2, -0.3) .. (0.3,0);
\end{scope}
}}} =
     \ev{\NB{\tikz[yscale=-1]{}}}=&
     [\myN]
     \ev{\NB{\tikz[]{\begin{scope}
  \draw[expand style= \styleodd] (-0.5,0) -- ( 0.5,0) \arrodd;
\end{scope}
}}}.
 \end{align}
 \end{prop}

 \begin{cor}\label{cor:curl-sing}
   The following local relation holds:
   \begin{equation}
     \label{eq:19}
     \ev{\NB{\tikz[]{\begin{scope}
  \clip (-0.5, -0.21) rectangle (0.5, 0.31);
  \draw (-0.5, -0.2) .. controls +(0.1,0) and +(-0.2, -0.2) .. (0,0);
  \draw (0.5, -0.2) .. controls +(-0.1,0) and +( 0.2, -0.2) .. (0,0);
  \draw (0,0) .. controls +(0.4, 0.4) and +(-0.4, 0.4) .. (0,0);
  \node at (0,0) {\singvertex};
\end{scope}}}} = ([2\myN-2] +[2]) \ev{\NB{\tikz[]{}}}.
   \end{equation}   
 \end{cor}
 \begin{proof}
   This follows from Lemma \ref{lem:singular-square}, Proposition \ref{p:combinatorial-skein-bigon}, and the fact that $[2]^{[\myN-2]}\cdot [\myN] /[2]^{[\myN-3]} = [\myN]\cdot(q^{\myN-2}+ q^{2-\myN}) = [2\myN -2] + [2]$.
 \end{proof}
 
 \begin{prop}\label{prop:triangle}
   The following local relations hold:
   \begin{align}
     \ev{\NB{\tikz[scale=0.6]{\begin{scope}
  \draw[expand style =\stylevect] (90:.5) -- (90:1);
  \draw[expand style =\styleeven] (210:.5) -- (210:1) \arreven;
  \draw[expand style =\styleodd] (-30:0.5) -- (-30:1) \arrodd;
  \draw[expand style =\stylevect] (210:.5) -- (-30:.5);
  \draw[expand style =\styleeven] (90:.5) -- (-30:.5) \arreven;
  \draw[expand style =\styleodd] (90:0.5) -- (210:.5) \arrodd;  
\end{scope}
}}} &= [\myN-1] \ev{\NB{\tikz[scale=0.6]{}}}, \qquad 
                                 \ev{\NB{\tikz[scale=0.6]{\begin{scope}
  \draw[expand style =\stylevect] (90:.5) -- (90:1);
  \draw[expand style =\styleodd] (210:.5) -- (210:1) \arrodd;
  \draw[expand style =\styleeven] (-30:0.5) -- (-30:1) \arreven;
  \draw[expand style =\stylevect] (210:.5) -- (-30:.5);
  \draw[expand style =\styleodd] (90:.5) -- (-30:.5) \arrodd;
  \draw[expand style =\styleeven] (90:0.5) -- (210:.5) \arreven;  
\end{scope}
}}} &= [\myN-1] \ev{\NB{\tikz[scale=0.6]{}}}, \\                            
     \ev{\NB{\tikz[scale=0.6]{\begin{scope}
  \draw[expand style =\stylevect] (90:.5) -- (90:1);
  \draw[expand style =\styleeven] (210:.5) -- (210:1) \rarreven;
  \draw[expand style =\styleodd] (-30:0.5) -- (-30:1) \rarrodd;
  \draw[expand style =\stylevect] (210:.5) -- (-30:.5);
  \draw[expand style =\styleeven] (90:.5) -- (-30:.5) \rarreven;
  \draw[expand style =\styleodd] (90:0.5) -- (210:.5) \rarrodd;  
\end{scope}
}}} &= [\myN-1] \ev{\NB{\tikz[scale=0.6]{}}}, \qquad 
                                 \ev{\NB{\tikz[scale=0.6]{\begin{scope}
  \draw[expand style =\stylevect] (90:.5) -- (90:1);
  \draw[expand style =\styleodd] (210:.5) -- (210:1) \rarrodd;
  \draw[expand style =\styleeven] (-30:0.5) -- (-30:1) \rarreven;
  \draw[expand style =\stylevect] (210:.5) -- (-30:.5);
  \draw[expand style =\styleodd] (90:.5) -- (-30:.5) \rarrodd;
  \draw[expand style =\styleeven] (90:0.5) -- (210:.5) \rarreven;  
\end{scope}
}}} &= [\myN-1] \ev{\NB{\tikz[scale=0.6]{}}}.
   \end{align}
 \end{prop}
 \begin{proof}
   Thanks to Corollary~\ref{cor:ev-all-equal}, it is enough to prove
   the first local relation. Let us write $\web_\triangle$ (resp.{}
   $\web_\lambda$) to denote the webs on the LHS (resp.{} RHS) of the identity.
   Note that there is a canonical $\myN$-to-one map from $\col{\web_\triangle}$
   to $\col{\web_\lambda}$. Fix $(o,c)$ a coloring of
   $\web_\lambda$.

   Suppose that the orientation of the vectorial edge
   at the distinguished vertex of $\web_\lambda$ points upward (i.e.{}
   out of the vertex) and denote by $\{a\}\subseteq\setN$ its
   color. Denote by $X$ the color of the even spin edge at the
   distinguished vertex. One has $a \in X$ and the odd spin edge is
   colored by $X\setminus \{a \}$. The colorings of $\web_\triangle$
   corresponding to $(o,c)$ can be parameterized as
   $(o,c_b)_{b\in \setN\setminus\{a\}}$, where $\{b\}$ is the color of
   the horizontal vectorial edge in the triangle. If $b \in X$
   (resp.{} $b\notin X$), then this edge is oriented rightward (resp.{}
   leftward). If $b \notin \{i,j\}\subseteq \setN$, then
   \begin{align}
     \rotational{\mymacroone{i }{ j} (\web_\triangle, (o,c_b))} &=
     \rotational{\mymacroone{i }{ j} (\web_\lambda, (o,c))} \qquad \text{and}\\
     \rotational{\mymacrotwo{i}{j} (\web_\triangle,(o,c_b))}&=
     \rotational{\mymacrotwo{i}{j} (\web_\lambda,(o,c))}.
     \end{align}
     Note also, that
\begin{align}
     \rotational{\mymacroone{a }{ b} (\web_\triangle, (o,c_b))} &=
     \rotational{\mymacroone{a }{ b} (\web_\lambda, (o,c))}      \qquad \text{and}  \\
     \rotational{\mymacrotwo{a}{b} (\web_\triangle,(o,c_b))}& =
     \rotational{\mymacrotwo{a}{b} (\web_\lambda,(o,c))}.
 \end{align}
 Suppose that $b \in X\setminus\{a\}$, for $i\in X \setminus \{a, b\}$, one has:
\begin{align}
  \rotational{\mymacroone{i }{ b} (\web_\triangle, (o,c_b))} &=
                                                           \begin{cases}
                                   \rotational{\mymacroone{i }{ b} (\web_\lambda, (o,c))}  +1 & \text{if $i<b$,}\\
                                                             \rotational{\mymacroone{i }{ b} (\web_\lambda, (o,c))}  -1 & \text{if $b<i$,}
                                                           \end{cases}
                                                                                                                      \qquad \text{and}  \\
  \rotational{\mymacrotwo{i}{b} (\web_\triangle,(o,c_b))}& =
  \rotational{\mymacrotwo{a}{b} (\web_\lambda,(o,c))};
 \end{align}
 for $i \in \setN \setminus X$, then:
\begin{align}
  \rotational{\mymacroone{i }{ b} (\web_\triangle, (o,c_b))} &= \rotational{\mymacroone{i }{ b} (\web_\lambda, (o,c))}
                                                                                                                      \qquad \text{and}  \\
  \rotational{\mymacrotwo{i}{b} (\web_\triangle,(o,c_b))}& =   \rotational{\mymacrotwo{a}{b} (\web_\lambda,(o,c))} + 1, 
\end{align}
Hence we obtain that:
\begin{equation}
  \label{eq:20}
    \sum_{b\in X\setminus \{a\}} q^{\deg(o,c_b)} = q^{\myN-\#X}[\#X-1]q^{\deg(o,c)}.
\end{equation}
 Suppose that $b \notin X$. For $i\in X \setminus \{a\}$, one has:
\begin{align}
  \rotational{\mymacroone{i }{ b} (\web_\triangle, (o,c_b))} &= \rotational{\mymacroone{i}{ b} (\web_\lambda, (o,c))}
                                                                                                                      \qquad \text{and}  \\
  \rotational{\mymacrotwo{i}{b} (\web_\triangle,(o,c_b))}& =   \rotational{\mymacrotwo{a}{b} (\web_\lambda,(o,c))} - 1;
 \end{align}
 for $i \in \setN \setminus X$, then:
 \begin{align}
  \rotational{\mymacroone{i }{ b} (\web_\triangle, (o,c_b))} &=
                                                           \begin{cases}
              \rotational{\mymacroone{i }{ b} (\web_\lambda, (o,c))}  -1 & \text{if $i<b$,}\\
              \rotational{\mymacroone{i }{ b} (\web_\lambda, (o,c))}  +1 & \text{if $b<i$,}
                                                           \end{cases}
                                                                                                                      \qquad \text{and}  \\
  \rotational{\mymacrotwo{i}{b} (\web_\triangle,(o,c_b))}& =
  \rotational{\mymacrotwo{a}{b} (\web_\lambda,(o,c))}.
 \end{align}
Hence we obtain that:
\begin{equation}
  \label{eq:21}
    \sum_{b\in \setN \setminus X} q^{\deg(o,c_b)} = q^{-\#X-1}[\myN-\#X]q^{\deg(o,c)}.
\end{equation}
Altogether, this gives:
\begin{equation}
  \label{eq:22}
    \sum_{b\in \setN \setminus \{a\}} q^{\deg(o,c_b)} = [\myN-1]q^{\deg(o,c)}.
\end{equation}

The case where the vectorial edge is downward oriented (i.e. into the vertex), and therefore $a\notin X$, is similar and we also get~\eqref{eq:22} in that case. This implies that $\ev{\web_\triangle} = [\myN-1]\ev{\web_\lambda}$.
 \end{proof}

 \begin{prop}\label{prop:XIHII}
   The following relation holds:
   \begin{align} \label{eq:XH=1}
     \ev{\NB{\tikz[scale =0.6]{\begin{scope}
  \draw[expand style =\stylevect] (135:1) --  (-45:0.6);
  \draw[expand style =\stylevect] (45:1) --  (-135:0.6);
  \node at (0,0) {\singvertex};
  \draw[expand style =\styleeven] (-135:1) -- (-135:.6) \arreven;
  \draw[expand style =\styleodd] (-135:.6) -- (-45:.6) \arrodd;
  \draw[expand style =\styleeven] (-45:.6) -- (-45:1) \arreven;
\end{scope}
}}} = [2]\ev{\NB{\tikz[scale =0.6]{\input{\imagesfolder/td-H-1}}}} + [\myN -1 ] \ev{\NB{\tikz[scale =0.6]{\begin{scope}
  \draw[expand style =\stylevect] (135:1) .. controls (135:0.7) and (45:0.7) ..  (45:1);
  \draw[expand style =\styleeven] (-135:1) .. controls (-135:.7) and (-45:0.7) .. (-45:1) \arreven;
\end{scope}
}}}
   \end{align}
 \end{prop}
 \begin{proof}  
   This is once more a careful inspection of what happens
   locally. There are essentially two different cases: the one for
   which the two outer vectorial edges are colored by the same
   pigments (necessarily with opposite orientations) or the case where
   these vectorial edges are colored by different pigments
   (orientations are then arbitrary). We start with the latter. Let us
   denote by $\web_\times$, $\web_H$ and $\web_=$ the webs in
   \eqref{eq:XH=1}, read from left to right.

   Suppose that the two outer vectorial edges are inwardly oriented. Let us fix
   notations and let $i\neq j$ be two elements of $\setN$ and $X$ be
   an even subset of $\setN$ with $\{i,j\}\cap X = \emptyset$. One can
   consider colorings of $\web_\times$, $\web_H$ and $\web_=$, such
   that the restriction to the border of the distinguished subdiagram from \eqref{eq:XH=1} is
   given by:
   \[
     \NB{\tikz[]{\input{\imagesfolder/td-XH-col-1}}}
   \]
   The set of such colorings for $\web_=$ is clearly empty. There is a canonical two-to-one correspondance between colorings of $\web_\times$ and colorings of $\web_H$ satisfying this constraint.
   Let $(o,c)$ denote such a coloring of $\web_H$. On the distinguished piece of $\web_H$, $(o,c)$ is entirely determined by the coloring of the boundary:
   \[
     \NB{\tikz[]{\input{\imagesfolder/td-H-1-col}}}
   \]
   The corresponding two colorings of $\web_\times$ are given locally by:
   \[
     (o_1,c_1) :=  \NB{\tikz[]{\input{\imagesfolder/td-XI-1-col-1}}} \quad \text{and} \qquad (o_2,c_2) :=  \NB{\tikz[]{\input{\imagesfolder/td-XI-1-col-2}}}.
   \]
   A careful inspection of bicolored circles shows that for any $a\neq b \in \setN$,
   \begin{equation}\rotational{\mymacrotwo{a}{b} (\web_H,(o,c))} = \rotational{\mymacrotwo{a}{b} (\web_\times,(o_1,c_1))} = \rotational{\mymacrotwo{a}{b} (\web_\times,(o_2,c_2))},\end{equation}
   and if $\{a,b\}\neq \{i,j\}$:
   \begin{equation}\rotational{\mymacroone{a }{ b} (\web_H,(o,c))} =
     \rotational{\mymacroone{a }{ b} (\web_\times,(o_1,c_1))} =
     \rotational{\mymacroone{a }{ b} (\web_\times,(o_2,c_2))}.\end{equation}
   Finally, 
   \begin{align}
     \rotational{\mymacroone{i }{ j} (\web_\times,(o_1,c_1))}
     &=\rotational{\mymacroone{i }{ j} (\web_H,(o,c))} + \mathrm{sign}(j-i)\\
     \rotational{\mymacroone{i }{ j} (\web_\times,(o_2,c_2))}
                  &=\rotational{\mymacroone{i}{ j} (\web_H,(o,c))} +  \mathrm{sign}(i-j). 
   \end{align}
   This implies that $q^{\deg(o_1,c_1)} + q^{\deg(o_2,c_2)} = [2] q^{\deg(o,c)}$.  

   The other cases (\emph{i.e.{}} other possible orientations of the two outer vectorial edges) can be treated similarly and are left to the reader.

   So we need to consider the case were the two outer vectorial edges
   are colored by the same pigment (necessarily one points inward, the
   other one outward). To fix conventions, consider the following coloring:
   \begin{equation}\label{eq:XH-col-2}
\NB{\tikz[]{\input{\imagesfolder/td-XH-col-2}}}     
\end{equation}

There are two subcases to consider: either $i \in X$ or $i \notin X$. Let us inspect the first one first which is easier than the second. The set of such colorings for $\web_H$ is empty. On the other hand, on the distinguished piece of $\web_=$, such a coloring is completely determined by the condition. There is a canonical $(\myN-1)$-to-one correspondence between these colorings of $\web_\times$ and those of $\web_=$, locally this correspondence is given by:
\begin{equation}
  \label{eq:21}
  \NB{\tikz[]{\begin{scope}
  \draw[->-, expand style =\stylevect] (135:1) .. controls (135:0.7) and (45:0.7) ..  (45:1) node[above, pos =0.5] {$\{i\}$};
  \draw[expand style =\styleeven] (-135:1) .. controls (-135:.7) and (-45:0.7) .. (-45:1) \arreven node[below, pos =0.5] {$X$};
\end{scope}
}} \qquad \leftrightsquigarrow \qquad
  \NB{\tikz[]{\input{\imagesfolder/td-XI-1-col-3}}} \qquad \text{or} \qquad \NB{\tikz[]{\input{\imagesfolder/td-XI-1-col-4}}},
\end{equation}
where the orientation of the two possible colorings of
$\web_\times$ is determined by whether or not $j$ belongs to $X$.
Fix $(o,c)$ such a coloring of $\web_{=}$ and denote by $C$ the set of corresponding colorings of $\web_{\times}$. A careful inspection of rotationals of bicolored cycles, very similar to that in the proof of Proposition~\ref{prop:triangle}, shows that:
\begin{equation}
  \label{eq:23}
  \sum_{(\tilde{o},\tilde{c}) \in C} q^{\deg(\tilde{o},\tilde{c})} = [\myN-1] q^{\deg(o,c)}.  
\end{equation}

It remains to treat the last subcase which locally looks like \eqref{eq:XH-col-2} but with $i \notin X$. The correspondence between various colorings of webs $\web_{\times}$, $\web_{H}$ and $\web_=$ is a bit more subtle: locally there are $\myN+1$ colorings of $\web_\times$:
\begin{equation}
  \NB{\tikz[]{\input{\imagesfolder/td-XI-1-col-3}}} \quad \text{or} \quad \NB{\tikz[]{\input{\imagesfolder/td-XI-1-col-4}}}, \quad \NB{\tikz[]{\input{\imagesfolder/td-XI-1-col-5}}} \quad \text{and} \quad \NB{\tikz[]{\input{\imagesfolder/td-XI-1-col-6}}}.  \label{eq:24}
\end{equation}
While for $\web_{H}$ and $\web_{=}$ the colorings are completly determined by the boundary condition. Locally they are given by:
\begin{equation}
  \label{eq:25}
  \NB{\tikz[]{}} \qquad \text{and} \qquad 
  \NB{\tikz[]{\input{\imagesfolder/td-H-1-col-2}}}
\end{equation}

For $j \in \setN \setminus\{i\} \cup \{i^+, i^-\}$, let us denote by $(o_j,c_j)$ the colorings of $\web_\times$ (given locally in \eqref{eq:24}) and by $(o_{H},c_{H})$ and $(o_{=}, c_{=})$ the colorings of $\web_H$ and $\web_{=}$ locally given on \eqref{eq:25}.

One has for $j \in X$ and $k \in \setN \setminus (X \sqcup\{i \})=: Y$:
\begin{align}
  \deg(o_H, c_H) =& \deg(o_=, c_=) - (\myN - \#X -1) -  \#X_{<i}+ \#X_{>i}, \\
  \deg(o_j, c_j) =& \deg(o_=, c_=) - (\myN - \#X) + \mathrm{sign}(j-i) -\#X_{<j} +\#X_{>j}\\
  \deg(o_k, c_k) =& \deg(o_=, c_=) +\#X -1 + \mathrm{sign}(k-i) +\#Y_{<k} -\#Y_{>k} \\
  \deg(o_{i^-}, c_{i^-}) =& \deg(o_=, c_=) +\#X -1 +  \#Y_{<i} - \#Y_{>i}  \\
  \deg(o_{i^+}, c_{i^+}) =& \deg(o_=, c_=)  -(\myN -\#X - 2)   -  \#X_{<i}+ \#X_{>i},
\end{align}
where  $Z_{<\ell} = \{z \in  Z | z<\ell\}$ and $Z_{>\ell} = \{z \in  Z | z>\ell\}$,
so that:
\begin{align}
  q^{\deg(o_{i^-}, c_{i^-})}+\sum_{k \in Y} q^{\deg(o_k,c_k)} &= q^{\#X-1}[\myN - \#X]
\end{align} and
\begin{align}
  \sum_{j \in X} q^{\deg(o_j,c_j)} &= q^{-(\myN+\#X-2)}(q^{-1+\#X_{>i} }[\#X_{<i}] + q^{1-\#X_{<i} }[\#X_{>i}])\\  &= q^{\myN - \#X}([\# X -1] + q^{\#X_{>i}-\#X_{<i}}),
\end{align}
Finally this gives:
\begin{align*}
q^{\deg(o_{i^-}, c_{i^-})}+ q^{\deg(o_{i^+}, c_{i^+})}+ \sum_{j \in Y \cup X} q^{\deg(o_j,c_j)} = [\myN-1] q^{\deg(o_=, c_=) } + [2]q^{\deg(o_H, c_H)}. 
\end{align*}
Which is precisely what was needed. 
\end{proof}

In view of the global relations given in Corollary~\ref{cor:ev-all-equal},
Proposition~\label{prop:XIHII} immediately gives additional skein relations:

\begin{cor}
     The following relations hold:
   \begin{align} \label{eq:XH=1}
     \ev{\NB{\tikz[scale =0.6]{\begin{scope}
  \draw[expand style =\stylevect] (135:1) --  (-45:0.6);
  \draw[expand style =\stylevect] (45:1) --  (-135:0.6);
  \node at (0,0) {\singvertex};
  \draw[expand style =\styleodd] (-135:1) -- (-135:.6) \arrodd;
  \draw[expand style =\styleeven] (-135:.6) -- (-45:.6) \arreven;
  \draw[expand style =\styleodd] (-45:.6) -- (-45:1) \arrodd;
\end{scope}
}}} &= [2]\ev{\NB{\tikz[scale =0.6]{\input{\imagesfolder/td-H-1-2}}}} + [\myN -1 ] \ev{\NB{\tikz[scale =0.6]{\begin{scope}
  \draw[expand style =\stylevect] (135:1) .. controls (135:0.7) and (45:0.7) ..  (45:1);
  \draw[expand style =\styleodd] (-135:1) .. controls (-135:.7) and (-45:0.7) .. (-45:1) \arrodd;
\end{scope}
}}} \\
     \ev{\NB{\tikz[scale =0.6]{\begin{scope}
  \draw[expand style =\stylevect] (135:1) --  (-45:0.6);
  \draw[expand style =\stylevect] (45:1) --  (-135:0.6);
  \node at (0,0) {\singvertex};
  \draw[expand style =\styleeven] (-135:1) -- (-135:.6) \rarreven;
  \draw[expand style =\styleodd] (-135:.6) -- (-45:.6) \rarrodd;
  \draw[expand style =\styleeven] (-45:.6) -- (-45:1) \rarreven;
\end{scope}
}}} &= [2]\ev{\NB{\tikz[scale =0.6]{\input{\imagesfolder/td-H-1-3}}}} + [\myN -1 ] \ev{\NB{\tikz[scale =0.6]{\begin{scope}
  \draw[expand style =\stylevect] (135:1) .. controls (135:0.7) and (45:0.7) ..  (45:1);
  \draw[expand style =\styleeven] (-135:1) .. controls (-135:.7) and (-45:0.7) .. (-45:1) \rarreven;
\end{scope}
}}} \\
     \ev{\NB{\tikz[scale =0.6]{\begin{scope}
  \draw[expand style =\stylevect] (135:1) --  (-45:0.6);
  \draw[expand style =\stylevect] (45:1) --  (-135:0.6);
  \node at (0,0) {\singvertex};
  \draw[expand style =\styleodd] (-135:1) -- (-135:.6) \rarrodd;
  \draw[expand style =\styleeven] (-135:.6) -- (-45:.6) \rarreven;
  \draw[expand style =\styleodd] (-45:.6) -- (-45:1) \rarrodd;
\end{scope}
}}} &= [2]\ev{\NB{\tikz[scale =0.6]{\input{\imagesfolder/td-H-1-4}}}} + [\myN -1 ] \ev{\NB{\tikz[scale =0.6]{\begin{scope}
  \draw[expand style =\stylevect] (135:1) .. controls (135:0.7) and (45:0.7) ..  (45:1);
  \draw[expand style =\styleodd] (-135:1) .. controls (-135:.7) and (-45:0.7) .. (-45:1) \rarrodd;
\end{scope}
}}} 
   \end{align}
\end{cor}

\begin{cor}\label{cor:2sing}
  The following relation holds:
  \begin{equation}
    \ev{\NB{\tikz[rotate = 90, scale= 0.7]{\begin{scope}[expand style =\stylevect]
  \draw[name path= p] ( 0, 0.7) .. controls +( 0.5, -0.5) and +( 0.5, 0.5) .. (0,-0.7);
  \draw[name path= q] ( 0.5, 0.7) .. controls +(-0.5, -0.5) and +(-0.5, 0.5) .. ( 0.5,-0.7);
  \path[name intersections={of=p and q}] (intersection-1) node {\singvertex};
  \path[name intersections={of=p and q}] (intersection-2) node {\singvertex};
\end{scope}}}}= [2]\ev{\NB{\tikz[rotate = 90, scale= 0.7]{\begin{scope}[expand style =\stylevect]
  \draw[name path= p] ( 0, 0.7) .. controls +( 0.3, -0.3) and +( -0.3, 0.3) .. (0.5,-0.7);
  \draw[name path= q] ( 0.5, 0.7) .. controls +(-0.3, -0.3) and +(0.3, 0.3) .. ( 0,-0.7);
  \path[name intersections={of=p and q}] (intersection-1) node {\singvertex};
\end{scope}
}}}+ ([2\myN -3] + 1)\ev{\NB{\tikz[rotate = 90, scale= 0.7]{\begin{scope}[expand style =\stylevect]
  \draw[name path= p] ( 0, 0.7) .. controls +( 0.3, -0.3) and +( -0.3, -0.3) .. (0.5,0.7);
  \draw[name path= q] ( 0.5, -0.7) .. controls +(-0.3, 0.3) and +(0.3, 0.3) .. ( 0,-0.7);
\end{scope}
}}}.
    \label{eq:2sing}  
\end{equation}
\end{cor}

\begin{prop}
  \label{prop:square31}
   The following relation holds:
   \begin{equation} \label{eq:square31}
          \ev{\NB{\tikz[scale =0.8, rotate=0]{  \begin{scope}
    \coordinate (A) at ( 135:1) ;
    \coordinate (B) at (  45:1) ;
    \coordinate (C) at ( -45:1) ;
    \coordinate (D) at (-135:1) ;
    \coordinate (a) at ( 135:0.5) ;
    \coordinate (b) at (  45:0.5) ;
    \coordinate (c) at ( -45:0.5) ;
    \coordinate (d) at (-135:0.5) ;
    \draw[expand style =\stylevect] (A) --(a);
    \draw[expand style =\styleeven] (B) --(b) \arreven;
    \draw[expand style =\styleeven] (C) --(c) \rarreven;
    \draw[expand style =\stylevect] (D) --(d);  
    \draw[expand style =\styleodd] (a) -- (b) \arrodd;
    \draw[expand style =\stylevect] (b) -- (c); 
    \draw[expand style =\styleodd] (c) -- (d) \rarrodd;
    \draw[expand style =\styleeven] (d) -- (a) \arreven;
  \end{scope}

}}} = [\myN -2] \ev{\NB{\tikz[scale =0.8, rotate=0]{  \begin{scope}
    \coordinate (A) at ( 135:1) ;
    \coordinate (B) at (  45:1) ;
    \coordinate (C) at ( -45:1) ;
    \coordinate (D) at (-135:1) ;
    \coordinate (a) at ( 135:0.5) ;
    \coordinate (b) at (  45:0.5) ;
    \coordinate (c) at ( -45:0.5) ;
    \coordinate (d) at (-135:0.5) ;
    \coordinate (l) at (-90:0.4);
    \coordinate (t) at ( 90:0.4);
    
    \draw[expand style =\stylevect] (A) .. controls (135:0.7) .. (t);
    \draw[expand style =\styleeven] (B) .. controls (45:0.7) .. (t) \arreven;
    \draw[expand style =\styleeven] (C) .. controls (-45:0.7) .. (l) \rarreven;
    \draw[expand style =\stylevect] (D) .. controls (-135:0.7) .. (l);  
    \draw[expand style =\styleodd] (t) -- (l) \rarrodd;
  \end{scope}

}}} +
     \ev{\NB{\tikz[scale=0.8, rotate=0]{  \begin{scope}
    \coordinate (A) at ( 135:1) ;
    \coordinate (B) at (  45:1) ;
    \coordinate (C) at ( -45:1) ;
    \coordinate (D) at (-135:1) ;
    \draw[expand style =\stylevect] (A) .. controls (135:0.3) and (-135:0.3).. (D);
    \draw[expand style =\styleeven] (B) .. controls (45:0.3) and (-45:0.3) .. (C) \rarreven;
  \end{scope}

}}}.
   \end{equation}
\end{prop}

\begin{proof}
  Again, this is a careful inspection of what happens locally. Let us
  denote by $\web_\square$, $\web_I$, and $\web_{\ii}$ the webs in
  \eqref{eq:square31}, read from left to right. There are essentially
  two different cases: the one for which the two outer vectorial edges
  are colored by the same pigments (necessarily, one points inward,
  and the other one outward) or the case where these vectorial edges are colored by different pigments (orientations can be arbitrary). 

   We first consider the latter case when there are two different pigments. We only treat the case where the orientations are inward (the other cases are similar). Let us fix notations and let $i\neq j$ be two elements of $\setN$ and $X$ be an even subset of $\setN$ with $\{i,j\}\cap X = \emptyset$, one can consider colorings of $\web_\times$, $\web_H$ and $\web_=$, such that the restriction to the border of the distinguished pieces are given by:
   \[
\NB{\tikz[]{\input{\imagesfolder/td-square-31-col-1}}}
   \]

   The set of such colorings for $\web_{\ii}$ is clearly empty. There is a canonical $(\myN -2)$-to-one correspondence between colorings of $\web_\square$ and colorings of $\web_I$ satisfying this constraint.
   Let $(o_I,c_I)$ denote such a coloring of $\web_I$. On the distinguished piece of $\web_I$, $(o_I, c_I)$ is entirely determined by the coloring of the boundary:
   \begin{equation}
     (o_I,c_I) = \NB{\tikz[]{  \begin{scope}
    \coordinate (A) at ( 135:1) ;
    \coordinate (B) at (  45:1) ;
    \coordinate (C) at ( -45:1) ;
    \coordinate (D) at (-135:1) ;
    \coordinate (a) at ( 135:0.5) ;
    \coordinate (b) at (  45:0.5) ;
    \coordinate (c) at ( -45:0.5) ;
    \coordinate (d) at (-135:0.5) ;
    \coordinate (l) at (-90:0.4);
    \coordinate (t) at ( 90:0.4);
    \draw[expand style =\stylevect, >-] (A) .. controls (135:0.7) .. (t) node[pos=0, left] {$\{j\}$};
    \draw[expand style =\styleeven] (B) .. controls (45:0.7) .. (t) \arreven node[pos=0, right] {$X \cup \{i,j\}$};
    \draw[expand style =\styleeven] (C) .. controls (-45:0.7) .. (l) \rarreven node[pos=0, right] {$X$};
    \draw[expand style =\stylevect, >-] (D) .. controls (-135:0.7) .. (l) node[pos=0, left] {$\{i\}$};
    \draw[expand style =\styleodd] (t) -- (l) \rarrodd node[pos=0.5, right] {$X \cup \{i\}$};
  \end{scope}

}}
   \end{equation}
   The corresponding $\myN-2$ colorings of $\web_\square$ are given locally by:
   \begin{align}
     (o_k,c_k) &:=  \NB{\tikz[font=\tiny, scale =1.5]{\input{\imagesfolder/td-square-other-1-col-1}}}& \quad \text{for $k \in X$, and}  \\ (o_\ell, c_\ell)&:=  \NB{\tikz[font=\tiny, scale =1.5]{\input{\imagesfolder/td-square-other-1-col-2}}}&\quad  \text{for $\ell \in Y$,} 
   \end{align}
   where $Y= \setN\setminus (X \sqcup \{i,j\})$. One has:
   \begin{align}
     \deg(o_k, c_k) =& \deg(o_I, c_I) +\#X + \#Y_{<k} -\#Y_{>k} \quad \text{and} \\
     \deg(o_k, c_k) =& \deg(o_I, c_I) - \#Y - \#X_{<k} +\#X_{>k},
   \end{align}
   which implies that:
   \begin{align}
     \label{eq:18}
     \sum_{k \in X} q^{\deg(o_k,c_k)} + \sum_{\ell \in Y}q^{\deg(o_\ell, c_\ell)} &= q^{\deg(o_I,c_I)}(q^{\#X}[\#Y] + q^{\#Y}[\#X]) \\= &q^{\deg(o_I,c_I)}[\myN-2]
   \end{align}
   which is precisely what is needed.

   Now, we consider the case when the two outer vectorial edges are colored with the same pigments, so orientations are either both upward or both downward. We inspect the upward case, the other one is similar. Fixing notations, we let $i$ be an element of $\setN$ and $X$ be an even subset of $\setN$. One can consider colorings of $\web_\times$, $\web_H$ and $\web_=$, such that the restriction to the border of the distinguished pieces are given by:  
   \begin{equation}\label{eq:col-square31-2}
\NB{\tikz[]{\input{\imagesfolder/td-square-31-col-2}}}
\end{equation}
There are two subcases: either $i \in X$ or $i\notin X$. Let us first suppose that $i \in X$.  The set of such colorings for $\web_{I}$ is empty. There is a canonical one-to-one correspondance between colorings of $\web_\square$ and colorings of $\web_\ii$ satisfying this constraint, it is given below:
\begin{equation}
  \label{eq:30}
  (c_\square, o_\square) := \NB{\tikz[font=\tiny, scale =1.5]{\input{\imagesfolder/td-square-other-1-col-3}}}
\quad  \leftrightsquigarrow  \quad\NB{\tikz[font=\tiny, scale =1.5]{  \begin{scope}
    \coordinate (A) at ( 135:1) ;
    \coordinate (B) at (  45:1) ;
    \coordinate (C) at ( -45:1) ;
    \coordinate (D) at (-135:1) ;
    \draw[expand style =\stylevect, -<-] (A) .. controls (135:0.3) and (-135:0.3).. (D) node[left, pos=0.5] {$\{i\}$};
    \draw[expand style =\styleeven] (B) .. controls (45:0.3) and (-45:0.3) .. (C) \rarreven node[right, pos=0.5] {$X$};
  \end{scope}

}} =:(c_\ii, o_\ii),
\end{equation}
and one has $\deg(c_\square, o_\square)= \deg(c_\ii, o_\ii)$, so that
\begin{equation}
  \label{eq:31}
  q^{\deg(c_\square, o_\square)}= q^{\deg(c_\ii, o_\ii)},
\end{equation}
which is what is needed in this case.

Let us deal with the other subcase: colorings of the boundary are still given by \eqref{eq:col-square31-2} but now $i \notin X$. There is a canonical one-to-one correspondance between colorings of $\web_I$ and colorings of $\web_\ii$ satisfying this constraint:
\begin{equation}
  \label{eq:col-square-last}
  (c_I, o_I) := \NB{\tikz[font=\tiny, scale =0.8]{  \begin{scope}
    \coordinate (A) at ( 135:1) ;
    \coordinate (B) at (  45:1) ;
    \coordinate (C) at ( -45:1) ;
    \coordinate (D) at (-135:1) ;
    \coordinate (a) at ( 135:0.5) ;
    \coordinate (b) at (  45:0.5) ;
    \coordinate (c) at ( -45:0.5) ;
    \coordinate (d) at (-135:0.5) ;
    \coordinate (l) at (-90:0.4);
    \coordinate (t) at ( 90:0.4);
    \draw[expand style =\stylevect, <-] (A) .. controls (135:0.7) .. (t) node[pos=0, left] {$\{i\}$};
    \draw[expand style =\styleeven] (B) .. controls (45:0.7) .. (t) \arreven node[pos=0, right] {$X$};
    \draw[expand style =\styleeven] (C) .. controls (-45:0.7) .. (l) \rarreven node[pos=0, right] {$X$};
    \draw[expand style =\stylevect, >-] (D) .. controls (-135:0.7) .. (l) node[pos=0, left] {$\{i\}$};
    \draw[expand style =\styleodd] (t) -- (l) \rarrodd node[pos=0.5, right] {$X \cup \{i\}$};
  \end{scope}

}}
\quad  \leftrightsquigarrow  \quad\NB{\tikz[font=\tiny, scale =0.8]{}} =:(c_\ii, o_\ii).
\end{equation}
There is also a canonical $(\myN -1)$-to-one correspondance between colorings of $\web_\square$ and colorings of $\web_\ii$ satisfying this constraint. The colorings of $\web_\square$ corresponding to $(c_\ii, o_\ii)$ given in \eqref{eq:col-square-last} are given below:
\begin{align}
     (o_k,c_k) &:=  \NB{\tikz[font=\tiny, scale =1.5]{\input{\imagesfolder/td-square-other-1-col-4}}}& \quad \text{for $k \in X$, and}  \\ (o_\ell, c_\ell)&:=  \NB{\tikz[font=\tiny, scale =1.5]{\input{\imagesfolder/td-square-other-1-col-5}}}&\quad  \text{for $\ell \in Y$,} 
\end{align}
with $Y = \setN \setminus (X\sqcup\{i\})$.  One has for $k \in X$ and $\ell \in Y$: 
\begin{align}
  \deg(o_\ii, c_\ii) =& \deg(o_I, c_I) + \#Y -  \#X_{<i}+ \#X_{>i}, \\
  \deg(o_k, c_k) =& \deg(o_I, c_I) +\#Y - \mathrm{sign}(k-i) -\#X_{<k} +\#X_{>k} \\
  \deg(o_\ell, c_\ell) =& \deg(o_I, c_I) +1 -\#X +  \#Y_{<\ell} -\#Y_{>\ell} \end{align}
so that:
\begin{align}
  \label{eq:32}
  \sum_{k \in X} q^{(o_k,c_k)} + \sum_{\ell \in Y}q^{\deg(o_\ell, c_\ell)}&= q^{\deg(o_I, c_I)}(q^{1-\#X} [\#Y] + q^{\#Y}([\#X-1] + q^{\#X_{<i} -\#X_{>i}} ) \\ &= q^{\deg(o_I, c_I)}[\myN-1] + q^{\deg(o_\ii, c_\ii)}
\end{align}
which was precisely what we needed.
\end{proof}

\begin{cor}
  \label{cor:square31}
  The following relations hold:
     \begin{align} 
     \ev{\NB{\tikz[scale =0.8]{  \begin{scope}
    \coordinate (A) at ( 135:1) ;
    \coordinate (B) at (  45:1) ;
    \coordinate (C) at ( -45:1) ;
    \coordinate (D) at (-135:1) ;
    \coordinate (a) at ( 135:0.5) ;
    \coordinate (b) at (  45:0.5) ;
    \coordinate (c) at ( -45:0.5) ;
    \coordinate (d) at (-135:0.5) ;
    \draw[expand style =\stylevect] (A) --(a);
    \draw[expand style =\styleodd] (B) --(b) \arrodd;
    \draw[expand style =\styleodd] (C) --(c) \rarrodd;
    \draw[expand style =\stylevect] (D) --(d);  
    \draw[expand style =\styleeven] (a) -- (b) \arreven;
    \draw[expand style =\stylevect] (b) -- (c); 
    \draw[expand style =\styleeven] (c) -- (d) \rarreven;
    \draw[expand style =\styleodd] (d) -- (a) \arrodd;
  \end{scope}

}}} &= [\myN -2] \ev{\NB{\tikz[scale =0.8]{  \begin{scope}
    \coordinate (A) at ( 135:1) ;
    \coordinate (B) at (  45:1) ;
    \coordinate (C) at ( -45:1) ;
    \coordinate (D) at (-135:1) ;
    \coordinate (a) at ( 135:0.5) ;
    \coordinate (b) at (  45:0.5) ;
    \coordinate (c) at ( -45:0.5) ;
    \coordinate (d) at (-135:0.5) ;
    \coordinate (l) at (-90:0.4);
    \coordinate (t) at ( 90:0.4);
    
    \draw[expand style =\stylevect] (A) .. controls (135:0.7) .. (t);
    \draw[expand style =\styleodd] (B) .. controls (45:0.7) .. (t) \arrodd;
    \draw[expand style =\styleodd] (C) .. controls (-45:0.7) .. (l) \rarrodd;
    \draw[expand style =\stylevect] (D) .. controls (-135:0.7) .. (l);  
    \draw[expand style =\styleeven] (t) -- (l) \rarreven;
  \end{scope}

}}} +
     \ev{\NB{\tikz[scale=0.8]{  \begin{scope}
    \coordinate (A) at ( 135:1) ;
    \coordinate (B) at (  45:1) ;
    \coordinate (C) at ( -45:1) ;
    \coordinate (D) at (-135:1) ;
    \draw[expand style =\stylevect] (A) .. controls (135:0.3) and (-135:0.3).. (D);
    \draw[expand style =\styleodd] (B) .. controls (45:0.3) and (-45:0.3) .. (C) \rarrodd;
  \end{scope}

}}},\\
       \ev{\NB{\tikz[scale =0.8, rotate=180]{}}} &= [\myN -2] \ev{\NB{\tikz[scale =0.8, rotate=180]{}}} +
     \ev{\NB{\tikz[scale=0.8, rotate=180]{}}},\\
     \ev{\NB{\tikz[scale =0.8, rotate=180]{}}} &= [\myN -2] \ev{\NB{\tikz[scale =0.8, rotate=180]{}}} +
     \ev{\NB{\tikz[scale=0.8, rotate=180]{}}}.
     \end{align}
\end{cor}

 \begin{prop}\label{prop:pentagon}
   The following relation holds:
   \begin{equation} \label{eq:pentagon}
     \ev{\NB{\tikz[scale =0.8]{\input{\imagesfolder/td-pentagone-0}}}} = \ev{\NB{\tikz[scale =0.8]{\input{\imagesfolder/td-pentagone-1}}}} + [\myN -3 ] \ev{\NB{\tikz[scale =0.8]{\input{\imagesfolder/td-pentagone-2}}}}.
   \end{equation}
 \end{prop}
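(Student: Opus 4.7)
The strategy mirrors that of Propositions~\ref{prop:XIHII} and~\ref{prop:square31}: a direct, local combinatorial analysis of colorings. Denote by $\web_P$, $\web_1$, $\web_2$ the three webs appearing from left to right in~\eqref{eq:pentagon}. One fixes a colored boundary of the distinguished pentagonal region and then, for each such boundary datum, partitions the colorings of each of the three webs that extend this datum, comparing their degrees.

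The first step is to enumerate the boundary colorings. Using Corollary~\ref{cor:ev-all-equal}, one may reverse orientations and swap parities globally to reduce the number of sub-cases. The main case split is then according to whether the pigments carried by the outer vectorial edges of the pentagon are all distinct, or whether some coincide. When the boundary pigments are generic (pairwise distinct, with a fixed even subset $X\subseteq\setN$ coloring the outer spin edge, disjoint from the vectorial pigments), the pentagon $\web_P$ admits one free internal parameter, typically a pigment $k$ running over $\setN$ minus a small forbidden subset, together with an orientation choice that depends on whether $k\in X$ or not; whereas the boundary datum extends essentially uniquely to $\web_2$ and has no extension to $\web_1$. When boundary pigments coincide there is an extra family of colorings, and the extension to $\web_1$ becomes non-empty, giving the coefficient $1$ in~\eqref{eq:pentagon}.

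For each family, one computes the difference $\deg(o_P,c_P)-\deg(o_2,c_2)$ by inspecting how the bicolored cycles of both first and second type are modified through the internal pigment $k$; the contribution splits into $X_{<k},X_{>k}$ and $Y_{<k},Y_{>k}$ terms exactly as in the computations appearing in the proof of Proposition~\ref{prop:square31}. Summing over $k$ produces an expression of the form
\begin{equation}
  q^{\#Y}[\#X] + q^{\#X}[\#Y] = [\#X+\#Y] = [\myN-3],
\end{equation}
using the standard $q$-integer identity and the fact that $\#X+\#Y+2=\myN-1$ for the generic case. The residual configurations in the coincident-pigment case contribute a single summand matching $\ev{\web_1}$.

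The main obstacle is bookkeeping: the pentagon has one more boundary edge and one more internal vertex than the squares treated above, so the number of sub-cases where internal and boundary pigments coincide grows. To keep the computation manageable, I would decompose $\deg$ into its smooth-curvature and combinatorial-curvature parts as in Proposition~\ref{prop:BR-typeD}, and handle smooth contributions (which depend only on the embedding of the individual edges) once and for all, reducing each case to a finite combinatorial verification at the internal vertices of the pentagon. Corollary~\ref{cor:ev-all-equal} and the symmetry $a\mapsto\myN+1-a$ on pigments then cut the remaining cases roughly in half, leaving a finite (albeit tedious) check.
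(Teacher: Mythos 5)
Your strategy coincides with the paper's: both proceed by a local case analysis on colorings of the pentagonal region, split according to how many of the three outer vectorial pigments coincide, compute degree differences relative to the tree web $\web_2$, and check the claimed coefficients. The paper's (explicitly labeled) sketch uses three cases — all three pigments equal, exactly two equal, all distinct — and verifies each on a representative boundary coloring; your split into generic versus coincident subsumes the same cases, and your description of which webs admit extensions (no extension to $\web_1$ in the generic case, extra colorings of $\web_1$ when pigments coincide) matches the paper's findings. Your suggestion to decompose $\deg$ into smooth and combinatorial curvature contributions as in the proof of Proposition~\ref{prop:BR-typeD}, and to invoke Corollary~\ref{cor:ev-all-equal} to halve the sub-cases, does not appear in the paper's pentagon proof and is a sensible streamlining idea; but as written your proposal remains a plan, with none of the degree computations actually carried out, whereas the paper at least tabulates the differences $\deg(c_i)-\deg(c_T)$ for its example boundary colorings in each case.

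Two points to correct before writing this up. First, $q^{\#Y}[\#X]+q^{\#X}[\#Y]\ne[\#X+\#Y]$ in general (test $\#X=\#Y=1$: the left side is $2q$, the right is $q+q^{-1}$); the identity the $\#X_{<k},\#X_{>k},\#Y_{<k},\#Y_{>k}$ bookkeeping actually produces is $q^{-\#Y}[\#X]+q^{\#X}[\#Y]=[\#X+\#Y]$, with the minus sign — the same sign slip appears in the displayed line after \eqref{eq:18} in the paper's proof of Proposition~\ref{prop:square31}, so you may have inherited it; the final answer $[\myN-3]$ is unaffected but the intermediate step as stated is false. Second, when all three outer vectorial pigments coincide, $\web_1$ carries a mono-colored singular vertex and therefore admits \emph{two} extending colorings, $c_+$ and $c_-$, coming from the $\pm$ decoration in Definition~\ref{dfn:gweb}; your phrase ``a single summand matching $\ev{\web_1}$'' glosses over this, and the degree-counting in that sub-case must account for both.
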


 \begin{proof}[Sketch of the proof.]
   This is once again a careful inspection of what happens locally. There are essentially three different cases:
   \begin{enumerate}
   \item \label{it:pent1col} The three outer vectorial edges are colored by the same pigment (necessarily with alternating orientations).
   \item \label{it:pent2col} Two of the three edges outer edges are colored by the same pigment (necessarily with opposite orientations), the third one is colored by another pigment.  
   \item \label{it:pent3col} The three edges are colored by three different pigments. 
   \end{enumerate}

   Instead of giving a full proof (which would go along the same lines as for the previous propositions), we'll inspect for each case an example of a coloring of the boundary which corresponds to that case. Let us denote $\web_P$, $\web_X$ and $\web_T$ (for pentagon, crossing and tree) the webs in \eqref{eq:pentagon}, read from left to right.

   \subsection*{Case (\ref{it:pent1col}):}
   Let us suppose that the boundary is colored as follows:
   \[
     \NB{\tikz[]{\input{\imagesfolder/td-pentagone-col-0}}}
   \]
   The compatible colorings with $\web_P$, $\web_X$ and $\web_T$ are listed below:
   \begin{equation} \label{eq:pentagon-col}
     \NB{\tikz[]{\input{\imagesfolder/td-pentagone-col-i}}}\qquad
     \NB{\tikz[]{\input{\imagesfolder/td-pentagone-col-pm}}}\qquad
     \NB{\tikz[]{\input{\imagesfolder/td-pentagone-col-T}}}\qquad 
   \end{equation}
   for $i \in \setN \setminus\{1\}$ and $\epsilon \in \{+, -\}$.  Let
   $(c_{i})_{i \in \setN\setminus\{1\}}$, $c_+$, $c_-$ and $c_T$ be
   colorings of $\web_P$, $\web_{X}$ and $\web_T$ which are identical
   except where these webs differ and they are given by
   \eqref{eq:pentagon-col}. A careful inspection gives:
\begin{align}
  \deg(c_i)&= 2+ (\myN-i) -(i-2) + \deg(c_T), \\
  \deg(c_+)&= (\myN-1)  -1 + \deg(c_T) \quad \text{and} \\
  \deg(c_-)&= (\myN-1)  +1 + \deg(c_T).
\end{align}
Therefore, one has
\begin{equation}
  \label{eq:26}
  \sum_{i\in \setN \setminus\{1\}} q^{\deg(c_i)} = q^{\deg(c_T) + 2}
  [\myN-1] 
\end{equation}
{and}
\begin{align}
  q^{\deg(c_T)}[\myN-3] + q^{\deg(c_+)} + q^{\deg(c_-)}&= q^{\deg(c_T)}[\myN-3] + q^{\deg(c_T)+\myN-1}[2] \\ &= q^{\deg(c_T) + 2} [\myN-1],
\end{align}
so that:
\begin{equation}
  \sum_{i\in \setN \setminus\{1\}} q^{\deg(c_i)} = q^{\deg(c_T)}[\myN-3] + q^{\deg(c_+)} + q^{\deg(c_-)}.\label{eq:27}
\end{equation}
which is precisely the incarnation of \eqref{eq:pentagon} on that example.

\subsection*{Case (\ref{it:pent2col}):}

   Let us suppose that the boundary is colored as follows:
   \[
     \NB{\tikz[]{\input{\imagesfolder/td-pentagone-col-1}}}
   \]
   The compatible colorings with $\web_P$, $\web_X$ and $\web_T$ are listed below:
   \begin{equation} \label{eq:pentagon-col-1}
     \NB{\tikz[]{\input{\imagesfolder/td-pentagone-col-i-1}}}\qquad
     \NB{\tikz[]{\input{\imagesfolder/td-pentagone-col-pm-1}}}\qquad
     \NB{\tikz[]{\input{\imagesfolder/td-pentagone-col-T-1}}}\qquad 
   \end{equation}
   for $i \in \setN \setminus\{1,2\}$
   Let $(c_{i})_{i \in \setN\setminus\{1,2\}}$, $c_X$ and $c_T$ be colorings of $\web_P$, $\web_{X}$ and $\web_T$ which are identical except where these webs differ and there are given by \eqref{eq:pentagon-col-1}. A careful inspection gives:
\begin{align*}
  \deg(c_i)&= 1+ (\myN-i) -(i-3) + \deg(c_T), \quad \text{and} \\
  \deg(c_X)&= (\myN-2) + \deg(c_T). \\
\end{align*}
Therefore, one has
\begin{align}
  \label{eq:29}
  \sum_{i\in \setN \setminus\{1\}} q^{\deg(c_i)} &= q^{\deg(c_T) + 1} [\myN-2] \quad\text{and} \\
  q^{\deg(c_T)}[\myN-3] + q^{\deg(c_X)}&= q^{\deg(c_T)}[\myN-3] + q^{\deg(c_T)+\myN-2} \\ &= q^{\deg(c_T) + 1} [\myN-2],
\end{align}
so that:
\begin{equation}
  \sum_{i\in \setN \setminus\{1\}} q^{\deg(c_i)} = q^{\deg(c_T)}[\myN-3] + q^{\deg(c_X)} .\label{eq:27}
\end{equation}
which is precisely the incarnation of \eqref{eq:pentagon} on that example.

\subsection*{Case (\ref{it:pent3col}):}

   Let us suppose that the boundary is colored as follows:
   \[
     \NB{\tikz[]{\input{\imagesfolder/td-pentagone-col-2}}}
   \]

   There is no compatible coloring for $\web_X$. The compatible colorings with $\web_P$, and $\web_T$ are listed below:
   \begin{equation} \label{eq:pentagon-col-2}
     \NB{\tikz[]{\input{\imagesfolder/td-pentagone-col-i-2}}}\qquad
     \NB{\tikz[]{\input{\imagesfolder/td-pentagone-col-T-2}}}\qquad 
   \end{equation}
   for $i \in \setN \setminus\{1,2\}$
   Let $(c_{i})_{i \in \setN\setminus\{1,2,3\}}$ and $c_T$ be colorings of $\web_P$, and $\web_T$ which are identical except where these webs differ and there are given by \eqref{eq:pentagon-col-2}. A careful inspection gives:
\begin{align*}
  \deg(c_i)&=  (\myN-i) -(i-4) + \deg(c_T), \\
\end{align*}
so that:
\begin{equation}
  \sum_{i\in \setN \setminus\{1\}} q^{\deg(c_i)} = q^{\deg(c_T)}[\myN-3] .\label{eq:27}
\end{equation}
which is precisely the incarnation of \eqref{eq:pentagon} on that example.
 \end{proof}
 
\begin{cor}
  \label{cor:square31}
  The following relations hold:
  \begin{align}
         \ev{\NB{\tikz[scale =0.8]{\input{\imagesfolder/td-pentagone-0-1}}}} &= \ev{\NB{\tikz[scale =0.8]{\input{\imagesfolder/td-pentagone-1-1}}}} + [\myN -3 ] \ev{\NB{\tikz[scale =0.8]{\input{\imagesfolder/td-pentagone-2-1}}}}, \\
    \ev{\NB{\tikz[scale =0.8]{\input{\imagesfolder/td-pentagone-0-2}}}} &= \ev{\NB{\tikz[scale =0.8]{\input{\imagesfolder/td-pentagone-1-2}}}} + [\myN -3 ] \ev{\NB{\tikz[scale =0.8]{\input{\imagesfolder/td-pentagone-2-2}}}}, \\
             \ev{\NB{\tikz[scale =0.8]{\input{\imagesfolder/td-pentagone-0-3}}}} &= \ev{\NB{\tikz[scale =0.8]{\input{\imagesfolder/td-pentagone-1-3}}}} + [\myN -3 ] \ev{\NB{\tikz[scale =0.8]{\input{\imagesfolder/td-pentagone-2-3}}}}.
  \end{align}
\end{cor}

\section{Link invariants}
\label{sec:linkinvariants}
In this section we associate Laurent polynomials to (some) diagrams of
knotted webs and prove that this quantity is invariant under certain
moves. In particular we will be able to define an invariant for
framed unoriented links, see Theorem~\ref{thm:link-invariant}. 

In this section, $\myN$ is an integer greater than or equal to
$3$. This restriction arises because we need to use singular vertices
which behavior is well-behaved when $\myN\geq 3$ (see
Lemma~\ref{lem:singular-square}).

Although our construction is novel, the resulting link invariant is not new. We will identify this invariant as the $\sotN$
quantum invariant associated with the vector representation (with
$q$ changed for $-q$) in Section~\ref{sec:intertwiners}.

We will not consider all (reasonable) diagrams of knotted webs: we exclude the spin-spin
crossings for simplicity. Let us make this precise by defining what
a diagram means:

\begin{dfn}
  A \emph{diagram} is a graph embedded in the plane with the same type
  of edges as webs (vectorial, even-spin and odd-spin) and for which
  each vertex has degree 3 or 4 and follows one of the 10 following
  local models:
  \begin{itemize}
  \item if the vertex has degree 3, it follows one of the four local
    models for vertices of webs (see \eqref{eq:local-models-webs}).
  \item of the vertex has degree 4, it is called a \emph{crossing} and
    follows one of the 6 following local models:
    \begin{equation}
      \label{eq:local-models-diags}
      \NB{\tikz[scale=0.6]{}}\quad\qquad
      \NB{\tikz[scale=0.6]{}}\qquad\quad
      \NB{\tikz[scale=0.6]{\begin{scope}[scale=0.6]
  \draw[expand style =\styleeven] (45:1) -- (-135:1);
  \draw[expand style =\styleeven] (45:0.9) -- (45:1) \arreven;
  \draw[expand style =\styleeven] (-135:1) -- (-135:0.9) \arreven;
  \fill[white] (0,0) circle (0.2cm);
  \draw[expand style =\stylevect] (-45:1) -- (135:1);
\end{scope}}}\quad\qquad
      \NB{\tikz[scale=0.6]{\begin{scope}[scale=0.6]
  \draw[expand style =\stylevect] (-45:1) -- (135:1);
  \fill[white] (0,0) circle (0.2cm);
  \draw[expand style =\styleeven] (45:1) -- (-135:1);
  \draw[expand style =\styleeven] (45:0.9) -- (45:1) \arreven;
  \draw[expand style =\styleeven] (-135:1) -- (-135:0.9) \arreven;
\end{scope}}}\quad\qquad
      \NB{\tikz[scale=0.6]{\begin{scope}[scale=0.6]
  \draw[expand style =\styleodd] (45:1) -- (-135:1);
  \draw[expand style =\styleodd] (45:0.9) -- (45:1) \arrodd;
  \draw[expand style =\styleodd] (-135:1) -- (-135:0.9) \arrodd;
  \fill[white] (0,0) circle (0.2cm);
  \draw[expand style =\stylevect] (-45:1) -- (135:1);
\end{scope}}}\quad\qquad
      \NB{\tikz[scale=0.6]{\begin{scope}[scale=0.6]
  \draw[expand style =\stylevect] (-45:1) -- (135:1);
  \fill[white] (0,0) circle (0.2cm);
  \draw[expand style =\styleodd] (45:1) -- (-135:1);
  \draw[expand style =\styleodd] (45:0.9) -- (45:1) \arrodd;
  \draw[expand style =\styleodd] (-135:1) -- (-135:0.9) \arrodd;
\end{scope}}}      
    \end{equation}
    The first one is called a \emph{singular crossing} and was already present in the definition of \gweb{}, the second one is called \emph{regular crossing} and the other ones \emph{mixed crossings}.   
  \end{itemize}
\end{dfn}

\begin{rmk}
    The local models for crossings in equation \eqref{eq:local-models-diags} are all considered up to rotation, so do not need to be read from the bottom to the top. For example, a regular crossing may also appear as an over-crossing as in the left hand side of \eqref{eq:dfn:xingVV}.
\end{rmk}

\begin{exa}\label{exa:link-diag}
  Unoriented link diagrams, where link components are thought of as vertex-less closed vectorial edges are an example of diagrams. As we shall see, one can think of link diagrams as representing framed links (with the blackboard framing). 
\end{exa}

We already know how to evaluate \gweb{}s. The evaluation of diagrams is given in a skein theoretical way by expressing the evaluation of diagrams with regular and/or mixed crossings as $\ZZ[q, q^{-1}]$ linear combinations of evaluation of webs.

\begin{dfn}\label{dfn:Xings}
  We extend $\ev{\cdot}$ on the set of diagrams as follows: \begin{align}
\label{eq:dfn:xingVV}    \ev{\,\,\NB{\tikz[rotate=90]{}}\,\,} &= -q\ev{\,\,\NB{\tikz[]{}}\,\,} + \ev{\,\,\NB{\tikz[]{}}\,\,} - q^{-1}\ev{\,\,\NB{\tikz[rotate=90]{}}\,\,}, \\
\ev{\NB{\tikz[]{}}} &= q^{1/2}\ev{\NB{\tikz[]{\input{\imagesfolder/td-Hing-even-1}}}} -  q^{-1/2}\ev{\NB{\tikz[]{\input{\imagesfolder/td-Hing-even-0}}}}, \\
    \ev{\NB{\tikz[]{}}} &= - q^{1/2}\ev{\NB{\tikz[]{\input{\imagesfolder/td-Hing-even-0}}}} +  q^{-1/2}\ev{\NB{\tikz[]{\input{\imagesfolder/td-Hing-even-1}}}}, \\
    \ev{\NB{\tikz[]{}}} &= -q^{1/2}\ev{\NB{\tikz[]{\input{\imagesfolder/td-Hing-odd-1}}}} +  q^{-1/2}\ev{\NB{\tikz[]{\input{\imagesfolder/td-Hing-odd-0}}}}, \\    
    \ev{\NB{\tikz[]{}}} &= q^{1/2}\ev{\NB{\tikz[]{\input{\imagesfolder/td-Hing-odd-0}}}} -  q^{-1/2}\ev{\NB{\tikz[]{\input{\imagesfolder/td-Hing-odd-1}}}}.
  \end{align}
  The map $\ev{\cdot}$ takes value in $\ZZ[q^{1/2}, q^{-1/2}]$.
\end{dfn}

\begin{rmk}\label{rmk:value-knot-inv}
 \begin{enumerate} 
  \item \label{it:inv-Laurent}The map $\ev{\cdot}$ on unoriented link diagrams takes value in $\ZZ[q, q^{-1} ]$.

  \item \label{it:mirror-image}  If $\diag$ is a diagram and $\overline{\diag}$ is the mirror image of this diagram, then $\ev{\overline{\diag}} = \ev{{\diag}}_{q^{1/2}\mapsto q^{-1/2}}$.
  \end{enumerate}
\end{rmk}

\begin{prop}\label{prop:link-invariant}
  If $\diag$ and $\diag'$ are two link diagrams representing the same framed unoriented link, then
  $\ev{\diag} = \ev{\diag'}$.
\end{prop}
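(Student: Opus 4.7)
The plan is to verify that $\ev{\cdot}$ is invariant under the framed Reidemeister moves R2 and R3 for regular vectorial crossings: framed isotopy with blackboard framing is generated by planar isotopy together with R2 and R3, while R1 is intentionally excluded (indeed, the curl identities in Theorem~\ref{theo:A} show that $\ev{\cdot}$ transforms under an isolated curl by the scalar $-q^{\pm(2\myN-1)}$, not trivially). Planar isotopy invariance is immediate from the combinatorial definitions in Section~\ref{sec:webs} and Section~\ref{sec:new-kind-vertex}, so the real content is R2 and R3.

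For the R2 move, my plan is to expand both crossings in the left-hand side diagram using the skein relation \eqref{eq:dfn:xingVV}, producing a linear combination of $3\times 3 = 9$ planar \gweb{}s with coefficients in $\ZZ[q,q^{-1}]$. I would then simplify this combination using the local relations of Section~\ref{sec:skein}: the singular-square reduction of Lemma~\ref{lem:singular-square}, the bigon reductions of Proposition~\ref{p:combinatorial-skein-bigon}, the XH relation of Proposition~\ref{prop:XIHII} together with Corollary~\ref{cor:2sing}, and the square resolution of Proposition~\ref{prop:square31}. The precise coefficients $-q$, $+1$, $-q^{-1}$ in the skein relation should be exactly what is needed to produce the cancellations leaving $\ev{\text{two parallel strands}}$; the fact that this triple mimics the Kauffman--BMW skein suggests the combinatorics will work out correctly.

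For R3, the expansion via \eqref{eq:dfn:xingVV} produces $3^3 = 27$ summands per side. Rather than matching term-by-term, the cleaner strategy is to reduce R3 to R2 combined with auxiliary local identities, using the familiar ``slide a strand across a crossing'' trick. The relevant auxiliary relations are exactly the square identity (Proposition~\ref{prop:square31}) and the pentagon identity (Proposition~\ref{prop:pentagon}), which govern how singular vertices and smoothings interact inside a four- or five-gon. Together with the triangle relations (Proposition~\ref{prop:triangle}) these are enough to rewrite both sides of the expanded R3 into the same canonical form.

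The main obstacle is the sheer combinatorial bookkeeping: the three-term nature of our skein relation (with an unusual $+1$ coefficient on the singular-vertex term) produces substantially more summands than a Kauffman-bracket-type expansion, and the asymmetry between $-q$ and $-q^{-1}$ prevents any straightforward self-duality argument. A secondary subtlety is ensuring that every intermediate \gweb{} appearing in the reduction is handled by the Section~\ref{sec:skein} relations alone, without implicitly invoking R1 or R3. As a consistency check at the end, the curl coefficient computed from our skein relation should agree with the value $-q^{2\myN-1}$ asserted in Theorem~\ref{theo:A}, which both validates the calculation and simultaneously extracts the content of that theorem.
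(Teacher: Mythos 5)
Your overall strategy (expand crossings via \eqref{eq:dfn:xingVV}, reduce using the local web relations of Section~\ref{sec:skein}) is the same as the paper's, and your R2 plan is exactly Lemma~\ref{lem:R2}. However, there is a genuine gap at the very start: you assert that framed isotopy with blackboard framing is generated by planar isotopy, R2, and R3, and on that basis you ``intentionally exclude'' R1. That claim is false. Two link diagrams represent the same framed unoriented link if and only if they are related by planar isotopy, R2, R3, \emph{and} the framed version of R1 (kink cancellation, often denoted R1$'$), which removes a pair of opposite-sign curls from a strand. The writhe is an R2/R3 invariant, so R2 and R3 alone cannot cancel curls; the move R1$'$ is needed and is not a consequence of R2 and R3. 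Invariance under R1$'$ follows precisely because the two curl scalars $-q^{2\myN-1}$ and $-q^{1-2\myN}$ from Lemma~\ref{lem:R1} are mutual inverses. In the paper this is why the proof explicitly cites Lemma~\ref{lem:R1} alongside Lemmas~\ref{lem:R2} and~\ref{lem:R3}. What you present as a ``consistency check at the end'' is actually a necessary ingredient of the proof, and the logical dependence runs the other way: you must verify the curl identities to obtain a framed link invariant; you cannot omit R1 and also not check kink cancellation.

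On R3, your sketch is aligned with the paper's proof but considerably vaguer about the intermediate step. You are right that the pentagon (Proposition~\ref{prop:pentagon}) and square (Proposition~\ref{prop:square31}) relations are the essential web-level inputs, but the paper does not expand all three crossings (nor attempt a $27$-term reduction). Instead it expands only the middle crossing, obtaining three terms which are handled by R2 (Lemma~\ref{lem:R2}) and an auxiliary ``R3 with a singular vertex'' relation (Corollary~\ref{cor:R3sing}). That corollary is derived from Lemmas~\ref{lem:PF} and~\ref{lem:R2-spin-vect}, which are stated and proved for \emph{mixed} (spin--vectorial) crossings as introduced in Definition~\ref{dfn:Xings}. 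Your plan never mentions mixed crossings, which are the paper's mechanism for disassembling a singular vertex and sliding it past a regular one; if you insist on avoiding them, you would have to prove the singular-R3 move directly from the web relations, which is possible in principle but substantially messier than you suggest. In short: fix the framed-Reidemeister claim and add the curl computation as part of the proof proper; for R3, expanding only one crossing and establishing a singular-crossing R3 move first (via mixed crossings, or by a direct but longer web computation) is the workable route.
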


Thanks to Reidemeister's theorem 
, the proof of this statement follows immediately from Lemmas~\ref{lem:R1}, \ref{lem:R2} and \ref{lem:R3}.

\begin{lem}\label{lem:R1}
  The following relations hold:
  \begin{equation}
    \label{eq:R1s}
    \ev{\NB{\tikz[scale=0.5]{}}} = -q^{2\myN -1} \ev{\,\NB{\tikz[scale=0.5]{}}\,} \qquad \qquad  \text{and} \qquad \qquad
    \ev{\NB{\tikz[scale=0.5, yscale=-1]{}}} = -q^{1-2\myN} \ev{\,\NB{\tikz[scale=0.5]{}}\,}. 
  \end{equation}
\end{lem}

\begin{proof}
  This is a basic computation, we only treat the first identity, the second follows by Remark \ref{rmk:value-knot-inv}.(\ref{it:mirror-image}).
  \begin{align}
    \ev{\NB{\tikz[scale=0.5]{}}}&= -q\ev{\NB{\tikz[scale=0.5]{\begin{scope}[expand style = \stylevect]
  \coordinate (a) at (-0.3,0.3);
  \coordinate (b) at (0.3,0.3);
  \coordinate (c) at (0.3,-0.3);
  \coordinate (d) at (-0.3,-0.3);
  \coordinate (O) at (0,0);
  \coordinate (T) at (-0.3, 0.9);
  \coordinate (B) at (-0.3,-0.9);
  \draw (T) .. controls +(0, -0.2) and +(-0.2, 0.2) .. (a);
  \draw (B) .. controls +(0, +0.2) and +(-0.2, -0.2) .. (d);  
  \draw (a) .. controls (O) .. (d);
  \draw (c) .. controls (O) .. (b);
  \draw (b) .. controls +(0.4, 0.4) and +(0.4, -0.4) .. (c);
\end{scope}}}} + \ev{\NB{\tikz[scale=0.5]{\begin{scope}[expand style = \stylevect]
  \coordinate (a) at (-0.3,0.3);
  \coordinate (b) at (0.3,0.3);
  \coordinate (c) at (0.3,-0.3);
  \coordinate (d) at (-0.3,-0.3);
  \coordinate (O) at (0,0);
  \coordinate (T) at (-0.3, 0.9);
  \coordinate (B) at (-0.3,-0.9);
  \draw (T) .. controls +(0, -0.2) and +(-0.2, 0.2) .. (a);
  \draw (B) .. controls +(0, +0.2) and +(-0.2, -0.2) .. (d);  
  \draw (a) -- (c);
  \node at (O) {\singvertex};
  \draw (b) -- (d);
  \draw (b) .. controls +(0.4, 0.4) and +(0.4, -0.4) .. (c);
\end{scope}}}} - q^{-1} \ev{\NB{\tikz[scale=0.5]{\begin{scope}[expand style = \stylevect]
  \coordinate (a) at (-0.3,0.3);
  \coordinate (b) at (0.3,0.3);
  \coordinate (c) at (0.3,-0.3);
  \coordinate (d) at (-0.3,-0.3);
  \coordinate (O) at (0,0);
  \coordinate (T) at (-0.3, 0.9);
  \coordinate (B) at (-0.3,-0.9);
  \draw (T) .. controls +(0, -0.2) and +(-0.2, 0.2) .. (a);
  \draw (B) .. controls +(0, +0.2) and +(-0.2, -0.2) .. (d);  
  \draw (a) .. controls (O) .. (b);
  \draw (c) .. controls (O) .. (d);
  \draw (b) .. controls +(0.4, 0.4) and +(0.4, -0.4) .. (c);
\end{scope}}}}  \\
                                    &= \left(-q([2\myN - 1] + 1) + ([2\myN -2] +[2]) - q^{-1}) \right)\ev{\NB{\tikz[scale=0.5]{}}}\\
                                    &=-q^{2N-1} \ev{\NB{\tikz[scale=0.5]{}}}.                                        \end{align}
  The first line follows from the very definition of a crossing, the second from 
  Proposition~\ref{prop:disjoint-union} and Corollary \ref{cor:curl-sing}. 
\end{proof}

\begin{lem}\label{lem:R2}  \newcommand{\ccc}{
  \coordinate (o) at (0.5,0.5);
  \coordinate (a) at (0,1);
  \coordinate (b) at (1,1);
  \coordinate (c) at (1,0);
  \coordinate (d) at (0,0);
}

\newcommand{\dd}[2][]{
      \NB{\tikz[#1]{
      \begin{scope}[expand style = \stylevect]
        \begin{scope}[xscale=2]
          \ccc
        \end{scope}
        #2
        \end{scope}}}}
        
\newcommand{\ddd}[3][]{
  \NB{\tikz[#1]{
      \begin{scope}[expand style = \stylevect]
      \ccc
      #2
    \begin{scope}[xshift =1cm]
      \ccc
      #3
    \end{scope}
  \end{scope}}}}

\newcommand{\xing}{
  \draw (a) .. controls +(0.3, 0) and +(-0.3, 0) ..  (c);
  \fill[white] (o) circle (1mm);
  \draw (d) .. controls +(0.3, 0) and +(-0.3, 0) .. (b);
}

\newcommand{\xingm}{
  \draw (d) .. controls +(0.3, 0) and +(-0.3, 0) .. (b);
  \fill[white] (o) circle (1mm);
  \draw (a) .. controls +(0.3, 0) and +(-0.3, 0) .. (c);
}
\newcommand{\sing}{
  \draw (d) .. controls +(0.3, 0) and +(-0.3, 0) .. (b);
  \node at (o) {\singvertex};
  \draw (a) .. controls +(0.3, 0) and +(-0.3, 0) .. (c);
}
\newcommand{\horz}{
  \draw (a) -- (b);
  \draw (d) -- (c);
}
\newcommand{\vertz}{
  \draw (c) .. controls +(-0.3, 0) and +(-0.3, 0) .. (b);
  \draw (a) .. controls +(0.3, 0) and +(0.3, 0) .. (d);
}

   The following relation holds:
  \begin{equation}
    \label{eq:35}
    \ev{\ddd[scale=0.5]{\xing}{\xingm}} =
    \ev{\ddd[scale=0.5]{\horz}{\horz}}.
  \end{equation}  
\end{lem}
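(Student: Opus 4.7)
The plan is to expand both crossings using the defining skein~\eqref{eq:dfn:xingVV} and to reduce each of the resulting webs using the local relations of Section~\ref{sec:skein}. Writing $X_+ = -q\,S_1 + S - q^{-1}\,S_2$ for the left-hand crossing and, by Remark~\ref{rmk:value-knot-inv}.(\ref{it:mirror-image}), $X_- = -q^{-1}\,S_1 + S - q\,S_2$ for its mirror on the right --- where $S_1, S_2$ denote the two planar smoothings of a vectorial crossing and $S$ the singular vertex --- distribution produces nine terms of the form $R_+ \cdot R_-$ with $R_\pm \in \{S_1, S_2, S\}$.

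Each such composition is a tangle with four boundary endpoints, so topologically realizes one of two planar matchings: either the ``horizontal'' matching appearing on the right-hand side of the identity, or the ``cup-cap'' matching that joins the two left-hand endpoints together and the two right-hand endpoints together. For each of the nine terms I would compute its evaluation as a scalar multiple of one of these two matchings, using Proposition~\ref{prop:disjoint-union} for the closed vectorial circle produced by $S_2 \cdot S_2$, Proposition~\ref{prop:XIHII} together with Lemma~\ref{lem:singular-square} for the four mixed terms $S_i \cdot S$ and $S \cdot S_i$, and Corollary~\ref{cor:2sing} for the doubled singular vertex $S \cdot S$. Summing the weighted contributions produces a $\mathbb{Z}[q^{\pm 1}]$-linear combination of the two matchings, after which it remains to verify that the coefficient of the horizontal matching equals $1$ and that the coefficient of the cup-cap matching vanishes.

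The main obstacle will be combinatorial bookkeeping rather than any conceptual difficulty: the signed $\pm$-decorations on singular vertices from Definition~\ref{dfn:gweb} must be tracked through each reduction, and each local resolution has to be correctly identified with its corresponding boundary matching. A useful consistency check is that the argument is parallel in structure to the proof of Lemma~\ref{lem:R1}: the same expansion and the same basic values --- the unknot $\ev{\cdot} = [2N-1]+1$ from Proposition~\ref{prop:disjoint-union} and the singular digon $\ev{\cdot} = [2N-2]+[2]$ from Corollary~\ref{cor:curl-sing} --- suffice, and the final cancellations reduce to elementary $[k]$-identities.
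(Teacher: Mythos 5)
Your overall strategy matches the paper's: expand each crossing via the defining relation~\eqref{eq:dfn:xingVV}, distribute to obtain nine planar compositions, reduce using local relations, and check that the coefficients collapse. But there is a genuine error in how you describe the intermediate structure, and it would derail the computation if followed literally. You assert that each of the nine compositions ``topologically realizes one of two planar matchings'' and hence is ``a scalar multiple of one of these two matchings.'' This is false: the singular vertex $\ttkz{td-singvertex}$ is a third, independent diagram in the skein space of this tangle, and several of the nine terms do not reduce past it. Concretely, $S_1 \cdot S$ and $S\cdot S_1$ (horizontal matching next to a singular vertex) are simply the singular vertex itself, with no scalar simplification; and Corollary~\ref{cor:2sing} expresses $S\cdot S$ not as a scalar multiple of a matching, but as $[2]\cdot(\text{singular vertex}) + ([2\myN-3]+1)\cdot(\text{matching})$, which is already a two-term linear combination with a surviving singular vertex. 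So the correct target is a three-element expansion in $\{$horizontal, cup-cap, singular vertex$\}$, and one must separately verify that the singular-vertex coefficient vanishes --- and indeed it does, by the elementary identity $-q+[2]-q^{-1}=0$, as the paper's calculation shows. Your proposal as written would have no place to put a standalone singular vertex.

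A secondary issue: you cite Proposition~\ref{prop:XIHII} for the mixed terms $S_i\cdot S$, but that proposition concerns vectorial-spin crossings ($\XX$, $\HH$, $\II$), which is not what these terms are. What you actually need there is the trivial observation that the horizontal smoothing next to a singular vertex is the singular vertex, together with Corollary~\ref{cor:curl-sing} for the case where the cup-cap smoothing closes off two legs of the singular vertex into a curl. These tools, plus Proposition~\ref{prop:disjoint-union} for the circle and Corollary~\ref{cor:2sing} for the doubled singular vertex, are exactly what the paper uses; once you enlarge your set of target diagrams to include the singular vertex, your plan goes through.
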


\begin{proof}
  As before, this is  a simple computation.
  \newcommand{\ccc}{
  \coordinate (o) at (0.5,0.5);
  \coordinate (a) at (0,1);
  \coordinate (b) at (1,1);
  \coordinate (c) at (1,0);
  \coordinate (d) at (0,0);
}

\newcommand{\dd}[2][]{
      \NB{\tikz[#1]{
      \begin{scope}[expand style = \stylevect]
        \begin{scope}[xscale=2]
          \ccc
        \end{scope}
        #2
        \end{scope}}}}
        
\newcommand{\ddd}[3][]{
  \NB{\tikz[#1]{
      \begin{scope}[expand style = \stylevect]
      \ccc
      #2
    \begin{scope}[xshift =1cm]
      \ccc
      #3
    \end{scope}
  \end{scope}}}}

\newcommand{\xing}{
  \draw (a) .. controls +(0.3, 0) and +(-0.3, 0) ..  (c);
  \fill[white] (o) circle (1mm);
  \draw (d) .. controls +(0.3, 0) and +(-0.3, 0) .. (b);
}

\newcommand{\xingm}{
  \draw (d) .. controls +(0.3, 0) and +(-0.3, 0) .. (b);
  \fill[white] (o) circle (1mm);
  \draw (a) .. controls +(0.3, 0) and +(-0.3, 0) .. (c);
}
\newcommand{\sing}{
  \draw (d) .. controls +(0.3, 0) and +(-0.3, 0) .. (b);
  \node at (o) {\singvertex};
  \draw (a) .. controls +(0.3, 0) and +(-0.3, 0) .. (c);
}
\newcommand{\horz}{
  \draw (a) -- (b);
  \draw (d) -- (c);
}
\newcommand{\vertz}{
  \draw (c) .. controls +(-0.3, 0) and +(-0.3, 0) .. (b);
  \draw (a) .. controls +(0.3, 0) and +(0.3, 0) .. (d);
}

   \begin{align}
    \ev{\ddd[scale=0.5]{\xing}{\xingm}} =& q^2\ev{\ddd[scale=0.5]{\vertz}{\horz}}- q \ev{\ddd[scale=0.5]{\sing}{\horz}} + \ev{\ddd[scale=0.5]{\horz}{\horz}} \\
                                         & -q \ev{\ddd[scale=0.5]{\vertz}{\sing}}+ \ev{\ddd[scale=0.5]{\sing}{\sing}} - q^{-1}\ev{\ddd[scale=0.5]{\horz}{\sing}} \\
                                         & + \ev{\ddd[scale=0.5]{\vertz}{\vertz}}- q^{-1} \ev{\ddd[scale=0.5]{\sing}{\vertz}} + q^{-2}\ev{\ddd[scale=0.5]{\horz}{\vertz}} \\
    =&\left(\scriptstyle{q^2 -[2]([2\myN-2] + [2]) + [2\myN -1] +1 + [2\myN-3] +1  +q^{-2}} \right) \ev{\dd[scale=0.5]{\vertz}} \\
                                         & +\left( -q  +[2] -q^{-1}\right)  \ev{\dd[scale=0.5]{\sing}} + \ev{\dd[scale=0.5]{\horz}} \\
    =&\ev{\dd[scale=0.5]{\horz}}.\qedhere
  \end{align}
\end{proof}

\begin{lem}\label{lem:PF}
  The following relations hold:
  \begin{equation}
    \label{eq:PF}
    \ev{\NB{\tikz[scale=0.7]{\input{\imagesfolder/td-pf1}}}} = \ev{\NB{\tikz[scale=0.7]{\input{\imagesfolder/td-pf2}}}}
    \qquad \text{and} \qquad
    \ev{\NB{\tikz[scale=0.7]{\input{\imagesfolder/td-pf3}}}} = \ev{\NB{\tikz[scale=0.7]{\input{\imagesfolder/td-pf4}}}}
    \end{equation}
  \end{lem}

\begin{proof}
  This is a simple computation, we only treat the first identity, the second follows by Remark \ref{rmk:value-knot-inv}.(\ref{it:mirror-image}).
    \newcommand{\ccc}[1]{
  \begin{scope}[xscale=#1]
  \foreach \i in {18, 90, 162, 234, 306}{
    \coordinate (A\i) at (\i:1);
    \coordinate (a\i) at (\i:0.8);
  }
  \coordinate (b90) at (270:-0.3);
  \coordinate (b270) at (270:0.3);
    \coordinate (s) at (-0.1,0);
  \begin{scope}[shift = {(-0.45, -0.077)}, rotate=15]
    \coordinate (l0) at (0,0);
    \foreach \j in {90, 180, 270, 360}{
      \coordinate (l\j) at (\j:0.21);}
  \end{scope}    
  \end{scope}
  }
  \newcommand{\smootha}[2]{
  \draw[expand style= \stylevect] (A162) .. controls (a162) .. (l90);
  \draw[expand style= \stylevect] (b270) .. controls + (s) .. (l270);
  \draw[expand style= #1] (b90) -- (l90) \arr;
  \draw[expand style= #1] (A234) .. controls (a234) .. (l270) \arr ;
  \draw[expand style= #2] (l90) -- (l270) \rarr;
  }
\newcommand{\smoothb}[2]{
    \draw[expand style= \stylevect] (A162) .. controls (a162) .. (l180);
  \draw[expand style= \stylevect] (b270) .. controls + (s) .. (l360);
  \draw[expand style= #1] (b90) -- (l360) \arr;
  \draw[expand style= #1] (A234) .. controls (a234) .. (l180) \rarr ;
  \draw[expand style= #2] (l180) -- (l360) \arr;
}
\newcommand{\ddd}[3][]{\NB{\tikz[#1]{
\begin{scope}
  \ccc{1}
  \draw[expand style= \stylevect] (A90) -- (b90);
  #2{\styleeven}{\styleodd}
  \ccc{-1}
  #3{\styleodd}{\styleeven}
\end{scope}}}}

   \begin{align}
    \ev{\NB{\tikz[scale=0.7]{\input{\imagesfolder/td-pf2}}}} =& -q\ev{\ddd[scale=0.7]{\smootha}{\smoothb}} + \ev{\ddd[scale=0.7]{\smootha}{\smootha}} \\ &+ \ev{\ddd[scale=0.7]{\smoothb}{\smoothb}} - q^{-1}\ev{\ddd[scale=0.7]{\smoothb}{\smootha}} \\
    =& {-\left(q[\myN -2] +[\myN-3] +[\myN-1] -q^{-1}[\myN-2]\right)}\ev{\NB{\tikz[scale=0.7]{\input{\imagesfolder/td-pentagone-2}}}} \\ &- q\ev{\NB{\tikz[scale=0.7]{\input{\imagesfolder/td-pf5}}}} - q^{-1}\ev{\NB{\tikz[scale=0.7]{\input{\imagesfolder/td-pf6}}}} + q \ev{\NB{\tikz[scale=0.7]{\input{\imagesfolder/td-pentagone-1}}}} \\
=& -q\ev{\NB{\tikz[scale=0.7]{\input{\imagesfolder/td-pf5}}}} + \ev{\NB{\tikz[scale=0.7]{\input{\imagesfolder/td-pentagone-1}}}} - q^{-1}\ev{\NB{\tikz[scale=0.7]{\input{\imagesfolder/td-pf6}}}}\\
    =& \ev{\NB{\tikz[scale=0.7]{\input{\imagesfolder/td-pf1}}}}.\qedhere
  \end{align}
\end{proof}
Similarly, one has:

\begin{lem}\label{lem:PF2}
  The following relations hold:
  \begin{align}
    \label{eq:PF2}
    \ev{\NB{\tikz[scale=0.7]{\input{\imagesfolder/td-pf1-2}}}} = \ev{\NB{\tikz[scale=0.7]{\input{\imagesfolder/td-pf2-2}}}},
    \qquad  \qquad
    \ev{\NB{\tikz[scale=0.7]{\input{\imagesfolder/td-pf3-2}}}} = \ev{\NB{\tikz[scale=0.7]{\input{\imagesfolder/td-pf4-2}}}}, \\
    \ev{\NB{\tikz[scale=0.7]{\input{\imagesfolder/td-pf1-3}}}} = \ev{\NB{\tikz[scale=0.7]{\input{\imagesfolder/td-pf2-3}}}},
    \qquad  \qquad
    \ev{\NB{\tikz[scale=0.7]{\input{\imagesfolder/td-pf3-3}}}} = \ev{\NB{\tikz[scale=0.7]{\input{\imagesfolder/td-pf4-3}}}}, \\
    \ev{\NB{\tikz[scale=0.7]{\input{\imagesfolder/td-pf1-4}}}} = \ev{\NB{\tikz[scale=0.7]{\input{\imagesfolder/td-pf2-4}}}},
    \qquad  \qquad
    \ev{\NB{\tikz[scale=0.7]{\input{\imagesfolder/td-pf3-4}}}} = \ev{\NB{\tikz[scale=0.7]{\input{\imagesfolder/td-pf4-4}}}}.
    \end{align}
  \end{lem}

\begin{lem}\label{lem:R2-spin-vect}
  \newcommand{\ccc}[1]{
\begin{scope}[xscale=#1]
  \coordinate (A) at (-1, 0.5);
  \coordinate (B) at (0, 0.5);
  \coordinate (C) at (0, -0.5);
  \coordinate (D) at (-1, -0.5);
  \coordinate (O) at (-0.5, 0);
  \coordinate (t) at (-0.5,0.25);
  \coordinate (b) at (-0.5,-0.25);
  \coordinate (l) at (-0.75,0);
  \coordinate (r) at (-0.25,0);
  \coordinate (sl) at (-0.25,0);
  \coordinate (sr) at ( 0.25,0);
\end{scope}}

\newcommand{\unknotted}[2]{
\begin{scope} 
  \draw[expand style = #1] (A) -- (B) \arr;
  \draw[expand style = \stylevect] (D) -- (C);
\end{scope}}

\newcommand{\xing}[2]{
\begin{scope}
  \draw[expand style = #1] (A) .. controls +(sr) and +(0,0) .. (O) \arr  .. controls +(0,0) and +(sl) .. (C) \arr;
  \fill[white] (O) circle (1mm);
  \draw[expand style =\stylevect] (D) .. controls +(sr) and +(sl) .. (B);
\end{scope}}

\newcommand{\xingm}[2]{
  \begin{scope}
  \draw[expand style =\stylevect] (D) .. controls +(sr) and +(sl) .. (B);
  \fill[white] (O) circle (1mm);
  \draw[expand style = #1] (A) .. controls +(sr) and +(sl) .. (C) \arr ;
\end{scope}
}

\newcommand{\III}[3][\rarr]{
\begin{scope}
  \draw[expand style = #2] (A) .. controls +(sr) and +(0,0) .. (t) \arr;
  \draw[expand style = #2] (C) .. controls +(sl) and +(0,0) .. (b) \arr;
  \draw[expand style = \stylevect] (D) .. controls +(sr) and +(0,0) .. (b) ;
  \draw[expand style = \stylevect] (B) .. controls +(sl) and +(0,0) .. (t) ;
  \draw[expand style = #3] (b) -- (t) #1;
\end{scope}}

\newcommand{\IIIr}[2]{\III[\arr]{#1}{#2}}

\newcommand{\HHH}[2]{
\begin{scope}
  \draw[expand style = #1] (A) .. controls +(sr) and +(0,0) .. (l) \arr;
  \draw[expand style = #1] (C) .. controls +(sl) and +(0,0) .. (r) \arr;
  \draw[expand style = \stylevect] (D) .. controls +(sr) and +(0,0) .. (l) ;
  \draw[expand style = \stylevect] (B) .. controls +(sl) and +(0,0) .. (r) ;
  \draw[expand style = #2] (l) -- (r) \arr;
\end{scope}}

\newcommand{\ddd}[3][]{\NB{\tikz[#1]{
\begin{scope}
  \ccc{1}
  #2{\styleeven}{\styleodd}
  \ccc{-1}
  #3{\styleeven}{\styleodd}
\end{scope}}}}

\newcommand{\dddodd}[3][]{\NB{\tikz[#1]{
\begin{scope}
  \ccc{1}
  #2{\styleodd}{\styleodd}
  \ccc{-1}
  #3{\styleodd}{\styleodd}
\end{scope}}}}

\newcommand{\dd}[2][]{\NB{\tikz[#1]{
\begin{scope}
  \ccc{2}
  #2{\styleeven}{\styleodd}
\end{scope}}}}

\newcommand{\ddodd}[2][]{\NB{\tikz[#1]{
\begin{scope}
  \ccc{2}
  #2{\styleodd}{}
\end{scope}}}}

   The following relations hold:
  \begin{align}
    \label{eq:37}
    &\ddd[scale=0.7]{\xingm}{\xingm}=\dd[scale=0.7]{\unknotted}=\ddd[scale=0.7]{\xing}{\xing}\\
    &\dddodd[scale=0.7]{\xingm}{\xingm}=\ddodd[scale=0.7]{\unknotted}=\dddodd[scale=0.7]{\xing}{\xing}
  \end{align}
\end{lem}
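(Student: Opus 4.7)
My plan is to follow the template of the proof of Lemma \ref{lem:R2}. I will first establish the equality $\ev{\text{two stacked under-over mixed crossings}}=\ev{\text{unknotted}}$ in the even-spin case; the remaining identities then follow formally. The other equality in the first line of the lemma will follow from this one by Remark \ref{rmk:value-knot-inv}.(\ref{it:mirror-image}) (which sends $q^{1/2}\mapsto q^{-1/2}$ and exchanges over-crossings with under-crossings), and the odd-spin identities in the second line will follow from the even-spin case by invoking Corollary \ref{cor:ev-all-equal}, which allows us to globally swap the parities of all spin edges without changing $\ev{\cdot}$.

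For the even-spin identity, the strategy is to expand both mixed crossings using the appropriate formulas in Definition \ref{dfn:Xings}. Each mixed crossing expands as a $\ZZ[q^{\pm 1/2}]$-linear combination of two local planar smoothings: a \emph{parallel} smoothing $I$ which separates the spin and vectorial strands, and a \emph{merged} smoothing $H$ which replaces the crossing with two trivalent vertices joined by a short spin segment. Multiplying out the two crossing expansions yields four summand diagrams, each obtained by stacking an $I$ or an $H$ at the top crossing over an $I$ or an $H$ at the bottom crossing.

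I then reduce each of these four diagrams. The $II$ stacking is already the desired right-hand side $\ev{\text{unknotted}}$. The $HH$ stacking contains a bigon bounded by one spin edge and one vectorial edge, which by the bad-digon relations in Proposition \ref{p:combinatorial-skein-bigon} evaluates to $[\myN]$ times the web obtained by collapsing that bigon. The mixed $IH$ and $HI$ stackings are planar isotopic to each other and reduce to a single local merger web. Collecting coefficients and performing a final quantum-integer simplification then yields the required equality, in direct analogy with the coefficient computation closing the proof of Lemma \ref{lem:R2}.

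The principal obstacle will be the careful bookkeeping of orientations and of the $\pm q^{\pm 1/2}$ signs prescribed by Definition \ref{dfn:Xings}: the two crossings in an R2 configuration are necessarily of opposite type (for instance one is a crossing of type Xing-even-$0$ and the other of type Xing-even-$1$), so their expansions have opposite sign structure. This relative sign is exactly what causes the cross terms $IH$ and $HI$ to cancel against part of the bad-digon contribution $HH$, leaving only the $II$ summand with coefficient $1$. Once the sign conventions are pinned down, only an elementary identity involving $[\myN]$ and $[2]$ remains to close the argument.
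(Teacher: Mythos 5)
The macro-level strategy of your proposal --- expand both crossings via Definition~\ref{dfn:Xings}, reduce the four stacked diagrams with planar skein relations, collect coefficients, and deduce the remaining three identities from Corollary~\ref{cor:ev-all-equal} and the mirror-image remark --- is the same as the paper's proof. However, your description of the resolutions of a mixed spin--vector crossing contains a genuine error that makes the rest of the calculation fail.

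By Definition~\ref{dfn:Xings}, a mixed crossing has no ``parallel'' resolution that separates the spin and vectorial strands: \emph{both} resolutions are merge-type, each consisting of a pair of trivalent vertices joined by a short odd-spin segment, the two differing only in the orientation of that segment. (A parallel resolution only appears for the vector--vector crossing, whose other resolution is a singular vertex rather than a pair of trivalent vertices.) Consequently what you call the ``$II$'' term is not the unknotted diagram; it is a closed web with four trivalent vertices that reduces, via a square-type relation, to $[\myN-2]$ times the merged local web \emph{plus} the unknotted diagram. Likewise the ``$HH$'' term reduces to $[\myN]$ times the merged web, and each cross term reduces to $[\myN-1]$ times it. The coefficient on the merged web is then $-q[\myN-1] + [\myN-2] + [\myN] - q^{-1}[\myN-1]$, which vanishes because $[\myN-2] + [\myN] = [2][\myN-1]$ and $q+q^{-1}=[2]$. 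Under your claimed reductions (unknotted with coefficient $1$, cross terms contributing $-q$ and $-q^{-1}$, $HH$ contributing $[\myN]$) the residual coefficient on the merged web would instead be $[\myN]-[2]$, which is nonzero for every $\myN\geq 3$, so the argument as described would not close.
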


\begin{proof}
  \newcommand{\ccc}[1]{
\begin{scope}[xscale=#1]
  \coordinate (A) at (-1, 0.5);
  \coordinate (B) at (0, 0.5);
  \coordinate (C) at (0, -0.5);
  \coordinate (D) at (-1, -0.5);
  \coordinate (O) at (-0.5, 0);
  \coordinate (t) at (-0.5,0.25);
  \coordinate (b) at (-0.5,-0.25);
  \coordinate (l) at (-0.75,0);
  \coordinate (r) at (-0.25,0);
  \coordinate (sl) at (-0.25,0);
  \coordinate (sr) at ( 0.25,0);
\end{scope}}

\newcommand{\unknotted}[2]{
\begin{scope} 
  \draw[expand style = #1] (A) -- (B) \arr;
  \draw[expand style = \stylevect] (D) -- (C);
\end{scope}}

\newcommand{\xing}[2]{
\begin{scope}
  \draw[expand style = #1] (A) .. controls +(sr) and +(0,0) .. (O) \arr  .. controls +(0,0) and +(sl) .. (C) \arr;
  \fill[white] (O) circle (1mm);
  \draw[expand style =\stylevect] (D) .. controls +(sr) and +(sl) .. (B);
\end{scope}}

\newcommand{\xingm}[2]{
  \begin{scope}
  \draw[expand style =\stylevect] (D) .. controls +(sr) and +(sl) .. (B);
  \fill[white] (O) circle (1mm);
  \draw[expand style = #1] (A) .. controls +(sr) and +(sl) .. (C) \arr ;
\end{scope}
}

\newcommand{\III}[3][\rarr]{
\begin{scope}
  \draw[expand style = #2] (A) .. controls +(sr) and +(0,0) .. (t) \arr;
  \draw[expand style = #2] (C) .. controls +(sl) and +(0,0) .. (b) \arr;
  \draw[expand style = \stylevect] (D) .. controls +(sr) and +(0,0) .. (b) ;
  \draw[expand style = \stylevect] (B) .. controls +(sl) and +(0,0) .. (t) ;
  \draw[expand style = #3] (b) -- (t) #1;
\end{scope}}

\newcommand{\IIIr}[2]{\III[\arr]{#1}{#2}}

\newcommand{\HHH}[2]{
\begin{scope}
  \draw[expand style = #1] (A) .. controls +(sr) and +(0,0) .. (l) \arr;
  \draw[expand style = #1] (C) .. controls +(sl) and +(0,0) .. (r) \arr;
  \draw[expand style = \stylevect] (D) .. controls +(sr) and +(0,0) .. (l) ;
  \draw[expand style = \stylevect] (B) .. controls +(sl) and +(0,0) .. (r) ;
  \draw[expand style = #2] (l) -- (r) \arr;
\end{scope}}

\newcommand{\ddd}[3][]{\NB{\tikz[#1]{
\begin{scope}
  \ccc{1}
  #2{\styleeven}{\styleodd}
  \ccc{-1}
  #3{\styleeven}{\styleodd}
\end{scope}}}}

\newcommand{\dddodd}[3][]{\NB{\tikz[#1]{
\begin{scope}
  \ccc{1}
  #2{\styleodd}{\styleodd}
  \ccc{-1}
  #3{\styleodd}{\styleodd}
\end{scope}}}}

\newcommand{\dd}[2][]{\NB{\tikz[#1]{
\begin{scope}
  \ccc{2}
  #2{\styleeven}{\styleodd}
\end{scope}}}}

\newcommand{\ddodd}[2][]{\NB{\tikz[#1]{
\begin{scope}
  \ccc{2}
  #2{\styleodd}{}
\end{scope}}}}

   As usual, we only need to prove the first one.
  \begin{align}
    \ddd[scale=0.7]{\xingm}{\xingm}=&-q \ddd[scale=0.7]{\HHH}{\IIIr} + \ddd[scale=0.7]{\III}{\IIIr} +\ddd[scale=0.7]{\HHH}{\HHH} - q^{-1} \ddd[scale=0.7]{\III}{\HHH} \\
    =&  \scriptstyle{({-q[\myN -1] +[\myN-2] + [\myN] - q^{-1}[\myN-1]})} \dd[scale=0.7]{\III}\, +\, \dd[scale=0.7]{\unknotted} \\
    =& \dd[scale=0.7]{\unknotted}. \qedhere
  \end{align}
\end{proof}

From Lemmas~\ref{lem:PF} and \ref{lem:R2-spin-vect} and the definition of a singular crossing, one immediately obtains the following.

\begin{cor}\label{cor:R3sing}
  The following local relations hold:
  \begin{equation}
    \label{eq:R3sing}
    \ev{\NB{\tikz[scale=0.7]{\input{\imagesfolder/td-R3sing-0}}}} =     \ev{\NB{\tikz[scale=-0.7]{\input{\imagesfolder/td-R3sing-0}}}} \qquad \text{and} \qquad
    \ev{\NB{\tikz[scale=0.7]{\input{\imagesfolder/td-R3sing-1}}}} =     \ev{\NB{\tikz[scale=-0.7]{\input{\imagesfolder/td-R3sing-1}}}}.
  \end{equation}
\end{cor}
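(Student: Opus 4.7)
The two identities in \eqref{eq:R3sing} correspond to a spin strand (even or odd) passing across a singular vertex, the two sides being horizontal mirror images of each other. Using Remark~\ref{rmk:value-knot-inv}(\ref{it:mirror-image}) it suffices to verify one chirality in each case.

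The strategy is to expand the singular vertex appearing in both sides using the identity
\[
\ev{\,\ttkz{td-singvertex}\,} = \ev{\,\ttkz[rotate=90]{td-Xing}\,} + q\,\ev{\,\ttkz{td-smoothing}\,} + q^{-1}\ev{\,\ttkz[rotate=90]{td-smoothing}\,},
\]
obtained by solving equation \eqref{eq:dfn:xingVV} for the singular-vertex term. This expresses each side of the desired relation as a sum of three subordinate diagrams, and it is enough to match them termwise.

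In the two smoothing summands the singular vertex has been resolved into a pair of disjoint vectorial arcs, so the spin strand interacts only with trivalent vertices. The pitchfork identities of Lemma~\ref{lem:PF} and Lemma~\ref{lem:PF2} slide mixed crossings through trivalent vertices; applying the appropriate variant twice transports the spin strand from one side of the configuration to the other, and identifies each smoothing summand on the left with its counterpart on the right. In the remaining summand, where the singular vertex has become a regular vector-vector crossing, the spin strand crosses this crossing through two mixed crossings forming a Reidemeister~II configuration. Lemma~\ref{lem:R2-spin-vect} removes the pair simultaneously on both sides, leaving identical diagrams.

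The main obstacle is purely administrative: one must choose the correct orientation- and parity-variant of the pitchfork relation listed in Lemma~\ref{lem:PF2} for each of the trivalent vertices encountered, and verify that the orientations induced by the smoothing are compatible with an available variant. No further state-sum computation is required once the three ingredients (the singular-vertex expansion, the pitchfork identities, and spin-vector Reidemeister~II) are combined in the manner above.
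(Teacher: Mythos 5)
The proposal misidentifies the diagrams in \eqref{eq:R3sing} and, more seriously, sets up a circular argument.

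First, the misidentification: you interpret the $\mathrm{R3sing}$ diagrams as a \emph{spin} strand sliding past a singular vertex. But the term $\ev{\ttkz[scale=0.7]{td-R3sing-1}}$ arises in the proof of Lemma~\ref{lem:R3} by applying \eqref{eq:dfn:xingVV} to one vector--vector crossing of the all-vectorial triple-crossing diagram $\ev{\ttkz[scale=0.7]{td-R3-0}}$; the diagrams in Corollary~\ref{cor:R3sing} therefore consist of a singular vertex together with two regular (vector--vector) crossings, with all strands vectorial. There is no spin strand, so the smoothings you produce contain neither spin edges nor trivalent vertices, and Lemmas~\ref{lem:PF}, \ref{lem:PF2}, \ref{lem:R2-spin-vect} cannot be invoked in the way you describe.

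Second, and independently, the decomposition $\ev{\,\ttkz{td-singvertex}\,} = \ev{\,\ttkz[rotate=90]{td-Xing}\,} + q\ev{\,\ttkz{td-smoothing}\,} + q^{-1}\ev{\,\ttkz[rotate=90]{td-smoothing}\,}$ applied to the singular vertex of $\ttkz[scale=0.7]{td-R3sing-1}$ produces, in the crossing summand, exactly the diagram $\ttkz[scale=0.7]{td-R3-0}$ with three regular crossings. Establishing the mirror-invariance of that summand is precisely the content of Lemma~\ref{lem:R3}, which in turn is deduced from Corollary~\ref{cor:R3sing}. Your plan therefore assumes what it is trying to prove. (Even granting the spin-strand reading, the two mixed crossings in the crossing summand would sit on two \emph{different} vectorial strands, so they would not form a Reidemeister II pair, and Lemma~\ref{lem:R2-spin-vect} would not apply there either.)

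The intended route, signaled by the sentence preceding the corollary, is the opposite expansion: use the relation between the singular vertex and the square of trivalent vertices (Lemma~\ref{lem:singular-square}, the ``definition'' of a singular crossing from \eqref{eq:4v-def-int}) to write $\ev{\ttkz[scale=0.7]{td-R3sing-1}}$ as $\frac{1}{[2]^{[\myN-3]}}$ times the evaluation of a diagram in which the singular vertex is a square of two spin edges and four trivalent vertices. Sliding the over- or under-passing vectorial strand across this square now only ever pushes a crossing past a trivalent vertex (where Lemmas~\ref{lem:PF} and~\ref{lem:PF2} apply) or creates a cancelling spin--vector Reidemeister~II pair (Lemma~\ref{lem:R2-spin-vect}); no vector--vector Reidemeister~III move is needed. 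This is the non-circular argument the paper has in mind, and it is the one you should carry out.
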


\begin{lem}\label{lem:R3}
  The following local relation holds:
  \begin{equation}
    \label{eq:R3sing}
    \ev{\NB{\tikz[scale=0.7]{\input{\imagesfolder/td-R3-0}}}} = \ev{\NB{\tikz[scale=-0.7]{\input{\imagesfolder/td-R3-0}}}}.
  \end{equation}  
\end{lem}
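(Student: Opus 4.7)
The plan is to expand one of the three vector--vector crossings on each side of \eqref{eq:R3sing} using Definition~\ref{dfn:Xings} and reduce to term-by-term equalities that are already available. Concretely, pick corresponding crossings on LHS and RHS---for instance, the crossing between the two ``stationary'' strands, relative to which the third strand is being slid across. Applying
\[
\ev{\,\,\ttkz[rotate=90]{td-Xing}\,\,} = -q\ev{\,\,\ttkz{td-smoothing}\,\,} + \ev{\,\,\ttkz{td-singvertex}\,\,} -q^{-1}\ev{\,\,\ttkz[rotate=90]{td-smoothing}\,\,}
\]
to this crossing on both sides produces three terms on each side, with the same coefficients $-q$, $+1$, $-q^{-1}$, so it suffices to match the three pairs of resolutions.

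In each of the two smoothing cases, the chosen crossing is replaced by a pair of vectorial arcs, and the sliding strand passes through the two remaining vector--vector crossings with those arcs between them. One transports the sliding strand across by combining planar isotopy with Lemma~\ref{lem:R2}, which identifies this configuration with its mirror (where the sliding strand goes on the opposite side of the smoothed arcs). In the singular-vertex case, the sliding strand passes through the two remaining crossings with a singular vertex lying between them: this is precisely the configuration of Corollary~\ref{cor:R3sing}, which was set up exactly so that the strand can swap sides with no remaining obstruction.

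The main obstacle is bookkeeping: one must choose the crossing to expand so that all three resulting cases reduce cleanly, and then check that the local configurations produced by each resolution match the hypotheses of Lemma~\ref{lem:R2} and Corollary~\ref{cor:R3sing} up to planar isotopy. The two variants in Corollary~\ref{cor:R3sing}, together with the mirror-image symmetry from Remark~\ref{rmk:value-knot-inv}.(\ref{it:mirror-image}), should absorb the minor orientation asymmetries that arise among the resolutions. Since all three strands in \eqref{eq:R3sing} are vectorial and unoriented, no case analysis on the parity of spin edges is required, which is one reason the all-vector R3 is handled separately from the singular/mixed analogues already verified.
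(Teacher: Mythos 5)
Your proposal is essentially the paper's own proof: resolve the crossing between the two strands being passed over via Definition~\ref{dfn:Xings}, then match the singular-vertex term via Corollary~\ref{cor:R3sing} and the two smoothed terms via Lemma~\ref{lem:R2} together with planar isotopy, and recombine. The invocation of Remark~\ref{rmk:value-knot-inv} is unnecessary but harmless; otherwise this is the same argument.
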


\begin{proof}
  Using Definition \ref{dfn:Xings}, Corollary \ref{cor:R3sing} and Lemma \ref{lem:R2}, we find:
  \begin{align}
    \ev{\NB{\tikz[scale=0.7]{\input{\imagesfolder/td-R3-0}}}}&= -q \ev{\NB{\tikz[scale=0.7]{\input{\imagesfolder/td-R3-2}}}} + \ev{\NB{\tikz[scale=0.7]{\input{\imagesfolder/td-R3sing-1}}}} -q^{-1} \ev{\NB{\tikz[scale=0.7]{\input{\imagesfolder/td-R3-1}}}} \\
                                  &= -q \ev{\NB{\tikz[scale=-0.7]{\input{\imagesfolder/td-R3-2}}}} + \ev{\NB{\tikz[scale=-0.7]{\input{\imagesfolder/td-R3sing-1}}}} -q^{-1} \ev{\NB{\tikz[scale=-0.7]{\input{\imagesfolder/td-R3-1}}}} \\
   &= \ev{\NB{\tikz[scale=-0.7]{\input{\imagesfolder/td-R3-0}}}}.\qedhere
  \end{align}
\end{proof}

From the very definition of $\ev{\cdot}$ on
crossings \eqref{eq:dfn:xingVV}, we obtain that it satisfies the following skein relation:
\begin{equation}
  \label{eq:skein-ev-link}
\ev{\,\,\NB{\tikz[rotate=90]{}}\,\,} -
   \ev{\,\,\NB{\tikz[rotate=00]{}}\,\,}
   = (q-q^{-1})\left(\ev{\,\,\NB{\tikz[rotate=90]{}}\,\,} - \ev{\,\,\NB{\tikz[rotate=00]{}}\,\,} \right).
\end{equation}

We now can prove Theorem~\ref{theo:A} from the introduction.

\begin{thm}\label{thm:restated-thm-A}
    There is of a family of link invariants satisfying the following skein relations:
\begin{gather}
     \evN{\,\,\NB{\tikz[rotate=90]{}}\,\,} -
   \evN{\,\,\NB{\tikz[rotate=00]{}}\,\,}
   = (q-q^{-1})\left(\evN{\,\,\NB{\tikz[rotate=90]{}}\,\,} -
     \evN{\,\,\NB{\tikz[rotate=00]{}}\,\,} \right), \\
       \evN{\NB{\tikz[scale=0.5]{}}} = -q^{2\myN -1} \evN{\,\NB{\tikz[scale=0.5]{}}\,}, \qquad \qquad
    \evN{\NB{\tikz[scale=0.5, yscale=-1]{}}} = -q^{1-2\myN}
    \evN{\,\NB{\tikz[scale=0.5]{}}\,}, \\ \text{and}\qquad  \evN{\NB{\tikz[]{}}} = \left([2\myN-1] +1\right).  
\end{gather}    
\end{thm}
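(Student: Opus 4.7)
The plan is to take as the candidate invariant the map $\evN{\cdot}$ extended to link diagrams by Definition~\ref{dfn:Xings}, which takes values in $\ZZ[q,q^{-1}]$ on link diagrams by Remark~\ref{rmk:value-knot-inv}.(\ref{it:inv-Laurent}), and then verify each of the three claimed properties in turn.

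First I would invoke Proposition~\ref{prop:link-invariant}, which asserts that $\evN{\cdot}$ depends only on the framed isotopy class of the link represented by the diagram. That proposition is already proved via the framed version of Reidemeister's theorem together with three explicit invariance lemmas: Lemma~\ref{lem:R1} for the framed first Reidemeister move (the curl), Lemma~\ref{lem:R2} for the second move, and Lemma~\ref{lem:R3} for the third move. This is the only substantive content of the theorem, and is where the genuine difficulty lies---in particular the verification of R2 for vector--vector strands, which requires expanding both crossings into nine planar terms and collapsing them using the digon and curl identities of Proposition~\ref{p:combinatorial-skein-bigon} and Corollary~\ref{cor:curl-sing} together with the H/crossing/identity relation of Proposition~\ref{prop:XIHII}. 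That obstacle has already been cleared in Section~\ref{sec:linkinvariants}.

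The three skein identities in the statement are then essentially immediate. The crossing skein is exactly equation~\eqref{eq:skein-ev-link}: applying Definition~\ref{dfn:Xings} to the two vector--vector crossings produces the same singular-vertex term in both expansions, so it cancels upon subtraction and leaves $(q-q^{-1})$ times the difference of planar smoothings. The two curl relations are the content of Lemma~\ref{lem:R1}, and the unknot value $[2\myN-1]+1$ is the first identity of Proposition~\ref{prop:disjoint-union}, itself a direct state-sum computation on the vectorial circle carried out in Example~\ref{ex:vect}. Combining these three facts with the framed invariance from Proposition~\ref{prop:link-invariant} completes the argument; thus the proof reduces to assembling previously established results, the one genuine obstacle having been dealt with in the proof of Lemma~\ref{lem:R2}.
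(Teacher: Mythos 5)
Your proposal is correct and follows essentially the same route as the paper: Proposition~\ref{prop:link-invariant} gives invariance, and equation~\eqref{eq:skein-ev-link}, Lemma~\ref{lem:R1}, and Example~\ref{ex:vect} supply the three skein identities. The added detail on how Lemma~\ref{lem:R2} reduces via Proposition~\ref{prop:XIHII} and Corollary~\ref{cor:curl-sing} is accurate but only unpacks what the paper already cites.
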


\begin{proof}
Proposition~\ref{prop:link-invariant} says $\ev{\cdot}$ is a link invariant, while equation \eqref{eq:skein-ev-link}, Lemma~\ref{lem:R1}, and Example \ref{ex:vect} imply the invariant satisfies the skein relations.
\end{proof}

\coloreddiagrams
\section{Type \texorpdfstring{$\mathsf{D}$}{D} intertwiners}
\label{sec:intertwiners}

The purpose of this section is relate the link invariant $\ev{\cdot}$ from Theorem~\ref{thm:restated-thm-A}, with the
Reshetikhin--Turaev link invariant \cite{MR1036112} for
$U_q(\sotN)$. In order to accomplish this task, we explicitly describe
homomorphisms between tensor products of minuscule representations for
$U_q(\sotN)$. These morphisms can be represented in the usual graphical calculus for $U_q(\sotN)$ which is the inspiration for the braided webs studied above, analogous to
\cite[Appendix A]{RW2}. We also derive enough relations between these
morphisms to characterize the link invariant and deduce
Theorem~\ref{thm:ev-vs-alg} which relates it to the link invariant
$\evN{\cdot}$ defined in Section~\ref{sec:linkinvariants}. In this
section, we assume that $\myN\geq 3$.

\subsection{Background on quantum group}

    Let $\mathfrak{g}$ be a semisimple Lie algebra over $\mathbb{C}$ and let $\Phi$ be the associated root system. Fix a choice of simple roots $\Pi$. Write $(-,-)$ for the $W$ invariant bilinear form on $\mathbb{Z}\Phi$ such that $(\alpha, \alpha)=2$ for all short roots. Given $\alpha \in \Phi$, define $\alpha^{\vee} = \frac{2\alpha}{(\alpha, \alpha)}$. Let $q$ be an indeterminant and set $q_{\alpha}:=q^{\frac{(\alpha, \alpha)}{2}}$. We write $X$ for the set of integral weights and $X_+\subset X$ for the set of dominant integral weights.

\begin{exa}
    In the present work, we are only interested in the case of $\mathfrak{g} = \sotN$. In this case,
    \[
    \Phi = \{\pm(\epsilon_i \pm \epsilon_j)\}\subset \oplus_{i=1}^{\myN}\mathbb{R}\epsilon_i 
    \]
    and
    \[
    \Pi = \{\alpha_1 = \epsilon_1 - \epsilon_2, \dots, \alpha_{n-2} = \epsilon_{n-1} - \epsilon_{n-1}, \alpha_{n-1} = \epsilon_{n-1} - \epsilon_n, \alpha_{n} = \epsilon_{n-1} + \epsilon_n\}.
    \]
    Also, the form $(-,-)$ is simply defined by $(\epsilon_i, \epsilon_j) = \delta_{i,j}$. The corresponding weights (respectively dominanat weights) are the $\mathbb{Z}$ (respectively $\mathbb{Z}_{\ge 0}$) span of the fundamental weights
    \[
    \varpi_1 = \epsilon_1 , \varpi_2 = \epsilon_1 + \epsilon_2, \dots, \varpi_{n-2} = \epsilon_1 + \dots + \epsilon_{n-2},
    \]
    \[
    \varpi_{n-1} = \frac{1}{2}(\epsilon_1 + \dots + \epsilon_{n-1} + \epsilon_n), \quad \text{and} \quad  \varpi_n = \frac{1}{2}(\epsilon_1 + \dots + \epsilon_{n-1} - \epsilon_n).
    \]
\end{exa}
    
    Associated to this data is a $\mathbb{C}(q)$-algebra, denoted $U_q(\mathfrak{g})$ and referred to as the Drinfeld-Jimbo quantum group, which is defined by generators
    \[
    e_i, f_i, k_i^{\pm 1}, \quad \text{for $\alpha_i\in \Pi$},
    \]
    and relations \cite[Section 4.3]{Jantzenbook}. 
    
    The $\mathbb{C}(q)$-algebra $U_q(\mathfrak{g})$ is a Hopf algebra. The structure maps are given on generators as follows:
    \begin{enumerate}
        \item $\Delta(e_i) = e_i \otimes k_i + 1\otimes e_i$, 
	$\Delta(f_i) = f_i\otimes 1 + k_i^{-1}\otimes f_i$, 
	and $\Delta(k_i^{\pm}) = k_i ^{\pm}\otimes k_i^{\pm}$.
        \item $S(e_i) = -e_i k_i^{-1}$, 
	$S(f_i) = - k_i f_i$, 
	and $S(k_i^{\pm})= k_i^{\mp}$.
        \item $\epsilon(e_i)= 0$, 
	$\epsilon(f_i) = 0$, 
	and $\epsilon(k_i) = 1$.
    \end{enumerate}
    We write $\mathbb{C}(q)$ for the trivial representation of $U_q(\sotN)$, i.e. for $X\in U_q(\sotN)$ and $\alpha\in \mathbb{C}(q)$, we set $u\cdot \alpha := \epsilon(u)\alpha$. Given a representation $W$ of $U_q(\mathfrak{g})$, we have an action of $U_q(\mathfrak{g})$ on $W^*= \Hom_{\mathbb{C}(q)}(W, \mathbb{C}(q))$ by $(u\cdot f)(w) = f(S(u)\cdot w)$ for all $f\in W^*$ and $w\in W$.

\subsubsection{Background on representations}

    We restrict to finite dimensional representations such that for each $\alpha_i\in \Pi$, $k_i$ is diagonalizable with eigenvalues contained in $\{q^n\}_{n\in \mathbb{Z}}$. This collection of modules is called finite dimensional type-$\textbf{1}$. Since the $k_i$ all commute, they are simultaneously diagonalizable on such representations. Let $W$ be a finite dimensional type$-\textbf{1}$ representation and define weight spaces for $\lambda \in X$ as
    \[
    W[\lambda]:=\{w\in W \ | \ k_iw = q_{\alpha_i}^{(\alpha_i^{\vee}, \lambda)}w\}. 
    \]
    Then $W$ is a direct sum of its weight spaces \cite[Proposition 5.1]{Jantzenbook}. Type-$\textbf{1}$ representations are also closed under tensor product and duals \cite[Section 5.4]{Jantzenbook}.

    Finite dimensional type-$\textbf{1}$ modules are direct sums of irreducible modules \cite[Theorem 5.17]{Jantzenbook}. For each $\lambda\in X_+$, there is a finite dimensional type-$\textbf{1}$ irreducible representation $V(\lambda)$ such that the weight spaces are the same dimension as the corresponding irreducible representation of $\mathfrak{g}$. Such representations are mutually non-isomorphic and any irreducible finite dimensional type-$\textbf{1}$ representation of $U_q(\mathfrak{g})$ is isomorphic to one \cite[Theorem 5.11]{Jantzenbook}.

    For more discussion about the justification for restricting to type-$\textbf{1}$ representations, see \cite[Section 5.2]{Jantzenbook}.

\subsection{Explicit calculations with minuscule representations}

    In the case that $\varpi\in X_+$ is minuscule, the action of $U_q(\mathfrak{g})$ on $V(\varpi)$ can be determined explicitly \cite[Section 5A.1]{Jantzenbook}. For $\sotN$, the (non-zero) dominant minuscule weights are $\varpi_1$, $\varpi_{n-1}$, and $\varpi_n$. We will now recall how the representations $V(\varpi_1)$, $V(\varpi_{n-1})$, and $V(\varpi_n)$ correspond to the vector and half spinor representations. 

\subsubsection{Vector representation}

    \begin{dfn}
    Let $V$ be the $\mathbb{C}(q)$-vector space with basis $\{v_{\pm i}\}_{i\in \setN}$.
    \end{dfn}

    \begin{dfn}
        For $i\in \setN$, define $\wt (v_i)= \epsilon_i$ and $\wt (v_{-i}) = -\epsilon_i$. 
    \end{dfn}

    \begin{prop}\label{P:vectorrepn}
        The operators
        \begin{align*}
        f_i\cdot v_k &= \begin{cases}
        v_{i+1} \qquad& \text{if $k=i$,} \\
        v_{-i} \qquad &\text{if $k= -(i+1)$,} \\
        0 \qquad &\text{otherwise,}\end{cases} && \qquad f_\myN\cdot v_k &= \begin{cases}
            v_{-(\myN-1)} \qquad &\text{if $k=\myN$,} \\
            v_{-\myN} \qquad &\text{if $k= \myN-1$,} \\
            0 \qquad &\text{otherwise,}\end{cases} \\
e_i\cdot v_k &= \begin{cases}
        v_{i} \qquad &\text{if $k=i+1$,} \\
        v_{-(i+1)} \qquad & \text{if $k= -i$,} \\
        0 \qquad &\text{otherwise,}\end{cases} && \qquad e_\myN\cdot v_k &= \begin{cases}
            v_{\myN-1} \qquad &\text{if $k=-\myN$,} \\
            v_{\myN} \qquad &\text{if $k= -(\myN-1)$,} \\
            0 \qquad   &\text{otherwise,}\end{cases} \\ 
k_j\cdot v_k &= q_{\alpha_j}^{(\alpha_j^{\vee}, \wt (v_k))}v_k, \end{align*}
        for $i=1,\dots, \myN -1$ and $j=1, \dots, \myN$,
        define an action of $U_q(\sotN)$ on $V$. Moreover, $V\cong V(\varpi_1)$ so in particular $V$ is a simple $U_q(\sotN)$-module.  
    \end{prop}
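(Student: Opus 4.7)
The plan is to verify the defining relations of $U_q(\sotN)$ directly on the basis $\{v_{\pm i}\}_{i\in\setN}$ and then identify $V$ with $V(\varpi_1)$ via highest weight theory.

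First I would check the Cartan-type relations. By construction each $v_k$ is a weight vector of weight $\wt(v_k)$, so inspecting the explicit formulas shows that $e_i$ sends $V[\lambda]$ into $V[\lambda + \alpha_i]$ and $f_i$ sends $V[\lambda]$ into $V[\lambda - \alpha_i]$. This immediately gives the relations $k_j e_i k_j^{-1} = q^{(\alpha_j^\vee, \alpha_i)} e_i$ and their $f$-analogues. Next, for $i\ne j$, the commutator $[e_i, f_j]$ acts as zero on each basis vector by tracing through the two compositions and checking that in the finitely many nontrivial cases they agree. For $i = j$, a case-by-case computation on basis vectors yields $(e_i f_i - f_i e_i) v_k = [(\alpha_i^\vee, \wt(v_k))]\, v_k$, matching the required relation $[e_i, f_i] = (k_i - k_i^{-1})/(q-q^{-1})$ since all simple roots in type $D$ have length $2$.

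The main obstacle will be the quantum Serre relations, but this is simplified substantially by a direct observation: the formulas show that $e_i^2 = 0 = f_i^2$ on $V$ for every simple root $\alpha_i$, because the two basis vectors on which $e_i$ acts nontrivially are sent to basis vectors outside the domain where $e_i$ acts nontrivially. Consequently, for adjacent $\alpha_i, \alpha_j$ the Serre relation $e_i^2 e_j - [2] e_i e_j e_i + e_j e_i^2 = 0$ collapses to the statement $e_i e_j e_i = 0$, which is verified by tracing the short composition on each basis vector. For non-adjacent $\alpha_i, \alpha_j$ one confirms $e_i e_j = e_j e_i$ in the same manner, and the $f$-relations follow by identical arguments.

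To identify $V$ with $V(\varpi_1)$: the vector $v_1$ has weight $\epsilon_1 = \varpi_1$, and since the weights of $V$ are exactly $\pm \epsilon_j$ for $j \in \setN$, none of the weights $\epsilon_1 + \alpha_i$ occurs in $V$, so $v_1$ is annihilated by every $e_i$. Hence the $U_q(\sotN)$-submodule generated by $v_1$ is a highest-weight module with highest weight $\varpi_1$, admitting $V(\varpi_1)$ as its unique simple quotient. Applying $f_1, f_2, \ldots, f_{\myN-1}$ in sequence to $v_1$ produces $v_2, v_3, \ldots, v_\myN$; applying $f_\myN$ to $v_{\myN-1}$ yields $v_{-\myN}$; and applying $f_{\myN-1}, \ldots, f_1$ successively produces $v_{-(\myN-1)}, \ldots, v_{-1}$. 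Thus $V$ is generated as a $U_q(\sotN)$-module by $v_1$, and since $\dim V = 2\myN = \dim V(\varpi_1)$ (by the dimension statement recalled before the proposition), any proper quotient would be strictly smaller, so $V \cong V(\varpi_1)$, in particular $V$ is simple.
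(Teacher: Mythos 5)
Your proof is correct, and it expands into a self-contained verification what the paper compresses into a citation: the paper simply notes that the weight-graded vector spaces agree (via the known character of $V(\varpi_1)$) and defers the ``generators and relations check'' to \cite[Section 5A.1]{Jantzenbook}, which is precisely the computation you carry out. The one genuinely useful addition in your write-up is the observation that $e_i^2=0=f_i^2$ on $V$ (a reflection of the fact that $V(\varpi_1)$ is minuscule, so all root strings have length at most $1$), which collapses the quantum Serre relations $e_i^2e_j-[2]e_ie_je_i+e_je_i^2=0$ to $e_ie_je_i=0$ and makes the check quick; the paper does not spell this out, but it is the standard shortcut implicit in Jantzen's treatment of minuscule modules. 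Your concluding identification is also sound: $v_1$ is a highest-weight vector of weight $\varpi_1$, generates $V$, and since any highest-weight module of highest weight $\varpi_1$ surjects onto $V(\varpi_1)$ while $\dim V = 2\myN = \dim V(\varpi_1)$, the surjection is an isomorphism. One could also argue slightly more directly that $V$ is irreducible by noting that $\varpi_1=\epsilon_1$ is the unique dominant weight among $\{\pm\epsilon_j\}$ and appears with multiplicity one, so $V$ cannot split as a direct sum of two nonzero type-$\textbf{1}$ submodules; but your version is equally valid.
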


    \begin{proof}
        Using the known character of $V(\varpi_1)$, it follows that the vector spaces $V$ and $V(\varpi_1)$ are isomorphic as ``weight graded" vector spaces. The result then follows from a generators and relations check \cite[Section 5A.1]{Jantzenbook}.
    \end{proof}

    \begin{lem} \label{lem:autodual-vect}
        The $\mathbb{C}(q)$-linear map $\varphi_V:V\longrightarrow V^*$, defined by
        \begin{align*}
            &v_i \mapsto (-q^{-1})^{i-1}v_{-i}^* \\
            &v_{-i}\mapsto (-q^{-1})^{\myN-1}(-q^{-1})^{\myN-i}v_i^*, 
        \end{align*}
        for $i=1, \dots, \myN$, is an isomorphism of $U_q(\sotN)$-modules. 
      \end{lem}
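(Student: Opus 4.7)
The plan is to verify directly that $\varphi_V$ is a $U_q(\sotN)$-equivariant bijection by checking the intertwining property on the algebra generators $e_j, f_j, k_j$ and on the basis $\{v_{\pm i}\}_{i\in \setN}$. Bijectivity is immediate: $\varphi_V$ sends basis vectors to non-zero scalar multiples of dual basis vectors, so it is a $\mathbb{C}(q)$-linear isomorphism. It remains to show $\varphi_V(u\cdot v) = u\cdot \varphi_V(v)$ for $u$ a generator.

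First I would dispose of the $k_j$-equivariance by a weight argument. Using the antipode formula $S(k_j)=k_j^{-1}$ and $(u\cdot f)(w)=f(S(u)\cdot w)$, one computes $k_j\cdot v_m^* = q_{\alpha_j}^{-(\alpha_j^\vee,\,\wt(v_m))}v_m^*$, so $\wt(v_m^*)=-\wt(v_m)$. In particular $\wt(v_{-i}^*)=\epsilon_i=\wt(v_i)$ and $\wt(v_i^*)=-\epsilon_i=\wt(v_{-i})$, hence $\varphi_V$ preserves weights and automatically commutes with every $k_j$.

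Next I would verify equivariance for the Chevalley generators $e_j$ and $f_j$ by unpacking $S(e_j)=-e_jk_j^{-1}$ and $S(f_j)=-k_jf_j$ to obtain explicit formulas for the action on the dual basis, then compare $\varphi_V(f_j\cdot v_k)$ with $f_j\cdot \varphi_V(v_k)$ (and the same for $e_j$) for every basis vector $v_k$. The cases split naturally into $j=1,\dots,\myN-1$ (where $f_j$ shifts indices $i\mapsto i{+}1$ on $v_i$ and $-(i{+}1)\mapsto -i$ on $v_{-i}$) and the exceptional case $j=\myN$ (involving the pairs $v_\myN,v_{\myN-1}$ and $v_{-(\myN-1)},v_{-\myN}$). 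The coefficients $(-q^{-1})^{i-1}$ and $(-q^{-1})^{\myN-1}(-q^{-1})^{\myN-i}$ are chosen precisely so that the sign and $q$-power generated by the antipode together with the $k_j^{\pm 1}$ factor in $S(e_j),S(f_j)$ telescope along the orbits of the generators: the ratio of consecutive coefficients on the ``positive" basis is $-q^{-1}$, matching the eigenvalue of $k_j^{-1}$ on the appropriate weight space, and the extra factor $(-q^{-1})^{\myN-1}$ between the positive and negative halves absorbs the contribution of $f_\myN$ when it bridges the two halves.

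The main obstacle is just the careful bookkeeping of signs and powers of $q$ in the four sub-cases per generator (one for each possible target weight); as a sanity check one can alternatively invoke Schur's lemma, noting that $V^*$ is a simple type-$\mathbf{1}$ module with the same character as $V$, hence $V^*\cong V(\varpi_1)$ and $\Hom_{U_q(\sotN)}(V,V^*)$ is one-dimensional, so it suffices to check intertwining with $f_1,\dots,f_{\myN-1},f_\myN$ starting from the highest weight vector $v_1$ (whose image $\varphi_V(v_1)=v_{-1}^*$ is a highest weight vector of $V^*$) and iterating down the weight lattice. Either route yields the desired equivariance and completes the proof.
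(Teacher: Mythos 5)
Your proposal is correct and matches the paper's (extremely terse) proof, which is simply stated as ``direct check using the definition of the antipode $S$'': you spell out what that direct check entails — dispose of $k_j$-equivariance by a weight argument, then verify $e_j,f_j$-equivariance case by case using $S(e_j)=-e_jk_j^{-1}$ and $S(f_j)=-k_jf_j$. The Schur's-lemma shortcut you mention as an alternative (observe $V^*\cong V(\varpi_1)$ and that $\varphi_V(v_1)=v_{-1}^*$ is a highest weight vector, then only track the $f_j$-action downward) is a genuine labour-saving refinement, but both routes amount to the same bookkeeping of antipode signs and $q$-powers, and either would serve in place of the paper's one-line proof.
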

\begin{proof}
        Direct check using the definition of the antipode $S$.
    \end{proof}

\subsubsection{Spin representation}

        \begin{dfn}
            Let $S$ be the $\mathbb{C}(q)$-vector space with basis $\{x_I\}_{I\subset \setN}$.
        \end{dfn}

        \begin{notation}
            When $\myN$ is understood, we write $I^c$ for $\setN\setminus I$. 
        \end{notation}
        
        \begin{dfn}
            For $I\subset \setN$, let 
            \[
            \varepsilon_k(I) = \begin{cases}
                +1 \quad \text{if} \quad k\notin I \\
                -1 \quad \text{if} \quad k\in I.
            \end{cases}
            \]
            Define $\wt(x_I):= \sum_{i=1}^{\myN} \varepsilon_i(I)\cdot \frac{\epsilon_i}{2}\in X(\sotN)$.
        \end{dfn}

        \begin{rmk}
            The only dominant weights one obtains as $\wt(x_I)$ are 
            \[
            \wt(x_{\emptyset}) = \frac{\epsilon_1}{2} + \dots + \frac{\epsilon_{\myN-1}}{2} + \frac{\epsilon_\myN}{2} = \varpi_{\myN} \quad \text{and} \quad \wt(x_{\emptyset}) = \frac{\epsilon_1}{2} + \dots + \frac{\epsilon_{\myN-1}}{2} - \frac{\epsilon_\myN}{2} = \varpi_{\myN-1}.
            \]
        \end{rmk}

        \begin{prop}\label{P:U-action-spin}
            The operators
            \begin{align*}
            f_i\cdot x_I &= \begin{cases}
                x_{I\setminus \{i+1\}\cup \{i\}} \quad \text{if $i+1\in I$ and $i\notin I$} \\
                0 \quad \text{otherwise}
            \end{cases}\\
f_\myN\cdot x_I &= \begin{cases}
                x_{I\cup\{\myN-1, \myN\}} \quad \text{if $\myN-1, \myN\notin I$} \\
                0 \quad \text{otherwise},
            \end{cases}\\
            e_i\cdot x_I &= \begin{cases}
                x_{I\setminus\{i\}\cup\{i+1\}} \quad \text{if $i\in I$ and $i+1\notin I$} \\
                0 \quad \text{otherwise}
            \end{cases}\\
e_\myN\cdot x_I &= \begin{cases}
                x_{I\setminus\{\myN-1, \myN\}} \quad \text{if $\myN-1, \myN\in I$} \\ 
                0 \quad \text{otherwise},
            \end{cases}\\
            k_j\cdot x_I &= q^{(\alpha_j^{\vee}, \wt x_I)}x_I
\end{align*}
            for $i= 1, \dots, \myN-1$ and $j= 1,\dots, \myN$, define
            an action of $U_q(\sotN)$ on $S$.
\end{prop}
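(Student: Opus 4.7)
The plan mirrors the proof of Proposition~\ref{P:vectorrepn}: one checks directly that the listed operators satisfy the Drinfeld--Jimbo defining relations of $U_q(\sotN)$ on generators. Each relation reduces to a finite case analysis on a basis vector $x_I$, governed by which of a handful of pigments near $i$, $i+1$, $\myN-1$, $\myN$ lie in $I$. Throughout I use that every simple root of $\sotN$ is long, so $\alpha_j^\vee = \alpha_j$ and $q_{\alpha_j} = q$.

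First, the Cartan-type relations $k_ik_j = k_jk_i$ and $k_ik_i^{-1} = 1$ are immediate since the $k_j$ act diagonally. The mixed relations $k_j e_i k_j^{-1} = q^{(\alpha_j,\alpha_i)} e_i$ and $k_j f_i k_j^{-1} = q^{-(\alpha_j,\alpha_i)} f_i$ follow by checking that the subset modifications described in the proposition shift $\wt(x_I)$ by exactly $+\alpha_i$ (for $e_i$) or $-\alpha_i$ (for $f_i$): for $i<\myN$, swapping $i \in I$ for $i+1 \notin I$ changes the weight by $\epsilon_i - \epsilon_{i+1} = \alpha_i$; for $i=\myN$, removing $\{\myN-1,\myN\}$ from $I$ changes it by $\epsilon_{\myN-1}+\epsilon_\myN = \alpha_\myN$.

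The main input is the relation $[e_i, f_j] = \delta_{ij}(k_i - k_i^{-1})/(q-q^{-1})$. When $i\neq j$, I will show that both $e_if_jx_I$ and $f_je_ix_I$ vanish on every $x_I$: the membership conditions the two operators impose on $I$ are mutually exclusive on their respective inputs. The most delicate subcases are those involving the spin node (pairs such as $\{\myN-1,\myN\}$ and $\{\myN-2,\myN\}$), but in each of these the first operator destroys the configuration required by the second. When $i=j<\myN$, splitting on whether $i\in I$, $i+1\in I$, both, or neither, shows that $(e_if_i-f_ie_i)x_I$ equals $+x_I$, $-x_I$, or $0$, in exact agreement with the eigenvalue of $(k_i-k_i^{-1})/(q-q^{-1})$ predicted by the weight formula; the case $i=j=\myN$ is entirely parallel, with $\{\myN-1,\myN\}$ in the role of $\{i,i+1\}$.

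Finally, the quantum Serre relations are handled by the same combinatorial game. For two simple roots disconnected in the $D_\myN$ Dynkin diagram, including the pair $(\alpha_{\myN-1},\alpha_\myN)$, which both attach only to $\alpha_{\myN-2}$, the operators $e_i$ and $e_j$ (and the corresponding $f$'s) act on disjoint ``slots'' of $I$ and commute on each $x_I$. For the classical adjacencies $(\alpha_i,\alpha_{i+1})$ with $i<\myN-2$ the verification is the standard one for the $\slN$-exterior-algebra action. The two remaining $D$-type adjacencies $(\alpha_{\myN-2},\alpha_{\myN-1})$ and $(\alpha_{\myN-2},\alpha_\myN)$ are checked by a case split on the membership of $\myN-2,\myN-1,\myN$ in $I$, evaluating $e_i^2 e_j - [2] e_ie_je_i + e_je_i^2$ termwise. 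The main obstacle I anticipate is the bookkeeping at the branching node $\alpha_{\myN-2}$, where classical and spin-type adjacencies interact; however this step is mechanical rather than conceptual.
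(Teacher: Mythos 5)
Your overall plan matches the paper's: the paper's proof is ``same as Proposition~\ref{P:vectorrepn}'', which in turn is a generators-and-relations verification, with the systematic framework for minuscule representations deferred to \cite[Section 5A.1]{Jantzenbook}. So the strategy is fine in spirit, but one step as written contains a genuine error.

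You claim that for $i \neq j$ the relation $[e_i,f_j]=0$ holds because ``the membership conditions the two operators impose on $I$ are mutually exclusive on their respective inputs'', so that $e_if_jx_I$ and $f_je_ix_I$ both vanish. This is false whenever $\alpha_i$ and $\alpha_j$ act on disjoint slots of $\setN$. For instance with $N\geq 5$, $i=N-3$, $j=N$, and $I=\{N-3\}$: one has $f_Nx_I = x_{\{N-3,N-1,N\}}$, then $e_{N-3}f_N x_I = x_{\{N-2,N-1,N\}}$; and $e_{N-3}x_I = x_{\{N-2\}}$, then $f_N e_{N-3}x_I = x_{\{N-2,N-1,N\}}$. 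Both compositions are nonzero and equal — the commutator vanishes, but not because each term does. (The same happens already for $i=1$, $j=3$, $I=\{1,4\}$.) Your ``both vanish'' reasoning is correct precisely when the slots of $\alpha_i$ and $\alpha_j$ overlap: that covers $|i-j|=1$ with $i,j<N$, the pair $\{N-2,N\}$, and — by a small miracle of the $D$-type presentation — the Dynkin-nonadjacent pair $\{N-1,N\}$, since $\alpha_{N-1}$ and $\alpha_N$ both involve the indices $N-1$ and $N$. For all other pairs you must instead argue that $e_i$ and $f_j$ act on disjoint slots and therefore commute as operators, with both compositions equal (and often nonzero). You already use exactly this disjoint-slots argument when you discuss the Serre relations for disconnected nodes, so the fix is simply to apply the same dichotomy to the $[e_i,f_j]$ check instead of asserting blanket vanishing. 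With that correction the proof goes through; the paper's citation of Jantzen also buys you the additional simplification that $e_i^2=f_i^2=0$ on $S$, which makes the quantum Serre relations for adjacent nodes collapse rather than requiring a term-by-term expansion.
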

        
        \begin{proof}
            Same as proof of Proposition \ref{P:vectorrepn}
        \end{proof}
        
        \begin{rmk}
            It is useful to note that $(\alpha_i^{\vee}, \wt x_I) \in \{1, 0, -1\}$, so $k_i$ has eigenvalues $\{q, 1, q^{-1}\}$ in $S$.
        \end{rmk}

        \begin{notation}
            For $I\subset \setN$, write $(-q)^I:=\prod_{i\in
              I}(-q)^{\myN-i}$, e.g. if $\myN=4$, then $(-q)^{\{2,3\}} =
            (-q)^{2+1}$ and $(-q)^{\{1,3,4\}} =
            (-q)^{3+1+0}$. Similarly, write
            $(-q^{-1})^I:=\left((-q)^I\right)^{-1}$.
        \end{notation}

    \begin{lem}\label{lem:autodual-spin}
        The $\mathbb{C}(q)$-linear map $\varphi_S:S\rightarrow S^*$ defined by
        \[
        x_I\mapsto (-q^{-1})^{I} x_{I^c}^*
      \]
is an isomorphism of $U_q(\sotN)$-modules. 
    \end{lem}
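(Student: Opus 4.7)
The plan is to mimic the proof of Lemma \ref{lem:autodual-vect}: establish that $\varphi_S$ is a bijective $\mathbb{C}(q)$-linear map, then verify $U_q(\sotN)$-equivariance directly on the Chevalley generators. Bijectivity is immediate because $\varphi_S$ sends the basis $\{x_I\}_{I\subseteq\setN}$ of $S$ to the set $\{(-q^{-1})^I x_{I^c}^*\}_{I\subseteq\setN}$, which is a basis of $S^*$ (the scalars are invertible in $\mathbb{C}(q)$ and $I\mapsto I^c$ is an involution on $\mathcal{P}(\setN)$).

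For equivariance, by linearity it suffices to check $\varphi_S(u\cdot x_I) = u\cdot \varphi_S(x_I)$ for each generator $u\in\{k_j^{\pm1}, e_i, f_i\}$, using the dual-action convention $(u\cdot f)(w)=f(S(u)\cdot w)$ together with the antipode formulas $S(e_i)=-e_ik_i^{-1}$, $S(f_i)=-k_if_i$, $S(k_i^{\pm})=k_i^{\mp}$. The $k_j$ check is immediate from $\wt(x_{I^c})=-\wt(x_I)$, so that $k_j$ acts on $x_{I^c}^*$ by $q^{(\alpha_j^\vee,\wt x_I)}$, which agrees with its action on $x_I$.

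For $e_i$ with $i<\myN$, a short case analysis using Proposition \ref{P:U-action-spin} and the explicit formula for the dual action shows that both sides vanish unless $i\in I$ and $i+1\notin I$. Assuming this, set $I'=(I\setminus\{i\})\cup\{i+1\}$, so that $(I')^c=(I^c\setminus\{i+1\})\cup\{i\}$. On the left, the ratio $(-q^{-1})^{I'}/(-q^{-1})^I$ equals $(-q^{-1})^{(\myN-i-1)-(\myN-i)}=-q$, giving $\varphi_S(e_i\cdot x_I)=-q\cdot(-q^{-1})^I x_{(I')^c}^*$. On the right, $(e_i\cdot x_{I^c}^*)(x_J)=-q^{-(\alpha_i^\vee,\wt x_J)}x_{I^c}^*(e_i\cdot x_J)$ is nonzero only for $J=(I')^c$, and there $(\alpha_i^\vee,\wt x_{(I')^c})=-1$ produces exactly the factor $-q$. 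The verifications for $f_i$ (using $S(f_i)=-k_if_i$) and for $e_\myN,f_\myN$ follow the same pattern.

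The step I expect to require the most care is the check at the exceptional node $\alpha_\myN=\epsilon_{\myN-1}+\epsilon_\myN$: there $e_\myN$ or $f_\myN$ toggles the pair $\{\myN-1,\myN\}$ in $I$ simultaneously, so the jump in $(-q^{-1})^I$ is by a factor of $(-q^{-1})^{(\myN-(\myN-1))+(\myN-\myN)}=-q^{-1}$ (or its inverse), and this must cohere with the combined weight shift at that node and the sign coming from the antipode. The key feature to verify is that the exponent $\myN-i$ in the definition of $(-q^{-1})^I$ is engineered precisely so that the transitions produced by every $e_i$ or $f_i$ match the $k_i^{\pm 1}$-factor appearing in $S(e_i)$ or $S(f_i)$; once this is checked at $\alpha_\myN$, the remaining checks are routine and entirely parallel to Lemma \ref{lem:autodual-vect}.
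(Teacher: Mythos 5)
Your proposal is correct and takes exactly the approach the paper indicates with its one-line proof ``Direct check'': verify equivariance on the Chevalley generators using $(u\cdot f)(w)=f(S(u)\cdot w)$ and the explicit action on $S$ from Proposition~\ref{P:U-action-spin}. Your worked example for $e_i$ with $i<\myN$ (matching the factor $-q$ on both sides via the ratio $(-q^{-1})^{I'}/(-q^{-1})^I$ and the pairing $(\alpha_i^\vee,\wt x_{(I')^c})=-1$) and your observation about the exceptional node $\alpha_\myN$ (where the pair $\{\myN-1,\myN\}$ is toggled simultaneously, so the jump is by $(-q^{-1})^{1+0}$, again coherent with $(\alpha_\myN^\vee,\wt x_{(I')^c})=-1$) are both accurate, and the remaining cases do follow the same pattern.
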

    \begin{proof}
        Direct check.
    \end{proof}

Since the algebra $U_q(\sotN)$ is a Hopf algebra, its category of finite dimension modules is monoidal and rigid. It is convenient to setup a graphical calculus. Diagrams should be read from bottom to top. Bottom to top juxtoposition of diagrams corresponds to composition of morphisms. Left to right juxtaposition of diagrams correspond to tensor product. Upward oriented strands labelled by $W$ represent the identity of $W$ while downward oriented strands labelled by $W$ represent $W^*$. Counterclockwise caps and clockwise cups labelled by $W$ correspond to $\eval_W$ and $\coeval_W$ respectively.

The relation with the diagrams drawn in the previous sections is not
completely transparent hence we'll use different diagrams.    

\graydiagrams
    \begin{notation}
      The identities of
      the vector and the spin representation are respectively given
      by
      \[
        \NB{\tikz[]{\begin{scope}
  \draw[expand style=\stylevect] (0,0) -- (0,1);
\end{scope}}} \qquad \text{and} \qquad \NB{\tikz[]{\begin{scope}
  \draw[expand style=\stylespin] (0,0) -- (0,1);
\end{scope}}}.
      \] 

      The careful reader may have noticed that in this notation,
      neither of the strands carries an arrow. We established that the vector and spin representation are self-dual in Lemma \ref{lem:autodual-vect} and Lemma \ref{lem:autodual-spin}, and will give more explanation why we are justified to ignore arrows on these diagrams in Section \ref{sec:cupscaps}. The half spin representation will be self dual if and only if $N$ is even, so we will use arrows in the graphical calculus when working with half spin representations.
    \end{notation}
    
\subsection{Intertwiners}

    It is well-known that the category of finite dimensional type-$1$ $U_q(\sotN)$-modules is a ribbon category. To be precise, this requires we specify an $R$ matrix and a ribbon element. We work with the conventions from \cite{SnyderTingley}. In particular, we use the non-standard ribbon element which has the property that the Frobenius-Schur indicators of all self-dual representations are $+1$ \cite[Lemma 5.7]{SnyderTingley}.

\subsubsection{Braiding}

It follows from semisimplicity of the category of finite dimensional type-$\textbf{1}$ $U_q(\sotN)$-modules, and the fact that each $V(\lambda)$ is a direct sum of its weight spaces, that we can describe operators on all finite dimensional type-$\textbf{1}$ representations by specifying their action on weight vectors in $V(\lambda)$, for all $\lambda \in X_+$. 

For a weight vector $v\in V(\lambda)$, the operator $T_{w_0}$ is the longest element in the quantum Weyl group, with simple reflection generators 
\[
T_{\alpha_i}(v) = \sum_{\substack{a,b\ge 0 \\ b-a = (\alpha_i^{\vee}, \mu)}}(-q_{\alpha_i})^be_{\alpha_i}^{(a)}f_{\alpha_i}^{(b)},
\]
and $J$ is specified by its action on irreducible representations as
\[
J\cdot v = q^{\frac{1}{2}(\wt (v), \wt (v) + 2\rho)}.
\]
Note that to define $J$ for $U_q(\sotN)$, we technically need to adjoin $q^{1/4}$. This is a minor modification, which we ignore in what follows. 

We define $X:=JT_{w_0}$. Following \cite[Section 3]{SnyderTingley}, we define the braiding as $R:=\text{flip}\circ \hat{R}$, with
\[
\hat{R}:=(X^{-1}\otimes X^{-1})\circ \Delta(X).
\]
This agrees with the usual expression 
\[
\hat{R} = \sum a_i\otimes b_i\in 1\otimes 1 + U_q(\sotN)^{> 0}\otimes U_q(\sotN)^{< 0},
\]
where $U_q(\sotN)^{>0}$ is the subalgebra generated by the $e_i$'s and $U_q(\sotN)^{<0}$ is the subalgebra generated by the $f_i$'s. 

We also work with Snyder-Tingley's non-standard ribbon element $X^{-2}$. This means in particular, that the category of finite dimensional type-$\textbf{1}$ $U_q(\sotN)$-modules is pivotal. Let $W$ be such a module, then the pivotal structure is $p_W:W\rightarrow W^{**}$, $p_W(w)(f) = f(g\cdot w)$, where $g= X^{-2}u$ and $u = \sum_i S(b_i)a_i$.

\subsubsection{Cups and Caps}
\label{sec:cupscaps}

    \begin{notation}
        Fix a finite dimensional vector space $W$, with basis $\mathbb{B}_W$. We write $\{b^*\}_{b\in \mathbb{B}}$ for the dual basis of $W^*$. Also, we will write $\mathrm{C}_W:=\sum_{b\in \mathbb{B}}b\otimes b^*\in W\otimes W^*$. It turns out that this element does not depend on the choice of basis. 
    \end{notation}
    
    Let $W$ be a finite dimensional $U_q(\sotN)$-module. The $\mathbb{C}(q)$-linear maps 
        \begin{align*}
            \eval_W&:W^*\otimes W\longrightarrow \mathbb{C}(q) \\
            &g\otimes w\mapsto g(w)
        \end{align*}
        and
        \begin{align*}
            \coeval_W&: \mathbb{C}(q)\longrightarrow W\otimes W^* \\
            &1\mapsto \mathrm{C}_W
        \end{align*}
        are $U_q(\sotN)$-module homomorphisms. These maps satisfy the zig-zag identities:
      \[
        \label{eq:33}
        (\id_W\otimes \eval_W)\circ (\coeval_W \otimes \id_W)
        = \id_W \quad \text{and} \quad \id_{W^*}=
        (\eval_W\otimes\id_{W^*})\circ (\id_{W^*} \otimes \coeval_W).
      \]
    \begin{dfn}\label{D:cap/cup-V-and-S}
        Let $W\in \{V, S\}$. Define $U_q(\sotN)$-module homomorphisms $\capp_W$ and $\cupp_W$ as the following compositions. 
        \[
        \capp_W:= \eval_W\circ (\varphi_W\otimes \id_W)\qquad \text{and} \qquad \cupp_W:=(\id_W\otimes \varphi_W^{-1})\circ \coeval_W. 
        \]
    \end{dfn}

    \begin{rmk}\label{R:explicit-cap-cup}
    We have the following explicit formulas:
    \[
    \capp_V(v_i\otimes v_{-i}) = (-q^{-1})^{i-1} \qquad \text{and} \qquad \capp_V(v_{-i}\otimes v_i) = (-q^{-1})^{\myN-1}(-q^{-1})^{\myN-i},
  \]
for $i=1, \dots, \myN$, while $\capp_V(v_r\otimes v_t) = 0$ otherwise, and
    \[
    \cupp_V(1) = \sum_{i=1}^\myN (-q)^{\myN-1}(-q)^{\myN-i} v_i\otimes v_{-i} + \sum_{i=1}^\myN (-q)^{i-1}v_{-i}\otimes v_i.
  \]
For the spin representation $S$ we have 
    \[
    \capp_S(x_I\otimes x_{I^c}) = (-q^{-1})^I \qquad \text{while} \qquad  \text{$\capp_S(x_R\otimes x_T) = 0$ otherwise}, 
    \]
    and 
    \[
    \cupp_S(1) = \sum_{I\subset \setN}(-q)^{I^c}x_I\otimes x_{I^c}.
    \]
    \end{rmk}

\begin{rmk}
    The pivotal structure associated to the ribbon element $X^{-2}$ then gives rise to $\widetilde{\eval}_W:= \eval_{W^*}\circ (p_W\otimes \id_{W^*})$ and $\widetilde{\coeval}_W:= (\id_{W^*}\otimes p_W^{-1})\circ \coeval_{W^*}$. These maps also satisfy the zig-zag identities. The pairs of evaluation and coevaluation maps allow morphisms to be ``rotated" 180 degrees clockwise or counterclockwise. In a pivotal category both rotations are equal.

    Moreover, since the Frobenius-Schur indicator of a self-dual irreducible is $+1$ \cite[Lemma 5.7]{SnyderTingley}, it follows that $\capp_W = \widetilde{\eval}_W\circ (\id_W\otimes \varphi_W)$ and $\cupp_W = (\varphi_W^{-1}\otimes \id_W)\circ \widetilde{\coeval}_W$. Thus, we can unambiguously work with unoriented cups and caps colored by self-dual irreducible representations.
\end{rmk}

Although the spin representation is not irreducible, it is self-dual. The following lemma justifies using unoriented cups and caps in the graphical calculus for $\capp_S$ and $\cupp_S$.

\begin{lem}
    We have the following equality of morphisms
    \begin{equation}\label{FPS1}
    \eval_S\circ (\varphi_S\otimes \id_S) = \capp_S = \widetilde{\eval}_S\circ (\id_S \otimes \varphi_S) 
    \end{equation}
    and
      \begin{equation}\label{FPS2}
     (\id_S\otimes \varphi_S^{-1})\circ \coeval_S = \cupp_S =(\varphi_S^{-1}\otimes \id_S)\circ \widetilde{\coeval}_S
    \end{equation}
\end{lem}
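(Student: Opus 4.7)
The plan is to verify both equalities by direct computation on the basis $\{x_I \otimes x_J\}_{I,J \subset \setN}$ of $S \otimes S$. First I observe that \eqref{FPS1} and \eqref{FPS2} are equivalent: both $(\eval_S, \coeval_S)$ and $(\widetilde{\eval}_S, \widetilde{\coeval}_S)$ satisfy zig-zag identities, so once the ``left'' cap built from $\varphi_S$ matches the ``right'' cap, the corresponding statement for cups follows by composing with $\cupp_S$ or $\widetilde{\coeval}_S$ and invoking zig-zag. Hence I focus on \eqref{FPS1}.

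Using $\varphi_S(x_J) = (-q^{-1})^J x_{J^c}^*$ together with the definition $\widetilde{\eval}_S(w \otimes f) = f(g \cdot w)$, where $g = X^{-2} u$ is the pivotal element, the right hand side of \eqref{FPS1} applied to $x_I \otimes x_J$ becomes $(-q^{-1})^J \, x_{J^c}^{*}(g \cdot x_I)$, while the left hand side, computed in Remark~\ref{R:explicit-cap-cup}, equals $(-q^{-1})^I \delta_{J, I^c}$. Since $g$ preserves weight spaces, both sides vanish unless $J = I^c$, and the identity reduces to showing that $g \cdot x_I = c_I \, x_I$ with $c_I := (-q^{-1})^I /(-q^{-1})^{I^c}$. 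A short calculation using $\rho = \sum_i (N-i)\epsilon_i$ gives $c_I = (-1)^{N(N-1)/2}\, q^{(2\rho,\, \wt(x_I))}$, which is precisely the expected eigenvalue of Snyder-Tingley's non-standard pivotal on a weight-$\wt(x_I)$ vector; the sign $(-1)^{N(N-1)/2}$ records the discrepancy with the standard pivotal $K_{2\rho}$ on half-integral weights.

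A more conceptual route is to rewrite \eqref{FPS1} as $\varphi_S^* \circ p_S = \varphi_S$: when $N$ is even, $S = V(\varpi_{N-1}) \oplus V(\varpi_N)$ is a sum of two self-dual irreducibles and the identity on each summand is Snyder-Tingley's Lemma 5.7 applied with Frobenius-Schur indicator $+1$, while when $N$ is odd the two summands are mutually dual and the identity is automatic for the canonical evaluation pairing. The hard part, in either approach, is matching the specific normalisation of $\varphi_S$ given in Lemma~\ref{lem:autodual-spin} with the normalisation implicit in the Frobenius-Schur condition; this comes down to computing the action of $g = X^{-2} u$ on a fixed weight vector in each summand using Snyder-Tingley's explicit formulas for $J$, $T_{w_0}$, and the Drinfeld element $u$, and verifying the resulting scalar agrees with $c_I$.
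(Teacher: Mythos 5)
Your reduction of \eqref{FPS1} to a scalar identity is correct and carefully set up: you rightly observe that both sides vanish unless $J = I^c$ (using that the pivotal element $g$ acts by a character), and your calculation that the required eigenvalue is $c_I = (-q^{-1})^I/(-q^{-1})^{I^c} = (-1)^{N(N-1)/2}\, q^{(2\rho,\, \wt(x_I))}$ is right. However, you then declare that verifying $g\cdot x_I = c_I\, x_I$ for $g = X^{-2}u$ is ``the hard part'' and do not carry it out; this is a genuine gap, not a deferred routine check. The entire content of the lemma is to confirm that the specific normalisation in Lemma~\ref{lem:autodual-spin} is compatible with Snyder--Tingley's non-standard pivotal on half-integral weights, and neither the paper nor the cited reference supplies a ready-made closed formula for the eigenvalue of $g$ on such a weight vector. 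Your ``more conceptual route'' via Frobenius--Schur indicators has the same issue: Lemma 5.7 of Snyder--Tingley pins down the sign of the pairing but not its scale, so one is again left matching normalisations.

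The paper's proof is designed to avoid exactly this computation. Rather than diagonalising $g$, it uses that $X$ is an invertible operator swapping the $\mu$ and $-\mu$ weight spaces, so $X\cdot x_I = \xi_I x_{I^c}$ with $\xi_I \ne 0$, together with the algebraic identity $S(X) = gX$. Plugging $x_{I^c} = \xi_I^{-1} X x_I$ into $\varphi_S(x_{I^c})(x_I)$ and using that $\varphi_S$ is a module homomorphism (so it converts $X$ acting on the source to $S(X)$ acting inside the pairing), the unknown scalar $\xi_I$ cancels and one lands directly on $\varphi_S(x_I)(g\,x_{I^c})$. No eigenvalue of $g$ is ever computed. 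If you want to complete your route, you would need to explicitly evaluate $X^{-2}u$ on a weight vector of $S$ using the formulas for $J$, $T_{w_0}$, and the Drinfeld element and match it against $c_I$; the paper's argument is the shorter path precisely because it makes that step unnecessary.
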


\begin{proof}
    It is a standard exercise to deduce equation \eqref{FPS2} from equation \eqref{FPS1} using the zig-zag identities. It suffices to prove that 
    \[
    \eval_S\circ (\varphi_S\otimes \id_S) = \eval_{S^*}\circ (p_S\otimes \varphi_S).
    \]
    Weight space considerations imply we can restict to checking equality on simple tensors of the form $x_I\otimes x_{I^c}$, which is implied by the following argument adapted from the proof of \cite[Lemma 7.4]{SnyderTingley} which crucially relies on the identity $S(X) = gX$ \cite[Equation 41]{SnyderTingley} . Note that since conjugation by $X$ acts as in \cite[Lemma 3.10(iv)]{SnyderTingley}, it follows from weight space considerations that $X\cdot x_I = \xi_Ix_{I^c}$ for some $\xi_I\in \mathbb{C}(q)$. Since $X$ is invertible, $\xi_I\ne 0$. Then, we compute
    \begin{align*}
    \eval_S\circ (\varphi_S\otimes \id_S)(x_{I^c}\otimes x_I) &= \varphi_S(x_{I^c})(x_I) \\
    &= \varphi_S(\xi_I^{-1}X\cdot x_I)(x_I) \\
    &= \xi_I^{-1}\varphi_S(x_I)(S(X)\cdot x_I) \\
    &= \xi_I^{-1} \varphi_S(x_I)(gX\cdot x_I) \\
    &= \varphi_S(x_I)(gx_{I^c}) \\
    &= \widetilde{\eval}_S\circ (\id_S\otimes \varphi_S)(x_{I^c}\otimes x_I).
    \end{align*}
\end{proof}

    \begin{notation}
      The morphisms $\cupp_V$, $\capp_V$,  $\cupp_S$
      and  $\capp_S$ are depicted by:
      \[
        \NB{\tikz[]{\begin{scope}
  \draw[expand style=\stylevect] (0,0) arc(0:-180:0.5);
\end{scope}}}, \quad \NB{\tikz[yscale=-1]{}}, \quad
        \NB{\tikz[]{\begin{scope}
  \draw[expand style=\stylespin] (0,0) arc(0:-180:0.5);
\end{scope}}}, \quad \text{and} \quad \NB{\tikz[yscale=-1]{}}.
      \]
    \end{notation}
    
    \begin{lem}\label{L:S-zig-zag}
      These morphisms satisfy the usual zig-zag relations:
      \begin{equation}
        \label{eq:28}
        \NB{\tikz[expand style=\stylevect,xscale=-1]{\begin{scope}
  \draw (0, +0.5) -- (0,0) arc (0:-180:0.25cm) arc (0:180:0.25cm) -- +(0, -0.5);
\end{scope}}} =
        \NB{\tikz[]{}} =
        \NB{\tikz[expand style=\stylevect]{}}
        \qquad \text{and}\qquad
        \NB{\tikz[expand style=\stylespin,xscale=-1]{}} =
        \NB{\tikz[]{}} =
        \NB{\tikz[expand style=\stylespin]{}}.
      \end{equation}
    \end{lem}

\subsection{Twists}\label{subsec:twists}

Using the braiding, cups, and caps, we obtain endomorphisms of objects called ``twists" \cite[Section 2]{MR1797619}. When the object is irreducible, Schur's lemma implies that the twist is a scalar multiple of the identity. Using our conventions, the scalar for the twist of $V(\lambda)$ is $(-1)^{(2\lambda, \rho^{\vee})}q^{(\lambda, \lambda + 2\rho)}$ \cite[Lemma 3.10]{SnyderTingley}. 

\begin{exa}
    For $\sotN$, we have $\rho = (\myN-1)\epsilon_1 + \dots + 2\epsilon_{\myN-2} + \epsilon_{\myN-1}$. Moreover, since $\sotN$ is simply laced, we have $\rho^{\vee} = \rho$. Thus, for the vector representation, with highest weight $\varpi_1 = \epsilon_1$, we have the scalar for the twist is $q^{2\myN-1}$, and for both half spin representations, with highest weights
    \[
    \varpi_{\myN-1} = \frac{1}{2}\epsilon_1 + \dots + \frac{1}{2}\epsilon_{\myN-1} - \frac{1}{2}\epsilon_\myN \qquad \text{and} \qquad \varpi_\myN = \frac{1}{2}\epsilon_1 + \dots + \frac{1}{2}\epsilon_{\myN-1} + \frac{1}{2}\epsilon_\myN,
    \]
    we have the scalar for the twist is $(-q)^{\binom{\myN}{2}}q^{\frac{\myN}{4}}$.
\end{exa}

\subsubsection{Trivalent vertices}

    \begin{notation}
        For $i\in \setN$, let $\sigma_i:\setN\setminus\{i\} \rightarrow \{2, \dots, \myN\}$ be the order preserving bijection, e.g. when $\myN=4$, then $\sigma_3$ is the map: $1\mapsto 2$, $2\mapsto 3$, and $4\mapsto 4$.
    \end{notation}
    
    \begin{dfn}\label{D:YVSS}
        Define the $\mathbb{C}(q)$-linear map $\Ya_{V}^{S, S}:V\longrightarrow S\otimes S$ by
        \[
        v_i\mapsto \sum_{\substack{I,J\subset \setN \\ I\cup J = \setN\setminus \{i\} \\ I \cap J = \emptyset}}(-q)^{\sigma_i(J)}x_I\otimes x_J \quad \text{and} \quad v_{-i} \mapsto \sum_{\substack{I,J\subset \setN \\ I\cup J =\setN \setminus \{i\} \\ I \cap J = \emptyset}}(-q)^{\sigma_i(J)}x_{I\cup \{i\}}\otimes x_{J\cup \{i\}}
        \]
    \end{dfn}

    \begin{prop}
        The map $\Ya_V^{S, S}$ is a $U_q(\sotN)$ intertwiner. 
    \end{prop}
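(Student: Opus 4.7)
The plan is to verify the intertwining property on the Chevalley generators acting on the basis of $V$. First, a weight check shows $\Ya_V^{S,S}$ commutes with each $k_j$: in every summand $x_I \otimes x_J$ of $\Ya_V^{S,S}(v_i)$, one has $I \sqcup J = \setN \setminus \{i\}$, so each coordinate $j \neq i$ contributes $0$ to $\wt(x_I) + \wt(x_J)$ while the coordinate $i$ contributes $+\epsilon_i$, giving total weight $\epsilon_i = \wt(v_i)$; the case of $v_{-i}$ is symmetric.

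Rather than checking commutation with each $e_j$ and each $f_j$ on every basis vector, I would exploit that $V$ is a simple highest weight module with highest weight vector $v_1$ (which is annihilated by every $e_j$ by Proposition \ref{P:vectorrepn}), so $V = U_q^-(\sotN) \cdot v_1$. It then suffices to verify two things: first, that $\Ya_V^{S,S}(v_1)$ is annihilated by every $e_j$, so that it defines a highest weight vector of weight $\varpi_1$ in $S \otimes S$ and hence yields a unique intertwiner $\widetilde\Ya \colon V \to S \otimes S$ sending $v_1$ to $\Ya_V^{S,S}(v_1)$; and second, that $f_j \cdot \Ya_V^{S,S}(v) = \Ya_V^{S,S}(f_j \cdot v)$ for every basis vector $v \in V$ and every $j$. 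Combined with the weight check, these imply that $\Ya_V^{S,S}$ and $\widetilde\Ya$ agree on $v_1$ and commute with the $U_q^-$-action generating $V$ from $v_1$; hence $\Ya_V^{S,S} = \widetilde\Ya$ is an intertwiner.

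For the highest weight check, I would expand $e_j \cdot \Ya_V^{S,S}(v_1)$ using $\Delta(e_j) = e_j \otimes k_j + 1 \otimes e_j$. For $j < \myN$, the nonzero contributions from $e_j \otimes k_j$ come from summands with $j \in I$ and $j+1 \notin I$; each such summand is to be paired with a summand contributing through $1 \otimes e_j$ in which the roles of $j$ and $j+1$ are exchanged between $I$ and $J$. The coefficients $(-q)^{\sigma_1(J)}$ together with the $q$-power $q^{(\alpha_j^{\vee},\wt x_J)}$ from $k_j$ combine so that the paired terms cancel. For $j = \myN$, an analogous pairing handles contributions requiring $\{\myN-1,\myN\} \subset I$ against those requiring $\{\myN-1,\myN\} \subset J$.

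For the $f_j$ commutation, I would expand $f_j \cdot \Ya_V^{S,S}(v)$ using $\Delta(f_j) = f_j \otimes 1 + k_j^{-1} \otimes f_j$ and match, via an explicit reindexing of the summation variables, with the formula for $\Ya_V^{S,S}(f_j v)$ (which is zero except when $i = j$, when $i = -(j+1)$ for $j<\myN$, or when $j = \myN$ and $i \in \{\myN-1, \myN\}$). The main obstacle is sign bookkeeping: one must verify that replacing $\sigma_i$ by $\sigma_{i+1}$ (respectively $\sigma_{\myN-1}$ or $\sigma_\myN$ in the spin case) in the coefficient $(-q)^{\sigma_\bullet(J)}$, combined with the $q$-powers supplied by the $k_j^{-1}$ factor, precisely converts the coefficient of $x_I \otimes x_J$ in $f_j \cdot \Ya_V^{S,S}(v_i)$ into the coefficient of the corresponding $x_{I'} \otimes x_{J'}$ appearing in $\Ya_V^{S,S}(v_{i'})$. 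This reduces to an elementary combinatorial identity about counting elements of $J$ on either side of a swap, but is the most technical part of the verification.
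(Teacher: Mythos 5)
Your proposal is correct and takes essentially the same route as the paper: establish that $\Ya_V^{S,S}(v_1)$ is a highest weight vector of weight $\varpi_1$ by showing all $e_j$ annihilate it (with the same cancellation-via-reindexing argument), invoke the universal property of $V(\varpi_1)$ to produce the unique intertwiner $\widetilde\Ya$ with $\widetilde\Ya(v_1)=\Ya_V^{S,S}(v_1)$, and then verify agreement on the rest of $V$ by propagating with the $f_j$'s. The separate $k_j$-commutation check at the start is harmless but redundant, since it follows automatically once $\Ya_V^{S,S}=\widetilde\Ya$.
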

    \begin{proof}
        Note that $V\cong V(\varpi_1)$, and $\wt \Ya_V^{S,S}(v_1)=\epsilon_1=\varpi_1$, It follows from the universal property of $V(\varpi_1)$, that if we show that $e_k\cdot \Ya_V^{S,S}(v_1)=0$, for $k\in \setN$, then there is a unique $U_q(\sotN)$-module homomorphism such that $v_1\mapsto \Ya_V^{S, S}(v_1)$. We then leave it to the reader to use the definition of the action of $f_i$ on $S$ to verify that this implies the intertwiner is equal to $\Ya_V^{S, S}$ as in Definition \ref{D:YVSS}. 
        
        One easily checks, using the definition of $\Delta$, that for $e_1\cdot \Ya_V^{S,S}(v_1)= 0$. Next, for $i=2, \dots, N-1$, we compute the following:
        \begin{align*}
            e_i\cdot \Ya_V^{S,S}(v_1) &= \sum_{\substack{I,J\subset \setN \\ I\cup J = \{2, \dots, \myN\} \\ I \cap J = \emptyset}}(-q)^J(e_ix_I)\otimes (k_ix_J) + \sum_{\substack{I,J\subset \setN \\ I\cup J = \{2, \dots, \myN\} \\ I \cap J = \emptyset}}(-q)^Jx_I\otimes e_i(x_J) \\
            & = \sum_{\substack{I,J\subset \setN \\ I\cup J = \{2, \dots, \myN\} \\ I \cap J = \emptyset \\ i\in I, i+1\notin I}}(-q)^Jq^{(\alpha_i^{\vee}, \wt x_J)}x_{I\setminus\{i\}\cup\{i+1\}}\otimes x_J \\
            &+ \sum_{\substack{I,J\subset \setN \\ I\cup J = \{2, \dots, \myN\} \\ I \cap J = \emptyset \\ i\in J, i+1\notin J}}(-q)^Jx_I\otimes x_{J\setminus \{i\}\cup \{i+1\}} \\
            & = \sum_{\substack{I\subset \{2, \dots, \myN\} \\ i\in I, i+1\notin I}}(-q)^{\{2, \dots, \myN\}\setminus I}q^{(\alpha_i^{\vee}, \wt x_{\{2, \dots, \myN\}\setminus I})}x_{I\setminus\{i\}\cup\{i+1\}}\otimes x_{\{2, \dots, \myN\}\setminus I} \\
            &+ \sum_{\substack{J \subset \{2, \dots, \myN\} \\ i\in J, i+1\notin J}}(-q)^Jx_{\{2, \dots, \myN\}\setminus J}\otimes x_{J\setminus \{i\}\cup \{i+1\}}. 
        \end{align*}
        Reindexing the second sum by $I\subset \{2, \dots, \myN\}$ such that 
        \[
        I\setminus \{i\}\cup\{i+1\} = \{2, \dots, \myN\}\setminus J, \qquad \text{so} \qquad  J= \left(\{2, \dots, n\}\setminus I\right) \setminus \{i+1\}\cup \{i\},
        \]
        we have
        \begin{align*}
            e_i\cdot \Ya_V^{S,S}(v_1)& = \sum_{\substack{I\subset \{2, \dots, \myN\} \\ i\in I, i+1\notin I}}(-q)^{\{2, \dots, \myN\}\setminus I}q^{(\alpha_i^{\vee}, \wt x_{\{2, \dots, \myN\}\setminus I})}x_{I\setminus\{i\}\cup\{i+1\}}\otimes x_{\{2, \dots, \myN\}\setminus I} \\
            &+ \sum_{\substack{I \subset \{2, \dots, \myN\} \\ i\in I, i+1\notin I}}(-q)^{\left(\{2, \dots, n\}\setminus I\right) \setminus \{i+1\}\cup \{i\}}x_{I\setminus \{i\}\cup\{i+1\}}\otimes x_{\{2, \dots, \myN\}\setminus I}.
        \end{align*}
        Since
        \[
        (-q)^{\{2, \dots, \myN\}\setminus I}q^{(\alpha_i^{\vee}, \wt x_{\{2, \dots, \myN\}\setminus I})} + (-q)^{\left(\{2, \dots, n\}\setminus I\right) \setminus \{i+1\}\cup \{i\}} 
        \]
        is equal to 
        \[
        (-q)^{\{2, \dots, \myN\}\setminus I}\left(q + \frac{(-q)^{\myN-i}}{(-q)^{\myN-(i+1)}}\right) = 0,
        \]
        we deduce that $e_i\cdot\Ya_V^{S,S}(v_1) = 0$. A similar calculation shows $e_m\cdot \Ya_V^{S,S}(v_1)= 0$ as well. We leave the details to the reader. 
    \end{proof}

    \begin{notation}
      In graphical calculus, the morphism $\Ya_V^{S,S}$ is depicted by
      \[
        \NB{\tikz[]{\begin{scope}
  \draw[expand style= \stylespin] (-0.25, 1) .. controls +(0, -0.25) and +(0,0) .. (0, 0.5)
  .. controls +(0,0) and +(0, -0.25) .. (0.25, 1);
  \draw[expand style =\stylevect] (0,0.5) -- (0,0);
\end{scope}}}.
      \]
    \end{notation}
    
    \begin{dfn}\label{D:PVSS}
        Define the $\mathbb{C}(q)$-linear map $\Pa_{S,S}^V:S\otimes S\rightarrow V$ by
        \[
        x_I\otimes x_J \mapsto \begin{cases}
            (-q^{-1})^{\sigma_i(I)} v_i \qquad \text{if $I\cup J = \setN\setminus \{i\}$ and $I\cap J = \emptyset$}, \\
            (-q^{-1})^{\sigma_i(I\setminus \{i\})} v_{-i} \qquad \text{if $I\cup J = \setN$ and $I \cap J = \{i\}$}, \qquad \text{and} \\
            0 \qquad \text{otherwise}.
        \end{cases}
        \]
    \end{dfn}

        \begin{notation}
      In graphical calculus, the morphism $\Pa_{S,S}^V$ is depicted
      \[
        \NB{\tikz[yscale =-1]{}}
      \]
    \end{notation}

    \begin{prop}
        The map $\Pa_{S,S}^V$ is equal to the composition of intertwiners
        \[
          \NB{\tikz[]{\begin{scope}[scale=1]
  \draw[expand style= \stylespin] (-0.75,-0.6) -- (-0.75,1) arc
  (180:0:0.25cm)  .. controls +(0, -0.25) and +(0,0) .. (0, 0.5) 
  .. controls +(0,0) and +(0, -0.25) .. (0.25, 1) arc (0:180:0.75cm)
  -- +(0, -1.6);
  \draw[expand style =\stylevect] (0,0.5) -- (0,0) arc (-180:0:0.4) --
  +(0,1.9);
\end{scope}}} = \NB{\tikz[xscale=-1]{}} =  \NB{\tikz[yscale =-1]{\begin{scope}
  \draw[expand style= \stylespin] (-0.25, 1.75) -- (-0.25, 1)
  .. controls +(0, -0.25) and +(0,0) .. (0, 0.5)
  .. controls +(0,0) and +(0, -0.25) .. (0.25, 1) -- (0.25, 1.75);
  \draw[expand style =\stylevect] (0,0.5) -- (0,-0.75);
\end{scope}}}
        \]
        In particular, $\Pa_{S,S}^V$ is a $U_q(\sotN)$ intertwiner. 
    \end{prop}
    \begin{proof}
        The equality of the left and right ``rotations" of $\Ya_{V}^{S,S}$ is a consequence of working in a pivotal category.

        We will show that the right twist of $\Ya_{V}^{S,S}$, which by definition is the composition of intertwiners
        \begin{equation}\label{eqn:triv-righttwist}
        (\id_V\otimes \capp_S)\circ (\id_V\otimes \id_S\otimes \capp_S\otimes \id_S)\circ (\id_V\otimes \Ya_{V}^{S,S}\otimes \id_S\otimes \id_S)\circ (\cupp_{V}\otimes \id_{S}\otimes \id_{S}),
        \end{equation}
        agrees with $\Pa_{S,S}^{V}$ on the vector $x_I\otimes x_J$, when $I$ and $J$ are disjoint and $I\cup J = \setN\setminus \{k\}$, leaving the other cases to the reader. To this end, fix $I$ and $J$ as above. Using the formulas for cups and caps in Remark \ref{R:explicit-cap-cup} and Definition \ref{D:YVSS}, we find that the map in equation \eqref{eqn:triv-righttwist} sends $x_I\otimes x_J$ to 
        \begin{align*}
        &(-q)^{\myN-1}(-q)^{\myN-k}(-q^{-1})^{I^c}(-q^{-1})^{J^c}(-q)^{\sigma_k(I^c\setminus \{k\})}v_k \\
        =&(-q)^{\myN-1}(-q)^{\myN-k}(-q^{-1})^{J}(-q^{-1})^{\myN-k}(-q^{-1})^{I}(-q^{-1})^{\myN-k}(-q)^{\sigma_k(J)}v_k \\
        =&(-q)^{k-1}(-q^{-1})^I(-q^{-1})^J(-q)^{J}(-q^{-1})^{|J\cap \{1, \dots, k-1\}|}v_k \\
        =&(-q^{-1})^I(-q)^{|I\cap \{1, \dots, k\}|}v_k \\
        =&(-q^{-1})^{\sigma_k(I)}v_k \\
        =&\Pa_{S,S}^{V}(x_I\otimes x_J).\qedhere
        \end{align*}
    \end{proof}
    \begin{notation}\label{N:I-and-H}
        We will write $\Ya_{S}^{V,S}$ and $\Pa_{S,V}^{S}$ etc. for the appropriate rotations of $\Ya_{V}^{S,S}$ and/or $\Pa_{S,S}^{V}$. We also use the letters $\II$ to denote $\Ya^{S,S}_{V}\circ \Pa_{S,S}^{V}$ and $\HH$ to denote $\II$'s ninety degree rotation.
    \end{notation}

\subsection{Even and odd spin}

\newcommand{\macroeven}{\ensuremath{\mathbf{e}}}
\newcommand{\macroodd}{\ensuremath{\mathbf{o}}}
\newcommand{\evenodd}{\ensuremath{\macroeven/\macroodd}}

      \begin{dfn}
        Let $S_{\macroeven}$ (respectively $S_{\macroodd}$) be the span of the $x_I$ such that the $|I|$ is even (respectively odd). 
          
        Write $\iota_{\macroeven}$ and $\iota_{\macroodd}$ for the inclusions of $S_{\macroeven}$ and $S_{\macroodd}$ into $S$. Let $\pi_{\macroeven}:S\rightarrow S_{\macroeven}$ and $\pi_{\macroodd}:S\rightarrow S_{\macroodd}$ be the following projections: 
            \[
             \pi_{\macroeven}(x_I) = \begin{cases}
                 x_I\qquad \text{if $|I|$ is even} \\
                 0 \qquad \text{if $|I|$ is odd},
             \end{cases}
             \qquad \text{and} \qquad \pi_{\macroodd}(x_I) = \begin{cases}
                 0 \qquad \text{if $|I|$ is even} \\ x_I \qquad \text{if $|I|$ is odd}.
             \end{cases}
           \]
         \end{dfn}

               \begin{rmk}
          It is easy to see from Proposition \ref{P:U-action-spin}, that the action of $U_q(\sotN)$ on $S$ respects the decomposition $S= S_{\macroeven} \oplus S_{\macroodd}$. Moreover,
            \[
            S_{\macroeven}\cong V(\varpi_{\myN}) \quad \text{and} \quad S_{\macroodd}\cong V(\varpi_{\myN-1}).
            \]
        Note that the maps $\pi_{\evenodd}$ and $\iota_{\evenodd}$ are $U_q(\sotN)$ intertwiners. 
      \end{rmk}

      \begin{notation}
           The identity morphisms of $S_{\macroeven}$ and $S_{\macroodd}$, the projections $\pi_\evenodd$
           and the inclusion maps $\iota_{\evenodd}$ will be depicted by:
           \[
             \NB{\tikz[]{\begin{scope}
  \draw[expand style=\styleeven] (0,0) -- (0,1) \arreven{};
\end{scope}}}, \quad
             \NB{\tikz[]{\begin{scope}
  \draw[expand style=\styleodd] (0,0) -- (0,1) \arrodd;
\end{scope}}}, \quad
             \NB{\tikz[]{\begin{scope}
  \draw[expand style=\styleeven] (0,0.5) -- (0,1) \arreven;
  \draw[expand style=\stylespin] (0,0) -- (0,0.5); 
\end{scope}}}, \quad
             \NB{\tikz[]{\begin{scope}
  \draw[expand style=\styleodd] (0,0.5) -- (0,1) \arrodd;
  \draw[expand style=\stylespin] (0,0) -- (0,0.5); 
\end{scope}}}, \quad
             \NB{\tikz[yscale=-1]{}}, \quad \text{and} \quad
             \NB{\tikz[yscale=-1]{}}.
           \]
           In particular, one has
           \begin{gather}
             \NB{\tikz[yscale=1.2]{}}= \NB{\tikz[]{\begin{scope}
  \draw[expand style=\stylespin] (0,0.4) -- (0,0.8); 
  \draw[expand style=\styleeven] (0,0.0) -- (0,0.4) \arreven;
  \draw[expand style=\stylespin] (0,-0.4) -- (0,0);
\end{scope}}} +
             \NB{\tikz[]{\begin{scope}
  \draw[expand style=\stylespin] (0,0.4) -- (0,0.8); 
  \draw[expand style=\styleodd] (0,0.0) -- (0,0.4) \arrodd;
  \draw[expand style=\stylespin] (0,-0.4) -- (0,0);
\end{scope}}},\qquad \NB{\tikz[]{\begin{scope}
  \draw[expand style=\styleeven] (0,0.8) -- (0,1.2) \arreven;
  \draw[expand style=\stylespin] (0,0.4) -- (0,0.8); 
  \draw[expand style=\styleodd] (0,0.0) -- (0,0.4) \arrodd;
\end{scope}}} =
             \NB{\tikz[yscale=-1]{}} =0, \qquad
             \NB{\tikz[]{\begin{scope}
  \draw[expand style=\styleeven] (0,0.8) -- (0,1.2) \arreven;
  \draw[expand style=\stylespin] (0,0.4) -- (0,0.8); 
  \draw[expand style=\styleeven] (0,0.0) -- (0,0.4) \arreven;
\end{scope}}} = \NB{\tikz[yscale=1.2]{}},
             \qquad \text{and,} \qquad
             \NB{\tikz[]{\begin{scope}
  \draw[expand style=\styleodd] (0,0.8) -- (0,1.2) \arrodd;
  \draw[expand style=\stylespin] (0,0.4) -- (0,0.8); 
  \draw[expand style=\styleodd] (0,0.0) -- (0,0.4) \arrodd;
\end{scope}}} = \NB{\tikz[yscale=1.2]{}}.
           \end{gather}
\end{notation}
         \begin{lem}\label{lem:odd-even-duality}
           The isomorphism $\varphi_S$ of Lemma~\ref{lem:autodual-spin} restricts
           to isomorphisms $\varphi_{S_{\macroeven}, S_{\macroeven}}: S_{\macroeven}^* \to S_{\macroeven}$ and
           $\varphi_{S_{\macroodd}, S_{\macroodd}}: S_{\macroodd}^* \to S_{\macroodd}$ if $\myN$ is even and to isomorphisms $\varphi_{S_{\macroeven}, S_{\macroodd}}: S_{\macroeven}^* \to S_{\macroodd}$ and
           $\varphi_{S_{\macroodd}, S_{\macroeven}}: S_{\macroodd}^* \to S_{\macroeven}$ if $\myN$ is odd.
         \end{lem}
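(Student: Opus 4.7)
The plan is to verify this by inspection on the natural basis of $S$, using the explicit formula $\varphi_S(x_I) = (-q^{-1})^I x_{I^c}^*$ from Lemma~\ref{lem:autodual-spin}. First I would fix the identifications: view $S_{\macroeven}^* \subset S^*$ (respectively $S_{\macroodd}^* \subset S^*$) as the subspace of functionals vanishing on $S_{\macroodd}$ (respectively $S_{\macroeven}$). Under these identifications, $S_{\macroeven}^* = \mathrm{span}\{x_J^* : |J| \text{ even}\}$ and $S_{\macroodd}^* = \mathrm{span}\{x_J^* : |J| \text{ odd}\}$.

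Next I would track parities. For a basis vector $x_I \in S$, one has $\varphi_S(x_I) \in \mathbb{C}(q) \cdot x_{I^c}^*$ with $|I^c| = \myN - |I|$. Hence the parity of $|I^c|$ equals the parity of $|I|$ when $\myN$ is even, and is opposite to the parity of $|I|$ when $\myN$ is odd. It follows that $\varphi_S$ sends $S_{\macroeven}$ to $S_{\macroeven}^*$ and $S_{\macroodd}$ to $S_{\macroodd}^*$ when $\myN$ is even, and sends $S_{\macroeven}$ to $S_{\macroodd}^*$ and $S_{\macroodd}$ to $S_{\macroeven}^*$ when $\myN$ is odd. (Strictly speaking, this produces isomorphisms $S_{\macroeven} \to S_{\macroeven}^*$ etc., which we may invert to get the arrows stated in the lemma, or interpret the notation as describing the pair of subspaces matched by the restriction.)

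Finally, the maps thus obtained are isomorphisms: since $\varphi_S : S \to S^*$ is already an isomorphism by Lemma~\ref{lem:autodual-spin}, it takes the basis $\{x_I\}$ bijectively (up to scalars) to $\{x_{I^c}^*\}$, and the parity argument above shows that this bijection restricts to a bijection between bases of the relevant even/odd subspaces. In particular, the restrictions are $U_q(\sotN)$-equivariant (being restrictions of an intertwiner to summands, which are themselves $U_q(\sotN)$-submodules because the decomposition $S = S_{\macroeven} \oplus S_{\macroodd}$ is a decomposition of $U_q(\sotN)$-modules), so they yield the desired isomorphisms.

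There is no real obstacle; the proof is a bookkeeping of set cardinalities, and the only point where care is needed is fixing the convention for how $S_{\macroeven}^*$ and $S_{\macroodd}^*$ embed in $S^*$, so that the word ``restricts'' is unambiguous.
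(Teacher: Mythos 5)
Your proof is correct and is precisely the bookkeeping the paper leaves implicit (the lemma is stated without a proof environment): the formula $\varphi_S(x_I) = (-q^{-1})^I x_{I^c}^*$ together with $|I^c| = \myN - |I|$ immediately gives the parity matching, and equivariance of the restrictions follows since $S = S_{\macroeven}\oplus S_{\macroodd}$ is a decomposition of $U_q(\sotN)$-modules. Your remark about the arrow direction (the restriction naturally gives $S_{\macroeven}\to S_{\macroeven}^*$, etc., and the lemma's notation records the inverse or simply the matched pair of subspaces) is the right way to read the statement.
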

         The impact of the parity of $\myN$, should be compared with the
         results of Section~\ref{sec:chang-parity-orient}.
         \begin{notation}
           In graphical calculus, Lemma~\ref{lem:odd-even-duality}
           translates into:
           \begin{equation}
                                       \label{eq:34}
             \NB{\tikz[]{\begin{scope}
  \draw[expand style=\styleeven] (0,0) -- (0,1) \rarreven;
\end{scope}}} = \NB{\tikz[]{}}
             \quad\text{and}\quad
             \NB{\tikz[]{\begin{scope}
  \draw[expand style=\styleodd] (0,0) -- (0,1) \rarrodd;
\end{scope}}} = \NB{\tikz[]{}}
             \quad\text{if $\myN$ is even,}\quad
             \NB{\tikz[]{}} = \NB{\tikz[]{}}
             \quad\text{and}\quad
             \NB{\tikz[]{}} = \NB{\tikz[]{}}
             \quad\text{if $\myN$ is odd;}\quad
           \end{equation}
           When $\myN$ is even, we could get rid of orientations of
           strands representing $S_{\macroeven}$, $S_{\macroodd}$ and their duals. However
           to keep the presentation homogeneous among the values of
           $\myN$, we prefer to keep them. 
         \end{notation}

      Using the inclusion and projections we can define various maps as follows. 
      \begin{dfn}
          Let $\epsilon_1,\epsilon_2\in \{\macroeven, \macroodd\}$. Define 
          \[
          \Ya_V^{{\epsilon_1},{\epsilon_2}}:= (\pi_{\epsilon_1}\otimes \pi_{\epsilon_2})\circ \Ya_V^{S,S}, \qquad \Pa_{{\epsilon_1},{\epsilon_2}}^V:=\Pa_{S,S}^V\circ (\iota_{\epsilon_1}\otimes \iota_{\epsilon_2}), 
          \]
          \[
          \capp_{{\epsilon_1},{\epsilon_2}} :=\capp_{S}\circ (\iota_{\epsilon_1}\otimes \iota_{\epsilon_2}), \qquad \text{and} \qquad \cupp_{{\epsilon_1},{\epsilon_2}}:= (\pi_{\epsilon_1}\otimes \pi_{\epsilon_2})\circ \cupp_{S}
          \]
      \end{dfn}
      \begin{notation}
          Following the notation from the previous definition and Notation \ref{N:I-and-H}, we write $\II_{V,\macroeven}^{V,\macroodd}$ and $\HH_{V,\macroeven}^{\macroeven,V}$ etc. We will also write $\id_{\macroeven}:=\id_{S_{\macroeven}}$, $\id_{\macroodd}:=\id_{S_{\macroodd}}$, and $R_{\epsilon_1,\epsilon_1}:=R_{S_{\epsilon_1},S_{\epsilon_2}}$.
      \end{notation}

      Depending on the parity of $\myN$, different maps from the previous definition are easily seen to be zero. 

      \begin{lem}\label{L:N-when-is-zero}
          If $\myN$ is even, then
          \[
          \Ya_{V}^{\macroeven,\macroeven}, \Ya_{V}^{\macroodd,\macroodd},\Pa_{\macroeven,\macroeven}^V,\Pa_{\macroodd,\macroodd}^V,
          \]
          \[
          \capp_{\macroeven,\macroodd}, \capp_{\macroodd,\macroeven},\cupp_{\macroeven,\macroodd}, \text{and} \ \cupp_{\macroodd,\macroeven}
          \]
          are equal to zero.
          
          If $\myN$ is odd, then
          \[
          \Ya_{V}^{\macroeven,\macroodd}, \Ya_{V}^{\macroodd,\macroeven},\Pa_{\macroeven,\macroodd}^V,\Pa_{\macroodd,\macroeven}^V,
          \]
          \[
          \capp_{\macroeven,\macroeven}, \capp_{\macroodd,\macroodd},\cupp_{\macroeven,\macroeven}, \text{and} \ \cupp_{\macroodd,\macroodd}
          \]
          are equal to zero.
      \end{lem}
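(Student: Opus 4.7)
The plan is to observe that each of the morphisms listed has an explicit combinatorial description in terms of basis vectors $x_I\otimes x_J$ for which the parity of $|I|+|J|$ is forced to take a specific value depending on $N$. Once this is established, composing with $\pi_{\macroeven}$ or $\pi_{\macroodd}$ (or pre-composing with $\iota_{\macroeven}$ or $\iota_{\macroodd}$) on the side with the ``wrong'' parity yields zero.

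More concretely, I would proceed morphism by morphism. For $\Ya_V^{S,S}$, Definition~\ref{D:YVSS} shows that the image of $v_i$ is a linear combination of terms $x_I\otimes x_J$ with $I\sqcup J=\setN\setminus\{i\}$, so $|I|+|J|=N-1$; the image of $v_{-i}$ is a combination of $x_{I\cup\{i\}}\otimes x_{J\cup\{i\}}$, which also satisfies $|I\cup\{i\}|+|J\cup\{i\}|=N+1\equiv N-1\pmod 2$. Thus when $N$ is even every such pair $(|I|,|J|)$ has opposite parities, so $(\pi_{\macroeven}\otimes\pi_{\macroeven})\circ\Ya_V^{S,S}=0$ and $(\pi_{\macroodd}\otimes\pi_{\macroodd})\circ\Ya_V^{S,S}=0$; when $N$ is odd both cardinalities have the same parity, killing the mixed cases. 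The map $\Pa_{S,S}^V$ is handled identically using Definition~\ref{D:PVSS}: the only nonzero images come from pairs $(I,J)$ with $|I|+|J|\in\{N-1,N+1\}$, hence the same parity constraint applies. For the (co)evaluations, Remark~\ref{R:explicit-cap-cup} shows $\capp_S(x_I\otimes x_J)$ and the summands of $\cupp_S(1)$ only involve pairs $(I,I^c)$, so $|I|+|J|=N$, and the analogous parity dichotomy gives the stated vanishings.

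The main conceptual point behind all four computations is the same: $S_{\macroeven}\cong V(\varpi_{N})$ and $S_{\macroodd}\cong V(\varpi_{N-1})$ lie in specific cosets of the root lattice in the weight lattice, and a $U_q(\sotN)$-intertwiner between tensor products of irreducibles can be nonzero only when the highest weights add up correctly modulo the root lattice. Working modulo the root lattice one has $\varpi_{N-1}+\varpi_N\equiv \varpi_1$ and $2\varpi_N\equiv 2\varpi_{N-1}\equiv 0$ when $N$ is even, while $\varpi_{N-1}+\varpi_N\equiv 0$ and $2\varpi_N\equiv 2\varpi_{N-1}\equiv \varpi_1$ when $N$ is odd, which matches exactly the case distinction in the statement. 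I expect no real obstacle; the proof is essentially a bookkeeping exercise with the explicit formulas, and the parity argument could in fact be substituted, if desired, by this shorter representation-theoretic remark.
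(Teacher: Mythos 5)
Your proof is correct, and in fact you offer two valid arguments: a hands-on parity bookkeeping using the explicit formulas in Definitions~\ref{D:YVSS}, \ref{D:PVSS} and Remark~\ref{R:explicit-cap-cup}, and a one-line representation-theoretic remark at the end. The paper's proof is precisely your second remark, in its most compressed form: each morphism lives in a $\Hom$ space that is zero-dimensional, which is a direct consequence of the central character (weight modulo root lattice) considerations you spell out. Your parity computation is a more elementary, self-contained variant of the same fact: it trades the invocation of the fusion constraint $\Hom(V(\lambda), V(\mu)\otimes V(\nu))=0$ unless $\lambda\equiv\mu+\nu$ modulo the root lattice for a direct check on basis vectors that $|I|+|J|$ always has a fixed parity (namely the parity of $N-1$ for the trivalent maps, and of $N$ for cups and caps). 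The explicit approach requires no structural input beyond the formulas already written down, which is a mild advantage in a paper that is trying to make things as concrete as possible; the $\Hom$-space argument is shorter and immediately generalizes to any choice of summand. Either way the case analysis by parity of $N$ is the same. No issues.
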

      \begin{proof}
          Each map lies in a homomorphism space which is zero dimensional.
      \end{proof}
      \begin{notation}
        All other choices will give non-zero intertwiners. In graphical calculus we
        write:
        \begin{gather}
          \label{eq:36}
          \NB{\tikz[]{\begin{scope}
  \draw[expand style= \styleeven] (-0.25, 1) .. controls +(0, -0.25)  .. (0, 0.5) \arreven;
  \draw[expand style= \styleodd]
  (0, 0.5) .. controls +(0.25, 0.25) .. (0.25, 1) \arrodd;
  \draw[expand style =\stylevect] (0,0.5) -- (0,0);
\end{scope}}},\quad \NB{\tikz[]{\begin{scope}
  \draw[expand style= \styleodd] (-0.25, 1) .. controls +(0, -0.25) .. (0, 0.5) \arrodd;
  \draw[expand style= \styleeven] (0, 0.5)
  .. controls +(0.25,0.25) .. (0.25, 1) \arreven;
  \draw[expand style =\stylevect] (0,0.5) -- (0,0);
\end{scope}}},\quad
          \NB{\tikz[]{\begin{scope}
  \draw[expand style= \styleeven] (-0.25, 1) .. controls +(0, -0.25)
.. (0, 0.5) \rarreven;
  \draw[expand style= \styleodd] (0, 0.5)
  .. controls +(0.25, 0.25) .. (0.25, 1) \rarrodd;
  \draw[expand style =\stylevect] (0,0.5) -- (0,0);
\end{scope}}},\quad \NB{\tikz[]{\begin{scope}
  \draw[expand style= \styleodd] (-0.25, 1) .. controls +(0, -0.25) .. (0, 0.5) \rarrodd;
  \draw[expand style= \styleeven] (0, 0.5)
  .. controls  +(0.25, 0.25) .. (0.25, 1) \rarreven;
  \draw[expand style =\stylevect] (0,0.5) -- (0,0);
\end{scope}}},\quad
          \NB{\tikz[rotate=180]{}},\quad \NB{\tikz[rotate=180]{}},\quad
          \NB{\tikz[rotate=180]{}},\quad
          \NB{\tikz[rotate=180]{}},\\
          \NB{\tikz[scale=0.8]{\begin{scope}
  \draw[expand style=\styleeven] (0,0) arc(0:-180:0.5) \arreven;
\end{scope}}}, \quad \NB{\tikz[scale=0.8]{\begin{scope}
  \draw[expand style=\styleeven] (0,0) arc(0:-180:0.5) \rarreven;
\end{scope}}}, \quad
          \NB{\tikz[scale=0.8]{\begin{scope}
  \draw[expand style=\styleodd] (0,0) arc(0:-180:0.5) \arrodd;
\end{scope}}}, \quad \NB{\tikz[scale=0.8]{\begin{scope}
  \draw[expand style=\styleodd] (0,0) arc(0:-180:0.5) \rarrodd;
\end{scope}}}, \quad
          \NB{\tikz[scale=0.8, yscale=-1]{}}, \quad
          \NB{\tikz[scale=0.8, yscale=-1]{}}, \quad
          \NB{\tikz[scale=0.8, yscale=-1]{}}, \quad
          \NB{\tikz[scale=0.8, yscale=-1]{}}. 
        \end{gather}
        Note that if $\myN$ is even, one has:
        \begin{gather}
          \label{eq:38}
          \NB{\tikz[]{}}=\NB{\tikz[]{}}, \qquad
          \NB{\tikz[]{}}=\NB{\tikz[]{}}, \qquad
          \NB{\tikz[rotate=180]{}}=\NB{\tikz[rotate=180]{}}, \qquad
          \NB{\tikz[rotate=180]{}}=\NB{\tikz[rotate=180]{}}, \\
          \NB{\tikz[scale=0.8]{}} =\NB{\tikz[scale=0.8]{}}, \qquad
          \NB{\tikz[scale=0.8]{}} = \NB{\tikz[scale=0.8]{}}, \qquad
          \NB{\tikz[scale=0.8, yscale=-1]{}}=
          \NB{\tikz[scale=0.8, yscale=-1]{}}, \qquad
          \NB{\tikz[scale=0.8, yscale=-1]{}}=
          \NB{\tikz[scale=0.8, yscale=-1]{}};
        \end{gather}
        while if $\myN$ is odd, one has:
        \begin{gather}
          \NB{\tikz[]{}}=\NB{\tikz[]{}}, \qquad
          \NB{\tikz[]{}}=\NB{\tikz[]{}}, \qquad
          \NB{\tikz[rotate=180]{}}=\NB{\tikz[rotate=180]{}}, \qquad
          \NB{\tikz[rotate=180]{}}=\NB{\tikz[rotate=180]{}}, \\
          \NB{\tikz[scale=0.8]{}} =\NB{\tikz[scale=0.8]{}}, \qquad
          \NB{\tikz[scale=0.8]{}} = \NB{\tikz[scale=0.8]{}}, \qquad
          \NB{\tikz[scale=0.8, yscale=-1]{}}=
          \NB{\tikz[scale=0.8, yscale=-1]{}}, \qquad
          \NB{\tikz[scale=0.8, yscale=-1]{}}=
          \NB{\tikz[scale=0.8, yscale=-1]{}}.
        \end{gather}
\end{notation}
      \begin{lem}\label{lem:dual-even-odd}
           The following relations holds:
           \begin{gather}
             \label{eq:39}
             \NB{\tikz[xscale=-1]{\begin{scope}
  \draw[expand style=\styleeven] (0, +0.5) -- (0,0) \rarreven arc
  (0:-180:0.25cm) arc (0:180:0.25cm) -- +(0, -0.5) \rarreven;
\end{scope}}} =
             \NB{\tikz[]{}} =
             \NB{\tikz[]{}},\qquad \qquad
             \NB{\tikz[xscale=-1]{\begin{scope}
  \draw[expand style=\styleeven] (0, +0.5) -- (0,0) \arreven arc
  (0:-180:0.25cm) arc (0:180:0.25cm) -- +(0, -0.5) \arreven;
\end{scope}}} =
             \NB{\tikz[]{}} =
             \NB{\tikz[]{}},\\
             \label{eq:40}
             \NB{\tikz[xscale=-1]{\begin{scope}
  \draw[expand style=\styleodd] (0, +0.5) -- (0,0) \rarrodd arc
  (0:-180:0.25cm) arc (0:180:0.25cm) -- +(0, -0.5) \rarrodd;
\end{scope}}} =
             \NB{\tikz[]{}} =
             \NB{\tikz[]{}},\qquad \qquad
             \NB{\tikz[xscale=-1]{\begin{scope}
  \draw[expand style=\styleodd] (0, +0.5) -- (0,0) \arrodd arc
  (0:-180:0.25cm) arc (0:180:0.25cm) -- +(0, -0.5) \arrodd;
\end{scope}}} =
             \NB{\tikz[]{}} =
             \NB{\tikz[]{}}.
           \end{gather}

      \end{lem}
      \begin{proof}
          This follows from combining Lemmas \ref{lem:odd-even-duality}, \ref{L:N-when-is-zero}, Lemma \ref{L:S-zig-zag}.
        \end{proof}
        \begin{rmk}\label{rmk:two-by-two}
           For each $\myN$, the relations of Lemma~\ref{lem:dual-even-odd} identify ``two-by-two". When $\myN$ is even, we erase orientation, e.g. the two relations in \eqref{eq:39} become one relation. When $\myN$ is odd, we ignore color but reverse orientation on the odd strands, e.g. the first relation in \eqref{eq:39} and the second relation in \eqref{eq:40} become one relation.
        \end{rmk}

      Remark \ref{rmk:two-by-two} applies to most of the forthcoming
      lemmas. In addition, we are not exhaustive, in terms of color and orientation, when displaying relations.
        
      \begin{lem}
        The following relations hold:
        \begin{align}
          \label{eq:unknot-v-diag}
          \NB{\tikz[]{}} &= ([2\myN-1]
 +1)\mathrm{id}_{\mathbb{C}(q)} \\
                    \label{eq:unknot-v-spins}
          {\NB{\tikz[]{}}}&={\NB{\tikz[]{}}}={\NB{\tikz[]{}}}={\NB{\tikz[]{}}}= (-1)^{\binom{\myN}{2}}[2]^{[\myN-1]}\mathrm{id}_{\mathbb{C}(q)}.
        \end{align}
      \end{lem}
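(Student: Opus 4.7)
The plan is to recognize each closed loop in the graphical calculus as the scalar endomorphism $\capp\circ\cupp$ of the corresponding representation---that is, its quantum dimension---and to evaluate these scalars using the explicit formulas of Remark~\ref{R:explicit-cap-cup}. Independence of the four orientations appearing in \eqref{eq:unknot-v-spins} follows from Lemma~\ref{lem:odd-even-duality} together with the zig-zag identities of Lemma~\ref{lem:dual-even-odd}: when $\myN$ is even, $S_{\macroeven}$ and $S_{\macroodd}$ are self-dual, so reversing the orientation on a closed strand does not change the scalar; when $\myN$ is odd, reversing the orientation swaps $S_{\macroeven}$ with $S_{\macroodd}$, so it suffices to show that both half-spins have the same quantum dimension.

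For the vector circle, I compose $\cupp_V$ with $\capp_V$ term-by-term using Remark~\ref{R:explicit-cap-cup}. The summands pair up as $v_i \otimes v_{-i}$ and $v_{-i}\otimes v_i$, contributing respectively
\[
(-q)^{\myN-1}(-q)^{\myN-i}(-q^{-1})^{i-1}=q^{2\myN-2i} \quad\text{and}\quad (-q)^{i-1}(-q^{-1})^{\myN-1}(-q^{-1})^{\myN-i}=q^{2i-2\myN},
\]
the signs collapsing in each case because the relevant exponents of $-1$ are even. Summing over $i=1,\dots,\myN$ yields $2+\sum_{k=1}^{\myN-1}(q^{2k}+q^{-2k})=[2\myN-1]+1$, proving~\eqref{eq:unknot-v-diag}.

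For the spin circle, set $e(I):=\sum_{i\in I}(\myN-i)$, so that $(-q)^I=(-1)^{e(I)}q^{e(I)}$ and $(-q^{-1})^I=(-1)^{e(I)}q^{-e(I)}$. Remark~\ref{R:explicit-cap-cup} then gives
\[
\capp_S\circ\cupp_S(1)=\sum_{I\subset\setN}(-q)^{I^c}(-q^{-1})^I=\sum_{I\subset\setN}(-1)^{e(I)+e(I^c)}q^{e(I^c)-e(I)}.
\]
Since $e(I)+e(I^c)=e(\setN)=\binom{\myN}{2}$, the global sign factors out of the sum as the constant $(-1)^{\binom{\myN}{2}}$, and the remaining sum factors over the independent binary choices $i\in I$ versus $i\in I^c$:
\[
\sum_{I\subset\setN}q^{e(I^c)-e(I)}=\prod_{i=1}^{\myN}\bigl(q^{\myN-i}+q^{-(\myN-i)}\bigr)=2[2]^{[\myN-1]}.
\]

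Finally, the half-spin quantum dimensions are extracted by restricting to $I$ of fixed parity via the symmetrization $\tfrac12\bigl(\sum_I q^{e(I^c)-e(I)}\pm\sum_I(-1)^{|I|}q^{e(I^c)-e(I)}\bigr)$. The signed sum factors as $\prod_{i=1}^{\myN}(q^{\myN-i}-q^{-(\myN-i)})$, whose $i=\myN$ factor equals $q^0-q^0=0$; hence the signed sum vanishes and each parity contributes exactly half of the full spin dimension, yielding $(-1)^{\binom{\myN}{2}}[2]^{[\myN-1]}$ for both $S_{\macroeven}$ and $S_{\macroodd}$, as claimed in~\eqref{eq:unknot-v-spins}. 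No essential obstacle is expected in carrying out these computations; the only subtle bookkeeping is tracking the global sign $(-1)^{\binom{\myN}{2}}$, which emerges cleanly from the identity $e(I)+e(I^c)=\binom{\myN}{2}$.
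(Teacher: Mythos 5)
Your proof is correct, and it rests on exactly the same raw ingredient as the paper's---the explicit cup/cap formulas of Remark~\ref{R:explicit-cap-cup}---but the organization of the half-spin computation is genuinely nicer. The paper simply asserts ``direct calculation'' and evaluates the parity-restricted sum $\sum_{|I|\text{ even}}(-q^{-1})^I(-q)^{I^c}$ in one worked example. You instead compute the unrestricted trace $\sum_{I}(-q^{-1})^I(-q)^{I^c} = (-1)^{\binom{\myN}{2}}\prod_{i=1}^{\myN}\bigl(q^{\myN-i}+q^{-(\myN-i)}\bigr) = 2(-1)^{\binom{\myN}{2}}[2]^{[\myN-1]}$ and then observe that the parity-weighted sum $\sum_I(-1)^{|I|}q^{e(I^c)-e(I)}$ factors as $\prod_{i=1}^{\myN}(q^{\myN-i}-q^{-(\myN-i)})$, which vanishes because of the $i=\myN$ factor. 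Splitting via $\tfrac12(\text{unsigned}\pm\text{signed})$ then hands you both half-spin quantum dimensions at once, and moreover explains \emph{why} $S_{\macroeven}$ and $S_{\macroodd}$ have the same trace---a fact you also need to identify all four oriented circles in~\eqref{eq:unknot-v-spins}, and which the paper leaves implicit. The bookkeeping (the identity $e(I)+e(I^c)=\binom{\myN}{2}$, the factorization over independent binary choices, the sign cancellation in the vector case) is all correct. This is a somewhat slicker version of the same computation.
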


      \begin{proof}
The proofs are direct calculations. For instance, let us
          prove compute the first circle on the second line in the
          case when $\myN$ is even:
          Using formulas in \ref{R:explicit-cap-cup} we find
          \[
          \capp_{\macroeven,\macroeven}\circ \cupp_{\macroeven,\macroeven}(1) =  \sum_{\substack{I\subset \setN \\ |I|\in 2\mathbb{Z}}}(-q^{-1})^I(-q)^{\setN\setminus I} = (-1)^{\binom{\myN}{2}}[2]^{[\myN-1]}.
          \]
          Since $\End_{U_q(\sotN)}(\mathbb{C}(q)$ is spanned by $\id_{\mathbb{C}(q)}$, it follows that
          \[
          \capp_{\macroeven,\macroeven}\circ \cupp_{\macroeven,\macroeven} =
          (-1)^{\binom{\myN}{2}}[2]^{[\myN-1]} \id_{\mathbb{C}(q)}. \qedhere
          \]
      \end{proof}

      \begin{lem}
        The following relations hold:
        \begin{equation}
          \label{eq:42}
          \NB{\tikz[]{\begin{scope}
  \draw[expand style= \stylevect] (0,-0.5) -- (0,-0.3);
  \draw[expand style= \stylevect] (0, 0.5) -- (0, 0.3);
  \draw[expand style= \styleodd] (0,-0.3) ..  controls +( 0.3,0.2) and +( 0.3,-0.2) .. (0,0.3) \rarrodd;
  \draw[expand style= \styleeven] (0,-0.3) ..  controls +( -0.3,0.2) and +( -0.3,-0.2) .. (0,0.3) \arreven;
\end{scope}
}} = \NB{\tikz[xscale=-1]{}} =
          \NB{\tikz[]{\begin{scope}
  \draw[expand style= \stylevect] (0,-0.5) -- (0,-0.3);
  \draw[expand style= \stylevect] (0, 0.5) -- (0, 0.3);
  \draw[expand style= \styleodd] (0,-0.3) ..  controls +( 0.3,0.2) and +( 0.3,-0.2) .. (0,0.3) \arrodd;
  \draw[expand style= \styleeven] (0,-0.3) ..  controls +( -0.3,0.2) and +( -0.3,-0.2) .. (0,0.3) \rarreven;
\end{scope}
}} = \NB{\tikz[xscale=-1]{}} =
          (-1)^{\binom{\myN-1}{2}}[2]^{[\myN-2]} \NB{\tikz[]{}}.
        \end{equation}
      \end{lem}
      \begin{proof}
Proofs are direct calculations using the formulas in
        Definition \ref{D:YVSS} and Definition \ref{D:PVSS}. For the
        first bigon, in the case, when $\myN$ is even, this  gives
          \[
          \Pa_{\macroeven,\macroodd}^{V}\circ \Ya_{V}^{\macroeven,\macroodd}(v_1) = \sum_{\substack{I\subset \{2, \dots, \myN\} \\ |I|\in 2\mathbb{Z}}}(-q^{-1})^{I}(-q)^{\{2, \dots, \myN\}\setminus I}\cdot v_1 = (-1)^{\binom{\myN-1}{2}}[2]^{[\myN-2]}v_1.
          \]
          Since $\End_{U_q(\sotN)}(V)$ is spanned by $\id_V$, it follows that
          \[
          \Pa_{\macroeven,\macroodd}^V\circ \Ya_{V}^{\macroeven,\macroodd} = (-1)^{\binom{\myN-1}{2}}[2]^{[\myN-2]}\id_V.
          \qedhere\]
      \end{proof}

      \begin{lem}The following relations hold:
        \begin{align}
          \NB{\tikz[]{\begin{scope}
  \draw[expand style= \styleodd] (0,-0.7) -- (0,-0.3) \arrodd;
  \draw[expand style= \styleodd] (0, 0.7) -- (0, 0.3) \rarrodd;
  \draw[expand style= \styleeven] (0,-0.3) ..  controls +( 0.3,0.2) and +( 0.3,-0.2) .. (0,0.3) \arreven;
  \draw[expand style= \stylevect] (0,-0.3) ..  controls +( -0.3,0.2) and +( -0.3,-0.2) .. (0,0.3);
\end{scope}
}}=
          \NB{\tikz[xscale=-1]{}}=(-1)^{\myN-1}[\myN]
          \NB{\tikz[yscale=1.4]{}}, \qquad
          \NB{\tikz[]{\begin{scope}
  \draw[expand style= \styleodd] (0,-0.7) -- (0,-0.3) \rarrodd;
  \draw[expand style= \styleodd] (0, 0.7) -- (0, 0.3) \arrodd;
  \draw[expand style= \styleeven] (0,-0.3) ..  controls +( 0.3,0.2) and +( 0.3,-0.2) .. (0,0.3) \rarreven;
  \draw[expand style= \stylevect] (0,-0.3) ..  controls +( -0.3,0.2) and +( -0.3,-0.2) .. (0,0.3);
\end{scope}
}}=
          \NB{\tikz[xscale=-1]{}}=(-1)^{\myN-1}[\myN]
          \NB{\tikz[yscale=1.4]{}}, \\
          \NB{\tikz[]{\begin{scope}
  \draw[expand style= \styleeven] (0,-0.7) -- (0,-0.3) \arreven;
  \draw[expand style= \styleeven] (0, 0.7) -- (0, 0.3) \rarreven;
  \draw[expand style= \styleodd] (0,-0.3) ..  controls +( 0.3,0.2) and +( 0.3,-0.2) .. (0,0.3) \arrodd;
  \draw[expand style= \stylevect] (0,-0.3) ..  controls +( -0.3,0.2) and +( -0.3,-0.2) .. (0,0.3);
\end{scope}
}}=
          \NB{\tikz[xscale=-1]{}}=(-1)^{\myN-1}[\myN]
          \NB{\tikz[yscale=1.4]{}}, \qquad
          \NB{\tikz[]{\begin{scope}
  \draw[expand style= \styleeven] (0,-0.7) -- (0,-0.3) \rarreven;
  \draw[expand style= \styleeven] (0, 0.7) -- (0, 0.3) \arreven;
  \draw[expand style= \styleodd] (0,-0.3) ..  controls +( 0.3,0.2) and +( 0.3,-0.2) .. (0,0.3) \rarrodd;
  \draw[expand style= \stylevect] (0,-0.3) ..  controls +( -0.3,0.2) and +( -0.3,-0.2) .. (0,0.3);
\end{scope}
}}=
          \NB{\tikz[xscale=-1]{}}=(-1)^{\myN-1}[\myN]
          \NB{\tikz[yscale=1.4]{}}.
        \end{align}
      \end{lem}
      \begin{proof}
        These are direct computations, we only prove the first one, in
        the case $\myN$ is even.
Since $\End_{U_q(\sotN)}(S_{\macroodd})$ is spanned by $\id_{S_{\macroodd}}$, it follows that there is some $\xi\in \mathbb{C}(q)$ such that
          \[
          \Pa_{V,\macroeven}^{\macroodd}\circ \Ya_{\macroodd}^{V,\macroeven} = \xi\id_{\macroodd}.
          \]
          Using graphical calculus, it is easy to check that 
          \[
          \xi(-1)^{\binom{\myN}{2}}[2]^{[\myN-1]} = (-1)^{\binom{\myN-1}{2}}[2]^{[\myN-2]}\frac{[\myN][2(\myN-1)]}{[\myN-1]}, 
          \]
          so $\xi = (-1)^{\myN-1}[\myN]$.
      \end{proof}

      \begin{lem} The following relations hold:
        \begin{align}
        \label{triangle1}
          {\NB{\tikz[scale=0.5]{\input{\imagesfolder/td-diag-triangle-up-o-e}}}}
          &= (-1)^\myN[\myN-1] {\NB{\tikz[yscale=0.85, xscale=1]{}}}, 
            &&&\qquad 
            {\NB{\tikz[xscale=1, scale=0.5]{\input{\imagesfolder/td-diag-triangle-up-e-o}}}}
          &= (-1)^\myN[\myN-1] {\NB{\tikz[yscale=0.85, xscale=1]{}}},  \\               {\NB{\tikz[scale=0.5]{\input{\imagesfolder/td-diag-triangle-down-o-e}}}}
          &= (-1)^\myN[\myN-1] {\NB{\tikz[yscale=0.85, xscale=1]{}}}, 
            &&&\qquad 
            {\NB{\tikz[xscale=1, scale=0.5]{\input{\imagesfolder/td-diag-triangle-down-e-o}}}}
          &= (-1)^\myN[\myN-1] {\NB{\tikz[yscale=0.85, xscale=1]{}}}.
        \end{align}
\end{lem}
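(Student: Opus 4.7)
The plan is to establish each identity in two steps: first apply Schur's lemma to reduce the identity to the computation of a single scalar $\xi \in \mathbb{C}(q)$, then determine $\xi$ by direct evaluation on a highest weight vector. I focus on the first identity of \eqref{triangle1}; the remaining three follow from it by the parity-of-$\myN$ and orientation symmetries summarised in Remark~\ref{rmk:two-by-two}, combined with the pivotal rotations available in the category.

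Both sides of the first identity are $U_q(\sotN)$-intertwiners between the same source and the same target (in either reading of the diagram, the source and target involve only the vector representation and the two half-spin representations). The relevant Hom space is one-dimensional by a Clebsch--Gordan calculation: $V \otimes S_\macroeven$ contains $S_\macroodd$ with multiplicity exactly one, as is readily verified by weight-space inspection. Schur's lemma therefore supplies $\xi \in \mathbb{C}(q)$ with $\text{LHS} = \xi \cdot \text{RHS}$.

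To compute $\xi$, I would evaluate both sides on a chosen highest weight vector (say $v_1 \in V$) and read off the coefficient of a convenient basis vector in the output using the explicit formulas of Definitions~\ref{D:YVSS} and \ref{D:PVSS}. The triangle is a composition of three trivalent vertices, so its action on $v_1$ unfolds into a sum indexed by suitable subsets of $\setN$ weighted by $(-q)^I$ factors; specialising to a single output monomial collapses the sum to one term, from which $\xi$ can be read off.

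The main obstacle is sign bookkeeping: the factor $(-1)^\myN$ arises from the combined contributions of $(-1)^{\binom{\myN}{2}}$ in \eqref{eq:unknot-v-spins}, of $(-1)^{\binom{\myN-1}{2}}$ in \eqref{eq:42}, and of the $(-q)^I$ exponents packaged in $\varphi_V$ and $\varphi_S$. A more conceptual alternative to the direct evaluation is to post-compose the identity with a $\Pa$-vertex and match scalars via the spin-vector bigon relation proved just before this lemma; however, this requires evaluating a closed tetrahedral four-vertex network, which offers no obvious bigon to exploit (all pairs of vertices share exactly one edge) and therefore amounts to essentially the same scalar computation as the direct route.
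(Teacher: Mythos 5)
Your proposal matches the paper's proof in all essentials: both reduce the identity to a single scalar via the one-dimensionality of $\Hom_{U_q(\sotN)}(V, S_{\macroodd}\otimes S_{\macroeven})$, then compute that scalar by unfolding the triangle into a composition of explicit trivalent vertices, caps, and cups, evaluating on $v_1$, and extracting the coefficient of a single convenient output monomial (the paper picks $x_{\{2,\dots,\myN\}}\otimes x_{\emptyset}$). Your observation that the other three identities follow by the parity and orientation symmetries of Remark~\ref{rmk:two-by-two} also mirrors the paper, which proves only the first identity in detail and relies on the same remark for the rest.
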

      \begin{proof}
        We prove in detail the first identity in \eqref{triangle1}. The morphism represented in graphical calculus by the ``triangle" is equal to the composition
          \begin{equation}\label{eqn:composition-triangle}
          (\capp_{\macroeven}\otimes\id_{\macroodd}\otimes\id_{\macroeven}\otimes\capp_{\macroodd})\circ(\id_{\macroeven}\otimes \Ya_V^{\macroeven,\macroodd}\otimes \Ya_V^{\macroeven,\macroodd}\otimes \id_{\macroodd})\circ(\id_{\macroeven}\otimes \cupp_V\otimes \id_{\macroodd})\circ\Ya_V^{\macroeven,\macroodd},
        \end{equation}
        This is easier seen diagrammatically:
        \[
          \NB{\tikz[scale=1.5]{\input{\imagesfolder/td-diag-triangle-up-o-e}}}:=\NB{\tikz[scale=1.5]{\input{\imagesfolder/td-diag-triangle-big}}}
        \]
        
          The map in equation \eqref{eqn:composition-triangle} is an element of the $1$ dimensional vector space $\Hom_{U_q(\sotN)}(V, S_{\macroodd}\otimes S_{\macroeven})$, which is spanned by $\Ya_{V}^{\macroodd,\macroeven}$. Thus, there is some scalar, say $\xi$, such that equation \eqref{eqn:composition-triangle} is equal to $\xi\cdot \Ya_{V}^{\macroodd,\macroeven}$. 
          
          Using Definition \ref{D:YVSS}, it is easy to see that $\Ya_V^{\macroodd,\macroeven}$ maps $v_1$ to $1\cdot x_{\{2, \dots, \myN\}}\otimes x_{\emptyset}$ plus terms of the form $x_I\otimes x_J$ where $J\ne \emptyset$. Thus, $\xi$ is the coefficient of the image of $v_1$ under the map in equation \eqref{eqn:composition-triangle}. Using the explicit formulas in Remark \ref{R:explicit-cap-cup} and Definition \ref{D:YVSS}, it follows that $\xi$ is equal to 
          \[
          \sum_{k=2}^\myN (-q)^{k-1}(-q)^{\{k\}}\frac{(-q)^{\sigma_k\{1, \dots, \hat{k}, \dots, \myN\}}(-q)^{\sigma_k\{2, \dots,\hat{k}, \dots, \myN\}}}{(-q)^{\{1, \dots, \hat{k}, \dots, \myN\}}(-q)^{\{2, \dots, \hat{k}, \dots, \myN\}}}
          \]
          which simplifies to be
          \[
          \sum_{k=2}^\myN(-q)^{\myN-1}(-q)^{(\myN-k)-(\myN-1)}(-q)^{(\myN-k)-(\myN-2)} = \sum_{k=1}^{\myN-1} (-q)^{\myN-2k} = (-1)^\myN[\myN-1].
          \]
      \end{proof}

      \begin{lem}\label{lem:braid-and-trivalent}
        The following relations hold:\begin{align}
          (-q)^{\binom{\myN-1}{2}}q^{\frac{\myN-2}{4}}\NB{\tikz[]{\begin{scope}[scale=1]
  \draw[expand style= \styleeven] (-0.25, 1.5) .. controls +(0, -0.2)
  and +(-0.1,0.1)
  .. (0, 1.25)\rarreven .. controls +(0.1,-0.1) and (0.25, 1.1) .. (0.25, 1) .. controls +(0, -0.25)
  .. (0, 0.5) \arreven;
  \fill[white] (0, 1.25) circle (1mm);
  \draw[expand style= \styleodd] (0.25, 1.5) .. controls +(0, -0.2)
  and +(0.1,0.1)
  .. (0, 1.25)\rarrodd .. controls +(-0.1,-0.1) and (-0.25, 1.1) .. (-0.25, 1) .. controls +(0, -0.25)
  .. (0, 0.5) \arrodd;
  \draw[expand style =\stylevect] (0,0.5) -- (0,0);
\end{scope}}} &=\NB{\tikz[yscale=1.5]{}}=(-q^{-1})^{\binom{\myN-1}{2}}q^{-\frac{\myN-2}{4}}\NB{\tikz[]{\begin{scope}[scale=1]
  \draw[expand style= \styleodd] (0.25, 1.5) .. controls +(0, -0.2)
  and +(0.1,0.1)
  .. (0, 1.25)\rarrodd .. controls +(-0.1,-0.1) and (-0.25, 1.1) .. (-0.25, 1) .. controls +(0, -0.25)
  .. (0, 0.5) \arrodd;
  \fill[white] (0, 1.25) circle (1mm);
  \draw[expand style= \styleeven] (-0.25, 1.5) .. controls +(0, -0.2)
  and +(-0.1,0.1)
  .. (0, 1.25)\rarreven .. controls +(0.1,-0.1) and (0.25, 1.1) .. (0.25, 1) .. controls +(0, -0.25)
  .. (0, 0.5) \arreven;
  \draw[expand style =\stylevect] (0,0.5) -- (0,0);
\end{scope}}}, \label{eq:YXing}
                                 \\ (-q)^{\binom{\myN-1}{2}}q^{\frac{\myN-2}{4}}\NB{\tikz[]{\begin{scope}[scale=1]
  \draw[expand style= \styleodd] (-0.25, 1.5) .. controls +(0, -0.2)
  and +(-0.1,0.1)
  .. (0, 1.25)\rarrodd .. controls +(0.1,-0.1) and (0.25, 1.1) .. (0.25, 1) .. controls +(0, -0.25)
  .. (0, 0.5) \arrodd;
  \fill[white] (0, 1.25) circle (1mm);
  \draw[expand style= \styleeven] (0.25, 1.5) .. controls +(0, -0.2)
  and +(0.1,0.1)
  .. (0, 1.25)\rarreven .. controls +(-0.1,-0.1) and (-0.25, 1.1) .. (-0.25, 1) .. controls +(0, -0.25)
  .. (0, 0.5) \arreven;
  \draw[expand style =\stylevect] (0,0.5) -- (0,0);
\end{scope}}} &=\NB{\tikz[yscale=1.5]{}}=(-q^{-1})^{\binom{\myN-1}{2}}q^{-\frac{\myN-2}{4}}\NB{\tikz[]{\begin{scope}[scale=1]
  \draw[expand style= \styleeven] (0.25, 1.5) .. controls +(0, -0.2)
  and +(0.1,0.1)
  .. (0, 1.25)\rarreven .. controls +(-0.1,-0.1) and (-0.25, 1.1) .. (-0.25, 1) .. controls +(0, -0.25)
  .. (0, 0.5) \arreven;
  \fill[white] (0, 1.25) circle (1mm);
  \draw[expand style= \styleodd] (-0.25, 1.5) .. controls +(0, -0.2)
  and +(-0.1,0.1)
  .. (0, 1.25)\rarrodd .. controls +(0.1,-0.1) and (0.25, 1.1) .. (0.25, 1) .. controls +(0, -0.25)
  .. (0, 0.5) \arrodd;
  \draw[expand style =\stylevect] (0,0.5) -- (0,0);
\end{scope}}}, \\
          (-q)^{\binom{\myN-1}{2}}q^{\frac{\myN-2}{4}}\NB{\tikz[]{\begin{scope}[scale=1]
  \draw[expand style= \styleeven] (-0.25, 1.5) .. controls +(0, -0.2)
  and +(-0.1,0.1)
  .. (0, 1.25)\arreven .. controls +(0.1,-0.1) and (0.25, 1.1) .. (0.25, 1) .. controls +(0, -0.25)
  .. (0, 0.5) \rarreven;
  \fill[white] (0, 1.25) circle (1mm);
  \draw[expand style= \styleodd] (0.25, 1.5) .. controls +(0, -0.2)
  and +(0.1,0.1)
  .. (0, 1.25)\arrodd .. controls +(-0.1,-0.1) and (-0.25, 1.1) .. (-0.25, 1) .. controls +(0, -0.25)
  .. (0, 0.5) \rarrodd;
  \draw[expand style =\stylevect] (0,0.5) -- (0,0);
\end{scope}}} &=\NB{\tikz[yscale=1.5]{}}=(-q^{-1})^{\binom{\myN-1}{2}}q^{-\frac{\myN-2}{4}}\NB{\tikz[]{\begin{scope}[scale=1]
  \draw[expand style= \styleodd] (0.25, 1.5) .. controls +(0, -0.2)
  and +(0.1,0.1)
  .. (0, 1.25)\arrodd .. controls +(-0.1,-0.1) and (-0.25, 1.1) .. (-0.25, 1) .. controls +(0, -0.25)
  .. (0, 0.5) \rarrodd;
  \fill[white] (0, 1.25) circle (1mm);
  \draw[expand style= \styleeven] (-0.25, 1.5) .. controls +(0, -0.2)
  and +(-0.1,0.1)
  .. (0, 1.25)\arreven .. controls +(0.1,-0.1) and (0.25, 1.1) .. (0.25, 1) .. controls +(0, -0.25)
  .. (0, 0.5) \rarreven;
  \draw[expand style =\stylevect] (0,0.5) -- (0,0);
\end{scope}}}, 
                                 \\ (-q)^{\binom{\myN-1}{2}}q^{\frac{\myN-2}{4}}\NB{\tikz[]{\begin{scope}[scale=1]
  \draw[expand style= \styleodd] (-0.25, 1.5) .. controls +(0, -0.2)
  and +(-0.1,0.1)
  .. (0, 1.25)\arrodd .. controls +(0.1,-0.1) and (0.25, 1.1) .. (0.25, 1) .. controls +(0, -0.25)
  .. (0, 0.5) \rarrodd;
  \fill[white] (0, 1.25) circle (1mm);
  \draw[expand style= \styleeven] (0.25, 1.5) .. controls +(0, -0.2)
  and +(0.1,0.1)
  .. (0, 1.25)\arreven .. controls +(-0.1,-0.1) and (-0.25, 1.1) .. (-0.25, 1) .. controls +(0, -0.25)
  .. (0, 0.5) \rarreven;
  \draw[expand style =\stylevect] (0,0.5) -- (0,0);
\end{scope}}} &= \NB{\tikz[yscale=1.5]{}}=(-q^{-1})^{\binom{\myN-1}{2}}q^{-\frac{\myN-2}{4}}\NB{\tikz[]{\begin{scope}[scale=1]
  \draw[expand style= \styleeven] (0.25, 1.5) .. controls +(0, -0.2)
  and +(0.1,0.1)
  .. (0, 1.25)\arreven .. controls +(-0.1,-0.1) and (-0.25, 1.1) .. (-0.25, 1) .. controls +(0, -0.25)
  .. (0, 0.5) \rarreven;
  \fill[white] (0, 1.25) circle (1mm);
  \draw[expand style= \styleodd] (-0.25, 1.5) .. controls +(0, -0.2)
  and +(-0.1,0.1)
  .. (0, 1.25)\arrodd .. controls +(0.1,-0.1) and (0.25, 1.1) .. (0.25, 1) .. controls +(0, -0.25)
  .. (0, 0.5) \rarrodd;
  \draw[expand style =\stylevect] (0,0.5) -- (0,0);
\end{scope}}}.
        \end{align}
      \end{lem}
      \begin{proof}
        We leave it as an easy exercise to adapt the proof of the analogous result for $U_q(\mathfrak{so}_{2\myN+1})$ \cite[Lemma 4.11]{spinlinkhomology} to prove that e.g.
          \begin{align*}
          R_{\macroeven,\macroodd}\circ \Ya_{V}^{\macroeven,\macroodd} &= (-q^{-1})^{\{2, \dots, \myN\}}q^{-\frac{\myN-2}{4}}\Ya_{V}^{\macroodd,\macroeven}. \\
          &=(-q^{-1})^{\binom{\myN-1}{2}}q^{-\frac{\myN-2}{4}}\Ya_{V}^{\macroodd,\macroeven},
          \end{align*}
          which implies the second equality in \eqref{eq:YXing}. The other identities are similar.
      \end{proof}
      \begin{lem}
        The following relations hold: \allowdisplaybreaks
        \begin{gather}
(-1)^{\myN-1}q^{\frac{2\myN-1}{2}}\NB{\tikz[]{\begin{scope}[scale=1]
  \draw[expand style= \styleeven] (-0.25, 1.5) .. controls +(0, -0.2)
  and +(-0.1,0.1)
  .. (0, 1.25)\rarreven .. controls +(0.1,-0.1) and (0.25, 1.1) .. (0.25, 1) .. controls +(0, -0.25)
  .. (0, 0.5) \arreven;
  \fill[white] (0, 1.25) circle (1mm);
  \draw[expand style= \stylevect] (0.25, 1.5) .. controls +(0, -0.2)
  and +(0.1,0.1)
  .. (0, 1.25) .. controls +(-0.1,-0.1) and (-0.25, 1.1) .. (-0.25, 1) .. controls +(0, -0.25)
  .. (0, 0.5);
  \draw[expand style =\styleodd] (0,0.5) -- (0,0) \rarrodd;
\end{scope}}}   =\NB{\tikz[yscale=1.5]{\begin{scope}
  \draw[expand style= \styleeven] (-0.25, 1) .. controls +(0, -0.25)  .. (0, 0.5) \rarreven;
  \draw[expand style= \stylevect]
  (0, 0.5) .. controls +(0.25, 0.25) .. (0.25, 1);
  \draw[expand style =\styleodd] (0,0.5) -- (0,0) \rarrodd;
\end{scope}}}   =  (-1)^{\myN-1}q^{\frac{1-2\myN}{2}}\NB{\tikz[]{\begin{scope}[scale=1]
  \draw[expand style= \stylevect] (0.25, 1.5) .. controls +(0, -0.2)
  and +(0.1,0.1)
  .. (0, 1.25) .. controls +(-0.1,-0.1) and (-0.25, 1.1) .. (-0.25, 1) .. controls +(0, -0.25)
  .. (0, 0.5);
  \fill[white] (0, 1.25) circle (1mm);
  \draw[expand style= \styleeven] (-0.25, 1.5) .. controls +(0, -0.2)
  and +(-0.1,0.1)
  .. (0, 1.25)\rarreven .. controls +(0.1,-0.1) and (0.25, 1.1) .. (0.25, 1) .. controls +(0, -0.25)
  .. (0, 0.5) \arreven;
  \draw[expand style =\styleodd] (0,0.5) -- (0,0) \rarrodd;
\end{scope}}}   ,\\
(-1)^{\myN-1}q^{\frac{2\myN-1}{2}}\NB{\tikz[]{\begin{scope}[scale=1]
  \draw[expand style= \styleodd] (-0.25, 1.5) .. controls +(0, -0.2)
  and +(-0.1,0.1)
  .. (0, 1.25)\rarrodd .. controls +(0.1,-0.1) and (0.25, 1.1) .. (0.25, 1) .. controls +(0, -0.25)
  .. (0, 0.5) \arrodd;
  \fill[white] (0, 1.25) circle (1mm);
  \draw[expand style= \stylevect] (0.25, 1.5) .. controls +(0, -0.2)
  and +(0.1,0.1)
  .. (0, 1.25) .. controls +(-0.1,-0.1) and (-0.25, 1.1) .. (-0.25, 1) .. controls +(0, -0.25)
  .. (0, 0.5);
  \draw[expand style =\styleeven] (0,0.5) -- (0,0) \rarreven;
\end{scope}}}   =\NB{\tikz[yscale=1.5]{\begin{scope}
  \draw[expand style= \styleodd] (-0.25, 1) .. controls +(0, -0.25)  .. (0, 0.5) \rarrodd;
  \draw[expand style= \stylevect]
  (0, 0.5) .. controls +(0.25, 0.25) .. (0.25, 1);
  \draw[expand style =\styleeven] (0,0.5) -- (0,0) \rarreven;
\end{scope}}}   =  (-1)^{\myN-1}q^{\frac{1-2\myN}{2}}\NB{\tikz[]{\begin{scope}[scale=1]
  \draw[expand style= \stylevect] (0.25, 1.5) .. controls +(0, -0.2)
  and +(0.1,0.1)
  .. (0, 1.25) .. controls +(-0.1,-0.1) and (-0.25, 1.1) .. (-0.25, 1) .. controls +(0, -0.25)
  .. (0, 0.5);
  \fill[white] (0, 1.25) circle (1mm);
  \draw[expand style= \styleodd] (-0.25, 1.5) .. controls +(0, -0.2)
  and +(-0.1,0.1)
  .. (0, 1.25)\rarrodd .. controls +(0.1,-0.1) and (0.25, 1.1) .. (0.25, 1) .. controls +(0, -0.25)
  .. (0, 0.5) \arrodd;
  \draw[expand style =\styleeven] (0,0.5) -- (0,0) \rarreven;
\end{scope}}}   ,\\
(-1)^{\myN-1}q^{\frac{2\myN-1}{2}}\NB{\tikz[]{\begin{scope}[scale=1]
  \draw[expand style= \stylevect] (-0.25, 1.5) .. controls +(0, -0.2)
  and +(-0.1,0.1)
  .. (0, 1.25) .. controls +(0.1,-0.1) and (0.25, 1.1) .. (0.25, 1) .. controls +(0, -0.25)
  .. (0, 0.5);
  \fill[white] (0, 1.25) circle (1mm);
  \draw[expand style= \styleeven] (0.25, 1.5) .. controls +(0, -0.2)
  and +(0.1,0.1)
  .. (0, 1.25)\arreven .. controls +(-0.1,-0.1) and (-0.25, 1.1) .. (-0.25, 1) .. controls +(0, -0.25)
  .. (0, 0.5)\rarreven;
  \draw[expand style =\styleodd] (0,0.5) -- (0,0) \rarrodd;
\end{scope}}}   =\NB{\tikz[yscale=1.5]{\begin{scope}
  \draw[expand style= \stylevect] (-0.25, 1) .. controls +(0, -0.25)  .. (0, 0.5); 
  \draw[expand style= \styleeven]
  (0, 0.5) .. controls +(0.25, 0.25) .. (0.25, 1) \arreven;
  \draw[expand style =\styleodd] (0,0.5) -- (0,0) \rarrodd;
\end{scope}}}   =  (-1)^{\myN-1}q^{\frac{1-2\myN}{2}}\NB{\tikz[]{\begin{scope}[scale=1]
  \draw[expand style= \styleeven] (0.25, 1.5) .. controls +(0, -0.2)
  and +(0.1,0.1)
  .. (0, 1.25)\arreven .. controls +(-0.1,-0.1) and (-0.25, 1.1) .. (-0.25, 1) .. controls +(0, -0.25)
  .. (0, 0.5)\rarreven;
  \fill[white] (0, 1.25) circle (1mm);
  \draw[expand style= \stylevect] (-0.25, 1.5) .. controls +(0, -0.2)
  and +(-0.1,0.1)
  .. (0, 1.25) .. controls +(0.1,-0.1) and (0.25, 1.1) .. (0.25, 1) .. controls +(0, -0.25)
  .. (0, 0.5);
  \draw[expand style =\styleodd] (0,0.5) -- (0,0) \rarrodd;
\end{scope}}}   ,\\
(-1)^{\myN-1}q^{\frac{2\myN-1}{2}}\NB{\tikz[]{\begin{scope}[scale=1]
  \draw[expand style= \stylevect] (-0.25, 1.5) .. controls +(0, -0.2)
  and +(-0.1,0.1)
  .. (0, 1.25) .. controls +(0.1,-0.1) and (0.25, 1.1) .. (0.25, 1) .. controls +(0, -0.25)
  .. (0, 0.5);
  \fill[white] (0, 1.25) circle (1mm);
  \draw[expand style= \styleodd] (0.25, 1.5) .. controls +(0, -0.2)
  and +(0.1,0.1)
  .. (0, 1.25)\arrodd .. controls +(-0.1,-0.1) and (-0.25, 1.1) .. (-0.25, 1) .. controls +(0, -0.25)
  .. (0, 0.5)\rarrodd;
  \draw[expand style =\styleeven] (0,0.5) -- (0,0) \rarreven;
\end{scope}}}   =\NB{\tikz[yscale=1.5]{\begin{scope}
  \draw[expand style= \stylevect] (-0.25, 1) .. controls +(0, -0.25)  .. (0, 0.5); 
  \draw[expand style= \styleodd]
  (0, 0.5) .. controls +(0.25, 0.25) .. (0.25, 1) \arrodd;
  \draw[expand style =\styleeven] (0,0.5) -- (0,0) \rarreven;
\end{scope}}}   =  (-1)^{\myN-1}q^{\frac{1-2\myN}{2}}\NB{\tikz[]{\begin{scope}[scale=1]
  \draw[expand style= \styleodd] (0.25, 1.5) .. controls +(0, -0.2)
  and +(0.1,0.1)
  .. (0, 1.25)\arrodd .. controls +(-0.1,-0.1) and (-0.25, 1.1) .. (-0.25, 1) .. controls +(0, -0.25)
  .. (0, 0.5)\rarrodd;
  \fill[white] (0, 1.25) circle (1mm);
  \draw[expand style= \stylevect] (-0.25, 1.5) .. controls +(0, -0.2)
  and +(-0.1,0.1)
  .. (0, 1.25) .. controls +(0.1,-0.1) and (0.25, 1.1) .. (0.25, 1) .. controls +(0, -0.25)
  .. (0, 0.5);
  \draw[expand style =\styleeven] (0,0.5) -- (0,0) \rarreven;
\end{scope}}}   ,\\
(-1)^{\myN-1}q^{\frac{2\myN-1}{2}}\NB{\tikz[]{\begin{scope}[scale=1]
  \draw[expand style= \styleeven] (-0.25, 1.5) .. controls +(0, -0.2)
  and +(-0.1,0.1)
  .. (0, 1.25)\arreven .. controls +(0.1,-0.1) and (0.25, 1.1) .. (0.25, 1) .. controls +(0, -0.25)
  .. (0, 0.5) \rarreven;
  \fill[white] (0, 1.25) circle (1mm);
  \draw[expand style= \stylevect] (0.25, 1.5) .. controls +(0, -0.2)
  and +(0.1,0.1)
  .. (0, 1.25) .. controls +(-0.1,-0.1) and (-0.25, 1.1) .. (-0.25, 1) .. controls +(0, -0.25)
  .. (0, 0.5);
  \draw[expand style =\styleodd] (0,0.5) -- (0,0) \arrodd;
\end{scope}}} =\NB{\tikz[yscale=1.5]{\begin{scope}
  \draw[expand style= \styleeven] (-0.25, 1) .. controls +(0, -0.25)  .. (0, 0.5) \arreven;
  \draw[expand style= \stylevect]
  (0, 0.5) .. controls +(0.25, 0.25) .. (0.25, 1);
  \draw[expand style =\styleodd] (0,0.5) -- (0,0) \arrodd;
\end{scope}}} =  (-1)^{\myN-1}q^{\frac{1-2\myN}{2}}\NB{\tikz[]{\begin{scope}[scale=1]
  \draw[expand style= \stylevect] (0.25, 1.5) .. controls +(0, -0.2)
  and +(0.1,0.1)
  .. (0, 1.25) .. controls +(-0.1,-0.1) and (-0.25, 1.1) .. (-0.25, 1) .. controls +(0, -0.25)
  .. (0, 0.5);
  \fill[white] (0, 1.25) circle (1mm);
  \draw[expand style= \styleeven] (-0.25, 1.5) .. controls +(0, -0.2)
  and +(-0.1,0.1)
  .. (0, 1.25)\arreven .. controls +(0.1,-0.1) and (0.25, 1.1) .. (0.25, 1) .. controls +(0, -0.25)
  .. (0, 0.5) \rarreven;
  \draw[expand style =\styleodd] (0,0.5) -- (0,0) \arrodd;
\end{scope}}} ,\\
(-1)^{\myN-1}q^{\frac{2\myN-1}{2}}\NB{\tikz[]{\begin{scope}[scale=1]
  \draw[expand style= \styleodd] (-0.25, 1.5) .. controls +(0, -0.2)
  and +(-0.1,0.1)
  .. (0, 1.25)\arrodd .. controls +(0.1,-0.1) and (0.25, 1.1) .. (0.25, 1) .. controls +(0, -0.25)
  .. (0, 0.5) \rarrodd;
  \fill[white] (0, 1.25) circle (1mm);
  \draw[expand style= \stylevect] (0.25, 1.5) .. controls +(0, -0.2)
  and +(0.1,0.1)
  .. (0, 1.25) .. controls +(-0.1,-0.1) and (-0.25, 1.1) .. (-0.25, 1) .. controls +(0, -0.25)
  .. (0, 0.5);
  \draw[expand style =\styleeven] (0,0.5) -- (0,0) \arreven;
\end{scope}}} =\NB{\tikz[yscale=1.5]{\begin{scope}
  \draw[expand style= \styleodd] (-0.25, 1) .. controls +(0, -0.25)  .. (0, 0.5) \arrodd;
  \draw[expand style= \stylevect]
  (0, 0.5) .. controls +(0.25, 0.25) .. (0.25, 1);
  \draw[expand style =\styleeven] (0,0.5) -- (0,0) \arreven;
\end{scope}}} =  (-1)^{\myN-1}q^{\frac{1-2\myN}{2}}\NB{\tikz[]{\begin{scope}[scale=1]
  \draw[expand style= \stylevect] (0.25, 1.5) .. controls +(0, -0.2)
  and +(0.1,0.1)
  .. (0, 1.25) .. controls +(-0.1,-0.1) and (-0.25, 1.1) .. (-0.25, 1) .. controls +(0, -0.25)
  .. (0, 0.5);
  \fill[white] (0, 1.25) circle (1mm);
  \draw[expand style= \styleodd] (-0.25, 1.5) .. controls +(0, -0.2)
  and +(-0.1,0.1)
  .. (0, 1.25)\arrodd .. controls +(0.1,-0.1) and (0.25, 1.1) .. (0.25, 1) .. controls +(0, -0.25)
  .. (0, 0.5) \rarrodd;
  \draw[expand style =\styleeven] (0,0.5) -- (0,0) \arreven;
\end{scope}}} ,\\
(-1)^{\myN-1}q^{\frac{2\myN-1}{2}}\NB{\tikz[]{\begin{scope}[scale=1]
  \draw[expand style= \stylevect] (-0.25, 1.5) .. controls +(0, -0.2)
  and +(-0.1,0.1)
  .. (0, 1.25) .. controls +(0.1,-0.1) and (0.25, 1.1) .. (0.25, 1) .. controls +(0, -0.25)
  .. (0, 0.5);
  \fill[white] (0, 1.25) circle (1mm);
  \draw[expand style= \styleeven] (0.25, 1.5) .. controls +(0, -0.2)
  and +(0.1,0.1)
  .. (0, 1.25)\rarreven .. controls +(-0.1,-0.1) and (-0.25, 1.1) .. (-0.25, 1) .. controls +(0, -0.25)
  .. (0, 0.5)\arreven;
  \draw[expand style =\styleodd] (0,0.5) -- (0,0) \arrodd;
\end{scope}}} =\NB{\tikz[yscale=1.5]{\begin{scope}
  \draw[expand style= \stylevect] (-0.25, 1) .. controls +(0, -0.25)  .. (0, 0.5); 
  \draw[expand style= \styleeven]
  (0, 0.5) .. controls +(0.25, 0.25) .. (0.25, 1) \rarreven;
  \draw[expand style =\styleodd] (0,0.5) -- (0,0) \arrodd;
\end{scope}}} =  (-1)^{\myN-1}q^{\frac{1-2\myN}{2}}\NB{\tikz[]{\begin{scope}[scale=1]
  \draw[expand style= \styleeven] (0.25, 1.5) .. controls +(0, -0.2)
  and +(0.1,0.1)
  .. (0, 1.25)\rarreven .. controls +(-0.1,-0.1) and (-0.25, 1.1) .. (-0.25, 1) .. controls +(0, -0.25)
  .. (0, 0.5)\arreven;
  \fill[white] (0, 1.25) circle (1mm);
  \draw[expand style= \stylevect] (-0.25, 1.5) .. controls +(0, -0.2)
  and +(-0.1,0.1)
  .. (0, 1.25) .. controls +(0.1,-0.1) and (0.25, 1.1) .. (0.25, 1) .. controls +(0, -0.25)
  .. (0, 0.5);
  \draw[expand style =\styleodd] (0,0.5) -- (0,0) \arrodd;
\end{scope}}} ,\\
(-1)^{\myN-1}q^{\frac{2\myN-1}{2}}\NB{\tikz[]{\begin{scope}[scale=1]
  \draw[expand style= \stylevect] (-0.25, 1.5) .. controls +(0, -0.2)
  and +(-0.1,0.1)
  .. (0, 1.25) .. controls +(0.1,-0.1) and (0.25, 1.1) .. (0.25, 1) .. controls +(0, -0.25)
  .. (0, 0.5);
  \fill[white] (0, 1.25) circle (1mm);
  \draw[expand style= \styleodd] (0.25, 1.5) .. controls +(0, -0.2)
  and +(0.1,0.1)
  .. (0, 1.25)\rarrodd .. controls +(-0.1,-0.1) and (-0.25, 1.1) .. (-0.25, 1) .. controls +(0, -0.25)
  .. (0, 0.5)\arrodd;
  \draw[expand style =\styleeven] (0,0.5) -- (0,0) \arreven;
\end{scope}}} =\NB{\tikz[yscale=1.5]{\begin{scope}
  \draw[expand style= \stylevect] (-0.25, 1) .. controls +(0, -0.25)  .. (0, 0.5); 
  \draw[expand style= \styleodd]
  (0, 0.5) .. controls +(0.25, 0.25) .. (0.25, 1) \rarrodd;
  \draw[expand style =\styleeven] (0,0.5) -- (0,0) \arreven;
\end{scope}}} =  (-1)^{\myN-1}q^{\frac{1-2\myN}{2}}\NB{\tikz[]{\begin{scope}[scale=1]
  \draw[expand style= \styleodd] (0.25, 1.5) .. controls +(0, -0.2)
  and +(0.1,0.1)
  .. (0, 1.25)\rarrodd .. controls +(-0.1,-0.1) and (-0.25, 1.1) .. (-0.25, 1) .. controls +(0, -0.25)
  .. (0, 0.5)\arrodd;
  \fill[white] (0, 1.25) circle (1mm);
  \draw[expand style= \stylevect] (-0.25, 1.5) .. controls +(0, -0.2)
  and +(-0.1,0.1)
  .. (0, 1.25) .. controls +(0.1,-0.1) and (0.25, 1.1) .. (0.25, 1) .. controls +(0, -0.25)
  .. (0, 0.5);
  \draw[expand style =\styleeven] (0,0.5) -- (0,0) \arreven;
\end{scope}}} .
        \end{gather}
\allowdisplaybreaks[0]
      \end{lem}
      \begin{proof}
        Using Lemma \ref{lem:braid-and-trivalent} and the twists as described in Section \ref{subsec:twists} the proof is an easy application of graphical calculus. Compare to the proof in \cite[Lemma 4.33]{spinlinkhomology}.
      \end{proof}
      \begin{notation}
          We define
          \[
          \II_{\macroeven,\macroodd}^{\macroeven,\macroodd} := \Ya_{V}^{\macroeven,\macroodd} \circ \Pa^{V}_{\macroeven,\macroodd}
          \]
          and similarly define $\II_{\macroodd,\macroeven}^{\macroodd,\macroeven}$, $\II_{\macroeven,V}^{V,\macroeven}$ etc. We also define $\HH_{\macroeven,\macroeven}^{\macroodd,\macroodd}$ etc. to be the ninety degree rotations of the $\II$'s.
      \end{notation}

      \begin{prop}\label{prop:Xing:spin-v} 
        The following identities hold:\begin{align}
            R_{V,\macroeven} &= q^{\frac{1}{2}}\HH_{V,\macroeven}^{\macroeven,V} +
            q^{-\frac{1}{2}} \II_{V,\macroeven}^{\macroeven,V},\\
            R_{V,\macroeven}^{-1} &= q^{-\frac{1}{2}}\HH_{\macroeven,V}^{V,\macroeven} +
            q^{\frac{1}{2}} \II_{\macroeven,V}^{V,\macroeven}, \\
            R_{V,\macroodd} &= q^{\frac{1}{2}}\HH_{V,\macroodd}^{\macroodd,V} +
            q^{-\frac{1}{2}} \II_{V,\macroodd}^{\macroodd,V},\\
            R_{V,\macroodd}^{-1} &= q^{-\frac{1}{2}}\HH_{\macroodd,V}^{V,\macroodd} +
            q^{\frac{1}{2}} \II_{\macroodd,V}^{V,\macroodd}.
          \end{align}
          Graphically, this reads:
          \begin{align}
          \NB{\tikz[yscale=1]{\begin{scope}
  \draw[expand style=\styleeven]     (0, 0) .. controls +(0,0.2) and
  +(0.05, -0.1) .. (-0.25, 0.5) \rarreven  .. controls  +(-0.05, 0.1) and +(0,-0.2)
   .. (-0.5, 1)\rarreven;
  \fill[white] (-0.25, 0.5) circle (1mm);
  \draw[expand style=\stylevect]     (-0.5, 0) .. controls +(0,0.2) and
  +(-0.05, -0.1) .. (-0.25, 0.5)  .. controls  +(0.05, 0.1) and +(0,-0.2)
   .. (0, 1);
\end{scope}}} &= q^{\frac12}\NB{\tikz[yscale=1]{\begin{scope}
  \draw[expand style=\styleeven]     (0, 0) -- +(0, 0.5) \arreven;
  \draw[expand style=\styleeven](-0.5, 0.5) -- +(0, 0.5) \arreven;
  \draw[expand style= \stylevect] (-0.5, 0) -- +(0, 0.5);
  \draw[expand style= \stylevect]  (0, 0.5) -- +(0, 0.5);
  \draw[expand style= \styleodd]   (0, 0.5) -- +(-0.5, 0) \rarrodd;  
\end{scope}}} +
                                      q^{-\frac12}\NB{\tikz[yscale=1]{\input{\imagesfolder/td-I-v-e}}},\\
          \NB{\tikz[yscale=1]{\begin{scope}
  \draw[expand style=\stylevect]     (0.5, 0) .. controls +(0,0.2) and
  +(0.05, -0.1) .. (0.25, 0.5)  .. controls  +(-0.05, 0.1) and +(0,-0.2)
   .. (0, 1);
  \fill[white] (0.25, 0.5) circle (1mm);
  \draw[expand style=\styleeven]     (0, 0) .. controls +(0,0.2) and
  +(-0.05, -0.1) .. (0.25, 0.5) \arreven  .. controls  +(0.05, 0.1) and +(0,-0.2)
   .. (0.5, 1)\arreven;
\end{scope}}} &= q^{-\frac12}\NB{\tikz[yscale=1]{\begin{scope}[xscale=-1]
  \draw[expand style=\styleeven]     (0, 0) -- +(0, 0.5) \arreven;
  \draw[expand style=\styleeven](-0.5, 0.5) -- +(0, 0.5) \arreven;
  \draw[expand style= \stylevect] (-0.5, 0) -- +(0, 0.5);
  \draw[expand style= \stylevect]  (0, 0.5) -- +(0, 0.5);
  \draw[expand style= \styleodd]   (0, 0.5) -- +(-0.5, 0) \arrodd;  
\end{scope}}} +
                                      q^{\frac12}\NB{\tikz[yscale=1]{\input{\imagesfolder/td-I-e-v}}},\\
          \NB{\tikz[yscale=1]{\begin{scope}
  \draw[expand style=\styleodd]     (0, 0) .. controls +(0,0.2) and
  +(0.05, -0.1) .. (-0.25, 0.5) \rarrodd  .. controls  +(-0.05, 0.1) and +(0,-0.2)
   .. (-0.5, 1)\rarrodd;
  \fill[white] (-0.25, 0.5) circle (1mm);
  \draw[expand style=\stylevect]     (-0.5, 0) .. controls +(0,0.2) and
  +(-0.05, -0.1) .. (-0.25, 0.5)  .. controls  +(0.05, 0.1) and +(0,-0.2)
   .. (0, 1);
\end{scope}}} &= q^{\frac12}\NB{\tikz[yscale=1]{\begin{scope}
  \draw[expand style=\styleodd]     (0, 0) -- +(0, 0.5) \arrodd;
  \draw[expand style=\styleodd](-0.5, 0.5) -- +(0, 0.5) \arrodd;
  \draw[expand style= \stylevect] (-0.5, 0) -- +(0, 0.5);
  \draw[expand style= \stylevect]  (0, 0.5) -- +(0, 0.5);
  \draw[expand style= \styleeven]   (0, 0.5) -- +(-0.5, 0) \rarreven;  
\end{scope}}} +
                                      q^{-\frac12}\NB{\tikz[yscale=1]{\input{\imagesfolder/td-I-v-o}}},\\
            \NB{\tikz[yscale=1]{\begin{scope}
  \draw[expand style=\stylevect]     (0.5, 0) .. controls +(0,0.2) and
  +(0.05, -0.1) .. (0.25, 0.5)  .. controls  +(-0.05, 0.1) and +(0,-0.2)
   .. (0, 1);
  \fill[white] (0.25, 0.5) circle (1mm);
  \draw[expand style=\styleodd]     (0, 0) .. controls +(0,0.2) and
  +(-0.05, -0.1) .. (0.25, 0.5) \arrodd  .. controls  +(0.05, 0.1) and +(0,-0.2)
   .. (0.5, 1)\arrodd;
\end{scope}}} &= q^{-\frac12}\NB{\tikz[yscale=1]{\begin{scope}[xscale=-1]
  \draw[expand style=\styleodd]     (0, 0) -- +(0, 0.5) \arrodd;
  \draw[expand style=\styleodd](-0.5, 0.5) -- +(0, 0.5) \arrodd;
  \draw[expand style= \stylevect] (-0.5, 0) -- +(0, 0.5);
  \draw[expand style= \stylevect]  (0, 0.5) -- +(0, 0.5);
  \draw[expand style= \styleeven]   (0, 0.5) -- +(-0.5, 0) \arreven;  
\end{scope}}} +
                                      q^{\frac12}\NB{\tikz[yscale=1]{\input{\imagesfolder/td-I-o-v}}}.
          \end{align}
      \end{prop}
      \begin{proof}
            Given what we have established so far, the proof of \cite[Proposition 4.35]{spinlinkhomology} can easily be adapted to prove this result. The idea is that one can first adapt the proof of \cite[Lemma 4.34]{spinlinkhomology} to argue the $\II$ and $\HH$ diagrams are a basis for this homomorphism space. Thus, there is some expression for the braiding as a $\mathbb{C}(q)$-linear combination of $\II$ and $\HH$. The coefficients can then be determined by placing trivalent vertices on the top and side of the relation, and applying relations (we have already derived) for computing triangles, bigons, and the compositions of the braiding with trivalent vertices.
      \end{proof}

      \begin{lem}\label{L:square}
        The following relations hold:\begin{align}
            \label{eq:inter-square}
            \HH_{V,\macroeven}^{\macroeven,V}\circ \HH_{\macroeven,V}^{V,\macroeven} &= \id_{\macroeven}\otimes \id_{V} - (-1)^{\myN-2}[\myN-2] \II_{\macroeven,V}^{\macroeven,V},\quad\text{and}\\
            \HH_{V,\macroodd}^{\macroodd,V}\circ \HH_{\macroodd,V}^{V,\macroodd} &= \id_{\macroodd}\otimes \id_{V} - (-1)^{\myN-2}[\myN-2] \II_{\macroodd,V}^{\macroodd,V}.            
          \end{align}
        Graphically, they read:
        \begin{align}
          \label{eq:inter-square-diag}
          \NB{\tikz[]{\input{\imagesfolder/td-HH-e-v}}}&=\NB{\tikz[]{\begin{scope}
  \draw[expand style=\styleeven]     (0, 0) -- +(0, 1.5) \arreven;
  \draw[expand style= \stylevect]  (0.5,0) -- +(0, 1.5);
\end{scope}}}- (-1)^{\myN-2}[\myN-2]\NB{\tikz[yscale=1.5, xscale=1]{\input{\imagesfolder/td-Ip-e-v}}}, \quad\text{and}\\
          \NB{\tikz[]{\input{\imagesfolder/td-HH-o-v}}}&=\NB{\tikz[]{\begin{scope}
  \draw[expand style=\styleodd]     (0, 0) -- +(0, 1.5) \arrodd;
  \draw[expand style= \stylevect]  (0.5,0) -- +(0, 1.5);
\end{scope}}}- (-1)^{\myN-2}[\myN-2]\NB{\tikz[yscale=1.5, xscale=1]{\input{\imagesfolder/td-Ip-o-v}}}.
        \end{align}
      \end{lem}
      \begin{proof}
          Combine 
          \[
          R_{V,\macroeven} = q^{1/2}\HH_{V,\macroeven}^{\macroeven,V} + q^{-1/2}\II_{V,+}^{\macroeven,V} \qquad \text{and} \qquad R_{\macroeven,V}^{-1} = q^{-1/2}\HH_{\macroeven,V}^{V,\macroeven} + q^{1/2}\II_{\macroeven,V}^{V,\macroeven}.\qedhere
          \]
      \end{proof}

      \begin{lem}
        The following relations hold if $\myN\geq 3$:
        \begin{equation}
          \label{eq:45}
          \NB{\tikz[]{\input{\imagesfolder/td-diag-square-1}}}=\NB{\tikz[]{\input{\imagesfolder/td-diag-square-2}}}=\NB{\tikz[]{\input{\imagesfolder/td-diag-square-3}}}=\NB{\tikz[]{\input{\imagesfolder/td-diag-square-4}}}.
        \end{equation}
      \end{lem}
      \begin{proof}
          Using Lemma \ref{L:square}, resolve the following diagrams
          \begin{equation}
            \NB{\tikz[]{\input{\imagesfolder/td-diag-dble-square-1}}}, \NB{\tikz[]{\input{\imagesfolder/td-diag-dble-square-2}}}, \NB{\tikz[]{\input{\imagesfolder/td-diag-dble-square-3}}}, \NB{\tikz[]{\input{\imagesfolder/td-diag-dble-square-4}}}
          \end{equation}   
          in two different ways. The claim the follows since
          $[\myN-2]\ne 0$. 
      \end{proof}

      Note that the equality of the various square diagrams in the previous Lemma imply that the diagrams are rotation invariant.

      \begin{notation}
        We introduce notation for this rotation invariant endomorphism
        \begin{equation}
          \XX:=(-1)^{1+\binom{\myN}{2}}\frac{1}{[2]^{[\myN-3]}}\HH_{\macroeven,\macroeven}^{V,V}\circ \HH_{V,V}^{\macroeven,\macroeven}.
        \end{equation}
        Diagrammatically, we write:
        \begin{equation}
          \label{eq:46}
          \NB{\tikz[yscale= 1.5]{\begin{scope}
  \draw[expand style=\stylevect]     (0, 0) .. controls +(0,0.2) and
  +(0.05, -0.1) .. (-0.25, 0.5)  .. controls  +(-0.05, 0.1) and +(0,-0.2)
   .. (-0.5, 1);
  \draw[expand style=\stylevect]     (-0.5, 0) .. controls +(0,0.2) and
  +(-0.05, -0.1) .. (-0.25, 0.5)  .. controls  +(0.05, 0.1) and +(0,-0.2)
  .. (0, 1);
    \node at (-0.25, 0.5) {\singvertex};
\end{scope}}}=
          (-1)^{1+\binom{\myN}{2}}\frac{1}{[2]^{[\myN-3]}} \NB{\tikz[]{\input{\imagesfolder/td-diag-square-1}}}.
        \end{equation}
        
          The choice of sign is to make the formula for $R_{V,V}$ particularly simple.
      \end{notation}

      \begin{prop}\label{prop:braidingVV}
        The following relation holds:
          \begin{equation}
          R_{V,V} = q \id_{V\otimes V} + \XX + q^{-1} \cupp_{V}\circ \capp_{V}.
        \end{equation}
        Diagrammatically, this reads:
        \begin{equation}
          \label{eq:47}
          \NB{\tikz[yscale=1]{\begin{scope}
  \draw[expand style=\stylevect]     (0, 0) .. controls +(0,0.2) and
  +(0.05, -0.1) .. (-0.25, 0.5)  .. controls  +(-0.05, 0.1) and +(0,-0.2)
   .. (-0.5, 1);
  \fill[white] (-0.25, 0.5) circle (1mm);
  \draw[expand style=\stylevect]     (-0.5, 0) .. controls +(0,0.2) and
  +(-0.05, -0.1) .. (-0.25, 0.5)  .. controls  +(0.05, 0.1) and +(0,-0.2)
   .. (0, 1);
\end{scope}}}= q\, \NB{\tikz[yscale=0.667]{\begin{scope}
  \draw[expand style=\stylevect]     (0, 0) -- +(0, 1.5);
  \draw[expand style= \stylevect]  (0.5,0) -- +(0, 1.5);
\end{scope}}}\, +\,
          \NB{\tikz[]{}} \,+ \,q^{-1}\, \NB{\tikz[yscale=0.667]{\begin{scope}
  \draw[expand style=\stylevect]   (0, 0) .. controls +(0, 0.7) and
  +(0,0.7) .. +(0.5,0);
  \draw[expand style=\stylevect]   (0, 1.5) .. controls +(0, -0.7) and
  +(0,-0.7) .. +(0.5,0);
\end{scope}}}
        \end{equation}
      \end{prop}
      \begin{proof}
        First remark that:
        \begin{equation}
          \label{eq:50}
          (-1)^{\binom{\myN-1}{2}}[2]^{[\myN -2]}\NB{\tikz[scale=1.5, yscale=1.5]{}}=  \NB{\tikz[scale=1.5]{\begin{scope}
  \draw[expand style=\stylevect]     (0, 0) .. controls +(0,0.2) and
  +(0.05, -0.1) .. (-0.25, 0.5)  .. controls  +(-0.05, 0.1) and +(0,-0.2)
  .. (-0.5, 1);
  \draw[expand style=\styleodd] (-0.5, 1) .. controls +(0.2, 0.2)
  .. (-0.5, 1.4) \arrodd; 
  \draw[expand style=\styleeven] (-0.5, 1) .. controls +(-0.2, 0.2)
  .. (-0.5, 1.4) \rarreven;
  \draw[expand style=\stylevect] (-0.5, 1.4) -- (-0.5, 1.5);
  \fill[white] (-0.25, 0.5) circle (1mm);
  \draw[expand style=\stylevect]     (-0.5, 0) .. controls +(0,0.2) and
  +(-0.05, -0.1) .. (-0.25, 0.5)  .. controls  +(0.05, 0.1) and +(0,-0.2)
   .. (0, 1) -- +(0, 0.5);
\end{scope}}}
           = \NB{\tikz[scale=1.5, yscale=1.5]{\begin{scope}[/pgf/fpu/install only={reciprocal}]
  \draw[expand style=\stylevect] (0, 0) -- (0,0.1);
  \draw[expand style=\stylevect] (-0.5, 0.9) -- (-0.5,1);
  \draw[expand style=\styleodd]     (0, 0.1) .. controls +(0,0.8)  and
  +(0, 0)..  (-0.5, 0.9) \rarrodd ; 
  \draw[expand style=\styleeven]     (0, 0.1) .. controls +(-0.0,0) and
  +(0, -0.8) ..  (-0.5, 0.9)  \arreven;
   \fill[white] (-0.37, 0.35) circle (0.8mm and 0.6mm);
   \fill[white] (-0.12, 0.65) circle (0.8mm and 0.6mm);
  \draw[expand style=\stylevect]     (-0.5, 0) .. controls +(0,0.2) and
  +(-0.05, -0.1) .. (-0.25, 0.5)  .. controls  +(0.05, 0.1) and +(0,-0.2)
   .. (0, 1);
\end{scope}}}.
         \end{equation}
         Using the Proposition~\ref{prop:Xing:spin-v}, one can
         express the braidings between the vectorial representation
         and the spin ones. Finally, one uses planar relations, to
         obtain the result. This last step is left as an exercise to the reader. 
\end{proof}

      \begin{cor}\label{cor:skein-diag}
        The following relation holds:
        \begin{equation}
          \label{eq:skein-diag}
          \NB{\tikz[yscale=0.75]{}}\,- \,\NB{\tikz[xscale=-1,yscale=0.75]{}} \,= (q- q^{-1})\left(
            \, \NB{\tikz[yscale=0.5]{}} \,- \,
            \NB{\tikz[yscale=0.5]{}}\,\right).
        \end{equation}
      \end{cor}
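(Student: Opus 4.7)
The plan is to derive the skein relation \eqref{eq:skein-diag} as an immediate consequence of Proposition~\ref{prop:braidingVV} together with an analogous formula for the negative crossing. First I would record the formula from Proposition~\ref{prop:braidingVV}:
\[
\ttkz[yscale=1]{td-Xing-diag} \;=\; q\, \ttkz[yscale=0.667]{td-idid-v-v} \;+\; \ttkz{td-singvertex-diag} \;+\; q^{-1}\, \ttkz[yscale=0.667]{td-cupcap-v-v}.
\]

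The key observation is that the singular vertex $\XX$ and the cup-cap $\cupp_V\circ\capp_V$ are both mirror-invariant: they are defined purely in terms of (trivalent and evaluation/coevaluation) morphisms in the planar graphical calculus, with no crossing data, so applying the mirror image operation leaves them unchanged. Combined with Remark~\ref{rmk:value-knot-inv}.(\ref{it:mirror-image}), which tells us that passing to the mirror image amounts to substituting $q^{1/2}\mapsto q^{-1/2}$, this forces the analogous formula for the negative crossing to be
\[
\ttkz[xscale=-1,yscale=1]{td-Xing-diag} \;=\; q^{-1}\, \ttkz[yscale=0.667]{td-idid-v-v} \;+\; \ttkz{td-singvertex-diag} \;+\; q\, \ttkz[yscale=0.667]{td-cupcap-v-v}.
\]
(Alternatively, and as a sanity check, one can verify directly that the right-hand side of the displayed expression composes with $R_{V,V}$ to give the identity, using the planar relations already established — in particular the bigon and triangle identities — but the mirror argument is cleaner.)

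Subtracting the two formulas, the singular vertex terms cancel and the other two terms combine as
\[
\ttkz[yscale=0.75]{td-Xing-diag}\,-\,\ttkz[xscale=-1,yscale=0.75]{td-Xing-diag}
\;=\; (q-q^{-1})\, \ttkz[yscale=0.5]{td-idid-v-v} \;+\; (q^{-1}-q)\, \ttkz[yscale=0.5]{td-cupcap-v-v},
\]
which is exactly \eqref{eq:skein-diag}. The only genuine content beyond Proposition~\ref{prop:braidingVV} is the mirror-invariance of $\XX$ and of $\cupp_V\circ\capp_V$; I expect this to be the main (and minor) hurdle, namely justifying in our framework that the singular vertex, being defined up to rotation as in \eqref{eq:46} from a planar composition of trivalent vertices and cups/caps of the half-spin representations, carries no chirality and is therefore unchanged by mirroring the diagram.
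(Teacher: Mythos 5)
Your final answer is correct — indeed one does have
\begin{equation*}
R_{V,V}^{-1} \;=\; q^{-1}\,\id_{V\otimes V} \;+\; \XX \;+\; q\,\cupp_V\circ\capp_V,
\end{equation*}
and subtracting this from Proposition~\ref{prop:braidingVV} gives the corollary. However, the justification you offer for this formula does not hold up. Remark~\ref{rmk:value-knot-inv}.(\ref{it:mirror-image}) is a statement about the \emph{combinatorial} state-sum $\ev{\cdot}$ from Section~\ref{sec:linkinvariants}: it says that the Laurent polynomial assigned to a mirror image diagram is obtained from the original by $q^{1/2}\mapsto q^{-1/2}$. That is a numerical identity for $\ev{\cdot}$; it is not a statement about intertwiner morphisms in the representation category, and one cannot transport it there without first knowing the relationship between $\ev{\cdot}$ and $\RT{\cdot}$ — which is precisely Theorem~\ref{thm:ev-vs-alg}, whose proof relies on the present corollary. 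As invoked, the argument is circular.

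Worse, a naive categorical ``mirror'' argument actually produces the \emph{wrong} answer. If one reflects the diagrammatic equation of Proposition~\ref{prop:braidingVV} — a genuine equality of morphisms — and accepts that the mirror images of $\id_{V\otimes V}$, $\XX$, and $\cupp_V\circ\capp_V$ are themselves while the mirror of $R_{V,V}$ is $R_{V,V}^{-1}$, one would conclude $R_{V,V}^{-1}=q\,\id+\XX+q^{-1}E=R_{V,V}$, which is false. The $q\leftrightarrow q^{-1}$ that you want does not come for free from a geometric reflection of morphism diagrams: it is a bar-involution phenomenon, and the bar involution acts on representations only up to a semi-linear twist, which is exactly the subtlety that is being glossed over. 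In other words, the ``main (and minor) hurdle'' you flag is not mirror-invariance of $\XX$ and $E$ but the unjustified $q^{1/2}\mapsto q^{-1/2}$ step.

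The correct route — and what the paper's placement of the corollary directly after Proposition~\ref{prop:braidingVV} implicitly intends — is to obtain the formula for $R_{V,V}^{-1}$ by the same argument used for $R_{V,V}$: use equation \eqref{eq:50}, but substitute the formulas for $R_{V,\macroeven}^{-1}$ and $R_{V,\macroodd}^{-1}$ from Proposition~\ref{prop:Xing:spin-v} in place of those for $R_{V,\macroeven}$ and $R_{V,\macroodd}$. Alternatively (your sanity-check suggestion, which is in fact the more robust of your two options), verify directly that
$(q\,\id + \XX + q^{-1}E)(q^{-1}\,\id + \XX + qE)=\id_{V\otimes V}$
using the already-established planar relations (the square Lemma~\ref{L:square}, the bigon and circle values, and the algebra generated by $\id$, $\XX$, $E$). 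Either way, the formula for $R^{-1}$ must be derived, not conjured from a mirror.
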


      \begin{lem}
        The following relations hold:
        \begin{equation}
          \label{eq:R1s-diag}
          {\NB{\tikz[scale=0.5]{}}} = q^{2\myN -1} {\,\NB{\tikz[scale=0.5]{}}\,} \qquad \qquad  \text{and} \qquad \qquad
          {\NB{\tikz[scale=0.5, yscale=-1]{}}} = q^{1-2\myN}
          {\,\NB{\tikz[scale=0.5]{}}\,}. 
        \end{equation}
      \end{lem}

      \begin{proof}
          To compute the first equality in \eqref{eq:R1s-diag}, observe that the left hand side is the twist, so the discussion in Section \ref{subsec:twists} implies the scalar on the right hand side must be $(-1)^{(2\varpi_1, \rho^{\vee})}q^{(\varpi_1 , \varpi_1 + 2\rho)} = q^{2\myN - 1}$. A diagrammatic argument establishes the second equality by verifying that the left hand side of the second equality is the inverse of the twist endomorphism.
      \end{proof}

If $D$ is a diagram of a framed unoriented link (thought of as
      colored with the vector representation), let us denote by $\RT{D}$ the scalar in $\mathbb{C}[q, q^{-1}]$ obtained by viewing $D$ as an endomorphism of the
      trivial representation of $U_q(\sotN)$. From the
      Reshetikhin--Turaev framework, one knows that this quantity is
      independent of the choice of diagram, so that for a framed
      unoriented link $L$, the quantity $\RT{L}$ is well-defined. Moreover, we can relate $\RT{-}$ to the link invariant $\evN{\cdot}$
      developed in Section~\ref{sec:linkinvariants}, proving Theorem \ref{theo:C} from the introduction.
      \begin{thm}\label{thm:ev-vs-alg}
        For any framed link $L$, $\RT{L} =
        \left(\evN{L}\right)_{q\mapsto -q}$.
      \end{thm}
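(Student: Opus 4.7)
The proof strategy is to show that $\RT{\cdot}$ and $\evN{\cdot}|_{q\mapsto -q}$ both satisfy the defining relations of Theorem~\ref{thm:restated-thm-A}, and then invoke the uniqueness of the resulting Kauffman-type link invariant \cite{MR958895}.

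First, I would record that $\RT{\cdot}$ satisfies analogues of all three relations of Theorem~\ref{thm:restated-thm-A}. The Kauffman-type skein relation with coefficient $(q-q^{-1})$ is exactly Corollary~\ref{cor:skein-diag}. The curl relations $\RT{\text{positive curl}} = q^{2\myN-1}\RT{\text{strand}}$ and $\RT{\text{negative curl}} = q^{1-2\myN}\RT{\text{strand}}$ come from the twist scalar computed in Section~\ref{subsec:twists}, together with the fact that the twist on the vector representation is $q^{(\varpi_1,\varpi_1+2\rho)} = q^{2\myN-1}$ (with trivial Frobenius--Schur sign). The unknot value $\RT{\text{unknot}} = [2\myN-1]+1$ is equation~\eqref{eq:unknot-v-diag}.

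Next, I would apply the substitution $q\mapsto -q$ to each relation in Theorem~\ref{thm:restated-thm-A} and check that one obtains exactly these Reshetikhin--Turaev relations. Since $2\myN-1$ is odd, $-(-q)^{2\myN-1}=q^{2\myN-1}$ and $-(-q)^{1-2\myN}=q^{1-2\myN}$, so the curl relations for $\evN{\cdot}|_{q\mapsto -q}$ agree with those for $\RT{\cdot}$. The unknot value $[2\myN-1]=\frac{q^{2\myN-1}-q^{1-2\myN}}{q-q^{-1}}$ is $q\mapsto -q$ invariant because the resulting sign in the numerator matches the one in the denominator; hence so is $[2\myN-1]+1$. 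For the skein, comparing Definition~\ref{dfn:Xings} and Proposition~\ref{prop:braidingVV} term-by-term, the coefficients $(-q,-q^{-1})$ appearing in front of the two smoothings in the combinatorial formula become $(q,q^{-1})$ after substitution, matching the Reshetikhin--Turaev formula exactly, provided one identifies $\ttkz{td-smoothing}$ with $\id_{V\otimes V}$, $\ttkz[rotate=90]{td-smoothing}$ with $\cupp_V\circ \capp_V$, and the combinatorial singular vertex with the morphism $\XX$ of Section~\ref{sec:intertwiners}.

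Finally, since both $\RT{\cdot}$ and $\evN{\cdot}|_{q\mapsto -q}$ are invariants of framed unoriented links satisfying identical Kauffman-type skein, curl, and unknot-value relations, Kauffman's existence-and-uniqueness result \cite{MR958895} implies $\RT{L}=\evN{L}|_{q\mapsto -q}$ for every framed link $L$. The main obstacle is tracking sign conventions between the two frameworks, in particular verifying the local diagrammatic identifications at the end of the previous paragraph and confirming that the signs arising from the non-standard ribbon element of Snyder--Tingley used in Section~\ref{sec:intertwiners} are precisely what produces the $-q$ and $-q^{-1}$ in Definition~\ref{dfn:Xings}; these are exactly the signs that are absorbed by the substitution $q\mapsto -q$ and that therefore prevent $\RT{L}$ and $\evN{L}$ from being equal on the nose.
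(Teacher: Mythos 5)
Your proof is correct and takes essentially the same route as the paper: both verify that $\RT{\cdot}$ and $\evN{\cdot}|_{q\mapsto -q}$ satisfy the Kauffman skein, curl, and unknot-value relations (citing Corollary~\ref{cor:skein-diag}, equation~\eqref{eq:R1s-diag}, and equation~\eqref{eq:unknot-v-diag} on the $\RT$ side, and \eqref{eq:skein-ev-link}, Lemma~\ref{lem:R1}, and Example~\ref{ex:vect} on the combinatorial side), then appeal to Kauffman's uniqueness theorem~\cite{MR958895}. Your extra paragraph matching Definition~\ref{dfn:Xings} against Proposition~\ref{prop:braidingVV} term-by-term and identifying the singular vertex with $\XX$ is sound but strictly more than the paper uses, since the Kauffman-type difference relation already eliminates the singular vertex and so no such identification is needed for the uniqueness argument.
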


        \begin{proof}
      The invariant $\RT{-}$ satisfies the
      skein relations \eqref{eq:skein-diag}, \eqref{eq:unknot-v-diag},
      \eqref{eq:R1s-diag}, and the invariant $\left(\evN{L}\right)_{q\mapsto -q}$ satisfies the same skein relations, see \eqref{eq:skein-ev-link}, \eqref{eq:17}, and \eqref{eq:R1s}. Moreover, these skein relations completely determine each invariant
       \cite{MR958895}.
        \end{proof}

        Note that Theorem \ref{thm:ev-vs-alg} implies $\RT{-}$ is $\ZZ[q, q^{-1}]$-valued.

    \begin{rmk}
            Observe that we did not need to derive the analogue of the pentagon relation satisfied by $\ev{\cdot}$ from Proposition \ref{prop:pentagon}. This pentagon relation was used to verify the braid relation for the state sum invariant $\ev{\cdot}_{\myN}$. However, we already know that the representation category for $U_q(\sotN)$ is braided (and have used this fact extensively to derive relations between intertwiners). We leave it as an exercise to derive $\HH_{V,V}^{\macroeven,\macroeven}\circ R_{V,V}$, following the rubric from the proof of \cite[Corollary 4.36]{spinlinkhomology}, and then deriving the pentagon relation for trivalent intertwiners using graphical calculus.
    \end{rmk}

    \begin{rmk}
      We did not prove that there is a re-normalization of
      the algebraic evaluation of webs which matches the
      combinatorial one. We believe that considering pivotal
      structures should shed some light on whether there is such a renormalization.
    \end{rmk}

\bibliographystyle{alphaurl}
\bibliography{biblio}

\end{document}